\documentclass[a4paper,reqno,oneside]{amsart}
\usepackage{tikz}
\usepackage{tikz-cd}
\tikzcdset{scale cd/.style={every label/.append style={scale=#1},cells={nodes={scale=#1}}}}
\usetikzlibrary{arrows,shapes,chains,matrix,positioning,scopes,cd,patterns,decorations.pathreplacing}

\usepackage{amssymb, amstext, amsmath, amscd, amsthm, amsfonts, latexsym, mathrsfs, mathtools, lscape, comment, mathrsfs, empheq}
\usepackage{xcolor}
\usepackage{bm, bbm}
\usepackage{adjustbox}
\usepackage[all]{xy}
\usepackage[includehead, includefoot, top=20.5truemm, bottom=20.5truemm, left=25truemm, right=25truemm]{geometry}
\definecolor{darkblue}{rgb}{0.0, 0.0, 0.55}
\definecolor{darkmagenta}{rgb}{0.55, 0.0, 0.55}
\definecolor{darkorange}{rgb}{1.0, 0.55, 0.0}
\usepackage[colorlinks=true]{hyperref}
\hypersetup{
	linkcolor  = darkblue,
	citecolor  = darkmagenta,
	urlcolor   = darkorange,
	colorlinks = true,
}

\theoremstyle{plain} 
\newtheorem{thm}{Theorem}[section]
\newtheorem{cor}[thm]{Corollary}
\newtheorem*{thm*}{Theorem}
\newtheorem*{cor*}{Corollary}
\newtheorem{lem}[thm]{Lemma}
\newtheorem{prop}[thm]{Proposition}
\newtheorem{dfn-prop}[thm]{Definition-Proposition}

\theoremstyle{definition}
\newtheorem{dfn}[thm]{Definition}
\newtheorem{ex}[thm]{Example}

\numberwithin{equation}{section}

\renewcommand{\AA}{\mathcal{A}}
\newcommand{\BB}{\mathcal{B}}
\newcommand{\DD}{\mathcal{D}}
\newcommand{\NN}{\mathbb{N}}
\newcommand{\ZZ}{\mathbb{Z}}
\newcommand{\QQQ}{\mathbb{Q}}
\newcommand{\RR}{\mathbb{R}}
\newcommand{\CC}{\mathcal{C}}

\newcommand{\PP}{\mathcal{P}}
\newcommand{\QQ}{\mathcal{Q}}
\renewcommand{\SS}{\mathcal{S}}
\newcommand{\TT}{\mathcal{T}}
\newcommand{\XX}{\mathcal{X}}
\newcommand{\YY}{\mathcal{Y}}

\newcommand{\WW}{\mathcal{W}}
\newcommand{\MM}{\mathcal{M}}
\newcommand{\II}{\mathbb{I}}
\newcommand{\JJ}{\mathbb{J}}
\def\fm{{\mathfrak m}}
\def\fp{{\mathfrak p}}
\def\fs{{\mathfrak s}}
\def\ft{{\mathfrak t}}
\newcommand{\m}{\mathfrak{m}}
\newcommand{\rM}{\mathrm{M}}
\newcommand{\rL}{\mathrm{L}}

\def\Aut{\operatorname{Aut}}

\def\depth{\operatorname{depth}}

\def\dim{\operatorname{dim}}

\def\e{\operatorname{e}}
\def\End{\operatorname{End}}
\def\Ext{\operatorname{Ext}}

\def\gldim{\operatorname{gldim}}

\def\H{\operatorname{H}}

\def\Hom{\operatorname{Hom}}
\def\Hom{\operatorname{Hom}}
\def\HOM{\operatorname{HOM}}

\def\Ind{\operatorname{Ind}}
\def\injdim{\operatorname{injdim}}

\def\max{\operatorname{max}}
\def\min{\operatorname{min}}

\def\op{\operatorname{o}}
\def\Projs{\operatorname{Proj}}

\def\rank{\operatorname{rank}}
\def\Spec{\operatorname{Spec}}
\def\sup{\operatorname{sup}}

\def\Tor{\operatorname{Tor}}
\def\Tr{\operatorname{Tr}}

\def\lcd{\operatorname{lcd}}
\def\lcm{\operatorname{lcm}}

\def\infd{\operatorname{inf}}
\def\soc{\operatorname{soc}}
\def\rad{\operatorname{rad}}

\def\add{\operatorname{\mathsf{add}}}
\def\mod{\operatorname{\mathsf{mod}}}
\def\Mod{\operatorname{\mathsf{Mod}}}
\def\proj{\operatorname{\mathsf{proj}}}
\def\Proj{\operatorname{\mathsf{Proj}}}
\def\lin{\operatorname{\mathsf{lin}}}

\def\CM{\operatorname{\mathsf{CM}}}

\def\qgr{\operatorname{\mathsf{qgr}}}

\def\coh{\operatorname{\mathsf{coh}}}
\def\uCM{\operatorname{\underline{\mathsf{CM}}}}
\def\umod{\operatorname{\underline{\mathsf{mod}}}}
\def\top{\operatorname{top}}
\def\D{\mathsf{D}}
\def\Db{\mathsf{D}^\mathrm{b}}
\def\K{\mathsf{K}}
\def\Kb{\mathsf{K}^\mathrm{ b}}
\def\sg{\operatorname{sg}}
\def\thick{\operatorname{\mathsf{thick}}}
\def\per{\operatorname{\mathsf{per}}}
\def\Sim{\operatorname{\mathsf{sim}}}
\def\Sq{\operatorname{\mathrm{Sq}}}

\DeclareMathOperator{\RHom}{\mathrm{R}\!\Hom}
\DeclareMathOperator{\RHOM}{\mathrm{R}\!\HOM}
\DeclareMathOperator{\Lotimes}{\stackrel{L}{\otimes}}
\DeclareMathOperator{\RGamma}{\mathrm{R}\Gamma}

\begin{document}

\title[CM representations of AS-Gorenstein algebras of dimension one]
{Cohen-Macaulay representations of\\Artin-Schelter Gorenstein algebras of dimension one}

\author[O. Iyama]{Osamu Iyama}
\author[Y. Kimura]{Yuta Kimura}
\author[K. Ueyama]{Kenta Ueyama}
\address{Osamu Iyama : Graduate School of Mathematical Sciences, The University of Tokyo, 3-8-1 Komaba Meguro-ku Tokyo 153-8914, Japan}
\email{iyama@ms.u-tokyo.ac.jp}
\address{Yuta Kimura : Department of Mechanical Systems Engineering, Faculty of Engineering, Hiroshima Institute of Technology, 2-1-1 Miyake, Saeki-ku Hiroshima 731-5143, Japan, and Osaka Central Advanced Mathematical Institute, Osaka Metropolitan University, 3-3-138 Sugimoto, Sumiyoshi-ku Osaka 558-8585, Japan}
\email{y.kimura.4r@cc.it-hiroshima.ac.jp}
\address{Kenta Ueyama : Department of Mathematics, Faculty of Science, Shinshu University, 3-1-1 Asahi, Matsumoto, Nagano 390-8621, Japan}
\email{ueyama@shinshu-u.ac.jp}

\thanks{The first author was supported by JSPS Grant-in-Aid for Scientific Research (B) JP22H01113 and (C) JP18K03209.
The second author was supported by Grant-in-Aid for JSPS Fellows JP22J01155.
The third author was supported by JSPS Grant-in-Aid for Scientific Research (C) JP22K03222.}

\subjclass[2020]{16G50, 18G65, 16E65, 14A22, 18G80}

\keywords{Cohen-Macaulay module, stable category, singularity category, tilting theory, Artin-Schelter Gorenstein algebra, semiorthogonal decomposition, noncommutative projective scheme, Gorenstein order}

\begin{abstract}
Tilting theory is one of the central tools in modern representation theory, in particular in the study of Cohen-Macaulay representations.
We study Cohen-Macaulay representations of $\mathbb N$-graded Artin-Schelter Gorenstein algebras $A$ of dimension one, without imposing the connectedness condition $A_0=k$. 
This framework covers a broad class of noncommutative Gorenstein rings, including classical Gorenstein orders that are $\mathbb N$-graded.
We prove that the stable category $\operatorname{\underline{\mathsf{CM}}}_0^{\mathbb Z}A$ admits a silting object if and only if $A_0$ has finite global dimension. In this case, we give a silting object in $\operatorname{\underline{\mathsf{CM}}}_0^{\mathbb Z}A$ explicitly.
Moreover, without loss of generality, we assume that $A$ is ring-indecomposable.
Then we prove that $\operatorname{\underline{\mathsf{CM}}}_0^{\mathbb Z}A$ admits a tilting object if and only if either $A$ is Artin-Schelter regular or the average Gorenstein parameter $p^A_\mathrm{av} \in {\mathbb Q}$ of $A$ is non-positive. These results are far-reaching generalizations of the results of Buchweitz, Iyama, and Yamaura.
We give two different proofs of the second result; one is based on Orlov-type semiorthogonal decompositions, and the other is based on a more direct calculation.
We apply our results to a Gorenstein tiled order $A$ to prove that $\operatorname{\underline{\mathsf{CM}}}^{\mathbb Z}A$ is equivalent to the derived category of the incidence algebra of an (explicitly constructed) poset. 

We also apply our results and Koszul duality to study smooth
noncommutative projective quadric hypersurfaces
$\operatorname{\mathsf{qgr}} B$ of arbitrary dimension. We prove that
the derived category
$\mathsf{D}^\mathrm{b}(\operatorname{\mathsf{qgr}} B)$ admits an
(explicitly constructed) tilting object. Through Orlov's
semiorthogonal decomposition, our tilting object has the tilting
object of $\operatorname{\underline{\mathsf{CM}}}^{\mathbb Z}B$ due to
Smith and Van den Bergh as a direct summand.
\end{abstract}

\maketitle
\setcounter{tocdepth}{1}
\tableofcontents

\section{Introduction}

The study of maximal Cohen-Macaulay modules is one of the most active subjects in representation theory and commutative algebra \cite{Au2,CR,LW,Rogg,Si,Yo}, and has increasing importance in algebraic geometry and physics.
When the ring $A$ is Gorenstein, the category
\[\CM A=\{M\in\mod A\mid\Ext^i_A(M,A)=0\ \text{ for all}\ i\geq 1\}\]
of \emph{maximal Cohen-Macaulay} (\emph{CM} for short) $A$-modules forms a Frobenius category, which enhances the singularity category $\D_{\sg}(A)$ \cite{Bu,Or}, that is, the stable category $\uCM A$ is triangle equivalent to $\D_{\sg}(A)$.
When $A$ is a hypersurface, it is also triangle equivalent to the stable category of matrix factorizations \cite{Ei}.

Tilting theory controls triangle equivalences between derived categories of rings, and plays a significant role in various areas of mathematics (see e.g.\ \cite{AHK}).
Tilting theory also gives a powerful tool to study Cohen-Macaulay representations over (commutative and noncommutative) Gorenstein rings. A central notion in tilting theory is a \emph{tilting object} (respectively, \emph{silting object}), which is an object $T$ in a triangulated category $\mathscr T$ with suspension functor $[1]$ satisfying the following conditions.
\begin{itemize}
\item $\Hom_{\mathscr T}(T,T[i])=0$ for any $i\in\ZZ\setminus\{0\}$ (respectively, $i\in\ZZ_{\ge1}$).
\item The minimal thick subcategory of $\mathscr T$ containing $T$ is $\mathscr T$.
\end{itemize}
The class of silting objects complements that of tilting objects from a point of view of mutation. 
Recent advances in representation theory show that the existence of silting objects is a fundamental property of a triangulated category. Indeed, it induces both a t-structure and a co-t-structure
\cite{KY,BY}, and provides a framework in which mutation theory can be used to construct and study families of silting objects \cite{AI,AdIR}.

There exist a number of Gorenstein rings $A$ graded by abelian groups $G$ whose stable categories of $G$-graded CM $A$-modules admit tilting objects. In this case there exists a triangle equivalence
\begin{equation}\label{simple dynkin}
\uCM^{G}A\simeq\per\Lambda,
\end{equation}
where $\per\Lambda$ is the thick subcategory of the derived category $\Db(\mod\Lambda)$ generated by $\Lambda$ (see e.g.\ \cite{AIR,BIY,DL,FU,Ge,HI,HIMO,HU,IO,IT,JKS,KST1,KST2,Kim,KMY,KLM,LP,LZ,MU1,MY,SV,Ued,Ya} and a survey article \cite{Iy}).

\medskip
The aim of this paper is to study Cohen-Macaulay representations of a large class of noncommutative $\NN$-graded Gorenstein rings, called \emph{Artin-Schelter Gorenstein algebras} $A$ (Definition \ref{define AS-Gorenstein}) over a field $k$.
They are a Gorenstein analog of \emph{Artin-Schelter regular algebras}, which are main objects in noncommutative algebraic geometry (see e.g.\ \cite{AS,ATV1, ATV2, AZ, JZ, LPWZ, MM, Mo2, MU2, RR, VdB2, YZ} and survey articles \cite{Ro1, Ro2}).

In this paper, we allow Artin-Schelter Gorenstein algebras with arbitrary $A_0$, thereby dropping the commonly imposed connectedness assumption $A_0 = k$. This generality is not merely technical: once the connectedness restriction is removed, Artin-Schelter regular (respectively, Gorenstein) algebras can be naturally viewed as graded as well as twisted versions of Calabi-Yau (respectively, singular Calabi-Yau) algebras \cite{Gi,Ke,RR}. Moreover, this broader framework brings into play a rich and geometrically as well as representation-theoretically meaningful class of examples, namely Gorenstein orders (see Proposition \ref{lem-Gorder-ASG}) \cite{Au2,CR,IR,IW,Rogg}. The following table shows a hierarchy of commutative and noncommutative Gorenstein/regular rings we discussed. 
\[
\adjustbox{center}{\small
\scalebox{0.95}[1]{
$\begin{xy}
(0,55)*+[F:<5pt>]{\begin{array}{c}\mbox{Iwanaga-Gorenstein}\\ \mbox{rings}\end{array}}="0",
(18,39)*+[F:<5pt>]{\begin{array}{c}\mbox{Artin-Schelter}\\ \mbox{Gorenstein algebras}\end{array}}="1",
(88,39)*+[F:<5pt>]{\begin{array}{c}\mbox{Gorenstein}\\ \mbox{orders}\end{array}}="11",
(36,23)*+[F:<5pt>]{\begin{array}{c}\mbox{singular}\\ \mbox{Calabi-Yau algebras}\end{array}}="2",
(106,23)*+[F:<5pt>]{\begin{array}{c}\mbox{symmetric}\\ \mbox{orders}\end{array}}="12",
(124,7)*+[F:<5pt>]{\begin{array}{c}\mbox{Gorenstein}\\ \mbox{rings}\end{array}}="22",
(0,16)*+[F:<5pt>]{\begin{array}{c}\mbox{Artin-Schelter}\\ \mbox{regular algebras}\end{array}}="3",
(70,16)*+[F:<5pt>]{\begin{array}{c}\mbox{regular}\\ \mbox{orders}\end{array}}="13",
(18,0)*+[F:<5pt>]{\begin{array}{c}\mbox{Calabi-Yau}\\ \mbox{algebras}\end{array}}="4",
(88,0)*+[F:<5pt>]{\begin{array}{c}\mbox{symmetric}\\ \mbox{regular orders}\end{array}}="14",
(106,-16)*+[F:<5pt>]{\begin{array}{c}\mbox{regular}\\ \mbox{rings}\end{array}}="24",
\ar"1";"0",
\ar"4";"2",
\ar@{.>}"4";"3",
\ar@{.>}"2";"1",
\ar"3";"1",
\ar"14";"12",
\ar"14";"13",
\ar"12";"11",
\ar"13";"11",
\ar"24";"22",
\ar@{<.}"1";"11",
\ar@{<--}"2";"12",
\ar@{<.}"3";"13",
\ar@{<--}"4";"14",
\ar@{<-}"12";"22",
\ar@{<-}"14";"24",
\end{xy}
$}}
\]
Here, solid arrows indicate inclusions in general, dotted arrows indicate inclusions when $A$ is a graded $k$-algebra, and dashed arrows indicate inclusions when the base ring of $A$ is a finitely generated Gorenstein $k$-algebra.

It is well-known that the homological behavior of an $\NN$-graded commutative Gorenstein ring is strongly influenced by the sign of a numerical invariant called the \emph{Gorenstein parameter} (a.k.a.\ the \emph{$a$-invariant} up to sign). We introduce a noncommutative and non-connected version of Gorenstein parameters; for a basic $\NN$-graded Artin-Schelter Gorenstein algebra $A$ of dimension $d$, let $1=\sum_{i\in \II_A}e_i$ be a complete set of primitive orthogonal idempotents of $A$, and let $S_i$ ($i\in\II_A$) be the corresponding simple right $A$-module concentrated in degree $0$. Then the \emph{Gorenstein parameter} of $A$ is a tuple $p_A=(p_i)_{i\in\II_A}$ defined by the property
\[\Ext^d_A(S_i,A)\simeq D(S_{\nu i})(p_i),\]
where $\nu:\II_A\rightarrow \II_A; i \mapsto \nu i$ is a bijection called the \emph{Nakayama permutation}, and $D=\Hom_k(-,k)$ (see Section \ref{section: AS Gorenstein}). It plays an essential role in our results of this paper as we shall see below.

As already said, for an $\NN$-graded Artin-Schelter Gorenstein algebra $A$ of dimension $d$, we study the category of \emph{$\ZZ$-graded CM $A$-modules}
\[\CM^{\ZZ}A=\{M\in\mod^{\ZZ}A\mid \Ext^i_A(M,A)=0\ \text{for all}\ i>0\}.\]
As in the ungraded case, the category $\CM^{\ZZ}A$ forms a Frobenius category and enhances the graded singularity category $\D_{\sg}^{\ZZ}(A)$.

\medskip

In the case $d=0$, $\NN$-graded Artin-Schelter Gorenstein algebras of
dimension $0$ are precisely $\NN$-graded finite dimensional
selfinjective algebras. In this case, it is known that the stable
category $\uCM^{\ZZ}A$ always admits a tilting object if $\gldim A_0$
is finite (see \cite{Ya}).

In this paper, we concentrate on the next fundamental case $d=1$. We
consider the Serre quotient category
\[\qgr A:=\mod^{\ZZ}A/\mod^{\ZZ}_0A\]
which is traditionally called the \emph{noncommutative projective scheme} \cite{AZ}, and define the \emph{graded total quotient ring} $Q$ of $A$ (Definition \ref{define Q}) as the graded endomorphism algebra of $A$ in $\qgr A$. As in classical Auslander-Reiten theory for orders, the full subcategory
\begin{align*}
\CM_0^{\ZZ}A=\{M\in\CM^{\ZZ}A\mid M\otimes_A Q\ \text{is a graded projective $Q$-module}\}
\end{align*}
behaves much nicer than $\CM^{\ZZ}A$. In fact, it enjoys Auslander-Reiten-Serre duality (Theorem \ref{thm.Serref}), and hence it has almost split sequences.

Now we state the main result of this paper. We assume the following condition.
\begin{itemize}
\item[(A1)] $A$ is a ring-indecomposable basic $\NN$-graded Artin-Schelter Gorenstein algebra of dimension $1$.
\end{itemize}
We denote by $(p_i)_{i\in\II_A}$ the Gorenstein parameter of $A$, by
\[p^A_\mathrm{av}:=(\#\II_A)^{-1}\sum_{i \in \II_A}p_i\in\QQQ\]
their average,
and by $\nu$ the Nakayama permutation of $A$.
Let $Q$ be the graded total quotient ring of $A$. Then there exists a positive integer $q$ such that $\proj^{\ZZ}Q=\add\bigoplus_{i=1}^qQ(i)$ (see Theorem \ref{prop.Q4}(1)).

\begin{thm}[{Proposition \ref{non-existence silting},\ Theorem \ref{a and tilting}},\ Proposition \ref{information on V}, Corollary \ref{a and tilting 2}]\label{intro:a and tilting}
Under the assumption \textup{(A1)}, the following assertions hold true.
	\begin{enumerate}
\item $\uCM_0^{\ZZ}A$ adimits a silting object if and only if $\gldim A_0$ is finite.
\end{enumerate}
In the rest, assume that $\gldim A_0$ is finite.
\begin{enumerate}
\setcounter{enumi}{1}
		\item $\uCM_0^{\ZZ}A$ always admits a silting object
		\begin{equation}\label{define V intro}
			V:=\bigoplus_{s\in\II_A}\bigoplus_{i=1}^{-p_s+q}e_{\nu s}A(i)_{\ge 0}.
		\end{equation}
\item $\uCM_0^{\ZZ}A$ admits a tilting object if and only if either $p^A_\mathrm{av}\leq0$ or $A$ is Artin-Schelter regular.
\end{enumerate}
In the rest, assume that $p_i\le0$ holds for each $i\in\II_A$.
\begin{enumerate}
\setcounter{enumi}{3}
\item The object $V$ in \eqref{define V intro} is a tilting object in $\uCM_0^{\ZZ}A$. Thus for $\Gamma:=\underline{\End}^{\ZZ}_A(V)$, we have a triangle equivalence
\[\uCM_0^{\ZZ}A\simeq\per\Gamma.\]
\item $\Gamma$ is an Iwanaga-Gorenstein algebra. Moreover, there is an explicit description of $\Gamma$; see Proposition \ref{information on V}\textup{(3)(4)}.
\item If the quiver of $A_0$ is acyclic, then there exists an ordering in the isomorphism classes of indecomposable direct summands of $V$, which forms a full strong exceptional collection in $\uCM_0^{\ZZ}A$.
\end{enumerate}
\end{thm}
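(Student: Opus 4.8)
The plan is to reduce the assertion to the fact that, under the extra hypothesis, the algebra $\Gamma=\underline{\End}^{\ZZ}_A(V)$ of part (3) is \emph{directed}, i.e.\ its Gabriel quiver $\Delta$ is acyclic, and then to transport the standard full strong exceptional collection of a directed algebra through the equivalence $\uCM_0^{\ZZ}A\simeq\per\Gamma$. Granting directedness for the moment, let $E_1,\dots,E_n$ be the pairwise non-isomorphic indecomposable direct summands of $V$; under the above equivalence they correspond to the indecomposable projective $\Gamma$-modules $P_a=e_a\Gamma$, and $\underline{\Hom}^{\ZZ}_A(E_b,E_a)\cong\Hom_\Gamma(P_b,P_a)=e_a\Gamma e_b$. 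Reindex the $E_a$ so that $E_1,\dots,E_n$ is a topological ordering of the vertices of $\Delta$, that is, every arrow of $\Delta$ points from a vertex of smaller index to one of larger index; this is possible because $\Delta$ is acyclic. Then for $b>a$ there is no path in $\Delta$ from the vertex of $E_b$ to that of $E_a$, whence $\underline{\Hom}^{\ZZ}_A(E_b,E_a)=e_a\Gamma e_b=0$; moreover $\underline{\End}^{\ZZ}_A(E_a)=e_a\Gamma e_a$ is a division ring since an acyclic quiver admits only the trivial path from a vertex to itself; finally $\underline{\Hom}^{\ZZ}_A(E_a,E_b[\ell])=0$ for all $a,b$ and all $\ell\neq0$ because $V$ is a tilting object, and $\thick(E_1\oplus\cdots\oplus E_n)=\thick(V)=\uCM_0^{\ZZ}A$. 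Hence $(E_1,\dots,E_n)$ is a full strong exceptional collection in $\uCM_0^{\ZZ}A$.

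It therefore remains to prove that $\Gamma$ is directed, and here I would use the explicit description of $\Gamma$ supplied by Proposition \ref{information on V}(3)(4). The vertices of $\Delta$ are the indices of the summands $e_{\nu s}A(i)_{\ge0}$ of $V$, namely the pairs $(s,i)$ with $s\in\II_A$ and $1\le i\le -p_s+q$; its arrows come in two families, the "horizontal" arrows, induced by the arrows of the Gabriel quiver $Q_0$ of $A_0$ and leaving the twist $i$ fixed, and the "degree-raising" arrows, which strictly increase $i$. By hypothesis $Q_0$ is acyclic, so there is no oriented cycle formed by horizontal arrows alone. On the other hand an oriented cycle of $\Delta$ cannot contain a degree-raising arrow, for such an arrow strictly increases $i$ while---as one reads off from the description---no arrow of $\Delta$ decreases it. Therefore any oriented cycle of $\Delta$ would consist of horizontal arrows and so project to an oriented cycle of $Q_0$, which does not exist; hence $\Delta$ is acyclic and $\Gamma$ is directed.

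The step I expect to be the main obstacle is the bookkeeping required to read the precise shape of $\Delta$ off Proposition \ref{information on V}(3)(4): one must confirm that the degree-raising arrows really only go upward, with no "wrap-around" at the top twist $i=-p_s+q$, and that $\Delta$ carries no loops and no twist-lowering arrows. This is exactly where the hypotheses that $p_i\le0$ for every $i$ (which make $V$ a tilting object and $\Gamma$ finite-dimensional) and the specific upper bound $-p_s+q$ in the definition \eqref{define V intro} of $V$ are used in an essential way. Once $\Delta$ has been pinned down, its acyclicity, and hence the statement, follows at once.
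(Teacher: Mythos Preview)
Your overall strategy---show that the Gabriel quiver $\Delta$ of $\Gamma$ is acyclic, then transport the canonical full strong exceptional collection of a directed algebra across the equivalence $\uCM_0^{\ZZ}A\simeq\per\Gamma$---is exactly the paper's approach; the paper's proof of Corollary~\ref{a and tilting 2}(3) is one sentence long and says precisely this. The content lies entirely in verifying that $\Delta$ is acyclic, and here your argument has a genuine gap.

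Your dichotomy ``horizontal arrows fixing $i$, coming from $A_0$'' versus ``degree-raising arrows strictly increasing $i$'' is correct only for morphisms between summands of $V^1$, where Proposition~\ref{information on V}(3) gives $\underline{\Hom}^{\ZZ}_A(e_{\nu s}A(i)_{\ge0},e_{\nu t}A(j)_{\ge0})\simeq e_{\nu t}A_{j-i}e_{\nu s}$ and $A$ is $\NN$-graded. But for morphisms \emph{into} $V^2$ the description involves $e_{\nu t}Q_{j-i}e_{\nu s}$, and $Q$ is a genuinely $\ZZ$-graded algebra with nonzero negative-degree components; thus twist-lowering morphisms do occur, contrary to your claim. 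In particular, within $V^2$ the endomorphism algebra $\End^{\ZZ}_A(V^2)$ is Morita equivalent to the self-injective algebra $\Lambda$ of Corollary~\ref{cor.qql}, and a basic self-injective algebra has acyclic quiver only when it is semisimple (a directed algebra has finite global dimension, and self-injective plus finite global dimension forces semisimplicity).

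The paper handles this by invoking the triangular decomposition of Proposition~\ref{information on V}(4): $\Delta$ is acyclic iff both $\End^{\ZZ}_A(V^1)$ and $\End^{\ZZ}_A(V^2)$ are directed. Your argument (restricted to $V^1$) correctly disposes of the first block. For the second block, the paper \emph{assumes} in Corollary~\ref{a and tilting 2} that $\qgr A$ is semisimple (equivalently $\D_{\sg,0}^{\ZZ}(A)=\D_{\sg}^{\ZZ}(A)$), which makes $\End^{\ZZ}_A(V^2)$ semisimple and hence trivially directed. This hypothesis is present in Corollary~\ref{a and tilting 2} but suppressed in the introductory Theorem~\ref{intro:a and tilting}(5); without it the conclusion can fail, since then $\Delta$ acquires oriented cycles living in the $V^2$ block. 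So the ``bookkeeping'' you flagged as the main obstacle is not merely bookkeeping: it requires an additional structural hypothesis that your write-up does not identify.
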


As an application of Theorem \ref{intro:a and tilting}, we immediately obtain the following result.

\begin{cor}[Corollary \ref{grothendieck}]
Assume that \textup{(A1)}, $\gldim A_0<\infty$, and $p^A_\mathrm{av}\leq0$ hold. Then the Grothendieck group $K_0(\uCM_0^{\ZZ}A)$ of $\uCM_0^{\ZZ}A$ is a free abelian group of
\[\rank K_0(\uCM_0^{\ZZ}A)=-\sum_{s\in\II_A}p_s+\#\Ind(\proj^\ZZ Q).\]
\end{cor}

We give two different proofs of Theorem \ref{intro:a and tilting}(4). The first one given in Section \ref{section: tilting theory} is based on a calculation of syzygies and Auslander-Reiten-Serre duality.
The second one given in Section \ref{section: tilting theory 2} is based on Orlov-type semiorthogonal decompositions of the categories $\D_{\sg}^{\ZZ}(A)$ and $\Db(\qgr A)$ prepared in Section \ref{section: orlov}.

In both proofs, the category $\qgr A$ and the thick subcategory $\per(\qgr A)$ of $\Db(\qgr A)$ generated by $\proj^{\ZZ}A$ play an important role. One of the main properties is the following.

\begin{thm}[Theorem \ref{prop.Q4}]\label{intro:prop.Q4}
Under the assumption \textup{(A1)}, $\qgr A$ has a progenerator
\[P:=\bigoplus_{i=1}^{q}A(i).\]
Therefore $\per(\qgr A)$ has a tilting object $P$ and we have a triangle equivalence $\per(\qgr A)\simeq\per\Lambda$ for $\Lambda:=\End_{\qgr A}(P)$.
\end{thm}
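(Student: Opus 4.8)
The plan is to show that $P := \bigoplus_{i=1}^{q} A(i)$ is a progenerator of the abelian category $\qgr A$, i.e.\ that it is a projective object which generates. Once this is established, the classical Morita-type statement (a projective generator in an abelian category with suitable finiteness gives an equivalence with a module category over its endomorphism ring, and hence a tilting object for the thick subcategory it generates in the bounded derived category) yields both the tilting assertion and the triangle equivalence $\per(\qgr A)\simeq\per\Lambda$ with $\Lambda=\End_{\qgr A}(P)$. So the real content is the progenerator claim.

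First I would recall the structure of $\proj^\ZZ A$ and its image under the quotient functor $\pi\colon\mod^\ZZ A\to\qgr A$. Since $A$ is basic with idempotents indexed by $\II_A$, the indecomposable graded projectives are the shifts $e_iA(j)$. The key input is that, modulo the torsion subcategory $\mod^\ZZ_0 A$ (the modules that are finite-dimensional over $k$, equivalently annihilated by a power of the irrelevant ideal), periodicity sets in: because $\dim A = 1$ and $A$ is Artin-Schelter Gorenstein, the graded total quotient ring $Q$ has $\proj^\ZZ Q = \add\bigoplus_{i=1}^q Q(i)$ (this is Theorem \ref{prop.Q4}(1), which I may assume), and $\qgr A$ is equivalent, up to the comparison between $A$ and $Q$, to a category where shifting by $q$ is (isomorphic to) the identity on the relevant generators. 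Concretely, I would prove that for each $i\in\II_A$ there is an isomorphism $\pi(e_iA(q))\cong\pi(e_iA)$ in $\qgr A$, so that every $\pi(e_iA(j))$ lies in $\add\pi(P)$; this is what collapses the a priori infinite family $\{\pi(e_iA(j))\}_{i,j}$ down to the finite object $\pi(P)$.

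Next I would verify the two defining properties of a progenerator. Generation: every object of $\qgr A$ is a quotient of a finite direct sum of objects $\pi(e_iA(j))$ (the quotient functor is exact and essentially surjective, and every finitely generated graded module is a quotient of a finite sum of shifted indecomposable projectives), and by the previous paragraph each such object lies in $\add\pi(P)$; hence $\pi(P)$ generates. Projectivity in $\qgr A$: one shows $\operatorname{Ext}^1_{\qgr A}(\pi(e_iA(j)), -)$ vanishes. Here the standard tool is the computation of Ext groups in a Serre quotient as a colimit of Ext groups in $\mod^\ZZ A$ over subobjects with torsion quotient, combined with the Artin-Schelter condition; the point is that $e_iA(j)$ is already projective in $\mod^\ZZ A$, so the only possible contributions to higher Ext in the quotient come from the torsion, and these are controlled by the finiteness of $\gldim A_0$ together with the Gorenstein (hence finite injective dimension) property in the graded sense. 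I would assemble this using the description of $\qgr A$ near the torsion subcategory that the paper has already set up in the section defining $Q$.

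The main obstacle I anticipate is the projectivity of $\pi(P)$ in $\qgr A$: unlike generation, which is essentially formal, vanishing of $\operatorname{Ext}^1_{\qgr A}$ requires genuinely using the Artin-Schelter Gorenstein hypothesis and the dimension-one assumption to control the torsion contributions, and care is needed because $A_0$ is not assumed to be a field, so one cannot simply invoke connected-graded vanishing theorems. The isomorphism $\pi(e_iA(q))\cong\pi(e_iA)$ also needs the precise relationship between $A$ and $Q$ in degrees $\gg 0$, which is where the hard analysis of the one-dimensional structure is concentrated; everything after that — extracting the tilting object and the derived equivalence $\per(\qgr A)\simeq\per\Lambda$ — is a formal consequence of having a progenerator with endomorphism ring $\Lambda$ of finite global dimension or at least with $\per\Lambda$ well-behaved.
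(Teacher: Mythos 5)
Your overall plan (establish that $P$ is a progenerator, then invoke Morita theory to get both the equivalence $\qgr A\simeq\mod\Lambda$ and the derived statement) is exactly the paper's route, and you correctly identify that the substantive points are (i) projectivity of $A(j)$ in $\qgr A$ and (ii) collapsing the infinite family $\{\pi(e_iA(j))\}$ to the finite $\add P$. Two specific points need correction, though.

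First, the individual periodicity $\pi(e_iA(q))\simeq\pi(e_iA)$ is \emph{not} what the paper proves and need not hold for the $q$ defined by $\proj^{\ZZ}Q=\add\bigoplus_{i=1}^q Q(i)$; the paper introduces a separate integer $q'$ (the order of the shift permutation on $\Ind\pi(\proj^{\ZZ}A)$) for which $A(q')\simeq A$ in $\qgr A$. The collapse to $\add P$ is instead obtained by a softer counting argument: a finite presentation $\bigoplus A(-s_i)\to A\to A_0\to 0$ splits in $\qgr A$ because $A$ is projective there, whence $A\in\mathscr X:=\add\bigoplus_{i=1}^q A(-i)$; this gives $\mathscr X(1)\subset\mathscr X$, and since the shift is an autoequivalence of $\qgr A$ one gets $\#\Ind\mathscr X(1)=\#\Ind\mathscr X$ and hence $\mathscr X(1)=\mathscr X$, so $\mathscr X$ absorbs all shifts. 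This subcategory-level argument avoids any claim about individual objects.

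Second, you invoke finiteness of $\gldim A_0$ (assumption (A2)) in your projectivity argument, but the statement is under (A1) only and (A2) plays no role here. Projectivity of $A(j)$ in $\qgr A$ (Proposition~\ref{prop.Q2}(1)) is a consequence purely of local duality for AS-Gorenstein algebras of dimension one: local duality gives $\H^2_\fm(M)=0$ for all $M\in\mod^{\ZZ}A$, and then Proposition~\ref{H_m and qgr} identifies $\Ext^1_{\qgr A}(A,M)\simeq\H^2_\fm(M)_0=0$. So the crux is the local-cohomology vanishing that (A1) alone provides, not a bound on $\gldim A_0$.
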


It is also important in our proofs to observe the change of Gorenstein parameters under graded Morita equivalence (see Definition-Proposition \ref{graded Morita equivalent}).
In particular, the average Gorenstein parameter is invariant under graded Morita equivalence (see Proposition \ref{prop-eq-a-inv}). We also prove the following key result by using a purely combinatorial argument in Appendix \ref{appendix}.

\begin{thm}[Theorem \ref{thm-a-inv}]
For a ring-indecomposable basic $\NN$-graded Artin-Schelter Gorenstein algebra $A$, there is a ring-indecomposable basic $\NN$-graded Artin-Schelter Gorenstein algebra $B$ satisfying the following conditions.
\begin{enumerate}
\item $B$ is graded Morita equivalent to $A$.
\item $|p_i^{B}-p^{B}_\mathrm{av}|<1$ holds for each $i\in\II_B$.
\end{enumerate}
In particular, if $p^A_\mathrm{av}\leq 0$ holds, then $p_i^{B}\leq 0$ for each $i\in\II_B$.
\end{thm}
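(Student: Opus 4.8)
The plan is to translate the assertion into a purely combinatorial problem about the Nakayama permutation $\nu$ and the tuple $(p_i)_{i\in\II_A}$, and then to solve it by an explicit choice of graded Morita equivalence carried out orbit by orbit on $\II_A$.

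First I would use Definition-Proposition \ref{graded Morita equivalent} to describe all basic $\NN$-graded algebras graded Morita equivalent to $A$: up to graded isomorphism they are exactly the algebras $B=\End^\ZZ_A\bigl(\bigoplus_{i\in\II_A}e_iA(n_i)\bigr)$ indexed by tuples $(n_i)_{i\in\II_A}\in\ZZ^{\II_A}$ normalised so that the right-hand side is $\NN$-graded. Such a $B$ is again basic and $\NN$-graded, it is again ring-indecomposable (ring-indecomposability is Morita invariant and does not see the grading) and Artin-Schelter Gorenstein, and it has the same Nakayama permutation $\nu$; by Definition-Proposition \ref{graded Morita equivalent} its Gorenstein parameter is $p^B_i=p_i+n_i-n_{\nu i}$ (up to the sign convention fixed there). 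In particular, for every $\nu$-orbit $C\subseteq\II_A$ the partial sum $\sum_{i\in C}p^B_i=\sum_{i\in C}p_i$ is unchanged, and by Proposition \ref{prop-eq-a-inv} the global average is invariant, $p^B_\mathrm{av}=p^A_\mathrm{av}$. Thus condition (2) amounts to choosing $(n_i)_{i\in\II_A}$ so that $p_i+n_i-n_{\nu i}\in\{\lfloor p^A_\mathrm{av}\rfloor,\lceil p^A_\mathrm{av}\rceil\}$ for every $i\in\II_A$.

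The decisive point — and the place where ring-indecomposability really enters — is that for such an $A$ all $\nu$-orbits of $\II_A$ carry the same average Gorenstein parameter, so each orbit-average equals $p^A_\mathrm{av}$. I would establish this by propagating the $p_i$ along the quiver $Q_A$ of $A$: from the minimal injective resolution of $A_A$ (of length $\dim A$) one reads off its last term as a sum of copies $I_i(p_i)$ of the indecomposable graded injectives shifted by the $p_i$, and minimality of the resolution together with the fact that $A$ is concentrated in non-negative degrees forces numerical constraints relating $p_i$ and $p_j$ whenever $i$ and $j$ are joined by an arrow of $Q_A$, with the degree of that arrow entering the constraint. Since $A$ is ring-indecomposable the quiver $Q_A$ is connected, and running these constraints around $Q_A$ forces the orbit-averages of $(p_i)$ to coincide. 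This is the content of the combinatorial lemma proved in Appendix \ref{appendix}, and I expect it to be the main obstacle: one must pin down precisely the right relation between neighbouring Gorenstein parameters, check that it genuinely links distinct $\nu$-orbits, and handle the case in which $A_0$ is not semisimple (so $Q_A$ may contain arrows of degree $0$ and loops).

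Granting the orbit-average fact, the construction of $(n_i)$ is routine. Fix a $\nu$-orbit $C=\{i_0,\nu i_0,\dots,\nu^{\ell-1}i_0\}$ of length $\ell$ and set $\sigma:=\sum_{j=0}^{\ell-1}p_{\nu^j i_0}=\ell\,p^A_\mathrm{av}\in\ZZ$. Since $\ell\lfloor p^A_\mathrm{av}\rfloor\le\sigma\le\ell\lceil p^A_\mathrm{av}\rceil$, we may write $\sigma=\sum_{j=0}^{\ell-1}t_j$ with each $t_j\in\{\lfloor p^A_\mathrm{av}\rfloor,\lceil p^A_\mathrm{av}\rceil\}$. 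Now solve the telescoping system $n_{\nu^j i_0}-n_{\nu^{j+1}i_0}=t_j-p_{\nu^j i_0}$ (indices cyclic modulo $\ell$) by putting $n_{i_0}:=0$ and $n_{\nu^{j+1}i_0}:=n_{\nu^j i_0}-(t_j-p_{\nu^j i_0})$; the only compatibility condition around the orbit is $\sum_j(t_j-p_{\nu^j i_0})=0$, which holds because it equals $\sigma-\sigma$. Performing this on every $\nu$-orbit defines $(n_i)_{i\in\II_A}$, and for the resulting algebra $B$ one has $p^B_i\in\{\lfloor p^A_\mathrm{av}\rfloor,\lceil p^A_\mathrm{av}\rceil\}$, hence $|p^B_i-p^B_\mathrm{av}|<1$, for all $i$; so $B$ satisfies (1) and (2). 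Finally, if $p^A_\mathrm{av}\le0$ then $\lceil p^A_\mathrm{av}\rceil\le0$, whence $p^B_i\le\lceil p^A_\mathrm{av}\rceil\le0$ for every $i\in\II_B$, which gives the last assertion.
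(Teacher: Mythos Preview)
There is a genuine gap: you never verify that the algebra $B$ you construct is $\NN$-graded, and in general it will not be.

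Concretely, for $B=\End_A\bigl(\bigoplus_{i\in\II_A}e_iA(\ell_i)\bigr)$ the proof of Proposition \ref{lem-Morita-conj} gives $e_jB_me_i=e_jA_{m-\ell_i+\ell_j}e_i$, so $B$ is $\NN$-graded precisely when $m^A(i,j)+\ell_i-\ell_j\ge 0$ for every pair $i,j\in\II_A$, where $m^A(i,j)$ is the least degree in which $e_jAe_i$ is nonzero. This is a nontrivial system of inequalities coupling \emph{all} pairs $(i,j)$, including pairs lying in distinct $\nu$-orbits, and your orbit-by-orbit telescoping construction makes no attempt to satisfy it. The phrase ``normalised so that the right-hand side is $\NN$-graded'' hides exactly this issue: only the differences $\ell_i-\ell_j$ enter, so there is no overall normalisation that will save a bad choice.

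You have also misidentified what Appendix \ref{appendix} is doing. The fact that all $\nu$-orbits carry the same average Gorenstein parameter is Proposition \ref{prop average a-invariant}, proved directly from connectedness of the Ext-quiver (Lemma \ref{connected}) and the autoequivalence $-\otimes_A\omega$; it is not the content of the appendix. What the appendix proves (Theorem \ref{thm-nonneg-app-constant}) is the genuinely delicate combinatorial statement that a conjugation can be chosen making the Gorenstein parameters almost constant \emph{and} keeping the degree matrix $m^B$ non-negative \emph{simultaneously}. Achieving almost constancy alone is, as your argument correctly shows, elementary (this is essentially Lemma \ref{lem-cyclic-mdata}(1)); achieving both at once requires the machinery of $\Sigma$-non-negativity (Theorem \ref{lem-mat-pos}) and the passage to the quotient index set $J$ of $\nu$-orbits via the map $\overline{m}$ in the proof of Theorem \ref{thm-nonneg-app-constant 2}(2). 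Your proposal supplies no substitute for this step.
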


Now we present some examples of our Theorem \ref{intro:a and tilting}.
Considering the case where our Artin-Schelter Gorenstein algebra $A$ is commutative, we immediately recover the following main result of Buchweitz-Iyama-Yamaura's paper \cite{BIY}.

\begin{ex}{\cite{BIY}}\label{BIY main}
Let $R$ be an $\NN$-graded commutative Gorenstein ring of Krull dimension $1$ with Gorenstein parameter $p$ such that $R_0$ is a field. Let $Q$ be the graded total quotient ring of $R$, and $q\ge1$ an integer such that $Q(q)\simeq Q$ in $\mod^{\ZZ}Q$. Then the following holds true.
\begin{itemize}
\item[(1)] $\uCM_0^{\ZZ}R$ has a silting object $V:=\bigoplus_{i=1}^{-p+q}R(i)_{\ge0}$.
\item[(2)] $\uCM_0^{\ZZ}R$ has a tilting object if and only if either $p\leq 0$ or $R$ is regular. In this case, $V$ above is a tilting object.
\item[(3)] $\per(\qgr R)$ has a tilting object $P:=\bigoplus_{i=1}^{q}R(i)$.
Therefore we have a triangle equivalence $\per(\qgr R)\simeq\per\Lambda$ for $\Lambda:=\End_{\qgr R}(P)$.
\end{itemize}
\end{ex}

Next we apply our Theorem \ref{intro:a and tilting} to important classes of noncommutative  Artin-Schelter Gorenstein algebras. The first one is Gorenstein tiled orders \cite{Si,ZK,KKMPZ}.

\begin{thm}[Theorem \ref{thm-GTO-Inc}]
Let $A$ be a basic $\NN$-graded Gorenstein tiled order such that $p_i\leq 0$ for any $i\in\mathbb{I}_A$, and let $(\mathbb{V}_A, \le)$ be the poset introduced in \eqref{define V_A}.
Then the following statements hold.
\begin{enumerate}
\item $V:=\bigoplus_{i\in\II_A}\bigoplus_{j=1}^{1-p_i}e_iA(j)_{\ge0}$ is a tilting object in $\uCM^{\ZZ}A$.
\item The number of non-isomorphic indecomposable direct summands of $V$ in $\uCM^{\ZZ}A$ is $1-\sum_{i\in\II_A}p_i$.
In particular, the Grothendieck group $K_0(\uCM^{\ZZ}A)$ is a free abelian group of rank $1-\sum_{i\in\II_A}p_i$. 
\item $\End_{\uCM^{\ZZ}A}(V)$ is Morita equivalent to an incidence algebra $k\mathbb{V}_A^{\op}$. In particular, the global dimension of $\End_{\uCM^{\ZZ}A}(V)$ is finite, and we have a triangle equivalence
\[\uCM^{\ZZ}A\simeq \Db(\mod k\mathbb{V}_A^{\op}).\]
\end{enumerate}
\end{thm}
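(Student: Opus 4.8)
The plan is to deduce this from our main result, Theorem~\ref{intro:a and tilting}, together with an explicit analysis of the stable category of a tiled order. First I would record that a basic $\NN$-graded Gorenstein tiled order $A\subseteq\rM_n(k[x])$, with $\deg x=1$, satisfies the standing hypotheses: it is an $\NN$-graded Artin-Schelter Gorenstein algebra of dimension one by Proposition~\ref{lem-Gorder-ASG}; it is ring-indecomposable, since $e_iAe_j\ne0$ for all $i,j$; and its degree-zero part $A_0$ has finite global dimension, being the incidence algebra of the finite poset determined by the zero pattern of the exponent matrix of $A$. Thus (A1) and (A2) hold. The second preliminary step is to invert $x$: this shows that the graded total quotient ring $Q$ of $A$ is graded semisimple (up to graded Morita equivalence it is the graded field $k[x,x^{-1}]$). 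Hence every $M\in\CM^{\ZZ}A$ satisfies $M\otimes_AQ\in\proj^{\ZZ}Q$, so $\CM_0^{\ZZ}A=\CM^{\ZZ}A$; moreover multiplication by $x$ yields $Q(1)\simeq Q$, so the integer $q$ of Theorem~\ref{prop.Q4}(1) equals $1$ and $\#\Ind(\proj^{\ZZ}Q)=1$.

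Given this, parts (1) and (2) are essentially formal. With $q=1$, the tilting object produced by Theorem~\ref{intro:a and tilting}(3) is $\bigoplus_{s\in\II_A}\bigoplus_{i=1}^{1-p_s}e_{\nu s}A(i)_{\ge0}$; re-indexing by $i=\nu s$ and using that the Nakayama permutation of a Gorenstein tiled order preserves the Gorenstein parameter (so $p_{\nu s}=p_s$), this is precisely the object $V$ of the statement. Hence $V$ is a tilting object in $\uCM_0^{\ZZ}A=\uCM^{\ZZ}A$, which proves (1) and gives a triangle equivalence $\uCM^{\ZZ}A\simeq\per\Gamma$ with $\Gamma:=\End_{\uCM^{\ZZ}A}(V)$. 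For (2), Corollary~\ref{grothendieck} gives $\rank K_0(\uCM^{\ZZ}A)=-\sum_{i\in\II_A}p_i+\#\Ind(\proj^{\ZZ}Q)=1-\sum_{i\in\II_A}p_i$, and under this equivalence the rank equals the number of non-isomorphic indecomposable direct summands of $V$.

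It remains to prove (3), namely to identify $\Gamma$ with the incidence algebra $k\mathbb{V}_A^{\op}$; this is the main work. The strategy is to compute the stable Hom-spaces between the indecomposable summands $e_iA(j)_{\ge0}$ of $V$ directly. Up to the truncation functor $(-)_{\ge0}$, a graded homomorphism $e_iA(j)_{\ge0}\to e_{i'}A(j')_{\ge0}$ is left multiplication by a homogeneous element of $e_{i'}Ae_i$; since $A$ is a tiled order, $e_{i'}Ae_i$ is a free $k[x]$-module of rank one, so this space of maps is at most one-dimensional in each degree. Passing to $\uCM^{\ZZ}A$ and discarding maps that factor through a projective, one then shows that $\dim_k\Hom_{\uCM^{\ZZ}A}(V_x,V_y)\le1$ for all indecomposable summands $V_x,V_y$, that the relation "$\Hom_{\uCM^{\ZZ}A}(V_x,V_y)\ne0$" coincides with the partial order $(\mathbb{V}_A,\le)$ of \eqref{define V_A} (in particular the summands that vanish in $\uCM^{\ZZ}A$, equivalently the projective ones, are exactly those already accounted for in part (2)), and that the composite of two nonzero stable maps is again nonzero. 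Together these exhibit $\Gamma\cong k\mathbb{V}_A^{\op}$. Finally, the incidence algebra of a finite poset has finite global dimension, so $\per\Gamma=\Db(\mod k\mathbb{V}_A^{\op})$, and composing with the equivalence above gives $\uCM^{\ZZ}A\simeq\Db(\mod k\mathbb{V}_A^{\op})$.

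The main obstacle is this last computation of stable Hom-spaces for a tiled order. Two points demand care: identifying exactly which $e_iA(j)_{\ge0}$ become projective, which rests on the combinatorial description of the Gorenstein parameter and the Nakayama permutation of $A$ in terms of its exponent matrix; and, above all, verifying transitivity of the above relation, that is, that the composite of two nonzero stable maps between summands of $V$ does not vanish modulo projectives. It is the rigidity of the tiled structure, each $e_{i'}Ae_i$ being free of rank one over $k[x]$, that makes this work; without it one would obtain only a proper quotient of $k\mathbb{V}_A^{\op}$.
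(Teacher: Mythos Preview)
Your claim that the Nakayama permutation of a Gorenstein tiled order preserves the Gorenstein parameter is false. For the $2\times 2$ tiled order with $\mathrm{v}(A)=\left(\begin{smallmatrix}0&a\\b&0\end{smallmatrix}\right)$ one has $\nu=(1\ 2)$, $p_1=1-b$, $p_2=1-a$, so $p_{\nu 1}=p_2\neq p_1$ unless $a=b$. Worse, with $a=3$, $b=1$ the object $\bigoplus_i\bigoplus_{j=1}^{1-p_i}e_iA(j)_{\ge0}$ has only two non-isomorphic indecomposable summands (exponent vectors $(0,2)$ and $(0,0)$), whereas $1-\sum_i p_i=3$; so this direct sum is not tilting and your re-indexing argument cannot be repaired. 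The version of Theorem~\ref{thm-GTO-Inc} actually proved in the body of the paper uses $e_{\nu i}A(j)_{\ge0}$, which agrees with the general tilting object of Theorem~\ref{a and tilting} on the nose; the discrepancy you are trying to bridge is a typo in the introduction.

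For part (3), your plan of computing stable Hom-spaces directly and then checking that composites of nonzero maps remain nonzero modulo projectives is workable but heavier than needed. The paper bypasses the projective quotient entirely: Proposition~\ref{j-i}(5) already gives $\underline{\Hom}^{\ZZ}_A=\Hom^{\ZZ}_A$ on the summands of $V$, so it suffices to compute ordinary graded Hom. This is done uniformly via exponent vectors (Proposition~\ref{iso with incidence}): for rank-one modules $\rL(v),\rL(w)$ one has $\Hom^{\ZZ}_A(\rL(v),\rL(w))\simeq k$ if $v\ge w$ and $0$ otherwise, with composition given by multiplication in $K$, so the identification with $k\mathbb{V}_A^{\op}$ is immediate and no separate transitivity check is required. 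Note also that the conclusion is Morita equivalence rather than isomorphism, since $V$ contains repeated summands (several copies of $\rL(\bm{0})$); the paper passes to the basic module $T$ before invoking the incidence-algebra description.
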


The second one is the Koszul duals 
of \emph{noncommutative quadric hypersurfaces} $B$ (see Definition \ref{dfn.qh}).
By combining Theorem \ref{intro:a and tilting} and Koszul duality, we give the following existence theorem of a tilting object for the derived category $\Db(\qgr B)$.

\begin{thm}[Proposition \ref{prop.KdGor}, Theorem \ref{thm.nqh}]\label{thm.nqh in intro}
Let $B$ be a noncommutative quadric hypersurface of dimension $d-1$ with $d\geq 2$, and let $A$ be the opposite ring of the Koszul dual of $B$.
Assume that $\qgr B$ has finite global dimension.
Then the following statements hold.
\begin{enumerate}
\item $A$ is a Koszul Artin-Schelter Gorenstein algebra of dimension $1$ and Gorenstein parameter $2-d$.
\item There exists a duality $F: \Db(\qgr B) \to \uCM^{\ZZ}A$.
\item $\uCM^{\ZZ} A$ has a tilting object $\bigoplus_{i=1}^{d-1} A(i)_{\geq 0}$, and $\Db(\qgr B)$ has a tilting object $\bigoplus_{i=1}^{d-1}\Omega^ik(i)$. Moreover, they correspond to each other via the duality $F$.
\item Let $\Lambda := \End_{\Db(\qgr B)}(\bigoplus_{i=1}^{d-1}\Omega^ik(i))$, and $Q$ the graded total quotient ring of $A$.
Then we have isomorphisms of $k$-algebras
\[
\Lambda \simeq \End_{\uCM^{\ZZ}A}(\bigoplus_{i=1}^{d-1} A(i)_{\geq 0})^{\op}\simeq
		\begin{bmatrix}
			k &0&\cdots  &\cdots  &0\\
			A_1&k &\ddots  &&\vdots \\
			\vdots&\vdots&\ddots  &\ddots&\vdots \\
			A_{d-3}&A_{d-4}&\cdots&k &0\\
			Q_{d-2}&Q_{d-3}&\cdots&Q_1&Q_0
		\end{bmatrix}^{\op},
\]
and we have triangle equivalences
\[\Db(\qgr B) \simeq(\uCM^{\ZZ} A)^{\op} \simeq \Db(\mod \Lambda).\]
\end{enumerate}
\end{thm}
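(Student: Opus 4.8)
The plan is to build everything on top of the one-dimensional case already handled in Theorem \ref{intro:a and tilting}, transported through Koszul duality. First I would establish (1): the Koszul dual of a noncommutative quadric hypersurface $B$ is a graded algebra concentrated in two degrees (a ``quadratic dual'' construction applied to a quadratic algebra with Hilbert series $(1+t)^{d-1}/(1-t)$ type behaviour coming from the hypersurface relation), so that $A$, the opposite of this dual, is an $\NN$-graded algebra which one checks is Artin--Schelter Gorenstein of dimension $1$. The Gorenstein parameter computation $p_A=2-d$ is the numerical heart of part (1): since $B$ is a hypersurface of dimension $d-1$, its defining data has one quadratic relation, and the minimal free resolution of $k$ over $B$ (equivalently the grading of the Koszul dual) forces the injective resolution of $A$ to end with a shift by $d-2$; I would extract $p_A=2-d$ by comparing Hilbert series, or by citing the standard fact that for a Koszul AS-regular algebra of dimension $d$ the Gorenstein parameter equals $d$, then accounting for the single hypersurface relation that drops regular dimension $d$ to Gorenstein dimension $1$ and shifts the parameter accordingly. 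The finiteness hypothesis $\gldim(\qgr B)<\infty$ is exactly what makes $A$ satisfy (A2) after reduction.

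For (2), the duality $F\colon\Db(\qgr B)\to\uCM^{\ZZ}A$ is the standard Koszul-duality-type statement: $\qgr B$ has enough structure (finite global dimension) that $\Db(\qgr B)$ is generated by the exceptional-type objects $\Omega^ik(i)$, and Koszul duality exchanges $\qgr$ of a Koszul algebra with $\uCM^{\ZZ}$ (equivalently $\D^{\ZZ}_{\sg}$) of its dual; the contravariance comes from taking $A=$ the \emph{opposite} of the Koszul dual. I would set this up via $\RHom$ against the Koszul bimodule complex and identify the image of $\coh$-objects. Granting (1), part (3) is then nearly formal: by Theorem \ref{intro:a and tilting}(3), since $p_A=2-d\le 0$ (using $d\ge 2$), the object $V=\bigoplus_{s}\bigoplus_{i=1}^{-p_s+q}e_{\nu s}A(i)_{\ge0}$ is a tilting object in $\uCM^{\ZZ}_0A$; here $A$ is already basic with a single simple (or after reduction to the quadric case the relevant indices collapse), so $V$ simplifies to $\bigoplus_{i=1}^{d-1}A(i)_{\ge0}$, and one checks $\uCM_0^{\ZZ}A=\uCM^{\ZZ}A$ in this setting. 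Transporting through $F$ sends $A(i)_{\ge0}$ to $\Omega^ik(i)$ (matching syzygies on the $B$-side with truncations on the $A$-side), giving the tilting object $\bigoplus_{i=1}^{d-1}\Omega^ik(i)$ of $\Db(\qgr B)$.

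Part (4) is the explicit endomorphism algebra computation. I would compute $\End_{\uCM^{\ZZ}A}(\bigoplus_{i=1}^{d-1}A(i)_{\ge0})$ directly: the Hom-space from $A(i)_{\ge0}$ to $A(j)_{\ge0}$ in the stable category is $A_{j-i}$ for small gaps but becomes the total-quotient piece $Q_{j-i}$ once the morphism must factor through the socle/top of the Gorenstein resolution — this is precisely the phenomenon making the last row and column involve $Q$ rather than $A$, and it matches the matrix displayed. The triangle equivalences $\Db(\qgr B)\simeq(\uCM^{\ZZ}A)^{\op}\simeq\Db(\mod\Lambda)$ then follow by combining the duality $F$ with the tilting equivalence from (3) and Theorem \ref{intro:a and tilting}(3)--(4) (which also gives that $\Lambda$ has finite global dimension, since the quiver of $A_0=k$ is trivially acyclic, so part (5) of Theorem \ref{intro:a and tilting} applies). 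The main obstacle I anticipate is part (4): pinning down \emph{exactly} where the entries switch from $A_\bullet$ to $Q_\bullet$ requires a careful analysis of stable Hom-spaces using the graded total quotient ring $Q$ and the structure of $\uCM^{\ZZ}A$ (Theorem \ref{prop.Q4} and Auslander--Reiten--Serre duality), rather than any formal argument — everything else reduces cleanly to the already-established one-dimensional theorem plus a bookkeeping of shifts through Koszul duality.
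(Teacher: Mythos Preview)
Your strategy is essentially the same as the paper's, and the overall architecture (Koszul duality to pass to $A$; apply Theorem \ref{intro:a and tilting} with $p_A=2-d\le 0$ and $q=1$; pull the tilting object back via $F$; read off the endomorphism algebra from Proposition \ref{information on V}) is correct. Two points deserve correction, however.

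First, you misidentify the role of the hypothesis $\gldim(\qgr B)<\infty$. It does \emph{not} ensure (A2): since $A$ is connected ($A_0=k$), (A2) is automatic. What the hypothesis actually buys is semisimplicity of $\qgr A$, via the chain $\gldim(\qgr B)<\infty \Leftrightarrow C(B)$ semisimple $\Leftrightarrow \gldim(\qgr A)=0$ (the paper's Proposition \ref{prop.gldimq}); this is precisely what gives $\uCM_0^{\ZZ}A=\uCM^{\ZZ}A$ (Corollary \ref{a and tilting 2}(1)), without which Theorem \ref{intro:a and tilting} only produces a tilting object in the smaller category $\uCM_0^{\ZZ}A$. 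You do note that $\uCM_0=\uCM$ must be checked, but you should connect it to the right hypothesis.

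Second, for part (1) your sketch via Hilbert series is workable but roundabout. The paper's argument is cleaner: $S^!$ is a Frobenius Koszul algebra (AS-Gorenstein of dimension $0$, parameter $-d$), and $A=(B^!)^{\op}$ contains a regular normal element $w$ of degree $2$ with $A/(w)\simeq(S^!)^{\op}$; then Rees' lemma immediately gives that $A$ is AS-Gorenstein of dimension $1$ with parameter $-d+2=2-d$. This also explains $q=1$ (since $Q\simeq A[w^{-1}]$ is strongly graded), which you use implicitly when simplifying $V$ to $\bigoplus_{i=1}^{d-1}A(i)_{\ge0}$ but never justify.
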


Theorem \ref{thm.nqh in intro} gives a noncommutative generalization of the result that the derived category of a smooth projective quadric hypersurface admits a tilting object \cite{Ka1,Ka2}. 

We end this section by explaining a connection between our results and a work by Smith-Van den Bergh and Mori-Ueyama \cite{BEH, SV, MU3}. By combining Theorem \ref{intro:prop.Q4} and Koszul duality, one can recover the following results immediately.

\begin{cor}\label{tilting in CMB}\cite{SV, MU3}
Under the settings in Theorem \ref{thm.nqh in intro}, the following assertions hold.
\begin{enumerate}
\item There exists a duality $F': \Db(\qgr A)\to\uCM^{\ZZ}B$.
\item $\Db(\qgr A)$ has a tilting object $A(d-1)$, and $\uCM^{\ZZ}B$ has a tilting object $\Omega^{d-1}k(d-1)$. Moreover, they correspond to each other via the duality $F'$.
\item Let $\Lambda' := \End_{\uCM^{\ZZ}B}(\Omega^{d-1}k(d-1))$. Then we have isomorphisms of $k$-algebras 
\[
\Lambda' \simeq \End_{\Db(\qgr A)}(A(d-1))^{\op}\simeq (Q_0)^{\op}
\]
and we have triangle equivalences
\[\uCM^{\ZZ}B \simeq\Db(\qgr A)^{\op} \simeq \Db(\mod \Lambda').\]
\end{enumerate}
\end{cor}

Moreover, through Orlov's semiorthogonal decomposition, the tilting objects given in Corollary \ref{tilting in CMB} becomes a direct summand of our tilting objects given in Theorem \ref{thm.nqh in intro}, as illustrated in Figure \ref{connection of tilting}.

\begin{figure}[h]
\[\xymatrix@R=1pt@C=3pt@M=6.5pt{ 
	&*+[F-:<1pt>]{\text{tilt.\,obj.\,in Thm.\,\ref{thm.nqh}}}
	&&&*+[F-:<1pt>]{\text{tilt.\,obj.\,in Thm.\,\ref{a and tilting}}}
	\\
	&\bigoplus_{i=1}^{d-1}\Omega^{i}k(i) \ar@{|.>}[rrr] \ar@{}|-{\rotatebox{270}{$\in$}}[d]
	&&& \bigoplus_{i=1}^{d-1}  A(i)_{\geq 0}\ar@{}|-{\rotatebox{270}{$\in$}}[d]
	\\
	&\Db(\qgr B) \ar@{<-_)}[ddd]^{\text{Orlov SOD}} \ar[rrr]_{F}^{\text{Koszul duality}}
	&&& \uCM^{\ZZ} A\ar@{<-_)}[ddd]^{\text{Orlov SOD}} 
	&
	\\\\\\
	&\uCM^{\ZZ} B
	&&& \Db(\qgr A) \ar[lll]^{F'}_{\text{Koszul duality}}
	\\
	&\Omega^{d-1}k(d-1) \ar@{}|-{\rotatebox{90}{$\in$}}[u]  \ar@/^4pc/@{|.>}[uuuuu]^-{\oplus}
	&&&\txt{\normalsize$A(d-1)=A(d-1)_{\ge0}$}\ar@{}|-{\rotatebox{90}{$\in$}}[u]  \ar@{|.>}[lll] \ar@/_4pc/@{|.>}[uuuuu]_-{\oplus}
	\\
	&*+[F-:<3pt>]{\text{tilt.\,obj.\,in \cite{SV, MU3}}}
	&&&*+[F-:<3pt>]{\text{tilt.\,obj.\,in Thm.\,\ref{prop.Q4}(4)}}
}\]
\caption{The relationship between several tilting objects}
\label{connection of tilting}
\end{figure}

\subsection{Conventions}
Throughout this paper, $k$ is a field, and all algebras are over $k$.
(In Section \ref{sec.nqh}, we will
additionally assume that $k$ is algebraically closed and of characteristic zero.)
The composition of morphisms $f:L\to M$ and $g:M\to N$ in a category is denoted by $g\circ f$. The composition of arrows $a:i\to j$ and $b:j\to h$ is denoted by $b\circ a$.
Thus each arrow $a:i\to j$ of a quiver $Q$ gives a morphism $(a\cdot):e_i(kQ)\to e_j(kQ)$ of right projective $kQ$-modules over the path algebra $kQ$.

For a ring $A$, we denote by $\Mod A$ (respectively, $\mod A$, $\proj A$) the category of (respectively, finitely generated, finitely generated projective) right $A$-modules.

Let $A$ be a $\ZZ$-graded algebra.
We denote by $\Mod^{\ZZ} A$ (respectively, $\mod^{\ZZ} A$, $\proj^{\ZZ} A$) the category of graded (respectively, finitely generated graded, finitely generated graded projective) 
right $A$-modules.
For a subset $I$ of $\ZZ$, let
\[\mod^IA:=\{M=\bigoplus_{i\in \ZZ}M_i \in\mod^{\ZZ}A\mid\forall i\in\ZZ\setminus I,\ M_i=0\}.\]
In particular, for $i\in\ZZ$, let $\ZZ_{\ge i}:=\{j\in\ZZ\mid j\ge i\}$, $\ZZ_{<i}:=\{j\in\ZZ\mid j<i\}$ and
\begin{equation}\label{define mod>0A}
\mod^{\ge i}A:=\mod^{\ZZ_{\ge i}}A\quad \mbox{ and }\quad \mod^{<i}A:=\mod^{\ZZ_{<i}}A.
\end{equation}
If $A$ is $\NN$-graded, then the canonical surjection $A\to A_0$ gives an equivalence $\mod A_0\simeq\mod^{\{0\}}A$. We regard $\mod A_0$ as a full subcategory of $\mod^{\ZZ}A$.

We denote by $A^{\op}$ the opposite algebra of $A$ and by $A^{\e} = A^{\op} \otimes_k A$ the enveloping algebra.
The category of graded left $A$-modules is identified with $\Mod^{\ZZ} A^{\op}$ and the category of graded $A$-bimodules on which $k$-action acts centrally is identified with $\Mod^{\ZZ} A^{\e}$.
We call $M \in \Mod^{\ZZ} A^{\e}$ \emph{graded invertible} if there exists $L \in \Mod^{\ZZ} A^{\e}$ such that $M \otimes_AL\simeq A\simeq  L \otimes_A M$ in $\Mod^{\ZZ}A^{\e}$.
For $M=\bigoplus_{i\in \ZZ}M_i \in \Mod^{\ZZ}A$ and $n \in \ZZ$, we define the \emph{truncation} $M_{\geq n} := \bigoplus_{i\geq n} M_i$ and the \emph{shift} $M(n) \in \Mod^{\ZZ} A$, which has the same underlying module structure as $M$, but which satisfies $M(n)_i = M_{n+i}$.
For $M, N\in \Mod^{\ZZ} A$, we write
\[\Hom_A^{\ZZ}(M, N)=\Hom_{\Mod^{\ZZ}A}(M, N).\]
It is basic that if $M \in \mod^{\ZZ}A$, then $\Hom_A(M,N)=\bigoplus_{i\in\ZZ}\Hom_A^{\ZZ}(M, N(i))$.
Thus $\End_A(M)$ is a $\ZZ$-graded algebra with $\End_A(M)_i=\Hom_A^{\ZZ}(M,M(i))$ for each $i\in\ZZ$.

Let $\mathscr C$ be an additive category.
For a subcategory (or a collection of objects) $\mathscr B$ of $\mathscr C$, we denote by $\add \mathscr B$ the full subcategory of $\mathscr C$ consisting of direct summands of finite direct sums of objects in $\mathscr B$. The bounded homotopy category is denoted by $\Kb(\mathscr C)$.
A full subcategory of a triangulated category $\mathscr T$ is called \emph{thick} if it is closed under cones, $[\pm 1]$, and direct summands. 
For a subcategory (or a collection of objects) $\mathscr B$ of $\mathscr T$,
we denote by $\thick \mathscr B$ the smallest thick subcategory of $\mathscr T$ which contains $\mathscr B$.
For an abelian category $\mathscr A$,
the bounded (respectively, left bounded, right bounded, unbounded) derived category is denoted by
$\Db(\mathscr A)$
(respectively, $\D^{+}(\mathscr A)$, $\D^{-}(\mathscr A)$, $\D(\mathscr A)$). For a ring $A$, let $\per A:=\thick A$.

\section{Preliminaries}

This section collects a number of preparatory results for our treatment of Artin-Schelter Gorenstein algebras.

\subsection{Preliminaries on graded algebras}
A $\ZZ$-graded algebra $A=\bigoplus_{i \in \ZZ}A_i$ is called \emph{locally finite} if $\dim_k A_i<\infty$ for each $i\in\ZZ$.
Note that a right Noetherian $\ZZ$-graded algebra $A$ with $\dim_k A_0< \infty $ is locally finite.
A $\ZZ$-graded algebra $A$ is called \emph{$\NN$-graded} if $A_i=0$ for all $i<0$, and \emph{connected $\NN$-graded} if $A$ is  $\NN$-graded and $A_0=k$.
A $\ZZ$-graded algebra $A$ is called \emph{bounded below} if there exists $N\in\ZZ$ such that $A_i=0$ holds for each $i<N$. 

\begin{dfn-prop}\cite[Theorem 5.4]{GG}\label{graded Morita equivalent}
Let $A$ and $B$ be $\ZZ$-graded algebras. We say that $A$ and $B$ are \emph{graded Morita equivalent} if the following equivalent conditions are satisfied.
\begin{enumerate}
\item There exists an equivalence
$F:\Mod^{\ZZ}A\simeq\Mod^{\ZZ}B$ satisfying $F\circ(1)\simeq(1)\circ F$.
\item There exists a progenerator $P$ of $\Mod^{\ZZ}A$ such that $B \simeq \End_A(P)$ as graded algebras.
\end{enumerate}
\end{dfn-prop}

Let $A$ be a $\ZZ$-graded algebra and $1=e_1+\cdots+e_n$ orthogonal idempotents of $A_0$.
For given integers $\ell_1, \dots, \ell_n$, let $P=\bigoplus_{i=1}^ne_iA(\ell_i)\in\mod^{\ZZ}A$. We define a new $\ZZ$-graded algebra by $B=\End_A(P)$. Note that $A=B$ holds as an ungraded algebra, but they have different $\ZZ$-gradings. Then $P$ is a graded $(B, A)$-bimodule and a progenerator of $\Mod^{\ZZ}A$. By Definition-Proposition \ref{graded Morita equivalent}, we obtain the following observation.

\begin{prop}\label{graded Morita equivalent 2}
Under the above setting, $A$ is graded Morita equivalent to $B$.
\end{prop}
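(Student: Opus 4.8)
The plan is to verify the two conditions of Definition-Proposition \ref{graded Morita equivalent}(2): that $P$ is a progenerator of $\Mod^{\ZZ}A$, and that its graded endomorphism algebra is $B$. The second condition holds by construction, since $B=\End_A(P)$ as $\ZZ$-graded algebras, so the whole content is the progenerator claim.

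First I would check that $P$ is graded projective. Since $e_1,\dots,e_n$ are orthogonal idempotents of $A_0$ with $e_1+\cdots+e_n=1$, we have a decomposition $A_A=\bigoplus_{i=1}^n e_iA$ in $\mod^{\ZZ}A$; hence each $e_iA$, and therefore each shift $e_iA(\ell_i)$, lies in $\proj^{\ZZ}A$, so $P=\bigoplus_{i=1}^n e_iA(\ell_i)\in\proj^{\ZZ}A$. Next I would check that $P$ is a graded generator, in the sense that $\add\{P(j)\mid j\in\ZZ\}=\proj^{\ZZ}A$. The inclusion $\subseteq$ is what we just observed. For $\supseteq$, note that for each $i$ the shift $P(-\ell_i)$ has $e_iA$ as a direct summand, so $\add\{P(j)\mid j\in\ZZ\}$ contains $\bigoplus_{i=1}^n e_iA=A_A$ together with all its shifts, hence contains every finitely generated graded projective right $A$-module. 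Combining the two steps, $P$ is a progenerator of $\Mod^{\ZZ}A$.

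It then suffices to invoke Definition-Proposition \ref{graded Morita equivalent}(2) with this $P$ to conclude that $A$ and $B$ are graded Morita equivalent; concretely, viewing $P$ as the graded $(B,A)$-bimodule it naturally is, the usual Morita functors (the graded Hom out of $P$, summed over shifts, and $-\otimes_B P$ as a quasi-inverse) furnish an equivalence $\Mod^{\ZZ}A\simeq\Mod^{\ZZ}B$ commuting with the shift functors. I do not anticipate a genuine obstacle: the only point demanding care is the precise meaning of "progenerator of $\Mod^{\ZZ}A$" in the graded setting, which is exactly that the shifts of $P$ additively generate $\proj^{\ZZ}A$, and this is immediate from $\bigoplus_{i=1}^n e_iA=A$.
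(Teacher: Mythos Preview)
Your proof is correct and takes essentially the same approach as the paper: the paper simply asserts (in the paragraph preceding the proposition) that $P$ is a graded $(B,A)$-bimodule and a progenerator of $\Mod^{\ZZ}A$, then invokes Definition-Proposition \ref{graded Morita equivalent}. Your argument just spells out the progenerator verification in detail.
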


Clearly locally finite (respectively, bounded below) $\ZZ$-graded algebras are closed under graded Morita equivalence.

\begin{dfn}\label{define I_A}
For a locally finite $\ZZ$-graded algebra $A$, let $1=\sum_{i\in\JJ_A}e_i$ be a complete set of primitive orthogonal idempotents of $A$.
Define an equivalence relation on $\JJ_A$ by
\begin{align*}
i\sim j\ \Leftrightarrow\ e_iA\simeq e_jA\mbox{ as (ungraded) $A$-modules }\ \Leftrightarrow\ Ae_i\simeq Ae_j\mbox{ as (ungraded) $A^{\op}$-modules.}
\end{align*}
We fix a complete set $\II_A$ of representatives of $\JJ_A/\sim$ of $\JJ_A$. Let
\[\Sim A:=\{S_i:=\top^{\ZZ}e_iA\mid i\in\II_A\}\ \mbox{ and }\ \Sim A^{\op}:=\{S^{\op}_i:=\top^{\ZZ}Ae_i\mid i\in\II_A\},\]
where $\top^{\ZZ}$ denotes the top as an object in the abelian category $\mod^{\ZZ}A$ or $\mod^{\ZZ}A^{\op}$.
(Note that $S_i$ does not mean the degree $i$ part of $S$.)
Then $\Sim A$ and $\Sim A^{\op}$ give the sets of isomorphism classes of simple objects in $\mod^{\ZZ}A$ and $\mod^{\ZZ}A^{\op}$ respectively up to degree shift. 

We call $A$ \emph{basic} if $\II_A=\JJ_A$ holds, that is, for each $i\neq j\in\JJ_A$, $e_iA\not\simeq e_jA$ as (ungraded) $A$-modules.
\end{dfn}

For example, let $A$ be a locally finite $\NN$-graded algebra. Then $A$ is basic if and only if $A_0/\rad A_0$ is isomorphic to a product of division algebras. Moreover $\Sim A$ and $\Sim A^{\op}$ give the sets of the isomorphism classes of simple objects in $\mod^{\ZZ} A$ and $\mod A^{\op}$ respectively concentrated in degree zero.

\begin{lem} \label{lem.basic}
Let $A$ be a locally finite $\ZZ$-graded algebra.
\begin{enumerate}
\item $\mod^{\ZZ}A$ is Hom-finite and Krull-Schmidt.
\item $A$ is graded Morita equivalent to a locally finite $\ZZ$-graded algebra $B$ which is basic. If moreover $A$ is $\NN$-graded, then we can choose $B$ to be $\NN$-graded.
\end{enumerate}
\end{lem}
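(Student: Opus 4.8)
The plan is to handle the two parts largely independently, with part (1) being a standard consequence of local finiteness and part (2) requiring the construction of an explicit progenerator. For part (1), I would first observe that for $M, N \in \mod^{\ZZ} A$ finitely generated, the space $\Hom_A^{\ZZ}(M, N)$ is finite-dimensional: pick a surjection $A(\ell_1) \oplus \cdots \oplus A(\ell_r) \twoheadrightarrow M$ in $\mod^{\ZZ} A$, so that $\Hom_A^{\ZZ}(M, N)$ embeds into $\bigoplus_i \Hom_A^{\ZZ}(A(\ell_i), N) = \bigoplus_i N_{-\ell_i}$, which is finite-dimensional since $A$ (hence every finitely generated graded module, being a quotient of a finite sum of shifts of $A$, with $A$ locally finite) is locally finite. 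This gives Hom-finiteness. For Krull--Schmidt, Hom-finiteness over the field $k$ forces $\End_A^{\ZZ}(M)$ to be a finite-dimensional $k$-algebra, hence semiperfect, and idempotents split in $\mod^{\ZZ} A$ (it is an abelian, in particular idempotent-complete, category); the standard argument then yields that every object decomposes into a finite direct sum of objects with local endomorphism ring, i.e.\ $\mod^{\ZZ} A$ is Krull--Schmidt.

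For part (2), I would use part (1) to write $A = \bigoplus_{i \in \JJ_A} e_i A$ as a direct sum of indecomposable graded projectives, where $1 = \sum_{i \in \JJ_A} e_i$ is a complete set of primitive orthogonal idempotents of $A_0$ (these are homogeneous of degree $0$). With $\II_A \subseteq \JJ_A$ a set of representatives of $\JJ_A/\!\sim$ as in Definition~\ref{define I_A}, I would set $P := \bigoplus_{i \in \II_A} e_i A \in \mod^{\ZZ} A$ and $B := \End_A^{\ZZ\text{-graded}}(P) = \End_A(P)$ with its natural $\ZZ$-grading. Since every indecomposable summand $e_j A$ of $A$ is isomorphic, as an \emph{ungraded} module, to some $e_i A$ with $i \in \II_A$, there is some shift making $e_j A \simeq e_i A(m)$ in $\mod^{\ZZ} A$; hence $\add^{\ZZ}(P) \ni A$ up to shifts, and $P$ is a progenerator of $\Mod^{\ZZ} A$. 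By Definition-Proposition~\ref{graded Morita equivalent} (or directly Proposition~\ref{graded Morita equivalent 2}), $A$ is graded Morita equivalent to $B$, and $B$ is basic essentially by construction: $e_i A \not\simeq e_j A$ as ungraded modules for distinct $i, j \in \II_A$ implies $\JJ_B = \II_B$. One should check $B$ is still locally finite: this follows because $B_m = \Hom_A^{\ZZ}(P, P(m))$ is a direct summand of $\Hom_A^{\ZZ}(A, A(m)) = A_m$ after choosing an embedding $P \hookrightarrow A^{\oplus c}$ splitting off, so $\dim_k B_m < \infty$; alternatively, invoke the remark in the excerpt that local finiteness is preserved under graded Morita equivalence. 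Finally, if $A$ is $\NN$-graded, the idempotents $e_i$ lie in $A_0$ and the $e_i A$ are $\NN$-graded submodules, so $B = \End_A(P)$ inherits an $\NN$-grading (a homogeneous morphism $e_i A \to e_j A(m)$ of degree $n$ corresponds to an element of $(e_j A e_i)_{n+m}$... more precisely $\Hom^{\ZZ}_A(e_iA, e_jA(m)) = e_j A_{-m} e_i$ using $A$-linearity and that these are projective, which vanishes for $-m < 0$, i.e.\ the grading on $B$ is concentrated in degrees $\geq 0$ after the natural normalization), so we may choose $B$ to be $\NN$-graded.

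The main obstacle I anticipate is purely bookkeeping rather than conceptual: verifying that the $\ZZ$-grading on $B = \End_A(P)$ is genuinely $\NN$-supported when $A$ is $\NN$-graded, since this depends on how one normalizes the shifts of the summands $e_i A$ appearing in $P$ (if one takes $P = \bigoplus e_i A$ with \emph{no} shifts, then $\Hom^{\ZZ}_A(e_i A, e_j A(m))$ is spanned by homogeneous degree-$(-m)$ elements of $e_j A e_i$, which indeed vanishes for $m > 0$, giving $B_m = 0$ for $m > 0$ — so actually one gets the grading supported in $\ZZ_{\leq 0}$, and one should instead take $P = \bigoplus e_i A(-c_i)$ or simply reverse a sign convention to land in $\NN$). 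I would make the convention explicit and then the verification is immediate. Everything else is standard Morita theory in the graded setting, for which Definition-Proposition~\ref{graded Morita equivalent} and Proposition~\ref{graded Morita equivalent 2} do the heavy lifting.
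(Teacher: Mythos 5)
Your proof matches the paper's in substance: the paper simply sets $e:=\sum_{i\in\II_A}e_i$, $P:=eA$ (which is your $\bigoplus_{i\in\II_A}e_iA$), $B:=\End_A(P)=eAe$, and invokes graded Morita theory; part (1) is left as "clear," which your standard argument supplies.

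The one place where you hedge — the sign of the grading on $B$ — is a spot where you've actually made a small slip and then second-guessed yourself unnecessarily. With the paper's convention from the Conventions section, $\End_A(M)_m=\Hom_A^{\ZZ}(M,M(m))$, so
\[
B_m=\Hom_A^{\ZZ}(eA,eA(m))\simeq (eA(m))_0\,e=(eA)_m\,e=eA_m e,
\]
which vanishes for $m<0$ when $A$ is $\NN$-graded (not for $m>0$, as you feared). Your displayed formula $\Hom_A^{\ZZ}(e_iA,e_jA(m))=e_jA_{-m}e_i$ has the sign flipped; the correct statement is $e_jA_me_i$, which follows immediately from a degree-$0$ map $e_iA\to e_jA(m)$ being determined by the image of $e_i$, which must lie in $(e_jA(m))_0e_i=e_jA_me_i$. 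So no shifts $e_iA(-c_i)$ or sign-convention reversal are needed: $P=\bigoplus_{i\in\II_A}e_iA$ with no shifts already yields $B=eAe$ with its natural $\NN$-grading. The rest of your argument (Hom-finiteness, Krull--Schmidt via idempotent completeness, progenerator, basicness from $\II_A$) is fine.
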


\begin{proof}
(1) is clear. To prove (2), let $e:=\sum_{i\in\II_A}e_i\in A_0$ and $P:=eA\in\Mod^{\ZZ}A$ and $B:=\End_A(P)=eAe$. Then $B$ is basic by our choice of $\II_A$. Moreover, it is a standard argument in Morita theory to show that the functor $\Hom_A(P,-):\Mod^{\ZZ}A\to\Mod^{\ZZ}B$ and $-\otimes_BP:\Mod^{\ZZ}B\to\Mod^{\ZZ}A$ give mutually quasi-inverse equivalences. The latter assertion is clear.
\end{proof}

We will use the following elementary observation.

\begin{lem}\label{connected}
Let $A$ be a ring-indecomposable locally finite $\NN$-graded algebra.
Then for any $S, S'\in\Sim A$ with $S \neq S'$, there exists a sequence of simple modules $S=S^0, S^1, \dots, S^{\ell}=S'$ in $\Sim A$ such that either $\Ext_A^1(S^i, S^{i+1})\neq 0$ or $\Ext_A^1(S^{i+1}, S^i)\neq 0$ holds for each $0\leq i\leq \ell-1$.
\end{lem}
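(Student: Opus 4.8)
The plan is to translate the statement into a statement about the block structure of $A$ (equivalently, of $A_0$) together with the extension data between simple modules, and to reduce from the given graded $\NN$-graded ring to a finite-dimensional algebra where the classical theory applies. First I would observe that for $S=\top^{\ZZ}e_iA$ and $S'=\top^{\ZZ}e_jA$, the $k$-space $\Ext_A^1(S,S')$ is computed from a minimal graded projective presentation of $S$; since $A$ is $\NN$-graded with $S$ concentrated in degree $0$, the first syzygy $\Omega S$ lives in degrees $\ge 1$, and $\Ext_A^1(S,S')$ is nonzero precisely when $S'$ appears as a composition factor of $\Omega S/\Omega S\cdot(A_{\ge 1})$; in particular $\Ext^1_A(S,S')$ depends only on $\Ext^1_{A_0}(S,S')$ together with the degree-$1$ part $A_1$, and it is a finite-dimensional $k$-space. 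The key point I would extract is the standard one: $\Ext^1_A(S,S') \ne 0$ implies $e_i$ and $e_j$ lie in the same block of $A$, i.e.\ the same ring-direct summand.

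Next I would define a graph $G$ on the vertex set $\II_A$ (or, equivalently, on a complete set of primitive orthogonal idempotents of $A$) by joining $i$ and $j$ whenever $\Ext^1_A(S_i,S_j)\ne 0$ or $\Ext^1_A(S_j,S_i)\ne 0$; the lemma asserts exactly that $G$ is connected. Suppose for contradiction that $G$ is disconnected, and let $\II_A = I_1 \sqcup I_2$ be a partition into a union of connected components with both parts nonempty. Set $e:=\sum_{i\in I_1}e_i$ and $f:=\sum_{j\in I_2}e_j$, so $1=e+f$ in $A_0$ and these are orthogonal idempotents. I would then show $eAf=0=fAe$, which forces $A \simeq eAe \times fAf$ as graded rings and contradicts ring-indecomposability. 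To see $eAf = 0$: $eAf$ is a graded $(eAe,fAf)$-bimodule; if it were nonzero, picking the lowest degree $n$ in which it is nonzero, the quotient $eAf/(eA_{\ge 1}Af + eAfA_{\ge 1})$ — the "generators" of this bimodule over the positive part — would be nonzero and concentrated in finitely many degrees, and by minimality it would contribute a nonzero map between a summand of a projective cover and a simple, i.e.\ it would produce some $S_i$ ($i\in I_1$) and $S_j$ ($j\in I_2$) with $\Ext^1_A(S_i,S_j)\ne 0$ (or the symmetric statement after replacing $A$ by $A^{\op}$), contradicting that $I_1,I_2$ are unions of connected components of $G$.

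The main obstacle, and the step I would write out carefully, is precisely this last implication: turning "$eAf\ne 0$" into "there is an extension between a simple supported on $I_1$ and one supported on $I_2$". The clean way is to argue at the level of $A_0$ first — if $eAf$ is nonzero in degree $0$, i.e.\ $eA_0f\ne 0$, then standard idempotent/block theory for the (possibly non-semisimple, but finite-dimensional since $A$ is locally finite, here using that $A$ ring-indecomposable locally finite $\NN$-graded forces $\dim_k A_0<\infty$ via... actually one should be slightly careful: local finiteness gives $\dim_k A_0<\infty$ only if we additionally know $A$ is, say, Noetherian or has finitely many primitive idempotents; since $A$ is assumed locally finite this is automatic) algebra $A_0$ already yields an extension of simples — and then handle the general case by induction on the lowest degree $n>0$ in which $eAf$ is nonzero, using that multiplication $A_1 \otimes_{A_0}(eA_{n-1}f) \to eA_nf$ plus $eA_{n-1}f$-information feeds, via the bar-type resolution / minimal projective presentation, into $\Ext^1$ between appropriate simples along a path. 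Alternatively, and perhaps more cleanly, I would invoke the description of the Ext-quiver of the $\NN$-graded algebra $A$ and the fact that $A$ is ring-indecomposable if and only if its Ext-quiver (ignoring grading) is connected, which is exactly the assertion of the lemma once one identifies arrows of the Ext-quiver with nonvanishing $\Ext^1$ between simples; so the lemma is really the statement that "ring-indecomposable $\Leftrightarrow$ connected Ext-quiver", and the proof is the standard block-decomposition argument just sketched, the only extra care being the passage from the infinite-dimensional $\NN$-graded $A$ to the finite-dimensional data in low degrees.
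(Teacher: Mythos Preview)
Your overall architecture matches the paper's: define the graph on $\II_A$ with edges given by nonvanishing $\Ext^1$ between simples, assume it disconnects as $I_1\sqcup I_2$, set $e=\sum_{i\in I_1}e_i$ and $f=\sum_{j\in I_2}e_j$, and derive a ring decomposition $A\simeq eAe\times fAf$ by showing $eAf=0=fAe$. So the reduction is right.

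Where you fall short is exactly the step you flag as the ``main obstacle'': you never actually carry out the implication ``$eAf\ne 0$ $\Rightarrow$ some $\Ext^1$ between a simple on $I_1$ and one on $I_2$ is nonzero''. Your proposed induction on the lowest nonzero degree of $eAf$ is vague (you do not say what the inductive hypothesis is or how multiplication by $A_1$ produces an $\Ext^1$), and your fallback of ``invoking the Ext-quiver description'' is circular, since the lemma \emph{is} the connectedness of the Ext-quiver. Also, your parenthetical worry about $\dim_k A_0<\infty$ is misplaced: local finiteness of $A$ means precisely that each $A_i$ is finite-dimensional, so $A_0$ is automatically finite-dimensional with no Noetherian hypothesis needed.

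The paper fills this gap cleanly and you should adopt its argument. Set $J:=\rad A_0\oplus A_{\ge 1}$ and $\overline{A}:=A/J$; after reducing to the basic case one has $e\overline{A}f=0$. Applying $\Hom_A(-,f\overline{A})$ to $0\to eJ\to eA\to e\overline{A}\to 0$ and using $\Ext^1_A(e\overline{A},f\overline{A})=0$ gives $\Hom_A(eJ,f\overline{A})=0$, hence $e(J/J^2)f=0$, i.e.\ $eJf=eJ^2f$. Since $e\overline{A}f=0$ we also have $eAf=eJf$. Now iterate: from $eJf\subset eJ^nf$ one gets $eAf=eJ^2f=eJeJf+eJfJf\subset eJ\cdot eJ^nf + eJ^nf\cdot Jf\subset eJ^{n+1}f$, so $eAf=eJ^nf$ for all $n\ge 1$. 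Finally, since $\dim_kA_0<\infty$ there is $\ell$ with $(\rad A_0)^\ell=0$, hence $J^\ell\subset A_{\ge 1}$ and $J^{\ell n}\subset A_{\ge n}$; thus $eAf\subset eA_{\ge n}f$ for every $n$, forcing $eAf=0$. This replaces your undeveloped degree-induction with a radical-power argument that is both shorter and fully rigorous.
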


\begin{proof}
Although this is elementary (e.g.\ \cite[Lemma II.2.5]{ASS}), we include a proof for the convenience of the reader.
Without loss of generality, we can assume that $A$ is basic.
Let $J:=\rad A_0+A_{\geq 1}$ where $\rad A_0$ is the Jacobson radical of $A_0$, and let $\overline{A}=A/J$.
It suffices to show that, if $e,f\in A$ are idempotents such that $1=e+f$ and $\Ext_A^1(e\overline{A}, f\overline{A})=0=\Ext_A^1(f\overline{A}, e\overline{A})$, then 
$eAf=0=fAe$, that is, $A=eAe\times fAf$ as rings.
By symmetry, we only show $eAf=0$.

We claim $eAf=eJ^nf$ for each $n\ge1$.
Applying $\Hom_A(-, f\overline{A})$ to the exact sequence $0 \to eJ\to eA \to e\overline{A} \to 0$, we have an exact sequence
\[0=\Hom_A(eA,f\overline{A})\to \Hom_A(eJ,f\overline{A})\to\Ext^1_A(e\overline{A},f\overline{A})=0.\]
Thus $\Hom_A(eJ,f\overline{A})=0$ and hence $e(J/J^2)f=0$. Since $A$ is basic, $eJ^2f=eJf=eAf$ holds.
Inductively, we obtain the claim. In fact,
$eAf=eJ^2f=eJeJf+eJfJf=eJeJ^nf+eJ^nfJf\subset eJ^{n+1}f$.

Since $A_0$ is finite dimensional over $k$, there is an integer $\ell\ge1$ such that $J^\ell \subset A_{\geq 1}$. Thus $eAf=eJ^{\ell n}f\subset eA_{\geq n}f$ holds for each $n$. Thus $eAf=0$ as desired.
\end{proof}

\subsection{Artin-Schelter Gorenstein algebras}\label{section: AS Gorenstein}
In this subsection, we give the definition of $\NN$-graded Artin-Schelter Gorenstein (AS-Gorenstein) algebras, which is the main subject of this paper.

We call a ring $A$ \emph{Iwanaga-Gorenstein} if it is Noetherian and satisfies $\injdim _AA<\infty$ and $\injdim_{A^{\op}}A<\infty$. In this case, $\injdim _AA=\injdim_{A^{\op}}A$ holds \cite{Za, EJ}. 
The following is well-known (e.g.\ \cite[Corollary 2.11]{Mi}).

\begin{prop}\label{A dual}
If $A$ is a $\ZZ$-graded Iwanaga-Gorenstein algebra, then 
\[\RHom_A(-,A) : \Db(\mod^{\ZZ} A) \rightleftarrows \Db(\mod^{\ZZ} A^{\op}) :\RHom_{A^{\op}}(-,A)\]
define a duality by Hom evaluation. 
\end{prop}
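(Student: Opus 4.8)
The plan is to prove the statement in three stages: (i) both functors are well defined between the stated categories; (ii) there is a canonical Hom-evaluation natural transformation $\id\Rightarrow\RHom_{A^{\op}}(\RHom_A(-,A),A)$; and (iii) this transformation is invertible, which I would reduce to the case of maximal Cohen--Macaulay modules.

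For (i), set $d:=\injdim_A A=\injdim_{A^{\op}}A<\infty$. Since $A$ is Noetherian, every $M\in\mod^{\ZZ}A$ has a resolution by finitely generated graded projectives, so each $\Ext^i_A(M,A)$ is a finitely generated graded $A^{\op}$-module and $\Ext^i_A(M,A)=0$ for $i>d$; hence $\RHom_A(M,A)\in\Db(\mod^{\ZZ}A^{\op})$. (Graded and ungraded injective dimension of $A$ agree here because $\Ext^i_{\mod^{\ZZ}A}(M,A(n))=(\Ext^i_A(M,A))_n$ for finitely generated $M$.) Expressing an arbitrary object of $\Db(\mod^{\ZZ}A)$ as an iterated extension of shifts of its cohomology modules via the truncation triangles, it follows that $\RHom_A(-,A)$ is a well-defined triangulated functor $\Db(\mod^{\ZZ}A)\to\Db(\mod^{\ZZ}A^{\op})$; interchanging $A$ and $A^{\op}$ gives the other functor.

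For (ii)--(iii), the Hom-evaluation map $\eta_M\colon M\to\RHom_{A^{\op}}(\RHom_A(M,A),A)$ is a morphism of triangulated functors, so the full subcategory $\mathscr X\subseteq\Db(\mod^{\ZZ}A)$ on which $\eta$ is an isomorphism is closed under cones, shifts and direct summands. As $\RHom_A(P,A)\cong\Hom_A(P,A)\in\proj^{\ZZ}A^{\op}$ and $\eta_P$ is the evident isomorphism for $P\in\proj^{\ZZ}A$, we get $\proj^{\ZZ}A\subseteq\mathscr X$. Now for $M\in\mod^{\ZZ}A$, cut a projective resolution at the $d$-th syzygy: $0\to\Omega^dM\to P_{d-1}\to\cdots\to P_0\to M\to0$. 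Then $\Ext^i_A(\Omega^dM,A)=\Ext^{i+d}_A(M,A)=0$ for all $i\ge1$, so $\Omega^dM\in\CM^{\ZZ}A$, and $M$ is an iterated cone of shifts of $P_0,\dots,P_{d-1},\Omega^dM$; hence $\mathscr X=\Db(\mod^{\ZZ}A)$ will follow once we know $\CM^{\ZZ}A\subseteq\mathscr X$.

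The main obstacle is this last inclusion, i.e.\ that $\Hom_A(-,A)$ restricts to a duality $\CM^{\ZZ}A\xrightarrow{\ \sim\ }\CM^{\ZZ}A^{\op}$ with biduality an isomorphism. For $N\in\CM^{\ZZ}A$ one has $\RHom_A(N,A)\simeq N^\ast:=\Hom_A(N,A)$ concentrated in degree $0$. Using that $\CM^{\ZZ}A$ is a Frobenius category with projective-injective objects $\proj^{\ZZ}A$ (cf.\ \cite{Bu,Or} and the introduction), $N$ has a coresolution $0\to N\to P^0\to P^1\to\cdots$ with $P^j\in\proj^{\ZZ}A$ and all cosyzygies in $\CM^{\ZZ}A$; applying $\Hom_A(-,A)$ to the short exact sequences $0\to C^j\to P^j\to C^{j+1}\to0$ (with $C^0=N$) stays exact because $\Ext^1_A(C^{j+1},A)=0$, and splicing gives a projective resolution $\cdots\to(P^1)^\ast\to(P^0)^\ast\to N^\ast\to0$ over $A^{\op}$. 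Computing $\Ext^i_{A^{\op}}(N^\ast,A)$ from it and using $(P^j)^{\ast\ast}\cong P^j$ recovers the complex $P^0\to P^1\to\cdots$, whose cohomology is $N$ in degree $0$ and vanishes otherwise; thus $N^\ast\in\CM^{\ZZ}A^{\op}$, $N^{\ast\ast}\cong N$, and one checks that this isomorphism is $\eta_N$. (Alternatively one may argue via complete resolutions, or simply invoke the theory of dualizing complexes, $A$ being its own dualizing complex when Iwanaga--Gorenstein.) This yields $\mathscr X=\Db(\mod^{\ZZ}A)$; by symmetry the analogous transformation for $\RHom_{A^{\op}}(-,A)$ is invertible as well, so the two functors are mutually quasi-inverse dualities defined by Hom evaluation.
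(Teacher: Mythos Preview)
The paper does not actually prove this proposition; it simply records it as well-known and cites \cite[Corollary 2.11]{Mi}. Your argument, by contrast, supplies a self-contained proof, and the overall strategy (reduce to modules, then to CM modules via $\Omega^d$, then verify biduality there) is sound.

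There is, however, one subtle circularity worth flagging. In your final step you invoke the Frobenius structure on $\CM^{\ZZ}A$ to obtain a projective coresolution $0\to N\to P^0\to P^1\to\cdots$ of a CM module $N$. But the standard construction of such a coresolution---take a surjection $F\twoheadrightarrow N^*$ over $A^{\op}$ and dualize to get $N^{**}\hookrightarrow F^*$---already relies on the reflexivity $N\cong N^{**}$, which is precisely what you are trying to establish. Your parenthetical alternatives (complete resolutions, or dualizing-complex theory with $D=A$) do resolve this, but there is also a direct fix entirely within your argument: instead of starting from a coresolution of $N$, start from a projective \emph{resolution} $\cdots\to P_1\to P_0\to N\to0$, which always exists. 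Since $N\in\CM^{\ZZ}A$, applying $(-)^*$ produces an exact coresolution $0\to N^*\to P_0^*\to P_1^*\to\cdots$ in $\mod^{\ZZ}A^{\op}$. A dimension-shift using $\injdim_{A^{\op}}A=d$ shows that every cosyzygy of $N^*$ lies in $\CM^{\ZZ}A^{\op}$; hence applying $(-)^*$ again preserves exactness and yields $\cdots\to P_1\to P_0\to N^{**}\to0$. Comparing with the original resolution via naturality of $\eta$ on the $P_j$ gives $\eta_N\colon N\xrightarrow{\sim}N^{**}$. This is the mirror image of your argument and requires no prior knowledge of the Frobenius structure.
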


AS-Gorenstein algebras are defined by the behavior of simple modules under the duality above. Notice that we do \emph{not} assume that $A$ is connected graded (that is, we do \emph{not} assume $A_0=k$).

\begin{dfn}\label{define AS-Gorenstein}
Let $A$ be a Noetherian locally finite bounded below $\ZZ$-graded algebra with $\II_A$ given in Definition \ref{define I_A}.
We say that $A$ is \emph{Artin-Schelter Gorenstein (AS-Gorenstein) of dimension $d$} 
if it satisfies the following conditions.
\begin{enumerate}
\item $\injdim_A A = \injdim_{A^{\op}} A= d <\infty$.
\item For each $i\in\II_A$, there exists $\nu i:=\nu(i)\in\II_A$ and an integer $p_i$ such that
\[\Ext^\ell_A(S_i,A) \simeq
\begin{cases}
S^{\op}_{\nu i}(p_i) & \text { if } \ell=d,\\
0 & \text { if } \ell\neq d.
\end{cases}\]
\end{enumerate}
We call $p_i$ the \emph{Gorenstein parameter} of $S_i$ and the tuple $p_A=(p_i)_{i\in\II_A}$ the \emph{Gorenstein parameter} of $A$. In this case, $\nu:\II_A\to\II_A$ is a bijection called the \emph{Nakayama permutation}.

We say that $A$ is \emph{Artin-Schelter regular (AS-regular)} if it is AS-Gorenstein, and has finite global dimension. In this case, $\gldim A=d$ holds.
\end{dfn}

Let us give isomorphisms for AS-Gorenstein algebras which are frequently used in this paper. For each $i \in \II_A$, we have 
\begin{equation}\label{Ext-S 1}
\Ext^d_A(S_i,A)\simeq S^{\op}_{\nu i}(p_i)\ \mbox{ or equivalently,}\ D\Ext^d_A(S_i,A)\simeq S_{\nu i}(-p_i),
\end{equation}
where $D=\Hom_k(-,k)$. Applying the duality $\RHom_{A^{\op}}(-,A)$, we have
\begin{equation}\label{Ext-S 2}
\Ext^d_{A^{\op}}(S^{\op}_{\nu i },A)\simeq S_i(p_i)\ \mbox{ or equivalently,}\ D\Ext^d_{A^{\op}}(S^{\op}_{\nu i },A)\simeq S^{\op}_i(-p_i).
\end{equation}
For $i,j\in \II_A$ and $\ell\in\ZZ$, multiplying $e_j$ to \eqref{Ext-S 1} from the left, we obtain
\begin{equation}\label{prop-Ext-S}
\Ext_A^d(S_i, e_jA(\ell))\neq 0\Longleftrightarrow (j,\ell)=(\nu i ,-p_i).
\end{equation}
It follows from \eqref{Ext-S 2} that $A$ is AS-Gorenstein if and only if so is $A^{\op}$.

The following observation is immediate from definition.

\begin{lem}\label{lem.basic 2}
AS-Gorenstein algebras of dimension $d$ are closed under graded Morita equivalence.
Therefore each AS-Gorenstein algebra $A$ of dimension $d$ is graded Morita equivalent to a basic AS-Gorenstein algebra $B$ of dimension $d$. If moreover $A$ is $\NN$-graded, then we can choose $B$ to be $\NN$-graded.
\end{lem}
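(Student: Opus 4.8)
The plan is to reduce the statement to the two facts already established: Definition-Proposition \ref{graded Morita equivalent} (characterizing graded Morita equivalence via progenerators) and Lemma \ref{lem.basic}(2) (every locally finite $\ZZ$-graded algebra is graded Morita equivalent to a basic one, with $\NN$-gradedness preserved). So the only real content to check is that the AS-Gorenstein property, together with its dimension $d$, is invariant under graded Morita equivalence; the existence of a basic representative $B$ and the preservation of $\NN$-gradedness are then immediate by quoting Lemma \ref{lem.basic}(2).

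First I would fix a graded Morita equivalence $F:\Mod^{\ZZ}A\simeq\Mod^{\ZZ}B$ with $F\circ(1)\simeq(1)\circ F$, realized (via Definition-Proposition \ref{graded Morita equivalent}) by a progenerator $P$ of $\Mod^{\ZZ}A$ with $B\simeq\End_A(P)$, so that $F=\Hom_A(P,-)$ with quasi-inverse $-\otimes_BP$. Since $F$ is an equivalence of abelian categories commuting with the shift $(1)$, it preserves all the structural features appearing in Definition \ref{define AS-Gorenstein}: it restricts to $\mod^{\ZZ}A\simeq\mod^{\ZZ}B$, sends $\proj^{\ZZ}A$ to $\proj^{\ZZ}B$ (in particular $F(A)$ is a graded projective generator of $\Mod^{\ZZ}B$), and induces a bijection $\Sim A\to\Sim B$ compatible with shifts, hence a bijection $\II_A\to\II_B$, say $i\mapsto \bar i$, with $F(S_i)\simeq S_{\bar i}$ up to a degree shift that we can normalize away. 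Then $F$ carries the duality $\RHom_A(-,A)$ on $\Db(\mod^{\ZZ}A)$ to the analogous duality on $\Db(\mod^{\ZZ}B)$ — concretely because $\RHom_A(M,A)\cong\RHom_B(F(M),F(A))$ naturally, and one further identifies $\RHom_B(F(M),B)$ with $\RHom_B(F(M),F(A))$ after passing to a basic/standard model, or more cleanly: injective dimension is a categorical invariant, so $\injdim_A A=\injdim_{A^{\op}}A=d$ transfers to $B$, giving condition (1). For condition (2), I would compute $\Ext^\ell_B(S_{\bar i},B)$ by transporting along $F$: using that $F$ is exact and commutes with shifts, $\Ext^\ell_B(S_{\bar i},B)\cong \Ext^\ell_B(F(S_i),F(A))\cong \Ext^\ell_A(S_i,A)$, which is $0$ for $\ell\ne d$ and $\cong S^{\op}_{\nu i}(p_i)$ for $\ell=d$; applying the analogous Morita equivalence on the opposite side (note $A^{\op}$ and $B^{\op}$ are graded Morita equivalent via $\Hom_{A^{\op}}$-functor) identifies $S^{\op}_{\nu i}(p_i)$ with $S^{\op}_{\overline{\nu i}}(p_i)$ as a $B^{\op}$-module. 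This shows $B$ is AS-Gorenstein of dimension $d$ with Nakayama permutation the conjugate of $\nu$ by $i\mapsto\bar i$ and Gorenstein parameter $(p_i)$ relabeled accordingly.

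Having established the invariance, the "therefore" sentences follow at once: apply Lemma \ref{lem.basic}(2) to obtain a basic locally finite $\ZZ$-graded $B$ graded Morita equivalent to $A$ — $\NN$-graded if $A$ is — and then invariance forces $B$ to be AS-Gorenstein of the same dimension $d$.

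The main obstacle I anticipate is purely bookkeeping rather than conceptual: being careful that "up to degree shift" ambiguities in identifying $F(S_i)$ with $S_{\bar i}$ do not corrupt the value of the Gorenstein parameter. Since the $p_i$ are read off from a specific degree shift in $\Ext^d_A(S_i,A)\simeq S^{\op}_{\nu i}(p_i)$, one must track how the progenerator $P=\bigoplus e_jA(\ell_j)$ shifts degrees through $F$ and $F^{-1}$ on both $A$-modules and $A^{\op}$-modules, and confirm the shift applied to the source $S_i$ cancels against the shift on the target $S^{\op}_{\nu i}$ so that $p_i$ is genuinely unchanged (only its indexing set is relabeled). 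This is exactly the kind of computation carried out in detail later when analyzing the change of Gorenstein parameters under graded Morita equivalence, so here it suffices to observe the qualitative invariance; the precise transformation rule is deferred.
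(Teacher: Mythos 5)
Your proposal follows the same route as the paper: invoke Definition-Proposition~\ref{graded Morita equivalent} to realize the equivalence via a progenerator $P$, transport the injective-dimension and $\Ext$-of-simples conditions across the equivalence, and then conclude by Lemma~\ref{lem.basic}(2). The paper packages the transport more cleanly as the commutative square $\Ext^i_B(-,B)\circ\Hom_A(P,-)\simeq (P\otimes_A-)\circ\Ext^i_A(-,A)$, which automatically handles the bookkeeping you flag at the end (note $F(A)=\Hom_A(P,A)\neq B$ in general, only $\add\{F(A)(j)\}=\proj^{\ZZ}B$, so your chain $\Ext^\ell_B(S_{\bar i},B)\cong\Ext^\ell_B(F(S_i),F(A))\cong\Ext^\ell_A(S_i,A)$ is not literal but only up to shifts and passage through $P\otimes_A-$; this does not affect the vanishing for $\ell\neq d$ or the simplicity at $\ell=d$, which is all the lemma needs).
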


\begin{proof}
Assume that $A$ and $B$ are graded Morita equivalent. By Definition-Proposition \ref{graded Morita equivalent}, there exists $P\in\Mod^{\ZZ}A$ such that $\End_A(P)\simeq B$ as graded rings and $\Hom_A(P,-):\Mod^{\ZZ}A\to\Mod^{\ZZ}B$ and $-\otimes_BP:\Mod^{\ZZ}B\to\Mod^{\ZZ}A$ give mutually quasi-inverse equivalences. Then for each $i\in\ZZ$, we have a commutative diagram
\[\xymatrix@R=2pc@C=5pc{
\Mod^{\ZZ}A\ar[r]^{\Hom_A(P,-)}_{\sim}\ar[d]_{\Ext^i_A(-,A)}
&\Mod^{\ZZ}B\ar[d]^{\Ext^i_B(-,B)}\\
\Mod^{\ZZ}A^{\op}\ar[r]^{P\otimes_A-}_{\sim}&\Mod^{\ZZ}B^{\op}
}\]
whose horizontal functors are equivalences. This implies that if $A$ is AS-Gorenstein, so is $B$. The remaining assertions follow from Lemma \ref{lem.basic}.
\end{proof}

Thanks to Lemma \ref{lem.basic 2}, to study AS-Gorenstein algebras $A$, we can assume that $A$ is basic.
Throughout this paper, unless otherwise stated, ``a basic AS-Gorenstein algebra'' means  ``a basic AS-Gorenstein algebra of dimension $d$ and Gorenstein parameter $p_A=(p_i)_{i\in\II_A}$ with Nakayama permutation $\nu$''.

\subsection{Gorenstein orders}\label{subsection: order}
In this section, we give a typical example of AS-Gorenstein algebras, called Gorenstein orders.
We start with recalling basic notions in commutative algebra \cite{Mat,BH}.

Let $R$ be a commutative Noetherian ring of Krull dimension $d$.
When $(R,\fm)$ is local, we call $M\in\mod R$ \emph{maximal Cohen-Macaulay} (\emph{CM} for short) if either $\depth M=d$ or $M=0$ hold, where  $\depth M$ is the maximal length of $M$-regular sequences in $\fm$. In general, we call $M\in\mod R$ \emph{maximal Cohen-Macaulay} (\emph{CM} for short) if $M_\fp$ is a CM $R_\fp$-module for each $\fp\in\Spec R$. We call $R$ a \emph{Cohen-Macaulay ring} if it is CM as an $R$-module. 

Let $R=\bigoplus_{i\in\NN}R_i$ be a commutative Cohen-Macaulay $\NN$-graded ring of Krull dimension $d$ with a graded canonical module $\omega_R$. Assume that $R_0$ is an Artinian local ring containing its residue field $k$. Thus $R$ has a unique graded maximal ideal $\m=\rad R_0\oplus(\bigoplus_{i\ge1}R_i)$, and we have $\Ext^d_R(k,\omega_R)\simeq k$ as graded $R$-modules. 
Let $A=\bigoplus_{i\in\NN}A_i$ be an $\NN$-graded $R$-algebra such that the structure morphism $R \to A$ preserves the $\NN$-grading.

We say that $A$ is an \emph{$R$-order} if it is a CM $R$-module. In this case, we call the graded  $A$-bimodule
\[\omega:=\Hom_R(A,\omega_R)\]
a \emph{canonical module} of $A$. An $R$-order $A$ is called a \emph{Gorenstein $R$-order} if $\omega\in\proj A$ holds, or equivalently, $\omega\in\proj A^{\op}$ holds.
In this case, $\omega$ is an invertible $A$-bimodule, and therefore gives an autoequivalence
\begin{align}\label{tensor omega 2}
-\otimes_A\omega:\mod^{\ZZ}A\to\mod^{\ZZ}A.
\end{align}
We call $M\in\mod^{\ZZ}A$ \emph{maximal Cohen-Macaulay} (\emph{CM} for short) if it is CM as an $R$-module. The category of graded CM $A$-modules is denoted by $\CM^{\ZZ}A$.
We consider the full subcategory 
\[\CM^{\ZZ}_0A:=\{M\in\CM^{\ZZ}A\mid
M_{\fp,\ZZ}\in\proj A_{\fp,\ZZ}\; \text{for all}\;  \fp\in\Spec^{\ZZ}R\setminus\{\fm\} \} \]
where $\Spec^{\ZZ}R$ is the set of homogeneous prime ideals of $R$, and $(-)_{\fp,\ZZ}$ is a localization with respect to the multiplicative set consisting of all homogeneous elements in $R\setminus\fp$.
\begin{prop}\label{lem-Gorder-ASG}
Let $A$ be a Gorenstein $R$-order as above.
\begin{enumerate}
\item $A$ is an AS-Gorenstein algebra of dimension $d$.
\item  For each $i\in\mathbb{I}_A$, we have isomorphisms
\[\Ext^d_R(S_i^{\op},\omega_R)\simeq S_i\ \mbox{ and }\ e_i\omega\simeq\Hom_R(Ae_i,\omega_R)\simeq e_{\nu i}A(-p_i)\]
in $\mod^{\ZZ}A$, where $\nu:\mathbb{I}_A\to\mathbb{I}_A$ is a permutation given in \eqref{Ext-S 1}.
\item The category $\uCM_0^{\ZZ}A$ has a Serre functor $-\otimes_A\omega[d-1]$.
\end{enumerate}
\end{prop}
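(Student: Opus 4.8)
The plan is to derive (1) and (2) simultaneously from the canonical duality over $R$, and to obtain (3) from Auslander--Reiten--Serre duality for Gorenstein orders. The workhorse is the following. Since $\omega_R$ is a graded canonical module of the CM ring $R$, the functor $\Hom_R(-,\omega_R)$ is exact on $\CM^\ZZ R$, satisfies $\Ext^{>0}_R(M,\omega_R)=0$ and $\Hom_R(\Hom_R(M,\omega_R),\omega_R)\simeq M$ for $M\in\CM^\ZZ R$, and $\omega_R$ is a dualizing complex for $R$ with $\injdim_R\omega_R=d$. As $A\in\CM^\ZZ R$ we have $\RHom_R(A,\omega_R)=\omega$, so the derived tensor--hom adjunction gives, functorially in $M\in\mod^\ZZ A$ and compatibly with gradings and with the residual left $A$-action,
\[\RHom_A(M,\omega)\;\simeq\;\RHom_A(M,\RHom_R(A,\omega_R))\;\simeq\;\RHom_R(M,\omega_R),\]
and symmetrically $\RHom_{A^{\op}}(N,\omega)\simeq\RHom_R(N,\omega_R)$ for left $A$-modules $N$.

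Next I would set up a finite-length duality. Because the image of $\mathfrak m$ in $A$ lies in the graded radical $\rad A_0+A_{\ge1}$, every graded simple $A$- or $A^{\op}$-module is a finite-length $R$-module concentrated in degree $0$, hence a power of $k$ as an $R$-module. For finite-length $N$, graded local duality over $R_\mathfrak m$ together with $\Ext^d_R(k,\omega_R)\simeq k$ (in degree $0$) shows $\Ext^\ell_R(N,\omega_R)=0$ for $\ell\ne d$, and that $\Ext^d_R(-,\omega_R)$ is an exact contravariant duality on finite-length modules, so it preserves simplicity. Hence $\Ext^d_R(S_i^{\op},\omega_R)$ is a simple right $A$-module concentrated in degree $0$; since left multiplication by $e_i$ is the identity on $S_i^{\op}$, the induced right multiplication by $e_i$ on $\Ext^d_R(S_i^{\op},\omega_R)$ is the identity, so $\Ext^d_R(S_i^{\op},\omega_R)\simeq S_i$ (the first displayed isomorphism of (2)), and symmetrically $\Ext^d_R(S_j,\omega_R)\simeq S_j^{\op}$. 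Now $\omega$ is an invertible $A$-bimodule in $\proj^\ZZ A\cap\proj^\ZZ A^{\op}$, so $-\otimes_A\omega$ is a triangle autoequivalence of $\Db(\mod^\ZZ A)$ carrying the indecomposable graded projective $e_iA$ to $e_i\omega$; writing $e_i\omega\simeq e_{\sigma i}A(\ell_i)$ (Krull--Schmidt, with $\sigma$ a permutation of $\II_A$) we get $S_i\otimes_A\omega\simeq\top(e_i\omega)\simeq S_{\sigma i}(\ell_i)$. Applying this autoequivalence and then the adjunction above,
\[\Ext^\ell_A(S_i,A)\;\simeq\;\Ext^\ell_A(S_i\otimes_A\omega,\omega)\;\simeq\;\Ext^\ell_R(S_{\sigma i},\omega_R)(-\ell_i),\]
which vanishes for $\ell\ne d$ and equals $S^{\op}_{\sigma i}(-\ell_i)$ for $\ell=d$. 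Thus $A$ (and, symmetrically, $A^{\op}$) satisfies the $\Ext$-condition for AS-Gorenstein-ness with Nakayama permutation $\nu=\sigma$ and parameters $p_i=-\ell_i$, so $e_i\omega\simeq e_{\nu i}A(-p_i)$; combined with the elementary restriction isomorphism $e_i\omega=e_i\Hom_R(A,\omega_R)\simeq\Hom_R(Ae_i,\omega_R)$ this gives (2). For the remaining clause of (1), the adjunction gives $\injdim_A\omega\le\injdim_R\omega_R=d$, while $\Ext^d_A(S_i,\omega)\simeq\Ext^d_R(S_i,\omega_R)\ne0$ forces $\injdim_A\omega\ge d$; since invertibility of $\omega$ forces $\proj^\ZZ A=\add\{\,\omega(j)\mid j\in\ZZ\,\}$ we conclude $\injdim_AA=d$, and likewise $\injdim_{A^{\op}}A=d$. (Noetherianity, local finiteness and boundedness below of $A$ are immediate since $A$ is a finite graded $R$-module.)

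For (3) I would first check that $\CM^\ZZ A$ is a Frobenius category whose projective--injective objects are exactly $\proj^\ZZ A$: by the adjunction $\Ext^1_A(M,\omega)\simeq\Ext^1_R(M,\omega_R)=0$ for $M\in\CM^\ZZ A$, so $\omega$ and hence every object of $\proj^\ZZ A=\add\{\omega(j)\}$ is injective in $\CM^\ZZ A$, while conversely any injective object of $\CM^\ZZ A$ splits off a graded projective cover whose kernel is again CM by the depth lemma. Thus $\uCM^\ZZ A$ is triangulated, and since $\CM_0^\ZZ A$ is extension-closed and contains $\proj^\ZZ A$, the full subcategory $\uCM_0^\ZZ A$ is thick triangulated. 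Finally, for $X\in\CM_0^\ZZ A$ and $Y\in\CM^\ZZ A$ the $R$-modules $\Ext^{>0}_A(Y,X)$ are supported only at $\mathfrak m$ (as $X$ is locally projective on $\Spec^\ZZ R\setminus\{\mathfrak m\}$), hence of finite length; applying $\Ext^d_R(-,\omega_R)$ to them and combining with the canonical reflexivity on $\CM^\ZZ R$ and $\omega=\Hom_R(A,\omega_R)$ produces a functorial isomorphism
\[\underline{\Hom}{}_A^\ZZ(Y,X)\;\simeq\;D\,\underline{\Hom}{}_A^\ZZ\!\big(X,\;Y\otimes_A\omega[d-1]\big),\]
i.e.\ $-\otimes_A\omega[d-1]$ is a Serre functor on $\uCM_0^\ZZ A$. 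I expect this last point --- arranging the finite-length $\Ext$-modules and the syzygy bookkeeping so that the homological shift comes out exactly as $[d-1]$ --- to be the main technical obstacle; everything else is a formal consequence of the tensor--hom adjunction and the invertibility of $\omega$.
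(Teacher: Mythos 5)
Your proof is correct but takes a genuinely different route from the paper's. For (1) the paper first obtains $\injdim_A A=\injdim_{A^{\op}}A=d$ by citing Goto--Nishida \cite{GN}, then invokes the bifunctorial Serre-type duality $\Hom_{\D(\Mod A)}(M,N\otimes^{\mathrm L}_A\omega[d])\simeq D\Hom(N,M)$ of Iyama--Reiten \cite{IR} to read off $D\Ext^d_A(S_i,A)\simeq S_i\otimes_A\omega$ and conclude; you instead build everything on the co-induction adjunction $\RHom_A(-,\omega)\simeq\RHom_R(-,\omega_R)$ (valid because $\Hom_R(A,I)$ is a graded injective resolution of $\omega$ over $A$ whenever $I$ is one of $\omega_R$ over $R$, as $A$ is CM over $R$) together with the invertibility of $\omega$, so that the injective-dimension bound and the identification $\Ext^{\ell}_A(S_i,A)\simeq\Ext^{\ell}_R(S_{\sigma i},\omega_R)(-\ell_i)$ both fall out of the same adjunction without external citations. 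For (2), the two proofs of $\Ext^d_R(S_i^{\op},\omega_R)\simeq S_i$ coincide in substance (finite-length duality, idempotent pinning, and $\Ext^d_R(k,\omega_R)\simeq k$ in degree $0$), but whereas the paper obtains $e_i\omega\simeq e_{\nu i}A(-p_i)$ by dualizing $Ae_i\to S_i^{\op}$ and appealing to the already-proved criterion $\Ext^d_A(S_i,e_jA(\ell))\neq0\Leftrightarrow(j,\ell)=(\nu i,-p_i)$, your argument produces it in the same breath as (1). For (3) the paper simply cites Auslander \cite[Ch.~I, Prop.~8.8]{Au2}; your sketch gestures at the same classical AR-duality computation and is honest that the $[d-1]$ bookkeeping is the real content, so the level of completeness is comparable, though the claim that every injective of $\CM^{\ZZ}A$ is projective is hand-waved as written --- the cleanest route there is to note that $\Hom_A(-,A)$ is an exact duality $\CM^{\ZZ}A\leftrightarrow\CM^{\ZZ}A^{\op}$ preserving projectives, forcing projectives and injectives to coincide.
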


\begin{proof}
(1) Since $A$ is finitely generated as an $R$-module, $A$ is locally finite.
We have $\injdim_A A =\injdim_{A^{\op}} A=d$ by \cite[Proposition 1.1(3)]{GN}.

By \cite[Proposition 3.5 and Theorem 3.7]{IR}, we have a bifunctorial isomorphism
\begin{align}\label{serre}
\Hom_{\D(\Mod A)}(M, N \otimes_A^\mathrm{L} \omega [d]) \simeq D\Hom_{\D(\Mod A)}(N, M)
\end{align}
for any $M \in \Db(\mod_0 A)$ and $N \in \Kb(\proj A)$, where $D=\Hom_k(-,k)$.
Moreover, if $M\in\Db(\mod_0^{\ZZ} A)$ and $N \in \Kb(\proj^{\ZZ} A)$, then both sides of \eqref{serre} have canonical $\ZZ$-gradings which are preserved by the isomorphism.
For each $i\in\II_A$, the isomorphism \eqref{serre} implies $D\Ext_A^d(S_i, A) \simeq S_i\otimes_A\omega$, which is a simple object in $\mod^{\ZZ}A$ by \eqref{tensor omega 2}. Thus there exist $\nu i\in\II_A$ and $p_i\in\ZZ$ such that $\Ext_A^d(S_i, A) \simeq S_{\nu i}(p_i)$,
and hence $A$ is an AS-Gorenstein algebra.

(2) Since $\Ext^d_R(-,\omega_R):\mod_0^{\ZZ}A^{\op}\to\mod_0^{\ZZ}A$ is a duality, $\Ext^d_R(S_i^{\op},\omega_R)$ is a simple object in $\mod^{\ZZ}A$.
For each $j\in\mathbb{I}_A$ with $i\neq j$, we have
\[\Hom_A(e_jA,\Ext^d_R(S_i^{\op},\omega_R))\simeq\Ext^d_R(e_jA\otimes_AS_i^{\op},\omega_R)=0.\]
Thus $\Ext^d_R(S_i^{\op},\omega_R) \simeq S_i(\ell)$ holds for some $\ell\in\ZZ$. Take a surjection $S_i^{\op}\to k$ in $\mod^{\ZZ}R$. Applying $\Ext^d_R(-,\omega_R)$, we obtain an injection $k=\Ext^d_R(k,\omega_R)\to\Ext^d_R(S_i^{\op},\omega_R) \simeq S_i(\ell)$. Thus we obtain $\ell=0$ and $\Ext^d_R(S_i^{\op},\omega_R)\simeq S_i$ in $\mod^{\ZZ}A$, as desired.

Since $\Hom_R(Ae_i,\omega_R)$ belongs to $\proj^{\ZZ}A$, we can write $\Hom_R(Ae_i,\omega_R) \simeq e_jA(\ell)$ for some $j\in\mathbb{I}_A$ and $\ell\in\ZZ$. Let $\pi:Ae_i\to S_i^{\op}$ in $\mod^{\ZZ}A^{\op}$ be a natural surjection. Applying the duality $\RHom_R(-,\omega_R):\Db(\mod^{\ZZ}A^{\op})\to\Db(\mod^{\ZZ}A)$, we obtain a nonzero morphism
\[\RHom_R(\pi,\omega_R):S_i[-d]=\RHom_R(S_i^{\op},\omega_R)\to\RHom_R(Ae_i,\omega_R)=e_jA(\ell).\]
Thus $\Ext^d_A(S_i,e_jA(\ell))\neq0$ holds, and we obtain $(j,\ell)=(\nu i,-p_i)$ by \eqref{prop-Ext-S}. Thus the second claim follows.

(3) This is classical \cite[Chapter I, Proposition 8.8]{Au2}.
\end{proof}

We keep our setting of $R$ and $A$, and assume that $d=1$.
Let $S$ be the set of homogeneous nonzero divisors of $R$, and $Q:=S^{-1}A$.
In this case, the category $\CM_0^{\ZZ}A$ can be described as
\begin{align}\label{CM_0 for order}
\CM^{\ZZ}_0A =\{M \in \CM^{\ZZ}{A} \mid M\otimes_AQ\in\proj^{\ZZ}Q\},
\end{align}
see \cite[Proposition 4.15]{BIY}.

\subsection{Preliminaries on local cohomologies}
In this subsection, we give preliminaries on local cohomologies of graded modules over $\NN$-graded algebras. The results of this subsection will be used in the subsequent subsection.

Let $A$ be an $\NN$-graded algebra and let $\fm=\fm_A=A_{\geq 1}$. We define the functor
\begin{align*}
\Gamma_{\fm} : \Mod^{\ZZ}A \to \Mod^{\ZZ}A;\quad M \mapsto  \lim_{n\to\infty}\Hom_{A}(A/A_{\ge n},M).
\end{align*}
For $\Mod^{\ZZ}A^{\op}$ and $\Mod^{\ZZ}A^{\e}$, we have similar functors, which we denote respectively by $\Gamma_{\fm^{\op}}$ and $\Gamma_{\fm^{\e}}$. Let $\RGamma_{\fm}$ denote the derived functor of $\Gamma_{\fm}$. We define the $i$-th \emph{local cohomology} of $M \in \Mod^{\ZZ}A$ to be 
\[ \H_{\fm}^i(M):= \H^i \RGamma_{\fm}(M). \]
Moreover, the \emph{local cohomological dimension} of $A$ is defined by
\[
\lcd A := \sup\{i \mid \H_{\fm}^i(M) \neq 0 \ \text{for some}\ M \in \Mod^{\ZZ} A \}.
\]

Let $A$ be a Noetherian $\NN$-graded algebra.
An element $x$ of a graded module $M \in \Mod^{\ZZ}A$ is called \emph{$\fm$-torsion} if there exists an integer $m \in\ZZ$ such that $xA_{\ge m}=0$.
The torsion elements in $M$ form a graded submodule of $M$, which coincides with $\Gamma_{\fm}(M)$.
We call $M$ a \emph{$\fm$-torsion-free module} if $\Gamma_{\fm}(M)=0$ while we call $M$ a \emph{$\fm$-torsion module} if $\Gamma_{\fm}(M)=M$.
Moreover, if $M \in \mod^{\ZZ} A$, then $M$ is $\fm$-torsion if and if $M$ has finite length.

The following observation is stated in  \cite[Lemma 4.5]{VdB1} when $A$ and $B$ are Ext-finite connected $\NN$-graded algebras.

\begin{lem}\label{lem.4.5}
Let $A$ and $B$ be Noetherian locally finite $\NN$-graded algebras.  
Then
\[\RGamma_{\fm_{A \otimes B}}(M) \simeq 
\RGamma_{\fm_B} \circ \RGamma_{\fm_A}(M) \simeq 
\RGamma_{\fm_A} \circ \RGamma_{\fm_B}(M)\]
for $M \in \Mod^{\ZZ}A\otimes B$.
\end{lem}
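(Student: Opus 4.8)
The statement to prove is Lemma \ref{lem.4.5}: for Noetherian locally finite $\NN$-graded algebras $A$ and $B$, we have $\RGamma_{\fm_{A\otimes B}}(M)\simeq\RGamma_{\fm_B}\circ\RGamma_{\fm_A}(M)\simeq\RGamma_{\fm_A}\circ\RGamma_{\fm_B}(M)$ for $M\in\Mod^{\ZZ}A\otimes B$. The reference is \cite[Lemma 4.5]{VdB1}, stated there for Ext-finite connected $\NN$-graded algebras, and the claim is that it extends to the Noetherian locally finite (not necessarily connected) setting.

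\textbf{Approach.} The plan is to reduce everything to a composition-of-derived-functors statement via a Grothendieck-type spectral sequence, and to verify the hypotheses that make it collapse to an isomorphism of complexes. First I would observe that $\fm_{A\otimes B}=(A\otimes B)_{\ge1}$, and that a graded $A\otimes B$-module $M$ is $\fm_{A\otimes B}$-torsion if and only if it is the union of its finite-length submodules. The key elementary fact is the comparison of torsion functors: an element of $M$ is killed by $(A\otimes B)_{\ge n}$ for some $n$ iff it is killed by $A_{\ge n}$ and by $B_{\ge n}$ for some $n$, which gives $\Gamma_{\fm_{A\otimes B}}=\Gamma_{\fm_A}\circ\Gamma_{\fm_B}=\Gamma_{\fm_B}\circ\Gamma_{\fm_A}$ as functors on $\Mod^{\ZZ}(A\otimes B)$ (here $\Gamma_{\fm_A}$ means $\Gamma$ with respect to the ideal $A_{\ge1}\otimes B$, viewing $A\otimes B$-modules, and similarly for $B$; equivalently, restrict scalars, apply $\Gamma_{\fm_A}$, and note the $B$-structure is preserved). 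Then the composite functor spectral sequence of Grothendieck gives $\H^p_{\fm_B}(\H^q_{\fm_A}(M))\Rightarrow\H^{p+q}_{\fm_{A\otimes B}}(M)$, provided $\Gamma_{\fm_A}$ sends injectives to $\Gamma_{\fm_B}$-acyclics.

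\textbf{Key steps, in order.} (1) Check $\Gamma_{\fm_{A\otimes B}}=\Gamma_{\fm_A}\circ\Gamma_{\fm_B}=\Gamma_{\fm_B}\circ\Gamma_{\fm_A}$ on $\Mod^{\ZZ}(A\otimes B)$ by the torsion-element description above; this uses that $A,B$ are $\NN$-graded so that $(A\otimes B)_{\ge n}\supseteq A_{\ge n}\otimes B$ and $(A\otimes B)_{\ge n}\supseteq A\otimes B_{\ge n}$, and conversely $(A\otimes B)_{\ge 2n}\subseteq (A_{\ge n}\otimes B)\cdot(A\otimes B_{\ge n})$ up to the obvious bookkeeping. (2) Show that if $I$ is an injective object of $\Mod^{\ZZ}(A\otimes B)$ then $\Gamma_{\fm_A}(I)$ is $\Gamma_{\fm_B}$-acyclic. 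For this I would use that $\Gamma_{\fm_B}(-)=\varinjlim_n\Hom_{A\otimes B}((A\otimes B)/(A\otimes B_{\ge n}),-)$ commutes with the filtered colimit, so it suffices that $\Gamma_{\fm_A}(I)$ is injective over $A\otimes B$, or at least that it is a direct limit of injectives, or that it has a nice flat resolution on the other side; the standard device is that $\Gamma_{\fm_A}(I)$ is again injective because $\Gamma_{\fm_A}$ is a right adjoint (to the inclusion of the torsion subcategory) and because injectivity of graded modules over Noetherian graded rings localizes well — alternatively, decompose $I$ into indecomposable injectives and use the explicit structure. Because $A$ is Noetherian, $\Gamma_{\fm_A}$ preserves injectives (the torsion submodule of an injective is injective over a Noetherian ring), and an injective $A\otimes B$-module restricted through any side is still injective since $A\otimes B$ is Noetherian (locally finite $\NN$-graded Noetherian over a field, so flat on each side), giving $\Gamma_{\fm_B}$-acyclicity. (3) Conclude via the Grothendieck spectral sequence that $\RGamma_{\fm_B}\circ\RGamma_{\fm_A}(M)\simeq\RGamma_{\fm_{A\otimes B}}(M)$ in $\D(\Mod^{\ZZ}(A\otimes B))$, and symmetrically with $A$ and $B$ exchanged, which yields all three isomorphisms and in particular $\RGamma_{\fm_A}\circ\RGamma_{\fm_B}\simeq\RGamma_{\fm_B}\circ\RGamma_{\fm_A}$.

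\textbf{Main obstacle.} The delicate point is step (2): verifying that $\Gamma_{\fm_A}$ carries injective graded $A\otimes B$-modules to $\Gamma_{\fm_B}$-acyclic objects, without the connectedness hypothesis that \cite{VdB1} uses. The cleanest route is to note that since $A$ and $B$ are locally finite Noetherian $\NN$-graded $k$-algebras, $A\otimes B$ is again such (locally finiteness of the tensor product is where $A_0,B_0$ finite-dimensional — which follows from Noetherian plus locally finite — is used), and that over a Noetherian graded ring the torsion functor preserves injectivity; combined with the fact that restriction along the flat ring maps $A\to A\otimes B$ and $B\to A\otimes B$ preserves injectivity of graded modules (flatness plus Noetherian), one gets that $\Gamma_{\fm_A}(I)$ is injective over $A\otimes B$, hence certainly $\Gamma_{\fm_B}$-acyclic. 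Once this is in place the spectral sequence argument is routine, and the locally finite hypothesis enters only to guarantee that the relevant tensor products and Hom modules stay finite-dimensional in each degree so that all constructions make sense in $\Mod^{\ZZ}$. I would also remark that the same proof shows $\lcd(A\otimes B)\le\lcd A+\lcd B$, though that is not needed here.
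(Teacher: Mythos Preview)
Your outline follows the paper's route exactly: establish the underived identity $\Gamma_{\fm_{A\otimes B}}=\Gamma_{\fm_B}\circ\Gamma_{\fm_A}=\Gamma_{\fm_A}\circ\Gamma_{\fm_B}$ via cofinality, then pass to an injective resolution in $\Mod^{\ZZ}(A\otimes B)$. The paper compresses the passage to derived functors into the single sentence ``An injective resolution in $\Mod^{\ZZ}A\otimes B$ implies the result'', so your spelling out of the acyclicity requirement is welcome.

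The gap is in your justification of step~(2). You assert that $A\otimes_k B$ is again Noetherian and build the argument on this (``torsion submodule of an injective is injective over a Noetherian ring'', Matlis-type decomposition of injectives, etc.). But nothing in the hypotheses guarantees that $A\otimes_k B$ is Noetherian: tensor products of Noetherian noncommutative $k$-algebras need not be Noetherian, and in the paper's intended application ($B=A^{\op}$, so $A\otimes B=A^{\e}$) this is a genuine issue. Your parenthetical ``locally finite $\NN$-graded Noetherian over a field'' only gives that $A\otimes B$ is locally finite, not Noetherian. Likewise, ``$\Gamma_{\fm_A}$ is a right adjoint, hence preserves injectives'' confuses injectivity in the torsion subcategory with injectivity in $\Mod^{\ZZ}(A\otimes B)$; these coincide only under Noetherian-type hypotheses on the ambient ring, which is precisely what you cannot assume.

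The fix is the alternative you mention but do not pursue: use only that $B$ is Noetherian. Write $\Gamma_{\fm_A}(I)=\varinjlim_m\Hom_A(A/A_{\ge m},I)$. For each $m$, the adjunction
\[
\Hom^{\ZZ}_B\bigl(N,\Hom_A(A/A_{\ge m},I)\bigr)\;\simeq\;\Hom^{\ZZ}_{A\otimes B}\bigl((A/A_{\ge m})\otimes_k N,\,I\bigr)
\]
shows that $\Hom_A(A/A_{\ge m},I)$ is injective in $\Mod^{\ZZ}B$ (the left adjoint $(A/A_{\ge m})\otimes_k-$ is exact, and $I$ is injective over $A\otimes B$). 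Since $B$ is Noetherian and each $B/B_{\ge n}$ is finitely presented, the functors $\Ext^i_B(B/B_{\ge n},-)$ commute with filtered colimits; hence $\H^i_{\fm_B}\bigl(\Gamma_{\fm_A}(I)\bigr)=\varinjlim_m\H^i_{\fm_B}\bigl(\Hom_A(A/A_{\ge m},I)\bigr)=0$ for $i>0$. This gives the required $\Gamma_{\fm_B}$-acyclicity, and the Grothendieck composition (or simply applying the underived identity termwise to $I^\bullet$) finishes the proof.
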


\begin{proof}
Since $(A \otimes B)_{\ge 2n} \subset A\otimes B_{\ge n}+ A_{\ge n}\otimes B \subset (A\otimes B)_{\ge n}$, it follows that
\begin{align*}
&\Gamma_{\fm_{A \otimes B}}(-)=
\lim_{n\to \infty }\Hom_{A \otimes B}
( (A \otimes B)/(A \otimes B)_{\ge n}, -)\\
\simeq&\lim_{n\to \infty }\Hom_{A \otimes B}
( (A \otimes B)/(A\otimes B_{\ge n}+ A_{\ge n}\otimes B), -)
\simeq \lim_{n\to \infty }\Hom_{A \otimes B}
( A/A_{\ge n} \otimes B/B_{\ge n}, -)\\
\simeq&\lim_{n\to \infty }\Hom_{B}
( B/B_{\ge n}, \Hom_{A}(A/A_{\ge n}, -))
\simeq \lim_{n\to \infty }\lim_{m\to \infty }\Hom_{B}
( B/B_{\ge n}, \Hom_{A}(A/A_{\ge m}, -)).
\end{align*}
Since $B$ is Noetherian and $B/B_{\ge n} \in \mod^{\ZZ}B$, we have
\begin{align*}
\lim_{n\to \infty }\lim_{m\to \infty }\Hom_{B}
( B/B_{\ge n}, \Hom_{A}(A/A_{\ge m}, -))
&\simeq \lim_{n\to \infty }\Hom_{B}
( B/B_{\ge n}, \lim_{m\to \infty }\Hom_{A}(A/A_{\ge m}, -))\\
&= \Gamma_{\fm_B}\circ \Gamma_{\fm_A}(-).
\end{align*}
Similarly, $\Gamma_{\fm_{A \otimes B}}(-) \simeq \Gamma_{\fm_A}\circ \Gamma_{\fm_B}(-)$.
An injective resolution in $\Mod^{\ZZ}A\otimes B$ implies the result.
\end{proof}

\begin{dfn}{\cite[Definition 3.2]{AZ}}
Let $A$ be a Noetherian locally finite $\NN$-graded algebra.
For $M\in\mod^{\ZZ}A$, we say that $A$ satisfies the \emph{condition $\chi(M)$} if $\dim_k \Ext^i_A(A_0,M) <\infty$ for every $i \in \ZZ$.
We say that $A$ satisfies the \emph{condition $\chi$} if it satisfies the condition $\chi(M)$ for every $M\in\mod^{\ZZ}A$.
Dually, we define the \emph{condition $\chi^{\op}(N)$} for each $N\in\mod^{\ZZ}A^{\op}$, and also the \emph{condition $\chi^{\op}$}.
\end{dfn}

We have the following basic result.

\begin{lem}\label{lem.bim}
Let $A$ be a Noetherian locally finite $\NN$-graded algebra satisfying  $\chi(A)$ and $\chi^{\op}(A)$. Then
$R\Gamma_{\fm}(A)
\simeq R\Gamma_{\fm^{\op}}(A)$ in $\D(\Mod^{\ZZ} A^{\e})$.
\end{lem}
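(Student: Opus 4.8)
The plan is to prove that $\RGamma_{\fm}(A) \simeq \RGamma_{\fm^{\op}}(A)$ in $\D(\Mod^{\ZZ}A^\e)$ by passing to the enveloping algebra $A^\e = A^{\op}\otimes_k A$ and applying Lemma \ref{lem.4.5}. Regard $A$ as an object of $\Mod^{\ZZ}A^\e$, i.e.\ as a graded $A$-bimodule. The left $A$-action is governed by $\fm^{\op} = \fm_{A^{\op}}$ and the right $A$-action by $\fm = \fm_A$, and these are precisely the two ``halves'' of $\fm_{A^\e}$. By Lemma \ref{lem.4.5} applied with the pair of algebras $A^{\op}$ and $A$, we get
\[
\RGamma_{\fm_{A^\e}}(A) \simeq \RGamma_{\fm}\circ\RGamma_{\fm^{\op}}(A) \simeq \RGamma_{\fm^{\op}}\circ\RGamma_{\fm}(A)
\]
in $\D(\Mod^{\ZZ}A^\e)$. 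So it suffices to show that the outer applications of $\RGamma$ can be dropped, i.e.\ that $\RGamma_{\fm}(A)$ is already $\fm^{\op}$-torsion (so that $\RGamma_{\fm^{\op}}$ acts as the identity on it), and symmetrically that $\RGamma_{\fm^{\op}}(A)$ is $\fm$-torsion. Indeed, if $\RGamma_{\fm}(A)$ is $\fm^{\op}$-torsion then $\RGamma_{\fm^{\op}}\circ\RGamma_{\fm}(A)\simeq\RGamma_{\fm}(A)$, and combining with the display gives $\RGamma_{\fm^{\op}}\circ\RGamma_{\fm}(A)\simeq\RGamma_{\fm}\circ\RGamma_{\fm^{\op}}(A)$; the symmetric statement then identifies this with $\RGamma_{\fm^{\op}}(A)$ as well.

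The key reduction is therefore: \emph{the cohomology bimodules $\H^i_{\fm}(A)$ are $\fm^{\op}$-torsion as left $A$-modules}. This is where the hypotheses $\chi(A)$ and $\chi^{\op}(A)$ enter. First I would recall the standard fact (Serre-type vanishing / the relation between $\Gamma_{\fm}$ and $A_0$) that for a Noetherian locally finite $\NN$-graded algebra, $M\in\mod^{\ZZ}A$ has finite length iff $M$ is $\fm$-torsion, and that $\H^i_{\fm}(M)$ is computed from the inverse limit defining $\Gamma_{\fm}$. The condition $\chi(A)$ controls the finiteness of the graded pieces of $\H^i_{\fm}(A)$: under $\chi(A)$ one knows (cf.\ Artin--Zhang \cite{AZ}) that each $\H^i_{\fm}(A)$ is \emph{cofinite}, i.e.\ $\dim_k\H^i_{\fm}(A)_n<\infty$ for all $n$ and $\H^i_{\fm}(A)_n=0$ for $n\gg0$. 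Such a graded module, being the union of its finite-length bounded-below pieces, is an $\fm$-torsion $A$-module. But I need the \emph{left}-sided statement: $\H^i_{\fm}(A)$ is $\fm^{\op}$-torsion as a left module. Here the bimodule structure is essential — $\H^i_{\fm}(A)$ is a graded $A^\e$-module, and the hypothesis $\chi^{\op}(A)$ (applied to $A$ as a right $A^{\op}$-module, equivalently a left $A$-module) guarantees the analogous cofiniteness from the left side, so that $\H^i_{\fm}(A)$ is also $\fm^{\op}$-torsion. The symmetric argument with the roles of $\fm$ and $\fm^{\op}$ exchanged, using $\chi(A)$, shows $\H^i_{\fm^{\op}}(A)$ is $\fm$-torsion.

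I would organize the write-up in three steps: (1) invoke Lemma \ref{lem.4.5} for the pair $(A^{\op}, A)$ to get the three-fold isomorphism for $\RGamma_{\fm_{A^\e}}(A)$; (2) prove, using $\chi(A)$ and the bimodule structure together with $\chi^{\op}(A)$, that each $\H^i_{\fm}(A)$ is an $\fm^{\op}$-torsion bimodule, hence $\RGamma_{\fm^{\op}}\circ\RGamma_{\fm}(A)\simeq\RGamma_{\fm}(A)$ (using that a complex with $\fm^{\op}$-torsion cohomology is fixed by $\RGamma_{\fm^{\op}}$, since the hypercohomology spectral sequence degenerates to the identity on torsion modules); (3) run the symmetric argument to get $\RGamma_{\fm}\circ\RGamma_{\fm^{\op}}(A)\simeq\RGamma_{\fm^{\op}}(A)$, and conclude by comparing with the middle term of the display in Step (1). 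The main obstacle is Step (2): one must be careful that $\H^i_{\fm}(A)$, computed using only the \emph{right} module structure, is genuinely a graded \emph{bimodule} (this is standard since $\Gamma_{\fm}$ on $\Mod^{\ZZ}A^\e$ agrees with $\Gamma_{\fm}$ on $\Mod^{\ZZ}A$ by forgetting the left structure, as in the proof of Lemma \ref{lem.4.5}), and that the left-module torsion finiteness really does follow from $\chi^{\op}(A)$ rather than $\chi(A)$ — the two $\chi$-conditions are genuinely both needed, one for each side. A clean way to phrase Step (2) uniformly is: under $\chi(A)$, the complex $\RGamma_{\fm}(A)$ has cohomologies that are cofinite on the right; under $\chi^{\op}(A)$, the complex $\RGamma_{\fm^{\op}}(A)$ has cohomologies cofinite on the left; and by Lemma \ref{lem.4.5} these two complexes both compute $\RGamma_{\fm_{A^\e}}(A)$ once one checks that applying the ``other side'' $\RGamma$ to a one-sidedly-torsion complex is harmless, which reduces to the elementary fact that $\Gamma_{\fm^{\op}}$ is the identity on $\fm^{\op}$-torsion modules and is exact on them.
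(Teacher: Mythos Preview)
Your approach is essentially the same as the paper's: use Lemma~\ref{lem.4.5} to interchange $\RGamma_{\fm}$ and $\RGamma_{\fm^{\op}}$, and use the $\chi$-conditions (via \cite[Corollary 3.6]{AZ}) to show that the cohomologies of $\RGamma_{\fm}(A)$ and $\RGamma_{\fm^{\op}}(A)$ are torsion on the \emph{other} side, so that the outer $\RGamma$ can be dropped. One small slip: you attribute the $\fm^{\op}$-torsion property of $\H^i_{\fm}(A)$ to $\chi^{\op}(A)$, but in fact it is $\chi(A)$ that makes $\H^i_{\fm}(A)$ bounded above in degree, and then boundedness alone (since $A$ is $\NN$-graded) forces $\fm^{\op}$-torsion; symmetrically, $\chi^{\op}(A)$ handles $\H^i_{\fm^{\op}}(A)$ --- you get this right in your final paragraph.
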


\begin{proof}
Since $A$ satisfies $\chi(A)$  and $\chi^{\op}(A)$, it follows that cohomologies of $R\Gamma_{\fm}(A)$ and $R\Gamma_{\fm^{\op}}(A)$ are $\fm$-torsion by \cite[Corollary 3.6]{AZ}.
Thus we have 
\[
\RGamma_{\fm}(A) \simeq  \RGamma_{\fm^{\op}}(\RGamma_{\fm}(A)) \stackrel{{\rm Lem.\,\ref{lem.4.5}}}{\simeq} 
\RGamma_{\fm}(\RGamma_{\fm^{\op}}(A))  \simeq
\RGamma_{\fm^{\op}}(A)
\]
in $\D(\Mod^{\ZZ} A^{\e})$.
\end{proof}

\subsection{Canonical modules over Artin-Schelter Gorenstein algebras}

In this subsection, we study the canonical modules over AS-Gorenstein algebras $A$. Specifically, we give the local duality theorem and show that the canonical module is an invertible bimodule.
The results of this subsection are well-known for connected $\NN$-graded algebras (see \cite{Jo, Ye, VdB1} and also \cite{Mar}) and for module-finite algebras over commutative Noetherian rings (see e.g.\ Subsection \ref{subsection: order}). They enable us to treat a much more general class of algebras.

Let $A$ be an $\NN$-graded AS-Gorenstein algebra of dimension $d$.
It is easy to see that $\Ext^i_{A}(A/A_{\ge n}, A)=0$ for all $i \neq d$, so $\H_{\fm}^i(A)=0$ for all $i \neq d$. Similarly, $\H_{\fm^{\op}}^i(A)=0$ for all $i \neq d$. 

\begin{dfn}\label{define omega}
Let $A$ be an $\NN$-graded AS-Gorenstein algebra of dimension $d$.
We call
\begin{equation}\label{define omega 2}
\omega=\omega_A := D\H_{\fm}^d(A)\stackrel{\text{ Lem.\,}\ref{lem.bim}}{\simeq} D\H_{\fm^{\op}}^d(A)  \in \Mod^{\ZZ} A^{\e} 
\end{equation}
the \emph{canonical module} of $A$.
\end{dfn}

We first state a lemma, which is instrumental in the proof of the subsequent  proposition.

\begin{lem}\label{classify injective}
Let $A$ be a Noetherian locally finite $\NN$-graded algebra.
\begin{enumerate}
\item If $I$ is a injective module in $\Mod^{\ZZ} A$, then $I \simeq Q \oplus E$, where $Q$ is a $\fm$-torsion-free injective module and $E$ is a $\fm$-torsion injective module in $\Mod^{\ZZ} A$.
\item Each indecomposable $\fm$-torsion injective module $I\in\Mod^{\ZZ}A$ is isomorphic to $D(Ae_i)(\ell)$ for some $i\in \II_A$ and $\ell\in\ZZ$.
\end{enumerate}
\end{lem}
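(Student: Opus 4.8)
The plan is to reduce everything to the commutative-like theory of injective modules over Noetherian graded rings by exploiting the torsion functor $\Gamma_\fm$. For part (1), I would first recall that for any $M\in\Mod^{\ZZ}A$ the submodule $\Gamma_\fm(M)$ consists precisely of the $\fm$-torsion elements, and that $M/\Gamma_\fm(M)$ is $\fm$-torsion-free; this is the content already set up before the statement. Now let $I$ be injective. The key point is that $\Gamma_\fm(I)$ is again an injective module in $\Mod^{\ZZ}A$: since $A$ is Noetherian, $\Gamma_\fm$ is a left-exact functor commuting with direct limits, and one checks via Baer's criterion (testing against the finitely generated graded right ideals $A_{\ge n}$ and their submodules, all of which are finitely generated) that $\Gamma_\fm(I)$ inherits injectivity from $I$. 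Because $\Gamma_\fm(I)$ is a direct summand of $I$ — the inclusion $\Gamma_\fm(I)\hookrightarrow I$ splits by injectivity of $\Gamma_\fm(I)$ — we get $I\simeq E\oplus Q$ with $E:=\Gamma_\fm(I)$ an $\fm$-torsion injective and $Q$ a complement. It remains to see $Q$ is $\fm$-torsion-free: $Q\simeq I/E$ is $\fm$-torsion-free because $E=\Gamma_\fm(I)$ already absorbed all torsion, and a direct summand of a module with no torsion submodule has none either. This gives the claimed decomposition.

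For part (2), let $I$ be an indecomposable $\fm$-torsion injective in $\Mod^{\ZZ}A$. Since $I$ is $\fm$-torsion, every finitely generated submodule has finite length, so $I$ has a nonzero socle; pick a simple submodule, which up to a degree shift is some $S_i=\top^{\ZZ}e_iA$ (using $\Sim A$ from Definition \ref{define I_A}), say $S_i(\ell)\hookrightarrow I$. Then the graded injective hull $\rE^{\ZZ}(S_i(\ell))$ embeds as a direct summand of $I$, and indecomposability forces $I\simeq \rE^{\ZZ}(S_i(\ell)) = \rE^{\ZZ}(S_i)(\ell)$. So it suffices to identify the graded injective hull of each simple module. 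The plan is to show $\rE^{\ZZ}(S_i)\simeq D(Ae_i)$ as graded right $A$-modules. First, $D(Ae_i)$ is injective in $\Mod^{\ZZ}A$: the functor $D=\Hom_k(-,k)$ sends the projective graded left module $Ae_i$ to an injective graded right module (graded $k$-duality is exact and swaps projectives and injectives among locally finite modules, using local finiteness of $A$). Second, $D(Ae_i)$ is $\fm$-torsion: its graded pieces are the duals of the $(Ae_i)_{-n}$, which vanish for $n\ll 0$ since $A$ is $\NN$-graded and bounded below, so every element is killed by $A_{\ge m}$ for suitable $m$. Third, $D(Ae_i)$ is indecomposable with simple socle $S_i$: its socle as a right module is $D$ applied to the top of $Ae_i$ as a left module, namely $D(S_i^{\op})\simeq S_i$, which is simple; hence $D(Ae_i)$ is the injective hull of $S_i$. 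Combining, $\rE^{\ZZ}(S_i)\simeq D(Ae_i)$, and therefore $I\simeq D(Ae_i)(\ell)$ as required.

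The main obstacle I anticipate is the careful bookkeeping in the graded and locally-finite setting: one must make sure that "graded $k$-dual of projective is injective" and the socle computation $\soc(D(Ae_i))\simeq D(\top^{\ZZ}Ae_i)$ are valid with the right finiteness hypotheses (this is where local finiteness and bounded-below-ness of $A$ enter, guaranteeing $D(Ae_i)$ is again locally finite and $\fm$-torsion rather than merely a formal dual). A secondary technical point is verifying that $\Gamma_\fm$ preserves injectivity in part (1); the cleanest route is to check directly that $\Hom_A(-,\Gamma_\fm(I))$ is exact on the category of finitely generated graded modules, using that any such module has a finite filtration whose subquotients are either $\fm$-torsion or $\fm$-torsion-free, and that $\Gamma_\fm(I)$ being $\fm$-torsion kills $\Ext$ with $\fm$-torsion-free modules. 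Once these two lemmas are in place, both parts follow by the standard structure theory of injective modules over Noetherian rings.
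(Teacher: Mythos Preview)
Your treatment of part~(2) is correct and is essentially the paper's argument (the paper compresses it to three lines by taking a projective cover $Ae_i(\ell)\to DS$ and dualising, but the content is the same identification $E^{\ZZ}(S_i)\simeq D(Ae_i)$).

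For part~(1) the strategy---split off $\Gamma_{\fm}(I)$ by showing it is itself injective---is the standard one, but neither of your two sketched verifications goes through as written. In the Baer-criterion route, given $f:J\to\Gamma_{\fm}(I)$ and an extension $g:A\to I$, there is no reason for $g(1)$ to be $\fm$-torsion: knowing $g(1)\cdot J\subset\Gamma_{\fm}(I)$ only gives that $g(1)$ is annihilated by $J\cdot A_{\ge N}$ for some $N$, and for a general right ideal $J$ this does not contain any $A_{\ge M}$. Your fallback route asserts that $\Ext^1(N,\Gamma_{\fm}(I))=0$ for $N$ torsion-free, but unwinding the long exact sequence for $0\to\Gamma_{\fm}(I)\to I\to I/\Gamma_{\fm}(I)\to 0$ shows this vanishing is \emph{equivalent} to the surjectivity of $\Hom(N,I)\to\Hom(N,I/\Gamma_{\fm}(I))$, which is essentially the splitting you are trying to prove; so the argument is circular. (The stronger blanket claim ``torsion target kills $\Ext^1$ from torsion-free'' is false already over $k[x,y]$: take $N=(x,y)$ and $T=k$.)

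What is actually needed is the \emph{stability} of the $\fm$-torsion theory: essential extensions of $\fm$-torsion modules remain $\fm$-torsion. The paper avoids this by citing \cite[Proposition~7.1]{AZ}. If you want a self-contained argument, the cleanest route is to reverse the order and use your part~(2): over a Noetherian ring every injective is a direct sum of indecomposable (uniform) injectives; if such a summand $E$ has $\Gamma_{\fm}(E)\neq0$ it contains a simple $S_i(\ell)$, whence $E=E^{\ZZ}(S_i(\ell))$, and since $D(Ae_i)(\ell)$ is a torsion injective containing $S_i(\ell)$, its summand $E$ is torsion too. Grouping the indecomposable summands according to whether they are torsion or torsion-free then yields the decomposition.
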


\begin{proof}
(1) This is \cite[Proposition 7.1]{AZ}.
	
(2) It is clear that nonzero $\fm$-torsion module has nonzero socle.
For each indecomposable $\fm$-torsion injective module $I\in\Mod^{\ZZ}A$, take a simple submodule $S$. Take a projective cover $Ae_i(\ell)\to DS$ with $i\in\II_A$ and $\ell\in\ZZ$. Then $S\to D(Ae_i)(-\ell)$ is an injective envelope in $\Mod^{\ZZ}A$. Thus we have an injection $D(Ae_i)(-\ell)\to I$, which has to be an isomorphism.
\end{proof}

Thanks to the AS-Gorenstein property, we have the following explicit description of $\omega$. Notice that we do not use the condition Definition \ref{define AS-Gorenstein}(1) in the proof.

\begin{prop}\label{prop.lc1}
Let $A$ be a basic $\NN$-graded AS-Gorenstein algebra of dimension $d$.
\begin{enumerate}
\item For each $i\in \II_A$, we have
\[e_i\omega \simeq e_{\nu i }A(-p_{i})\ \mbox{ in }\ \mod^{\ZZ}A\ \mbox{ and }\ \omega e_{\nu i } \simeq Ae_i(-p_i)\ \mbox{ in }\ \mod^{\ZZ}A^{\op}.\]
\item $\omega$ is projective on both sides. More explicitly, we have
\[\omega \simeq \bigoplus_{i\in\II_A}e_{\nu i}A(-p_i)\ \mbox{ in $\mod^{\ZZ}A$ and }\ \omega \simeq\bigoplus_{i\in\II_A}Ae_i(-p_i)\ \mbox{ in $\mod^{\ZZ}A^{\op}$.}\]
\end{enumerate}
\end{prop}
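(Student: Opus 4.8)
The plan is to derive everything from the definition $\omega = D\H^d_\fm(A)$ together with the explicit behavior of simple modules under $\Ext^d_A(-,A)$ encoded in \eqref{Ext-S 1}, \eqref{Ext-S 2}, and \eqref{prop-Ext-S}. First I would establish local duality: for a Noetherian locally finite $\NN$-graded algebra $A$ of finite injective dimension $d$ (so that $\RGamma_\fm(A)$ is concentrated in cohomological degree $d$), one has a functorial isomorphism $D\H^d_\fm(M) \simeq \Ext^d_A(M,A) \otimes_A \omega$ — or in the cleanest form, $\RHom_A(M,\omega) \simeq D\RGamma_\fm(M)$ — for $M\in\mod^\ZZ A$. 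This is the graded analog of Grothendieck local duality and should be proved by taking an injective resolution and using that $\RGamma_\fm(A)[d] \simeq \omega$ by definition, combined with Hom-evaluation; the key input is that $\H^i_\fm(A)=0$ for $i\neq d$, already noted in the excerpt just before Definition \ref{define omega}.

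Next I would apply this to the simple modules $S_i$, $i\in\II_A$. Since $S_i$ has finite length, $\RGamma_\fm(S_i)\simeq S_i$, so local duality gives $\RHom_A(S_i,\omega)\simeq DS_i$, i.e. $\Hom_A(S_i,\omega) \simeq DS_i$ and $\Ext^{>0}_A(S_i,\omega)=0$. Now I would compute $e_i\omega$ directly. Because $A$ is basic, $e_i\omega\in\mod^\ZZ A$ (recall $\omega$ is finitely generated on each side — this follows from the $\chi$/$\chi^{\op}$ conditions and \cite[Corollary 3.6]{AZ} giving $\fm$-torsion cohomology, hence $\omega$ is a graded $A$-bimodule whose underlying one-sided module is finitely generated; alternatively this will be part of what we prove in (2)). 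To pin down $e_i\omega$ as a right module, I would use the isomorphism $\Hom_{A^{\op}}(e_i\omega\text{-dual data})\dots$ — more efficiently: the condition $\Ext^\ell_A(S_j, e_i\omega(\ell')) = D\H^{?}\dots$ Better, I would use local duality on the \emph{left}: $\Hom_{A^{\op}}(S^{\op}_j, \omega)\simeq DS^{\op}_j$ and $\Ext^{>0}_{A^{\op}}(S^{\op}_j,\omega)=0$, which says $\omega$ is injective as a graded left $A$-module relative to finite-length modules, and combined with $\omega$ being $\fm$-torsion-free (since $\omega = D\H^d_\fm(A)$ and $\H^d_\fm(A)$ is an $\fm$-torsion module, its dual is "co-torsion", i.e. $\fm$-torsion-free) and Lemma \ref{classify injective}, one concludes each $\omega e_{\nu i}$ is a $\fm$-torsion-free indecomposable, forcing $\omega e_{\nu i}\simeq Ae_{j}(\ell)$ for some $j,\ell$. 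Then I would identify $(j,\ell)=(i,-p_i)$ by pairing against simples: $\Hom_{A^{\op}}(Ae_i, \omega e_{\nu i})\neq 0$ in the right degree, together with \eqref{prop-Ext-S} applied on the opposite side via \eqref{Ext-S 2} which says $\Ext^d_{A^{\op}}(S^{\op}_{\nu i},A)\simeq S_i(p_i)$. The analogous computation on the other side gives $e_i\omega\simeq e_{\nu i}A(-p_i)$.

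For part (2), once (1) is known the description is immediate: $\omega = \bigoplus_{i\in\II_A} e_i\omega \simeq \bigoplus_{i\in\II_A} e_{\nu i}A(-p_i)$ as a graded right module (using that $1=\sum_{i\in\JJ_A}e_i = \sum_{i\in\II_A}e_i$ since $A$ is basic), and since $\nu$ is a bijection on $\II_A$ this is a reindexing of $\bigoplus_{i\in\II_A} e_iA(-p_{\nu^{-1}i})$; in particular $\omega\in\proj^\ZZ A$. The left-module statement follows symmetrically from $\omega e_{\nu i}\simeq Ae_i(-p_i)$ and rebracketing $\omega = \bigoplus_{i\in\II_A}\omega e_{\nu i}$.

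The main obstacle I anticipate is the bookkeeping in identifying the \emph{shift} $\ell = -p_i$ correctly (as opposed to the Nakayama-permuted index, which is comparatively routine). The cleanest route is to avoid computing $e_i\omega$ and $\omega e_{\nu i}$ independently: prove one of them, say $\omega e_{\nu i}\simeq Ae_i(-p_i)$, using left-sided local duality plus $\fm$-torsion-freeness plus Lemma \ref{classify injective} to get \emph{projectivity and indecomposability on the left}, then pin the pair $(i,-p_i)$ by testing against $S_j$ via \eqref{Ext-S 2}/\eqref{prop-Ext-S}; and then deduce the right-sided statement from the left-sided one by the bimodule structure — specifically, $e_i\omega$ is recovered as $\Hom_{A^{\op}}(Ae_i,\omega)$-type data, or most simply by applying the entire argument to $A^{\op}$ in place of $A$ (legitimate since $A$ AS-Gorenstein $\iff A^{\op}$ AS-Gorenstein, with $\nu$ replaced by $\nu^{-1}$ and $p_i$ unchanged by \eqref{Ext-S 2}), which yields $e_i\omega\simeq e_{\nu i}A(-p_i)$ after matching conventions. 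A secondary care point is confirming $\omega$ really is finitely generated on each side before invoking Krull--Schmidt / Lemma \ref{classify injective}; this uses $\chi(A)$, $\chi^{\op}(A)$ — which hold here since AS-Gorenstein algebras are Noetherian with $\H^i_\fm(A)$ concentrated in one degree, so $\H^d_\fm(A)$ is $\fm$-torsion of finite-dimensional graded pieces, making $\omega = D\H^d_\fm(A)$ bounded-below with finite-dimensional graded pieces, and finite generation on each side then follows from Noetherianity.
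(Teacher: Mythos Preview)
Your plan has a circularity relative to the paper's logical structure: you want to invoke local duality to prove this proposition, but the paper's proof of local duality (Theorem \ref{local duality}) \emph{uses} Proposition \ref{prop.lc1}(2) to establish $\lcd A<\infty$. You would need an independent route to $\lcd A<\infty$, which you do not supply. There is also a shift error: from $\RGamma_\fm(S_i)\simeq S_i$ and $D\RGamma_\fm(-)\simeq\RHom_A(-,\omega[d])$ one gets $\RHom_A(S_i,\omega)\simeq DS_i[-d]$, i.e.\ $\Ext^d_A(S_i,\omega)\simeq DS_i$, not $\Hom_A(S_i,\omega)\simeq DS_i$; so your claim that $\omega$ is ``injective relative to finite-length modules'' is off by a cohomological shift. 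More seriously, even granting the corrected statement, your jump to ``$\omega e_{\nu i}$ is $\fm$-torsion-free indecomposable, forcing $\omega e_{\nu i}\simeq Ae_j(\ell)$'' is not justified: Lemma \ref{classify injective} classifies \emph{injective} modules, and torsion-freeness plus vanishing of certain Ext groups against simples does not by itself force a module to be projective indecomposable.

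The paper avoids all of this by reversing the logic. It computes $\H^d_\fm(e_{\nu i}A)$ directly: take a minimal injective resolution $e_{\nu i}A\to I^\bullet$, split each $I^\ell=Q^\ell\oplus E^\ell$ into torsion-free and torsion parts via Lemma \ref{classify injective}(1), and observe $\H^\ell_\fm(e_{\nu i}A)\simeq\H^\ell(E^\bullet)$. The AS-Gorenstein condition \eqref{prop-Ext-S} gives $\Hom_A(S_j,E^\ell)\simeq\Ext^\ell_A(S_j,e_{\nu i}A)$, which vanishes unless $\ell=d$ and $j=i$, in which case it is $S^{\op}_{\nu i}(p_i)$. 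Hence $E^\ell=0$ for $\ell\neq d$ and $\soc E^d=S_i(p_i)$, so $E^d\simeq D(Ae_i)(p_i)$ by Lemma \ref{classify injective}(2). Thus $\omega e_{\nu i}=D\H^d_\fm(e_{\nu i}A)\simeq Ae_i(-p_i)$. The other side is symmetric. This argument is elementary, does not use local duality (indeed, as the paper remarks, it does not even use $\injdim_A A=d$), and then feeds into the proof of local duality rather than the other way around.
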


\begin{proof}
It suffices to prove (1). Let $e_{\nu i} A \to I=I^\bullet$ be a minimal injective resolution of $e_{\nu i}A$ in $\K(\Mod^{\ZZ} A)$. We may write $I^\ell=Q^\ell\oplus E^\ell$, where $Q^\ell$ is a $\fm$-torsion-free injective right $A$-module and $E^\ell$ is a $\fm$-torsion injective right $A$-module by Lemma \ref{classify injective}(1). Then there exists an exact sequence of complex $0 \to E^\bullet \to I^\bullet \to Q^\bullet \to 0$.
Since $Q^\bullet$ is $\fm$-torsion-free, we have $\Ext^\ell_A(A/A_{\geq n}, e_{\nu i}A)= \H^\ell(\Hom_ A(A/A_{\geq n}, E^\bullet))$.
Furthermore, since $E^\bullet$ is $\fm$-torsion, we have
\[
\H_{\fm}^\ell(e_{\nu i }A) = \lim_{n\to \infty}\Ext^\ell_A(A/A_{\geq n}, e_{\nu i}A)= \H^\ell(\lim_{n\to \infty}\Hom_ A(A/A_{\geq n}, E^\bullet)) \simeq \H^\ell(E^\bullet).
\]
On the other hand, we have
\[ \Hom_ A(S_j,E^\ell) \simeq
\Hom_ A(S_j,I^\ell) \simeq
\Ext^\ell_ A(S_j,e_{\nu i}A) \simeq 
\begin{cases}
S^{\op}_{\nu j}(p_j) & \ell=d\ \text{and}\ j= i\\
0 & \text{otherwise}
\end{cases}\]
by \eqref{prop-Ext-S}, so $\soc E^\ell=0$ for $\ell \neq d$ and $\soc E^d= S_{i}(p_i)$. Thus $E^\ell=0$ for $\ell \neq d$ and $E^d\simeq D(Ae_{i})(p_{i}) \simeq D(Ae_{i}(-p_{i}))$ by Lemma \ref{classify injective}(2).
Therefore we have
$\H_{\fm}^d(e_{\nu i}A) \simeq \H^d(E^\bullet) \simeq E^d \simeq D(Ae_{i}(-p_{i}))$
in $\Mod^{\ZZ}A$. By \eqref{define omega 2}, we have $\omega e_{\nu i} \simeq D\H_{\fm}^d(e_{\nu i} A) \simeq Ae_{i}(-p_{i})$ in $\Mod^{\ZZ}A^{\op}$.

Similarly, using 
\[
\Ext^\ell_{A^{\op}}(S^{\op}_{\nu j}, Ae_i) \simeq \begin{cases}
S_{j}(p_{j}) & \ell=d\ \text{and}\ j=i\\
0 &\text{otherwise},
\end{cases}
\]
one can obtain $\H_{\fm^{\op}}^d(Ae_i) \simeq D(e_{\nu i}A(-p_{i}))$ in $\Mod^{\ZZ}A^{\op}$. By \eqref{define omega 2}, we have $e_i \omega \simeq D\H_{\fm^{\op}}^d(Ae_i)\simeq e_{\nu i}A(-p_{i})$ in $\Mod^{\ZZ}A$.
\end{proof}

 For $M,N \in \Mod^{\ZZ} A$, we write
\[
\HOM_A(M,N) :=\bigoplus_{i\in\ZZ}\Hom_{A}^{\ZZ}(M,N(i)). \]
Then we have a natural inclusion $\HOM_A(M,N)\subset \Hom_A(M,N)$, and the equality holds
if $M \in \mod^{\ZZ}A$.
We use the similar notation
$\RHOM_A(M, N)$ for $M, N \in  \D(\Mod^{\ZZ} A)$.
We now present the local duality theorem for AS-Gorenstein algebras as a special case of J\o rgensen's theorem \cite[Theorem 2.3]{Jo} (see also \cite[Theorem 5.1]{VdB1}).

\begin{thm}[Local Duality]\label{local duality}
	Let $A$ be a basic $\NN$-graded AS-Gorenstein algebra of dimension $d$ and let $B$ be another graded algebra.
	For $M \in  \D^{-}(\Mod^{\ZZ} B^{\op} \otimes A)$, we have 
	\[  D\RGamma_{\fm}(M) \simeq \RHOM_A(M ,D\RGamma_{\fm}(A)) \simeq
	\RHOM_A(M, \omega[d])\]
	in $\D(\Mod^{\ZZ} A^{\op} \otimes B)$.
	In particular, for $M \in \mod^{\ZZ} A$, we have 
	$D\H_{\fm}^i(M) \simeq \Ext^{d-i}_A(M, \omega)$ in $\Mod A^{\op}$ for all $i \in\ZZ$.
\end{thm}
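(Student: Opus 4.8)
The plan is to deduce the local duality statement for AS-Gorenstein algebras from the general machinery already set up, principally Lemma \ref{lem.bim} together with J\o rgensen's local duality theorem \cite[Theorem 2.3]{Jo} (or the Van den Bergh version \cite[Theorem 5.1]{VdB1}). The first step is to check that a basic $\NN$-graded AS-Gorenstein algebra $A$ satisfies the hypotheses needed to invoke that theorem: it is Noetherian, connected-$\NN$-graded is \emph{not} assumed, but it is locally finite and bounded below, and — crucially — it has a balanced dualizing complex. This is where the preceding subsection does the real work: by Proposition \ref{prop.lc1}, $\H_{\fm}^i(A)=0=\H_{\fm^{\op}}^i(A)$ for $i\neq d$, and $\omega=D\H_{\fm}^d(A)$ is projective (hence finitely generated) on both sides, so $R\Gamma_{\fm}(A)\simeq \omega[d]$ and $R\Gamma_{\fm^{\op}}(A)\simeq\omega[d]$ agree in $\D(\Mod^{\ZZ}A^{\e})$ by Lemma \ref{lem.bim}. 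Thus $\omega[d]$ is a (shifted) balanced dualizing complex for $A$, and this is exactly the input J\o rgensen's theorem requires.

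Next I would feed the $B^{\op}\otimes A$-module $M$ into the local duality isomorphism. J\o rgensen's theorem (applied with the dualizing complex $\omega[d]$) gives a functorial isomorphism $D\RGamma_{\fm}(M)\simeq \RHOM_A(M, D\RGamma_{\fm}(A))$ in $\D(\Mod^{\ZZ}A^{\op}\otimes B)$ for $M\in\D^{-}(\Mod^{\ZZ}B^{\op}\otimes A)$; the care here is to track the left $B$-action through the Hom-functor so that the output genuinely lives in $\D(\Mod^{\ZZ}A^{\op}\otimes B)$ rather than merely in $\D(\Mod^{\ZZ}A^{\op})$. Substituting the identification $D\RGamma_{\fm}(A)\simeq\omega[d]$ from Definition \ref{define omega} and the discussion above then yields $D\RGamma_{\fm}(M)\simeq\RHOM_A(M,\omega[d])$, which is the displayed chain of equivalences.

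Finally, the ``in particular'' clause is extracted by taking $B=k$, so that $M\in\mod^{\ZZ}A$, and passing to cohomology. Here $\RHOM_A$ and $\RHom_A$ agree because $M$ is finitely generated, $\omega$ is a module concentrated in degree $0$ of the complex, and one reads off $\H^{-i}$ of both sides: the left side gives $D\H_{\fm}^i(M)$ and the right side gives $\Ext^{d-i}_A(M,\omega)$, where I would note that $\HOM$ versus $\Hom$ and the grading bookkeeping are harmless because everything is finitely generated over a Noetherian algebra. The main obstacle I anticipate is not any single computation but the verification that J\o rgensen's theorem applies verbatim in the non-connected setting: one must confirm that the existence of the balanced dualizing complex $\omega[d]$ — guaranteed by Proposition \ref{prop.lc1} and Lemma \ref{lem.bim} rather than by a connectedness hypothesis — is all that his argument uses, and that the conditions $\chi$, $\chi^{\op}$ feeding Lemma \ref{lem.bim} are available. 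Since AS-Gorenstein of dimension $d$ forces $\injdim_AA=\injdim_{A^{\op}}A=d<\infty$, these finiteness conditions hold (the relevant $\Ext$'s are finite-dimensional), so the argument goes through; but this is the point that genuinely requires attention rather than routine checking.
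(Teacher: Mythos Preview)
Your overall strategy---reduce to J\o rgensen's local duality \cite[Theorem 2.3]{Jo} and then identify $D\RGamma_{\fm}(A)\simeq\omega[d]$---matches the paper's. But you have misidentified the hypothesis that must be verified. J\o rgensen's Theorem~2.3 does not take ``existence of a balanced dualizing complex'' as input; its hypothesis is that $\lcd A<\infty$. The identification $D\RGamma_{\fm}(A)\simeq\omega[d]$ is essentially the \emph{output}, not the input. So the substantive step you have skipped is exactly the one the paper carries out: proving $\lcd A\le d$.

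The paper does this by a direct computation. For $M\in\mod^{\ZZ}A$ one writes
\[
\RGamma_{\fm}(M)\simeq\lim_{n}\RHom_A(A/A_{\ge n},M)\simeq\lim_{n}\RHom_{A^{\op}}(\RHom_A(M,A),\RHom_A(A/A_{\ge n},A)),
\]
using the duality of Proposition~\ref{A dual}, then passes the limit inside (Noetherianity, $\RHom_A(M,A)\in\Db(\mod^{\ZZ}A^{\op})$) to get $\RHom_{A^{\op}}(\RHom_A(M,A),\RGamma_{\fm}(A))\simeq\RHom_A(\omega,D\RHom_A(M,A))[-d]$. Since $\omega$ is projective in $\Mod^{\ZZ}A$ by Proposition~\ref{prop.lc1}(2), this forces $\H_{\fm}^i(M)=0$ for $i>d$; direct limits extend this to all $M\in\Mod^{\ZZ}A$, giving $\lcd A\le d$. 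This is the missing ingredient in your sketch. Your discussion of $\chi$, $\chi^{\op}$, and Lemma~\ref{lem.bim} is beside the point here: those facts were used earlier to make sense of $\omega$ as a bimodule, but they do not by themselves yield the finiteness of $\lcd A$ that J\o rgensen requires. (Incidentally, be careful citing the full condition $\chi$: Proposition~\ref{AS is X} is proved \emph{using} local duality, so invoking it here would be circular. Only $\chi(A)$ and $\chi^{\op}(A)$ are available beforehand, and those follow directly from the AS-Gorenstein axiom on simples.)
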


\begin{proof}
By \cite[Theorem 2.3]{Jo}, it is enough to show that $\lcd A<\infty$.
For a finitely generated module $M \in \mod^{\ZZ}A$, we have 
\begin{align*}
	&\RGamma_{\fm}(M) \simeq \lim_{n\to \infty }\RHom_{A}(A/A_{\ge n}, M)\\
&\stackrel{\text{Lem.\,}\ref{A dual}}{\simeq} \lim_{n\to \infty }\RHom_{A^{\op}}(\RHom_A(M,A), \RHom_A(A/A_{\ge n},A)).
\end{align*}
Since $A$ is Noetherian and $\RHom_A(M,A) \in \Db(\mod^{\ZZ}A^{\op})$, we have
\begin{align*}
	&\lim_{n\to \infty }\RHom_{A^{\op}}(\RHom_A(M,A), \RHom_A(A/A_{\ge n},A))\\
	&\simeq \RHom_{A^{\op}}(\RHom_A(M,A), \lim_{n\to \infty }\RHom_A(A/A_{\ge n},A))\simeq \RHom_{A^{\op}}(\RHom_A(M,A), \RGamma_{\fm}(A))\\
	&\simeq \RHom_{A}(D\RGamma_{\fm}(A), D\RHom_A(M,A))\simeq \RHom_{A}(\omega, D\RHom_A(M,A))[-d].
\end{align*}
Since $\omega$ is projective in $\Mod^{\ZZ}A$ by Proposition \ref{prop.lc1}(2), we get 
$\H_{\fm}^i(M) \simeq \Hom_{A}(\omega, D\Ext_A^{d-i}(M,A))=0$
for any $i >d$.
Since $A$ is Noetherian, local cohomology commutes with direct limits, 
so we obtain $\H_{\fm}^i(M)=0$ for any $M \in \Mod^{\ZZ}A$ and any $i>d$.
Hence $\lcd A\leq d$.
\end{proof}

Now we are able to prove the following basic properties.

\begin{prop} \label{prop.d3}
Let $A$ be a basic $\NN$-graded AS-Gorenstein algebra.
\begin{enumerate}
\item $\RHom_{A}(\omega, \omega) \simeq A$ and $\RHom_{A^{\op}}(\omega, \omega) \simeq A$ in $\D(\Mod^{\ZZ}A^{\e})$.
\item $\omega$ is a graded invertible $A$-bimodule. Thus we have equivalences
\[-\otimes_A\omega:\mod^{\ZZ}A\to\mod^{\ZZ}A\ \mbox{ and }\ \omega\otimes_A-:\mod^{\ZZ}A^{\op}\to\mod^{\ZZ}A^{\op}.\]
\item We have a duality by Hom evaluation:
\[\RHom_A(-,\omega):\Db(\mod^{\ZZ}A)\rightleftarrows\Db(\mod^{\ZZ}A^{\op}):\RHom_{A^{\op}}(-,\omega).\] 
\end{enumerate}
\end{prop}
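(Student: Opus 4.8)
The plan is to deduce all three statements from the explicit description of $\omega$ in Proposition \ref{prop.lc1}(2) together with the Local Duality theorem (Theorem \ref{local duality}), exploiting that $\omega$ is built from projective summands on both sides with matching degree shifts.

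\emph{Proof of (1).} Since $A$ is basic, Proposition \ref{prop.lc1}(1) gives $e_i\omega\simeq e_{\nu i}A(-p_i)$ in $\mod^{\ZZ}A$ and $\omega e_{\nu i}\simeq Ae_i(-p_i)$ in $\mod^{\ZZ}A^{\op}$. In particular $\omega\in\proj^{\ZZ}A$, so $\RHom_A(\omega,\omega)=\HOM_A(\omega,\omega)$ is concentrated in degree $0$ of the complex, and I only need to identify the bimodule $\HOM_A(\omega,\omega)$ with $A$. Computing componentwise, $e_i\HOM_A(\omega,\omega)e_j\simeq\HOM_A(e_j\omega,e_i\omega)\simeq\HOM_A(e_{\nu j}A(-p_j),e_{\nu i}A(-p_i))\simeq e_{\nu j}Ae_{\nu i}(p_j-p_i)$ as graded $k$-spaces, which (via the isomorphism $\Hom_A(e_{\nu j}A,e_{\nu i}A(\ell))\simeq e_{\nu i}A_\ell e_{\nu j}$, recalling the paper's convention that an arrow $a:i\to j$ gives a left-multiplication map $e_i(kQ)\to e_j(kQ)$) is $e_{\nu j}A e_{\nu i}(p_j-p_i)$ as a graded $A$-bimodule after reorganizing indices; summing over $i,j$ and using that $\nu$ is a bijection recovers $A$. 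The bimodule structure on $\HOM_A(\omega,\omega)$ is exactly the left $A$-action coming from the left $A$-module structure on $\omega$ (via the bimodule structure), which under the identification $\omega\simeq\bigoplus_i Ae_i(-p_i)$ in $\mod^{\ZZ}A^{\op}$ is faithful, so one gets a canonical graded algebra map $A\to\HOM_A(\omega,\omega)$ which is an isomorphism by the dimension count just made. The statement for $A^{\op}$ is symmetric, using the description of $\omega$ as a left module.

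\emph{Proof of (2).} By Proposition \ref{prop.lc1}(2), $\omega$ is projective as a right and as a left $A$-module, hence $-\otimes_A\omega$ is exact and sends $\mod^{\ZZ}A$ to $\mod^{\ZZ}A$. To see $\omega$ is graded invertible, set $\omega^{-1}:=\RHom_A(\omega,A)$, which by projectivity of $\omega$ is again an honest bimodule projective on both sides; part (1) then gives $\omega^{-1}\otimes_A\omega\simeq\RHom_A(\omega,A)\otimes_A\omega\simeq\RHom_A(\omega,\omega)\simeq A$, and symmetrically on the other side. (Alternatively, one can read off $\omega^{-1}\simeq\bigoplus_i e_iA(p_{\nu^{-1}i})$ directly from Proposition \ref{prop.lc1} and check the tensor products explicitly, again using that $\nu$ is a bijection so that the shifts $-p_i$ and $+p_i$ cancel.) This yields the two displayed equivalences.

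\emph{Proof of (3).} Since $\omega$ is a graded invertible bimodule by (2), the functor $\RHom_A(-,\omega)\simeq\RHom_A(-,A)\otimes_A\omega$ on $\Db(\mod^{\ZZ}A)$ is the composite of the Iwanaga-Gorenstein duality $\RHom_A(-,A):\Db(\mod^{\ZZ}A)\rightleftarrows\Db(\mod^{\ZZ}A^{\op})$ from Proposition \ref{A dual} (applicable since $A$ is AS-Gorenstein, hence Iwanaga-Gorenstein) with the autoequivalence $\omega\otimes_A-$ of $\Db(\mod^{\ZZ}A^{\op})$. A composite of a duality with an autoequivalence is again a duality, and the biduality isomorphism is the evaluation map, proving the claim.

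\emph{Main obstacle.} The essential point—and the place to be careful—is the bookkeeping of gradings and indices in (1): one must verify that the degree shifts $-p_i$ attached to the summands $e_i\omega$ and the summands $\omega e_{\nu i}$ are mutually consistent so that $\HOM_A(\omega,\omega)$ is genuinely $A$ as a \emph{graded bimodule} (not merely abstractly isomorphic as a $k$-space), and that the natural algebra homomorphism $A\to\HOM_A(\omega,\omega)$ arising from the bimodule structure is the isomorphism in question rather than some twisted version. Everything else is formal manipulation with the already-established facts that $\omega$ is projective on both sides and that $\nu$ is a bijection.
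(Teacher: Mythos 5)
Your approach to (1) is genuinely different from the paper's: the paper never touches the explicit projective structure of $\omega$, instead chaining $\RHom_{A}(\omega,\omega)\simeq\RHom_A(D\RGamma_{\fm^{\op}}(A),D\RGamma_\fm(A))\simeq\RHOM_{A^{\op}}(\RGamma_\fm(A),\RGamma_{\fm^{\op}}(A))\simeq\RHOM_{A^{\op}}(\RGamma_\fm(A),A)\simeq\cdots\simeq DDA\simeq A$ using $D$-duality, the $\fm$-torsion-free splitting of an injective resolution, and local duality applied to $DA$. Your plan --- a direct bimodule computation via the projective decomposition from Proposition~\ref{prop.lc1} --- is legitimate and is in fact salvageable, but as written it has a gap precisely at the place you yourself flag as ``the essential point.''

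The gap is in the claimed identification of $\HOM_A(\omega,\omega)$ with $A$. You compute $e_i\HOM_A(\omega,\omega)e_j\simeq\HOM_A(e_j\omega,e_i\omega)$ and then substitute the \emph{right-module} description for \emph{both} factors, obtaining (after fixing the index order) $e_{\nu i}Ae_{\nu j}(p_j-p_i)$. You then assert that ``summing over $i,j$ and using that $\nu$ is a bijection recovers~$A$.'' This does not work: reindexing $\nu i\mapsto i'$, $\nu j\mapsto j'$ turns the sum into $\bigoplus_{i',j'}e_{i'}Ae_{j'}(p_{\nu^{-1}j'}-p_{\nu^{-1}i'})$, which is \emph{not} graded-isomorphic to $A$ --- the shifts do not cancel. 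Making the per-component identification $e_{\nu i}A_{n+p_j-p_i}e_{\nu j}\simeq e_iA_ne_j$ is exactly the content of Proposition~\ref{lem-nu-eAe}(2), but that proposition is proved \emph{using} Proposition~\ref{prop.d3}(2), so invoking it here would be circular. The total-dimension count (summed over $n$ as well) only applies when $A$ is finite-dimensional, which it typically is not.

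The correct way to close the gap in your framework is to use the right-module description of $\omega$ for the \emph{source} and the left-module description for the rest, so that the shifts cancel honestly: $\HOM_A(e_j\omega,e_i\omega)\simeq\HOM_A(e_{\nu j}A(-p_j),e_i\omega)\simeq\bigl(e_i\omega e_{\nu j}\bigr)(p_j)$, and then $\omega e_{\nu j}\simeq Ae_j(-p_j)$ gives $e_i\omega e_{\nu j}\simeq e_iAe_j(-p_j)$, whence $e_i\HOM_A(\omega,\omega)e_j\simeq e_iAe_j$ as graded $k$-spaces with no residual shift. Combined with injectivity of the canonical map $\lambda:A\to\HOM_A(\omega,\omega)$ and local finiteness, this makes $\lambda$ a graded isomorphism. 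Parts (2) and (3) of your argument then go through essentially as in the paper (up to a minor reordering in the tensor-hom step of (2)), since both reduce to part (1) together with projectivity of $\omega$ and Proposition~\ref{A dual}.
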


\begin{proof}
(1) Let $A \to I$ be an injective resolution of $A$ in $\K(\Mod^{\ZZ} A^{\e})$.
Then $\RGamma_{\fm^{\op}}(A) \simeq \Gamma_{\fm^{\op}}(I)$, whose terms are $\fm$-torsion injective $A^{\op}$-modules.
Since cohomologies of $\RGamma_{\fm}(A)$ are $\fm$-torsion modules,
\begin{align}\label{eq.endom}
\RHOM_{A^{\op}}(\RGamma_{\fm}(A), A) \simeq \RHOM_{A^{\op}}(\RGamma_{\fm}(A), \RGamma_{\fm^{\op}}(A)).
\end{align}
Hence we have
\begin{align*}
&\RHom_{A}(\omega, \omega) 
\simeq \RHom_{A}(D\RGamma_{\fm^{\op}}(A), D\RGamma_{\fm}(A)) \simeq \RHOM_{A^{\op}}(\RGamma_{\fm}(A), \RGamma_{\fm^{\op}}(A))\\ 
\stackrel{\eqref{eq.endom}}{\simeq}&\RHOM_{A^{\op}}(\RGamma_{\fm}(A), A)
\simeq \RHOM_{A}(DA, D\RGamma_{\fm}(A))
\stackrel{\text{Thm.\,\ref{local duality}}}{\simeq} D\RGamma_{\fm}(DA)
\simeq DDA
\simeq A,
\end{align*}
where we used the fact that $DA$ is $\fm$-torsion in the second-to-last isomorphism.
The proof of the isomorphism $\RHom_{A^{\op}}(\omega, \omega) \simeq A$ is similar.

(2) We have $\omega \otimes_A \Hom_A(\omega, A) \stackrel{{\rm Prop.\,\ref{prop.lc1}(2)}}{\simeq} \End_A(\omega)\stackrel{{\rm(1)}}{\simeq} A$. 
Dually we have $\Hom_A(\omega, A) \otimes_A \omega \simeq A$.

(3) Since $\RHom_A(-,\omega)=(\omega\otimes_A-)\circ\RHom_A(-,A)$, the assertion follows from Proposition \ref{A dual} and (2).
\end{proof}

\begin{prop}\label{AS is X}
Let $A$ be a basic $\NN$-graded AS-Gorenstein algebra. Then $A$ satisfies the condition $\chi$.
\end{prop}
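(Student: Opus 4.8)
The plan is to verify the condition $\chi$ for $A$ by reducing it, via the local duality theorem (Theorem \ref{local duality}) and the explicit description of $\omega$ (Proposition \ref{prop.lc1}), to a statement that holds automatically because $A$ is Noetherian. Recall that $A$ satisfies $\chi$ if $\dim_k\Ext^i_A(A_0,M)<\infty$ for every $M\in\mod^{\ZZ}A$ and every $i\in\ZZ$; since $A_0$ has a finite filtration by the simple modules in $\Sim A$ (up to shift), it suffices to check $\dim_k\Ext^i_A(S_j,M)<\infty$ for each $j\in\II_A$. By \cite[Corollary 3.6]{AZ} (or directly), this finiteness condition is equivalent to the assertion that the cohomologies $\H^i_\fm(M)$ of $\RGamma_\fm(M)$ are all finitely generated (equivalently finite-dimensional, being $\fm$-torsion) graded $A$-modules.

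First I would invoke Theorem \ref{local duality}: for $M\in\mod^{\ZZ}A$ one has $D\H^i_\fm(M)\simeq\Ext^{d-i}_A(M,\omega)$ in $\Mod^{\ZZ}A^{\op}$, and more precisely the graded pieces $D\Ext^j_A(M,\omega)_n$ are finite-dimensional because $M$ is finitely generated, $\omega$ is finitely generated (indeed projective, Proposition \ref{prop.lc1}(2)), and $A$ is locally finite and Noetherian. Thus each $\Ext^{d-i}_A(M,\omega)$ is a finitely generated graded $A^{\op}$-module, hence $\H^i_\fm(M)\simeq D\Ext^{d-i}_A(M,\omega)$ is a finitely generated graded $A$-module. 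A finitely generated $\fm$-torsion graded module over a Noetherian locally finite $\NN$-graded algebra has finite length, so $\H^i_\fm(M)$ is finite-dimensional for all $i$ (and vanishes outside $0\le i\le d$). Feeding this back through \cite[Corollary 3.6]{AZ}, or by the standard exact sequence relating $\RGamma_\fm$, the identity functor, and $\RHom_A(A_0,-)$ type computations, we conclude $\dim_k\Ext^i_A(A_0,M)<\infty$ for all $i$, i.e. $A$ satisfies $\chi(M)$. Since $M$ was arbitrary, $A$ satisfies $\chi$.

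Alternatively, and perhaps more cleanly, I would run the argument used inside the proof of Theorem \ref{local duality}: there it is shown that $\RGamma_\fm(M)\simeq\RHom_A(\omega,D\RHom_A(M,A))[-d]$, so $\H^i_\fm(M)\simeq\Hom_A(\omega,D\Ext^{d-i}_A(M,A))$ using that $\omega$ is projective. Since $A$ is Noetherian, $\Ext^{d-i}_A(M,A)$ is a finitely generated graded $A^{\op}$-module, its graded dual has finite-dimensional graded pieces, and applying $\Hom_A(\omega,-)$ with $\omega$ finitely generated projective keeps the pieces finite-dimensional; moreover $D\Ext^{d-i}_A(M,A)$ is $\fm^{\op}$-torsion so $\Hom_A(\omega,-)$ of it is $\fm$-torsion and finitely generated, hence of finite length. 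This directly gives that $\H^i_\fm(M)$ has finite length, and then $\chi(M)$ follows from the Artin–Zhang criterion \cite[Corollary 3.6]{AZ}.

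The main obstacle I anticipate is purely bookkeeping: making sure the equivalence ``$\chi(M)$ $\iff$ $\H^i_\fm(M)$ finitely generated for all $i$'' is correctly quoted from \cite{AZ} in the non-connected, locally finite setting (the reference is stated there for $\NN$-graded algebras over a field, which covers our $A$ since $A_0$ is finite-dimensional, but one should confirm no connectedness hypothesis sneaks in), and handling the reduction from $A_0$ to the simples $S_j$ when $A_0$ is not semisimple — this is routine via the radical filtration of $A_0$ and the long exact sequence in $\Ext$. Everything else is a direct application of the local duality theorem and the projectivity of $\omega$ already established in this subsection, so I expect the proof to be short.
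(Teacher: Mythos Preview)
Your overall strategy---local duality plus \cite[Corollary 3.6]{AZ}---is exactly the paper's, but you misstate what \cite[Corollary 3.6]{AZ} actually gives, and this leads to a genuine error. The criterion there is that $\chi(M)$ holds if and only if each $\H^i_\fm(M)$ is \emph{bounded above} as a graded module, not that it is finitely generated or finite-dimensional. Your sentence ``each $\Ext^{d-i}_A(M,\omega)$ is a finitely generated graded $A^{\op}$-module, hence $\H^i_\fm(M)\simeq D\Ext^{d-i}_A(M,\omega)$ is a finitely generated graded $A$-module'' does not hold: the $k$-dual of a finitely generated module is almost never finitely generated. Concretely, already for $M=A$ one has $\H^d_\fm(A)=D\omega$, which is bounded above but is supported in infinitely many negative degrees whenever $A$ is infinite-dimensional; it is not finite length. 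The same issue recurs in your alternative paragraph.

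The fix is to prove only what is needed. Since $\omega$ is bounded below (Proposition \ref{prop.lc1}), and $M$ has a projective resolution whose terms are bounded below, $\Ext^{d-i}_A(M,\omega)$ is bounded below for every $i$. Hence $\H^i_\fm(M)\simeq D\Ext^{d-i}_A(M,\omega)$ is bounded above, and \cite[Corollary 3.6]{AZ} yields $\chi(M)$. That is the paper's proof, in one line.
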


\begin{proof}
Let $M \in \mod^{\ZZ} A$. Since $\omega$ is bounded below by Proposition \ref{prop.lc1}, so is $\Ext^i_{A}(M,\omega)$ for every $i$. Thus $\H_{\fm}^i(M)$ is bounded above for every $i$ by local duality (Theorem \ref{local duality}), so the assertion follows from \cite[Corollary 3.6]{AZ}. 
\end{proof}

\section{Artin-Schelter Gorenstein algebras of dimension one}
In this section, we investigate AS-Gorenstein algebras of dimension $1$.
More specifically, for a basic $\NN$-graded AS-Gorenstein algebra $A$ of dimension $1$,
we study the graded total quotient ring $Q$ of $A$ and 
the category $\D_{\sg,0}^{\ZZ}(A)$.

\subsection{The graded total quotient ring and the category $\qgr A$}

For a Noetherian $\NN$-graded algebra $A$, let 
\[
\mod^{\ZZ}_0 A:=\{M \in\mod ^{\ZZ}A \mid M\ \text{is finite length}\},\]
and let 
\[\qgr A:=\mod^{\ZZ} A/\mod^{\ZZ}_0 A\]
be the Serre quotient category. 
The category $\qgr A$ is traditionally called the \emph{noncommutative projective scheme} associated to
$A$ \cite{AZ}. We denote by
\[\pi:\mod^{\ZZ}A\to\qgr A\]
the canonical functor.
Let $\Db(\qgr A)$ be the bounded derived category of $\qgr A$ and let $\per(\qgr A)$ be its thick subcategory of $\Db(\qgr A)$ generated by $\proj^\ZZ A$. 

\begin{prop}\label{qgr Hom-finite}
Let $A$ be a basic $\NN$-graded AS-Gorenstein algebra. Then  $\qgr A$ is Hom-finite and Krull-Schmidt.
\end{prop}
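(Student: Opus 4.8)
The plan is to prove Hom-finiteness and the Krull--Schmidt property for $\qgr A$ by reducing to known results about $\mod^{\ZZ}A$ and the structure of $\qgr A$ as a Serre quotient. First I would recall that $\mod^{\ZZ}A$ is Hom-finite and Krull--Schmidt by Lemma \ref{lem.basic}(1), since $A$ is locally finite (an $\NN$-graded Artin-Schelter Gorenstein algebra is Noetherian with $\dim_k A_0<\infty$, hence locally finite). The key inputs are then: (i) $A$ satisfies the condition $\chi$ by Proposition \ref{AS is X}, and $\lcd A<\infty$ by the proof of Theorem \ref{local duality}; (ii) under these hypotheses, the work of Artin--Zhang \cite{AZ} gives that for $M,N\in\mod^{\ZZ}A$ one has $\Hom_{\qgr A}(\pi M,\pi N)\simeq \varinjlim_{n}\Hom_A^{\ZZ}(M_{\ge n},N)$, and more importantly that this colimit stabilizes, so that $\Hom_{\qgr A}(\pi M,\pi N)$ is a finite-dimensional $k$-vector space. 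Concretely, condition $\chi(M)$ together with $\lcd A<\infty$ forces $\underline{\Hom}_A(M,N)/\HOM_A(M,N)$ and the correction term $\varinjlim\Ext^1$ to be finite-dimensional, which is exactly what is needed.

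For Hom-finiteness: every object of $\qgr A$ is of the form $\pi M$ with $M\in\mod^{\ZZ}A$, and by the description above $\Hom_{\qgr A}(\pi M,\pi N)$ is a subquotient (after stabilization, an honest colimit that is eventually constant) of finite-dimensional spaces $\Hom_A^{\ZZ}(M_{\ge n},N)$; these are finite-dimensional because $M_{\ge n}\in\mod^{\ZZ}A$ and $\mod^{\ZZ}A$ is Hom-finite. The condition $\chi$ (Proposition \ref{AS is X}) guarantees the transition maps in the colimit are eventually isomorphisms, so $\Hom_{\qgr A}(\pi M,\pi N)$ is finite-dimensional. For the Krull--Schmidt property: a Hom-finite $k$-linear additive category in which idempotents split is Krull--Schmidt (endomorphism rings of indecomposables are local, being finite-dimensional algebras with no nontrivial idempotents). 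Idempotents split in $\qgr A$ because it is an abelian category (a Serre quotient of an abelian category). Hence $\qgr A$ is Krull--Schmidt. Alternatively one can cite the general principle that a Serre quotient of a Hom-finite Krull--Schmidt abelian category by a Serre subcategory is again Krull--Schmidt once one checks Hom-finiteness of the quotient, so the real content is the finiteness statement.

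I expect the main obstacle to be the verification that $\Hom_{\qgr A}(\pi M,\pi N)$ is finite-dimensional, i.e.\ that the colimit $\varinjlim_n \Hom_A^{\ZZ}(M_{\ge n},N)$ stabilizes. This is where condition $\chi$ and $\lcd A<\infty$ are essential: one uses the exact sequence relating $\Hom_{\qgr A}(\pi M,\pi N)$ to $\Hom_A(M,N)$, $\H^0_{\fm}$ and $\H^1_{\fm}$, and invokes \cite[Proposition 3.14, Theorem 4.5]{AZ} (or the analogous statements) to conclude that the relevant torsion pieces are finite-dimensional. Everything else---Hom-finiteness of $\mod^{\ZZ}A$, abelianness of $\qgr A$, the passage from Hom-finite abelian to Krull--Schmidt---is standard. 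So the proof should be short: cite Proposition \ref{AS is X} and the local cohomology bound from Theorem \ref{local duality}, invoke the Artin--Zhang machinery to get Hom-finiteness, and then note that a Hom-finite abelian $k$-category is automatically Krull--Schmidt.
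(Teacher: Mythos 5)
Your proposal is correct and follows essentially the same route as the paper: establish that $A$ satisfies condition $\chi$ (Proposition \ref{AS is X}) and then invoke the Artin--Zhang machinery for the Serre quotient $\qgr A$ of a Noetherian locally finite algebra satisfying $\chi$; the paper simply cites \cite[Corollary 7.3(3)]{AZ} for the Hom-finiteness, and the passage from Hom-finite abelian $k$-category to Krull--Schmidt is the standard fact you state. The extra mention of $\lcd A<\infty$ is not needed at this step (it enters earlier, in the proof of Proposition \ref{AS is X} via local duality) but does no harm.
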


\begin{proof}
Immediate from Proposition \ref{AS is X} and \cite[Corollary 7.3(3)]{AZ}. 
\end{proof}

In the rest of this subsection, let $A$ be a basic $\NN$-graded AS-Gorenstein algebra of dimension $1$.

\begin{dfn}
The \emph{graded total quotient ring} $Q$ of $A$ is defined as
\begin{equation}\label{define Q}
	Q:= \bigoplus_{i \in \ZZ} \Hom_{\qgr A}(A, A(i)).
\end{equation}
\end{dfn}
Thanks to Proposition \ref{qgr Hom-finite}, $Q$ is a locally finite $\ZZ$-graded algebra.
We need the following observations.

\begin{lem}\label{tensor Q from qgr}
The following assertions hold.
\begin{enumerate}
\item We have an equivalence $-\otimes_AQ:\pi(\proj^{\ZZ}A)\simeq\proj^{\ZZ}Q$.
\item For each $M\in\mod_0^{\ZZ}A$, we have $M\otimes_AQ=0$.
\item There is a functor $-\otimes_AQ:\qgr A\to\mod^{\ZZ}Q$ (by abuse of notation) making the following diagram commutative.
\[\xymatrix@R=2pc@C=3pc{
&\mod^{\ZZ}A\ar[dl]_\pi\ar[dr]^{-\otimes_AQ}\\
\qgr A\ar[rr]^{-\otimes_AQ}&&\mod^{\ZZ}Q}\]
\end{enumerate}
\end{lem}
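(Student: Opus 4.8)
The plan is to build the functor $-\otimes_AQ$ on $\qgr A$ by first treating projectives, then observing that torsion modules die, and finally invoking the universal property of the Serre quotient. For part (1), I would start from the fact that $Q$ is by definition the graded endomorphism algebra of $\pi(A)$ in $\qgr A$: by the Morita-type computation, $\Hom_{\qgr A}(\pi(A),\pi(A(i)))=Q_i$, and more generally, for idempotents $e_j$, $\Hom_{\qgr A}(\pi(e_jA),\pi(A(i)))=e_jQ_i$ and $\Hom_{\qgr A}(\pi(A),\pi(e_jA(i)))=(Qe_j)_i$. Hence the functor $\pi(A)\mapsto Q$, $\pi(e_jA)\mapsto e_jQ$ extends additively to give a well-defined functor $\pi(\proj^\ZZ A)\to\proj^\ZZ Q$ commuting with the shift; explicitly on objects it is $\pi(P)\mapsto \Hom_{\qgr A}(\pi(A_{\text{base}}),\pi(P))$ suitably graded, but it is cleaner to describe it as $\pi(P)\mapsto P\otimes_AQ$ once we know $e_jA\otimes_AQ=e_jQ$, which is immediate. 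Fullness and faithfulness follow because $\Hom_{\proj^\ZZ Q}(e_iQ,e_jQ(\ell))=e_j Q_\ell e_i=\Hom_{\qgr A}(\pi(e_iA),\pi(e_jA(\ell)))$, the last equality being the definition of $Q$ together with the fact that $\pi$ is full on maps between projectives modulo those factoring through torsion — but over a quotient by a Serre subcategory, $\Hom_{\qgr A}(\pi(e_iA),\pi(e_jA(\ell)))$ is exactly $\varinjlim_n\Hom_A((e_iA)_{\ge n},e_jA(\ell))$, and this colimit is $e_jQ_\ell e_i$ by the $\chi$ condition and Hom-finiteness from Proposition \ref{qgr Hom-finite}. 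Essential surjectivity is clear since every graded projective $Q$-module is a summand of a finite sum of shifts of $Q$, each of which is $\pi(A(i))\otimes_AQ$.

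For part (2), let $M\in\mod_0^\ZZ A$ have finite length. Then $M$ has a finite filtration with simple subquotients $S_i(\ell)$, and each $S_i(\ell)$ is annihilated by $A_{\ge m}$ for suitable $m$; more to the point, $\pi(M)=0$ in $\qgr A$, so $M\otimes_AQ$ — which we will show factors through $\pi$ — must vanish. Concretely, without yet invoking the factorization: $S_i\otimes_AQ$ is a subquotient (as a $Q$-module it's a quotient of $e_iA\otimes_AQ=e_iQ$ by the image of $(\rad A)e_i\otimes_AQ$, but $\rad A$ contains $A_{\ge 1}$ and $Q_iQ_j\subset Q_{i+j}$ forces the image to be all of $e_iQ$ in high enough degree); the cleanest argument is that $S_i$ is $\fm$-torsion, hence $S_i\otimes_AQ=0$ because localizing a torsion module at the "generic" point kills it — formally, every element of $S_i$ is annihilated by some $A_{\ge m}$, and tensoring with $Q$ inverts (in the $\qgr$ sense) all the $A_{\ge n}$, so the image is zero. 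By the long exact sequence (or rather right-exactness of $-\otimes_AQ$ combined with the filtration), $M\otimes_AQ=0$ for all finite-length $M$.

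For part (3), invoke the universal property of the Serre quotient: a functor $F:\mod^\ZZ A\to\mathscr C$ factors (uniquely) through $\pi:\mod^\ZZ A\to\qgr A$ provided $F$ sends every object of $\mod_0^\ZZ A$ to a zero object — but $-\otimes_AQ:\mod^\ZZ A\to\mod^\ZZ Q$ is not exact, only right exact, so I should be slightly careful. The correct statement is: the right-exact functor $-\otimes_AQ$ kills $\mod_0^\ZZ A$ by part (2), and moreover it sends every morphism whose kernel and cokernel are finite-length to an isomorphism (since such a morphism fits into short exact sequences with finite-length ends, and right-exactness plus part (2) forces the induced map to be epi, while applying the same to a related sequence, or using that $Q$ is flat over $A$ in the relevant sense — indeed $Q=S^{-1}A$-like localization in the dimension-one case makes $-\otimes_AQ$ exact on $\mod^\ZZ A$, cf. the discussion around \eqref{CM_0 for order} — gives monic as well); hence it inverts exactly the morphisms that $\pi$ inverts, and the factorization exists by the localization universal property. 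The commutative triangle then holds by construction, and on the full subcategory $\pi(\proj^\ZZ A)$ the induced functor agrees with the one from part (1) by the defining formula.

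The main obstacle I anticipate is \textbf{part (2)'s rigor}: showing $M\otimes_AQ=0$ cleanly for all finite-length $M$ requires knowing that $-\otimes_AQ$ behaves well enough — ideally that it is exact on $\mod^\ZZ A$ (which in dimension one it is, since $Q$ is a genuine localization/quotient ring), so that the finite filtration of $M$ reduces the claim to simple modules, and then $S_i\otimes_AQ=0$ because $S_i$ is $\fm$-torsion while $Q$ is $\fm$-torsion-free in the appropriate sense (it is the degree-$*$ Hom from $\pi(A)$, an object on which $\fm$ acts invertibly up to shift). Getting this exactness, or circumventing it, is the technical crux; everything else is formal bookkeeping with the Serre quotient and the definition of $Q$.
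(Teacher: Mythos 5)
Your plan for (1) follows the paper: compare Hom-spaces between shifts of $A$ in $\qgr A$ with Hom-spaces between shifts of $Q$ in $\mod^{\ZZ} Q$, and note that both equal $Q_{j-i}$. The paper records this in one line; your elaboration via idempotents and the colimit description of $\Hom_{\qgr A}$ is a more explicit unwinding of the same identification.

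The genuine gap is in (2). You rightly flag the circularity risk: flatness of $Q$ over $A$ is only established later (Proposition \ref{lem.sf2}), and its proof runs through Proposition \ref{prop.Q2}, which itself depends on Lemma \ref{tensor Q from qgr}(3) — so it cannot be invoked here. Your fallback — filter $M$ by simples and argue $S_i\otimes_A Q=0$ because $S_i$ is $\fm$-torsion while "$Q$ inverts the $A_{\ge n}$" — does not close as written. Since $Q=\bigoplus_j\Hom_{\qgr A}(A,A(j))$ is not an Ore localization of $A$, the assertion you would need (something like $A_{\ge 1}\cdot Q=Q$, so that $S_i\otimes_{A_0}(Q/A_{\ge1}Q)=0$) is not immediate from the definition, and pinning it down would amount to reproving much of what the lemma is for. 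Your reduction from general finite-length $M$ to simples via right-exactness is fine; only the base case is unjustified.

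The paper's argument for (2) takes a different and shorter route that sidesteps both the filtration and any flatness. Given $M\in\mod_0^\ZZ A$, choose a projective presentation $P'\xrightarrow{f}P\to M\to 0$. Since $\pi(M)=0$, the map $\pi(f)$ is an epimorphism in $\qgr A$, and it \emph{splits} there because $\pi(P)$ is a projective object of $\qgr A$ — a fact which follows from local duality (Theorem \ref{local duality}) together with Proposition \ref{H_m and qgr}, both already available at this point. Transporting the split epimorphism $\pi(f)$ across the additive equivalence of part (1) gives a split epimorphism $f\otimes 1_Q$ in $\mod^\ZZ Q$, whose cokernel $M\otimes_A Q$ therefore vanishes. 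Adopting this argument also sharpens your (3): extending the split-epi observation to a two-term projective resolution of a finite-length cokernel shows that $-\otimes_A Q$ inverts every morphism that $\pi$ inverts, which is precisely what the Gabriel--Zisman universal property requires of a functor that is not a priori exact — the subtlety you correctly identify.
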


\begin{proof}
(1) The assertion follows from 
$\Hom_{\qgr A}(A(i),A(j))=Q_{j-i}=\Hom_Q^{\ZZ}(Q(i),Q(j))$
for each $i,j\in\ZZ$.

(2) Take a projective presentation $P'\xrightarrow{f} P\to M\to0$ in $\mod^{\ZZ}A$. Then $f$ is an epimorphism in $\qgr A$ and hence splits. 
By (1), $f\otimes 1_Q:P'\otimes_AQ\to P\otimes_AQ$ is a split epimorphism in $\mod^{\ZZ}Q$. Since $M\otimes_AQ$ is a cokernel of 
$f\otimes 1_Q$ in $\mod^{\ZZ}Q$, we have $M\otimes_AQ=0$.

(3) Immediate from (2) and the universality of the Serre quotient.
\end{proof}

We prepare the following well-known result.

\begin{prop}{\cite[Proposition 7.2]{AZ}}\label{H_m and qgr}
For a Noetherian $\NN$-graded algebra $A$ and $M\in \mod^{\ZZ}A$,
there exists an exact sequence
\[0\to \H_\fm^0(M)_0\to M_0\to\Hom_{\qgr A}(A,M)\to \H_\fm^1(M)_0\to0\]
and an isomorphism $\Ext_{\qgr A}^i(A,M)\simeq \H_\fm^{i+1}(M)_0$
for each $i\ge1$.
\end{prop}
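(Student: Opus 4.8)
The plan is to derive both assertions from the fundamental short exact sequence $0\to A_{\ge n}\to A\to A/A_{\ge n}\to 0$ of graded right $A$-modules. First I would apply $\Hom^{\ZZ}_A(-,M)$ to it: since $A$ is projective as a right module, $\Ext^i_A(A,M)=0$ for $i\ge1$ and $\Hom^{\ZZ}_A(A,M)=M_0$, so the long exact sequence degenerates into the four-term exact sequence
\[0\to\Hom^{\ZZ}_A(A/A_{\ge n},M)\to M_0\to\Hom^{\ZZ}_A(A_{\ge n},M)\to\Ext^1_A(A/A_{\ge n},M)_0\to 0\]
together with isomorphisms $\Ext^i_A(A_{\ge n},M)_0\simeq\Ext^{i+1}_A(A/A_{\ge n},M)_0$ for all $i\ge1$, where $(-)_0$ denotes the degree-zero component. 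Next I would pass to $\lim_{n\to\infty}$, which is exact: by the very construction of $\RGamma_{\fm}$, i.e.\ the presentation $\RGamma_{\fm}(M)\simeq\lim_{n\to\infty}\RHom_A(A/A_{\ge n},M)$ already exploited in the proof of Theorem~\ref{local duality}, one obtains $\lim_{n\to\infty}\Ext^i_A(A/A_{\ge n},M)_0\simeq\H^i_{\fm}(M)_0$ for every $i\ge0$. Granting the identification $\lim_{n\to\infty}\Ext^i_A(A_{\ge n},M)_0\simeq\Ext^i_{\qgr A}(A,M)$ for every $i\ge0$, the four-term sequence becomes the asserted exact sequence and the isomorphisms become $\Ext^i_{\qgr A}(A,M)\simeq\H^{i+1}_{\fm}(M)_0$ for $i\ge1$.

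It then remains to establish the identification $\lim_{n\to\infty}\Ext^i_A(A_{\ge n},M)_0\simeq\Ext^i_{\qgr A}(A,M)$. For $i=0$ it follows directly from the construction of the Serre quotient $\qgr A=\mod^{\ZZ}A/\mod^{\ZZ}_0 A$: the right ideals $A_{\ge n}$ are cofinal among the graded submodules $A'\subseteq A$ with $A/A'\in\mod^{\ZZ}_0 A$, and the finite-length submodules of $M$ form a directed family with top element $\Gamma_{\fm}(M)=\H^0_{\fm}(M)$, which is bounded because $A$ is locally finite; hence $\Hom^{\ZZ}_A(A_{\ge n},\Gamma_{\fm}(M))=0$ for $n\gg0$, and $\H^{>0}_{\fm}$ of an $\fm$-torsion module vanishes, so passing to the colimit one may replace $M$ by $M/\Gamma_{\fm}(M)$ without changing either side and one recovers $\Hom_{\qgr A}(A,M)$. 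For $i\ge1$ the maps $\lim_{n\to\infty}\Ext^i_A(A_{\ge n},M)_0\to\Ext^i_{\qgr A}(A,M)$, induced by $\pi$ through the canonical isomorphisms $\pi A_{\ge n}\simeq\pi A$ in $\qgr A$, are isomorphisms; this is the remaining content of \cite[Proposition~7.2]{AZ}, and it follows from the general theory of cohomology in Serre localizations of Grothendieck categories. Concretely, since $\pi$ is exact and admits a right adjoint (the section functor), $\lim_{n\to\infty}\RHom_A(A_{\ge n},M)$ is the cone of $\RGamma_{\fm}(M)\to M$, and its degree-zero part computes $\RHom_{\qgr A}(\pi A,\pi M)$.

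The only step that is more than long-exact-sequence bookkeeping and exactness of filtered colimits is this last identification, i.e.\ the content of \cite[Proposition~7.2]{AZ} itself. Since that reference is normally stated for connected graded algebras, the one thing I would verify is that dropping the assumption $A_0=k$ causes no difficulty: the argument uses only that $A$ is right Noetherian and that the relevant torsion subcategory consists of finite-length (equivalently bounded) modules, which remains true in our setting since $A$ is locally finite, so the proof carries over without change.
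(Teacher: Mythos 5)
The paper gives no proof for this proposition beyond the citation of \cite[Proposition 7.2]{AZ}, and your argument correctly reconstructs the standard proof of that result: apply $\RHom_A(-,M)$ to the short exact sequences $0\to A_{\ge n}\to A\to A/A_{\ge n}\to 0$, pass to the filtered colimit (using that $\RGamma_{\fm}(M)\simeq\lim_{n}\RHom_A(A/A_{\ge n},M)$ and the exactness of filtered colimits), and identify $\lim_n\Ext^i_A(A_{\ge n},M)_0$ with $\Ext^i_{\qgr A}(A,M)$ via the section functor. Your closing remark is also the right thing to check: the argument uses only that $A$ is right Noetherian and locally finite (so that the torsion class consists of finite-length modules and $\Gamma_\fm(M)$ is bounded for $M\in\mod^{\ZZ}A$), which is why the citation is legitimate even without the hypothesis $A_0=k$ appearing in \cite{AZ}.
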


We show the following important property of the category $\qgr A$.

\begin{prop}\label{prop.Q2}
Let $A$ be a basic $\NN$-graded AS-Gorenstein algebra of dimension $1$.
\begin{enumerate}
\item The category $\qgr A$ has enough projective objects. The category of projective objects in $\qgr A$ is $\add\{A(i)\mid i\in\ZZ\}\simeq\pi(\proj^{\ZZ}A)$, which is equivalent to $\proj^{\ZZ}Q$.
\item We have an equivalence
\[ \bigoplus_{i\in\ZZ}\Hom_{\qgr A}(A(-i),-): \qgr A \to \mod^{\ZZ}Q.\]
\item There is an isomorphism of functors $\qgr A\to\mod^\ZZ Q$:
\[\bigoplus_{i \in \ZZ}\Hom_{\qgr A}(A(-i),-) \simeq -\otimes_A Q.\]\end{enumerate}
\end{prop}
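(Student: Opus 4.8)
The plan is to establish (1) first and then deduce (2) and (3) from it by a Morita-type argument based on projective presentations in $\qgr A$.

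\emph{Part (1).} The key point is that each $\pi(A(j))$ is a projective object of $\qgr A$. Indeed, the Local Duality Theorem \ref{local duality} shows $\lcd A\le d=1$, so $\H_{\fm}^{i}(M)=0$ for all $M\in\mod^{\ZZ}A$ and all $i\ge 2$; combined with Proposition \ref{H_m and qgr} this gives $\Ext^{i}_{\qgr A}(\pi A,\pi M)\simeq\H_{\fm}^{i+1}(M)_{0}=0$ for every $i\ge 1$. Since every object of $\qgr A$ is of the form $\pi M$, the object $\pi A$ is projective, and hence so is $\pi(A(j))=(\pi A)(j)$ because the degree shift is an autoequivalence of $\qgr A$. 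The category $\qgr A$ has enough projectives: for $\XX=\pi M$ one applies the exact functor $\pi$ to an epimorphism $P\twoheadrightarrow M$ with $P\in\proj^{\ZZ}A$, and $\pi P\in\add\{\pi(A(j))\mid j\in\ZZ\}$ since each $e_jA(\ell)$ is a direct summand of $A(\ell)$. A projective object admits a split epimorphism from such a $\pi P$, hence lies in $\add\{\pi(A(j))\}$; conversely these objects are projective. Thus the category of projectives is $\add\{\pi(A(j))\mid j\in\ZZ\}=\pi(\proj^{\ZZ}A)$ (using that $\qgr A$ is Krull--Schmidt by Proposition \ref{qgr Hom-finite}), which is equivalent to $\proj^{\ZZ}Q$ via $-\otimes_AQ$ by Lemma \ref{tensor Q from qgr}(1).

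\emph{Parts (2) and (3).} Write $F:=\bigoplus_{i\in\ZZ}\Hom_{\qgr A}(A(-i),-)$ and $G:=-\otimes_AQ\colon\qgr A\to\mod^{\ZZ}Q$ (a functor by Lemma \ref{tensor Q from qgr}(3)), and note $F(\pi A(j))=G(\pi A(j))=Q(j)$ for all $j$. There is a natural transformation $\eta\colon G\Rightarrow F$: for $M\in\mod^{\ZZ}A$ the $A$-linear map $M\to F(\pi M)$ given by $M_i\ni m\mapsto(\pi\circ(m\cdot)\colon A(-i)\to\pi M)$ lands in the $Q$-module $F(\pi M)$, so the tensor--hom adjunction yields a $Q$-linear map $\eta_M\colon M\otimes_AQ\to F(\pi M)$; it is natural in $M$, annihilates $\mod^{\ZZ}_0A$, and hence descends to $\eta\colon G\Rightarrow F$ on $\qgr A$. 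Unwinding definitions shows $\eta_{\pi A(j)}$ is the canonical isomorphism $A(j)\otimes_AQ\xrightarrow{\ \sim\ }Q(j)$. Both $F$ and $G$ are right exact on $\qgr A$: for $G$ because $-\otimes_AQ$ is right exact on $\mod^{\ZZ}A$, $\pi$ is exact, and short exact sequences in $\qgr A$ lift to $\mod^{\ZZ}A$; for $F$ because, given a projective presentation $\pi P_1\to\pi P_0\to\XX\to0$ in $\qgr A$ (available by (1)), projectivity of the objects $\pi(A(-i))$ makes $F(\pi P_0)\to F(\XX)$ surjective, and then left-exactness of $F$ gives exactness in the middle. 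Since $\eta$ is an isomorphism on $\add\{\pi(A(j))\}$ and every object of $\qgr A$ admits a presentation by such objects, the five lemma shows $\eta$ is an isomorphism; this proves (3). For (2) it now suffices, via (3), to prove that $G$ is an equivalence: $G$ is right exact, restricts by Lemma \ref{tensor Q from qgr}(1) to an equivalence $\pi(\proj^{\ZZ}A)\xrightarrow{\ \sim\ }\proj^{\ZZ}Q$, and for $P\in\add\{\pi(A(j))\}$ the natural map $\Hom_{\qgr A}(P,\YY)\to\Hom^{\ZZ}_Q(GP,G\YY)$ is bijective (for $P=\pi A(-i)$ it is identified, via $\eta$, with the identity of $F(\YY)_i=\Hom_{\qgr A}(A(-i),\YY)$). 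Essential surjectivity: lift a projective presentation $Q_1\xrightarrow{\alpha}Q_0\to N\to0$ of $N\in\mod^{\ZZ}Q$ through the equivalence of (1) to $\beta\colon\pi P_1\to\pi P_0$ and set $\XX:=\Coker\beta$; then $G(\XX)\simeq\Coker(G\beta)\simeq\Coker\alpha=N$. Full faithfulness: apply $\Hom_{\qgr A}(-,\YY)$ and $\Hom^{\ZZ}_Q(-,G\YY)$ to a projective presentation of $\XX$ and compare the resulting left-exact sequences using the bijections above.

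\emph{Main obstacle.} Conceptually this is the familiar principle that an abelian category with enough projectives whose indecomposable projectives form a nice generating family is a (graded) module category; the technical friction lies entirely in the Serre-quotient bookkeeping: verifying right-exactness of $F$ and $G$ on $\qgr A$ (which requires lifting short exact sequences and projective presentations through $\pi$) and checking a priori that $F$ takes values in \emph{finitely generated} graded $Q$-modules. The last point is cleanest to obtain a posteriori from the isomorphism $\eta\colon -\otimes_AQ\xrightarrow{\ \sim\ }F$, since $M\otimes_AQ$ is visibly finitely generated.
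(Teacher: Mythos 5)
Your proof is correct. Part (1) is essentially the paper's argument: local duality gives $\lcd A\le 1$, whence $\H^2_{\fm}=0$, whence $\Ext^1_{\qgr A}(A,-)=0$ by Proposition \ref{H_m and qgr}, and the rest is formal. For (2) and (3), you reverse the paper's order: the paper deduces (2) directly from (1) by citing abstract Morita theory for an abelian category with enough projectives (or \cite[Proposition 5.6]{AZ}), and then proves (3) separately by constructing $\eta$ and checking it is an isomorphism via a free presentation in $\mod^{\ZZ}A$ and the five lemma. You instead prove (3) first — constructing $\eta$ in the same way, using the tensor--hom adjunction, and invoking the five lemma with a projective presentation in $\qgr A$ in place of a free presentation upstairs — and then deduce (2) from (3) by an explicit Morita-type verification (essential surjectivity by lifting presentations through the equivalence of (1), full faithfulness by applying $\Hom$-functors to presentations). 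The two orderings are logically equivalent, but yours carries one small technical dividend that you yourself flag: it avoids having to check a priori that $\bigoplus_i\Hom_{\qgr A}(A(-i),-)$ lands in \emph{finitely generated} graded $Q$-modules, since this drops out of the isomorphism $\eta$ once it is established, whereas the paper's appeal to abstract Morita theory leaves this to the cited reference. In exchange you pay a little in overhead, having to justify right-exactness of $F$ and $G$ on $\qgr A$ directly, which the paper sidesteps by working with the free presentation $F'\to F\to M\to 0$ in $\mod^{\ZZ}A$ and using exactness of $\pi$.
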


\begin{proof}
(1) For any $M \in \mod^{\ZZ} A$, we have $\H_{\fm}^2(M)=0$ by local duality (Theorem \ref{local duality}) and hence $\Ext_{\qgr A}^1(A,M)=0$ by Proposition \ref{H_m and qgr}. Thus $A(i)$ is a projective object in $\qgr A$ for each $i\in\ZZ$.
Since $\mod^{\ZZ}A$ has enough projective objects $\proj^{\ZZ}A$, its Serre quotient $\qgr A$ has enough projective objects $\pi(\proj^{\ZZ}A)\simeq\add\{A(i)\mid i\in\ZZ\}$.
The last assertion is Lemma \ref{tensor Q from qgr}(1). 

(2) Thanks to (1), Morita theory gives rise to an equivalence
\[\qgr A\simeq\mod(\pi(\proj^{\ZZ}A))\simeq\mod(\proj^{\ZZ}Q)\simeq\mod^{\ZZ}Q.\]
Alternatively, it follows from \cite[Proposition 5.6]{AZ}.

(3) For $M \in \mod^{\ZZ}A$ and $i,j \in \ZZ$, consider the composition
\begin{align*}
M_i \otimes_k Q_j
&\to \Hom_{\mod^{\ZZ} A}(A(j),M(j+i)) \otimes_k \Hom_{\qgr A}(A,A(j))\\
&\to \Hom_{\qgr A}(A(j),M(j+i)) \otimes_k \Hom_{\qgr A}(A,A(j))\\
&\to \Hom_{\qgr A}(A,M(i+j))\to \Hom_{\qgr A}(A(-i-j),M),
\end{align*}
which gives a natural transformation from $-\otimes_k Q$ to $\bigoplus_{i \in \ZZ}\Hom_{\qgr A}(A(-i),-)$. 
By the universality of tensor products, we get a natural transformation $\eta$ from $-\otimes_A Q$ to $\bigoplus_{i \in \ZZ}\Hom_{\qgr A}(A(-i),-)$. 

It is easy to see that $\eta_{A(j)}$ is an isomorphism.
Moreover, for $M \in \mod^{\ZZ}A$, taking a free presentation
$F' \to F \to M \to 0$ in $\mod^{\ZZ}A$, we have 
a commutative diagram
{\small 
\begin{align*}
\xymatrix@R=2pc@C=1pc{
F' \otimes_A Q \ar[r] \ar[d]^{\simeq}_{\eta_{F'}}
&F \otimes_A Q \ar[r] \ar[d]^{\simeq}_{\eta_{F}}
&M \otimes_A Q\ar[r] \ar[d]_{\eta_{M}}
&0
\\
\bigoplus_{i \in \ZZ}\Hom_{\qgr A}(A(-i),F') \ar[r]
&\bigoplus_{i \in \ZZ}\Hom_{\qgr A}(A(-i),F) \ar[r]
&\bigoplus_{i \in \ZZ}\Hom_{\qgr A}(A(-i),M) \ar[r]
&0}
\end{align*}
}
with exact rows, where the exactness of the second row is ensured by the projectivity of $A(i)$ in $\qgr A$. Since $\eta_{F}$ and $\eta_{F'}$ are isomorphisms by \eqref{define Q}, so is $\eta_{M}$.
\end{proof}

Now we give the following fundamental results. In particular, (4) will play a fundamental role in this paper, cf.\ \cite[Proposition 4.16(c)]{BIY}.

\begin{thm}\label{prop.Q4}
Let $A$ be a basic $\NN$-graded AS-Gorenstein algebra of dimension $1$.
\begin{enumerate}
\item There exists $q>0$ such that $\pi(\proj^{\ZZ}A)\simeq\add\bigoplus_{i=1}^{q}A(i)$ and $\proj^{\ZZ}Q=\add\bigoplus_{i=1}^{q}Q(i)$. Moreover, there exists $q'>0$ such that $A(q')\simeq A$ in $\qgr A$ and $Q(q')\simeq Q$ in $\mod^{\ZZ}Q$.
\item $\qgr A$ has a progenerator
\[P:=\bigoplus_{i=1}^{q} A(i).\]
\item Let $\Lambda:=
\End_{\qgr A}(P)$. Then we have an equivalence 
\[\Hom_{\qgr A}(P, -) : \qgr A\to \mod\Lambda.\]
\item $P$ is a tilting object in $\per(\qgr A)$. Therefore we have a triangle equivalence 
\[\per(\qgr A)\simeq\per\Lambda.\]
\end{enumerate}
\end{thm}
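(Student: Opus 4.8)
The plan is to establish the four assertions in order, since each one builds on its predecessors. For part (1), I would start from Theorem \ref{local duality}, which gives $\lcd A \le d = 1$, and combine it with Proposition \ref{H_m and qgr}: since $\H_\fm^2(M) = 0$ for all $M$, the objects $A(i)$ are projective in $\qgr A$ (this is exactly the content used in Proposition \ref{prop.Q2}(1)). The existence of $q' > 0$ with $A(q') \simeq A$ in $\qgr A$ should follow from the AS-Gorenstein structure: the canonical module $\omega$ is a graded invertible bimodule by Proposition \ref{prop.d3}(2), and iterating the autoequivalence $- \otimes_A \omega$ together with the explicit shifts $e_i\omega \simeq e_{\nu i}A(-p_i)$ from Proposition \ref{prop.lc1}(1) should produce a power of the shift functor that is isomorphic to the identity on $\qgr A$ (the Nakayama permutation $\nu$ has finite order and the parameters $p_i$ are integers, so some common multiple works). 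Given $A(q') \simeq A$, the category $\pi(\proj^\ZZ A) = \add\{A(i) \mid i \in \ZZ\}$ is generated by the finitely many shifts $A(1), \dots, A(q')$, so one may take $q = q'$ (or the minimal such period); the corresponding statement $\proj^\ZZ Q = \add\bigoplus_{i=1}^q Q(i)$ and $Q(q') \simeq Q$ then transports along the equivalence $-\otimes_A Q \colon \pi(\proj^\ZZ A) \simeq \proj^\ZZ Q$ of Lemma \ref{tensor Q from qgr}(1).

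For part (2), once $q$ is fixed as in (1), the object $P = \bigoplus_{i=1}^q A(i)$ is projective in $\qgr A$ (each summand is), and it is a generator because every $A(i)$ with $i \in \ZZ$ lies in $\add P$ thanks to the periodicity $A(q') \simeq A$ and $\add\{A(i)\} = \add P$; since $\qgr A$ has enough projectives with the category of projectives equal to $\add\{A(i) \mid i\in\ZZ\}$ (Proposition \ref{prop.Q2}(1)), $P$ is a progenerator. Part (3) is then Morita theory: $\End_{\qgr A}(P) = \Lambda$ is a Noetherian ring (it is a finite matrix ring over $Q$ up to degree shifts, using $Q$ locally finite and right Noetherian), and $\Hom_{\qgr A}(P,-) \colon \qgr A \to \mod \Lambda$ is an equivalence because $P$ is a projective generator in the Hom-finite Krull--Schmidt abelian category $\qgr A$ (Proposition \ref{qgr Hom-finite}); this is essentially a restatement of Proposition \ref{prop.Q2}(2) after replacing the family $\{A(-i)\}$ by the single progenerator $P$.

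For part (4), the equivalence $\Hom_{\qgr A}(P,-) \colon \qgr A \to \mod\Lambda$ of (3) induces a triangle equivalence $\Db(\qgr A) \to \Db(\mod\Lambda)$; I would then argue that it restricts to an equivalence $\per(\qgr A) \to \per\Lambda$. Indeed $\per(\qgr A)$ is by definition $\thick(\proj^\ZZ A)$ inside $\Db(\qgr A)$, i.e.\ $\thick\{A(i) \mid i \in \ZZ\} = \thick(\add P) = \thick P$ (using the periodicity from (1) to reduce to finitely many shifts), and under the derived equivalence $P$ goes to $\Lambda$, so $\thick P$ maps onto $\thick \Lambda = \per\Lambda$. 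Equivalently, one invokes the standard tilting criterion: $P$ has no self-extensions in positive or negative degrees because it is projective in $\qgr A$, and $\thick P = \per(\qgr A)$ by construction, so $P$ is a tilting object of $\per(\qgr A)$ and the equivalence $\per(\qgr A) \simeq \per\Lambda$ follows from Keller's theorem on tilting. The main obstacle I anticipate is part (1) — specifically, proving cleanly that some shift $A(q')$ is isomorphic to $A$ in $\qgr A$, which requires carefully packaging the interplay between the invertible bimodule $\omega$, the Nakayama permutation, and the Gorenstein parameters; the rest is a fairly routine application of Morita theory and the definition of $\per(\qgr A)$.
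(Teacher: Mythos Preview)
Your plan for parts (2)--(4) is correct and matches the paper's argument (Morita theory from a progenerator, then the standard tilting consequence). The problem is in part (1): the canonical module $\omega$ does not give you the periodicity $A(q')\simeq A$ in $\qgr A$.

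What iterating $-\otimes_A\omega$ together with $e_i\omega\simeq e_{\nu i}A(-p_i)$ yields is that $-\otimes_A\omega^{\otimes g}$ is isomorphic to a degree shift $(-c)$ on $\qgr A$ (for $g$ the order of $\nu$ and $c=\sum_{\ell=0}^{g-1}p_{\nu^\ell i}$, constant in $i$ by Proposition~\ref{prop average a-invariant} under ring-indecomposability). But this only says that a certain autoequivalence equals a shift; it does \emph{not} say that any nontrivial shift is isomorphic to the identity. Concretely, for $A=k[x]$ with $\deg x=1$ one has $\omega\simeq A(-1)$, so $-\otimes_A\omega^{\otimes g}\simeq(-g)$ for every $g$, and nothing about $\omega$ forces $(1)\simeq\id$ on $\qgr A$; that isomorphism comes from multiplication by $x$. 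In general there is no $\nu$-order or common-multiple argument that squeezes $A(q')\simeq A$ out of $\omega$ (and the case $c=0$ makes this especially clear). Note also that Proposition~\ref{prop average a-invariant} assumes ring-indecomposability, which is not among the hypotheses here.

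The paper's argument for (1) is different and does not use $\omega$ at all. Take a graded projective presentation $\bigoplus_{i=1}^n A(-s_i)\to A\to A_0\to0$; in $\qgr A$ this becomes a surjection $\bigoplus A(-s_i)\to A$, which splits because $A$ is projective in $\qgr A$ (Proposition~\ref{prop.Q2}(1)). Hence $A\in\mathscr X:=\add\bigoplus_{i=1}^qA(-i)$ for $q=\max s_i$, so $\mathscr X(1)\subset\mathscr X$. Since $(1)$ is an equivalence and $\mathscr X$ has only finitely many indecomposables (Hom-finite Krull--Schmidt, Proposition~\ref{qgr Hom-finite}), a counting argument gives $\mathscr X(1)=\mathscr X$, hence $\pi(\proj^{\ZZ}A)=\mathscr X$, and $q'$ is the order of the permutation that $(1)$ induces on $\Ind\mathscr X$. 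Once you replace your $\omega$-argument by this presentation-plus-pigeonhole step, the rest of your plan goes through exactly as written.
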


Notice that, for each integer $q>0$ satisfying $Q(q)\simeq Q$ in $\mod^{\ZZ}Q$, all the equalities in (1) above hold.

For a Krull-Schmidt category $\mathscr C$, we denote by $\Ind \mathscr C$ a complete set of representatives of isomorphism classes of indecomposable objects of $\mathscr C$.

\begin{proof}
(1) 
Let $q:=\max\{s_1,\dots, s_n \}$ where $s_1,\dots, s_n$ are positive integers such that
$\bigoplus_{i=1}^{n}A(-s_i) \to A \to A_0 \to 0$
is exact in $\mod^{\ZZ} A$. Then we have an exact sequence
\begin{align*}\label{eq.res2}
\bigoplus_{i=1}^{n}A(-s_i) \to A \to 0
\end{align*}
in $\qgr A$.
This splits since $A$ is projective in $\qgr A$ by Proposition \ref{prop.Q2}(1).
Thus we have
\[A \in\mathscr X:=\add\bigoplus_{i=1}^{q}A(-i)\]
in $\qgr A$. In particular, we have $\mathscr X(1)\subseteq \mathscr X$. Since $(1) : \mathscr X \to \mathscr X(1)$ is an equivalence,  we see $\#\Ind\mathscr X = \#\Ind\mathscr X(1)$. Hence $\Ind \mathscr X = \Ind \mathscr X(1)$ and $\mathscr X=\mathscr X(1)$ hold. Inductively, we have $\mathscr X=\mathscr X(i)$.
Consequently, we have $\pi(\proj^{\ZZ}A)\simeq\add\bigoplus_{i=1}^{q}A(-i)$ in $\qgr A$, which gives the first equality.
It also gives the second equality by Proposition \ref{prop.Q2}(2), 

Let $q'$ be the order of the permutation of the finite set $\Ind\mathscr X$ given by the degree shift $(1)$. Then the remaining equalities hold.

(2) and (3) are follows from (1) and Morita theory, and (4) follows from (3).
\end{proof}

As in the commutative case, the $A$-module $Q$ is flat.

\begin{prop} \label{lem.sf2}
Let $A$ be a basic $\NN$-graded AS-Gorenstein algebra of dimension $1$.
\begin{enumerate}
\item $Q$ is a flat $A^{\op}$-module and a flat $A$-module.
\item $Q^{\op} \simeq \bigoplus_{i \in \ZZ} \Hom_{\qgr A^{\op}}(A,A(i))$ as $\ZZ$-graded algebras.
\end{enumerate}
\end{prop}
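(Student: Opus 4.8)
The plan is to prove the two assertions of Proposition \ref{lem.sf2} by reducing them to the structural results about $\qgr A$ already established, especially Theorem \ref{prop.Q4} and Proposition \ref{prop.Q2}. For part (1), I would argue that $Q$ is a filtered colimit of projective (in fact free) $A$-modules, from which flatness follows by Lazard's theorem. Concretely, recall from Theorem \ref{prop.Q4}(1) that there is $q'>0$ with $A(q')\simeq A$ in $\qgr A$; iterating gives isomorphisms $A(mq')\simeq A$ in $\qgr A$ for all $m\ge0$, hence via the functor $-\otimes_AQ$ of Lemma \ref{tensor Q from qgr} (or Proposition \ref{prop.Q2}(3)) isomorphisms $Q(mq')\simeq Q$ of graded $Q$-modules. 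Tracking these isomorphisms degreewise, $Q$ is the union of the ascending chain of $A$-submodules $Q_{\ge -mq'}$, and by the description $Q=\bigoplus_{i\in\ZZ}\Hom_{\qgr A}(A,A(i))$ together with Proposition \ref{prop.Q2}, each truncation $Q_{\ge n}$ is $\Hom_{\qgr A}(A,A)$-type data coming from a finitely generated graded $A$-module; more precisely, $Q_{\ge -mq'}\simeq (A\otimes_AQ)_{\ge -mq'}$ is the image of the finitely generated projective module $e_?A$-pieces, so is a finitely generated $A$-module which, being a submodule of the flat-to-be $Q$, one shows is itself $A$-projective by the explicit computation in Theorem \ref{prop.Q4} (each $A(i)$ being projective in $\qgr A$ means $\Hom_{\qgr A}(A,A(i))$ is computed by the degree-$0$ part of $A(i)$ up to torsion). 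Thus $Q$ is a directed union of finitely generated projective $A$-modules, hence flat; by symmetry, using $\qgr A^{\op}$ in place of $\qgr A$ — which makes sense because $A^{\op}$ is again AS-Gorenstein of dimension $1$ by \eqref{Ext-S 2} — the same argument gives flatness over $A^{\op}$.

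For part (2), I would exhibit the natural candidate isomorphism and check it respects the algebra structure. By definition $Q=\bigoplus_i\Hom_{\qgr A}(A,A(i))=\End_{\qgr A}(A)$ as a graded ring (with the grading recording the shift), and dually set $Q':=\bigoplus_i\Hom_{\qgr A^{\op}}(A,A(i))=\End_{\qgr A^{\op}}(A)$. The duality $\RHom_A(-,\omega)$ of Proposition \ref{prop.d3}(3), or more elementarily the duality $D=\Hom_k(-,k)$ composed with the torsion/quotient formalism, induces an (anti)equivalence between $\qgr A$ and $\qgr A^{\op}$ sending $A(i)\mapsto$ the shift of $A$ on the opposite side; applying it to the graded endomorphism ring of $A$ in $\qgr A$ turns composition around, yielding a graded algebra isomorphism $\End_{\qgr A}(A)^{\op}\simeq\End_{\qgr A^{\op}}(A)$, i.e.\ $Q^{\op}\simeq Q'$. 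Alternatively and perhaps more cleanly, one can use that $Q\otimes_A-$ and $-\otimes_AQ^{\op}$ are the localization functors at the set $S$ of homogeneous nonzerodivisors once one knows (from part (1) and the Ore condition implicit in the Noetherian AS-Gorenstein hypothesis) that $Q$ is the graded ring of fractions $S^{-1}A$; then $Q^{\op}=(S^{-1}A)^{\op}=S^{-1}(A^{\op})$, and $S^{-1}(A^{\op})$ is identified with $\bigoplus_i\Hom_{\qgr A^{\op}}(A,A(i))$ by the same reasoning that produced \eqref{define Q} on the right side.

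I expect the main obstacle to be part (1): making rigorous the claim that the finitely generated $A$-submodules $Q_{\ge n}$ of $Q$ are \emph{projective}, not merely that $Q$ is a colimit of f.g.\ modules. The subtlety is that $Q_n=\Hom_{\qgr A}(A,A(n))$ differs from $A_n$ by the torsion correction of Proposition \ref{H_m and qgr} — there is an exact sequence $0\to\H^0_\fm(A)_n\to A_n\to Q_n\to\H^1_\fm(A)_n\to 0$ — so one must use the AS-Gorenstein hypothesis to control $\H^0_\fm(A)$ and $\H^1_\fm(A)$ (both are finite-dimensional in each degree and, by local duality Theorem \ref{local duality} together with Proposition \ref{prop.lc1}, $\H^0_\fm(A)$ vanishes since $A$ is $\fm$-torsion-free and $\H^1_\fm(A)\simeq D\omega$ is bounded). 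From this one deduces $Q_n=A_n$ for $n\gg0$, which immediately gives that the chain $Q_{\ge -mq'}$ is eventually $A$ itself up to shift, hence that each term is $A$-projective; flatness of $Q$ over $A$ then follows, and the mild Ore-condition bookkeeping needed for part (2) comes along for free. I would take care to phrase the colimit argument so that it applies verbatim on the opposite side, invoking that $A^{\op}$ is AS-Gorenstein of dimension $1$.
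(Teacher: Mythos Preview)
Your approach to part (2) via the duality $\RHom_A(-,A)$ (or equivalently $\RHom_A(-,\omega)$, since $\omega$ is invertible) is essentially what the paper does: that duality descends to a duality between $\Db(\qgr A)$ and $\Db(\qgr A^{\op})$, and applying it to the graded endomorphism ring of $A$ yields $Q^{\op}\simeq\bigoplus_i\Hom_{\qgr A^{\op}}(A,A(i))$. Your alternative via an Ore localization would require first proving that $Q$ is such a localization, which the paper never does.

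Your approach to part (1), however, has a genuine gap. The claim that the truncations $Q_{\ge -mq'}$ are projective $A$-modules is false in general. Take $A=k[x,y]/(xy)$ with $\deg x=\deg y=1$: this is AS-Gorenstein of dimension $1$ with Gorenstein parameter $0$, and $Q\simeq k[x,x^{-1}]\times k[y,y^{-1}]$. Here $q'=1$ works (there is a unit $(x,y)\in Q_1$), and left multiplication by powers of this unit gives $A$-module isomorphisms $Q_{\ge 0}\simeq Q_{\ge m}$ for all $m\ge 0$. For $m\ge 1$ one has $Q_{\ge m}=A_{\ge m}$, so $Q_{\ge 0}\simeq A_{\ge 1}=\fm$, the graded maximal ideal. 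But $\projdim_A\fm=\projdim_A k-1=\infty$ since $A$ is not regular, so $Q_{\ge 0}$ is \emph{not} projective. Your proposed resolution (``$Q_n=A_n$ for $n\gg 0$ gives that $Q_{\ge -mq'}$ is eventually $A$ up to shift'') conflates $Q_{\ge n}$ for $n\gg 0$ with $Q_{\ge n}$ for $n\ll 0$; the periodicity only tells you all truncations are isomorphic to each other, not that any of them is free.

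The paper's argument for (1) bypasses Lazard entirely: by Proposition \ref{prop.Q2}(3), the functor $-\otimes_AQ$ on $\mod^{\ZZ}A$ is isomorphic to $\bigoplus_i\Hom_{\qgr A}(A(-i),-)$, and the latter is exact because each $A(-i)$ is projective in $\qgr A$ (Proposition \ref{prop.Q2}(1)). This gives left flatness immediately; right flatness then follows from part (2), since $Q^{\op}$ is the graded total quotient ring of $A^{\op}$ and the same argument applies.
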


\begin{proof}
(2) Since $A$ is AS-Gorenstein, $\RHom_A(-,A): \Db(\mod^{\ZZ} A) \to  \Db(\mod^{\ZZ} A^{\op})$
and $\RHom_{A^{\op}}(-,A): \Db(\mod^{\ZZ} A^{\op}) \to  \Db(\mod^{\ZZ} A)$ induces a duality between $\Db(\qgr A)$ and $\Db(\qgr A^{\op})$. Hence we have
\begin{align*}
Q^{\op} &\simeq \bigoplus_{i \in \ZZ} \Hom_{\Db(\qgr A)}(A,A(i))^{\op}
\simeq \bigoplus_{i \in \ZZ} \Hom_{\Db(\qgr A^{\op})}(\RHom_A(A(i),A),\RHom_A(A,A))\\
&\simeq \bigoplus_{i \in \ZZ}\Hom_{\qgr A^{\op}}(A(-i),A)
\simeq \bigoplus_{i \in \ZZ} \Hom_{\qgr A^{\op}}(A,A(i))
\end{align*}
as $\ZZ$-graded algebras.

(1) Since $\bigoplus_{i \in \ZZ}\Hom_{\qgr A}(A(-i),-)$ is an exact functor by Proposition \ref{prop.Q2}(1), so is $-\otimes_A Q$ by Proposition \ref{prop.Q2}(3). Thus $Q$ is a left flat $A$-module.
It remains to prove that $Q$ is a right flat $A$-module, or equivalently, $Q^{\op}$ is a left flat $A^{\op}$-module. This follows from the first claim since (2) shows that $Q^{\op}$ is a $\ZZ$-graded total quotient ring of $A^{\op}$. 
\end{proof}

We also prove the following result on injectivity.

\begin{prop} \label{lem.injective}
Let $A$ be a basic $\NN$-graded AS-Gorenstein algebra of dimension $1$.
\begin{enumerate}
\item $\omega$ and $A$ are injective objects in $\qgr A$.
\item $Q$ is an injective object in $\mod^{\ZZ}Q$.
\end{enumerate}
\end{prop}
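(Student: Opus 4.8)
The plan is to deduce (1) from the local duality theorem together with the explicit description of $\omega$, and to deduce (2) from (1) via the Morita-type equivalence $\qgr A\simeq\mod^{\ZZ}Q$ established in Proposition \ref{prop.Q2}(2). For part (1), recall from Proposition \ref{H_m and qgr} that for any $M\in\mod^{\ZZ}A$ one has $\Ext^i_{\qgr A}(A,M)\simeq\H^{i+1}_{\fm}(M)_0$ for $i\ge1$. Since $A$ is AS-Gorenstein of dimension $1$, local duality (Theorem \ref{local duality}) gives $D\H^i_{\fm}(M)\simeq\Ext^{1-i}_A(M,\omega)$, which vanishes for $i\ge2$; hence $\H^{i+1}_{\fm}(M)=0$ for all $i\ge1$, so $\Ext^i_{\qgr A}(A,M)=0$ for all $i\ge1$ and all $M\in\qgr A$ (every object of $\qgr A$ is of the form $\pi M$). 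Thus $A$ — and likewise each $A(i)$ by degree shift — is injective in $\qgr A$. For $\omega$, using Proposition \ref{prop.lc1}(2) we have $\omega\simeq\bigoplus_{i\in\II_A}e_{\nu i}A(-p_i)$ in $\mod^{\ZZ}A$, so $\pi(\omega)$ lies in $\add\{A(j)\mid j\in\ZZ\}$, which consists of injective objects of $\qgr A$ by the above; hence $\omega$ is injective in $\qgr A$.

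For part (2), apply the equivalence $-\otimes_AQ\colon\qgr A\xrightarrow{\ \sim\ }\mod^{\ZZ}Q$ of Proposition \ref{prop.Q2}(2)(3) (equivalently $\bigoplus_{i\in\ZZ}\Hom_{\qgr A}(A(-i),-)$). Under this equivalence $A\in\qgr A$ is sent to $Q\in\mod^{\ZZ}Q$ by the definition \eqref{define Q} of $Q$ and Lemma \ref{tensor Q from qgr}(1). Since an equivalence of abelian categories preserves injectivity, and $A$ is injective in $\qgr A$ by (1), it follows that $Q$ is injective in $\mod^{\ZZ}Q$.

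The only mild subtlety — and the step I would be most careful about — is the identification of the derived-functor vanishing $\Ext^i_{\qgr A}(A,-)=0$ ($i\ge1$) with genuine injectivity of $A$ as an object of the abelian category $\qgr A$: this uses that $A(i)$ for $i\in\ZZ$ are (by Proposition \ref{prop.Q2}(1)) precisely the projective generators of $\qgr A$, so $\Ext^1_{\qgr A}(-,A)$ vanishes on a projective generator and hence on all of $\qgr A$, giving injectivity of $A$ in $\qgr A$. Everything else is a direct invocation of results already in hand: Theorem \ref{local duality}, Proposition \ref{H_m and qgr}, Proposition \ref{prop.lc1}(2), and Proposition \ref{prop.Q2}.
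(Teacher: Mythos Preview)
Your argument for (1) has a genuine gap: you have mixed up projectivity and injectivity. The isomorphism $\Ext^i_{\qgr A}(A,M)\simeq\H^{i+1}_{\fm}(M)_0$ from Proposition~\ref{H_m and qgr}, together with the local-duality vanishing $\H^{i+1}_{\fm}(M)=0$ for $i\ge 1$, shows that $A$ is a \emph{projective} object in $\qgr A$ (which is exactly the content of Proposition~\ref{prop.Q2}(1)), not an injective one. To prove injectivity of $A$ (or of $\omega$) you must show $\Ext^1_{\qgr A}(M,A)=0$ for all $M$, and nothing in your argument addresses this. Your attempted patch in the last paragraph does not help: the assertion ``$\Ext^1_{\qgr A}(-,A)$ vanishes on a projective generator and hence on all of $\qgr A$'' is vacuous, since $\Ext^1_{\qgr A}(P,-)=0$ for any projective $P$ by definition, and this gives no information about $\Ext^1_{\qgr A}(M,A)$ for general $M$. (Note also that the identification projective $=$ injective in $\qgr A$ is Corollary~\ref{cor.qql}, which logically depends on the present proposition, so you cannot appeal to it.)

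The paper's proof attacks injectivity head-on: for an $\fm$-torsion-free $M$ (every object of $\qgr A$ is represented by such a module), local duality gives $\Ext^1_{\mod^{\ZZ}A}(M,\omega)\simeq D\H^0_{\fm}(M)_0=0$, and then the standard comparison
\[
\Ext^1_{\mod^{\ZZ}A}(M,\omega)\to\Ext^1_{\qgr A}(M,\omega)\to\lim_{n\to\infty}\Ext^2_{\mod^{\ZZ}A}(M/M_{\ge n},\omega)=0
\]
(the last term vanishing since $\injdim_A\omega=1$) forces $\Ext^1_{\qgr A}(M,\omega)=0$. Injectivity of $A$ then follows from the decomposition $\omega\simeq\bigoplus_{i}e_{\nu i}A(-p_i)$ of Proposition~\ref{prop.lc1}. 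Your proof of (2) via the equivalence of Proposition~\ref{prop.Q2}(2) is fine and matches the paper's, once (1) is repaired.
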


\begin{proof}
(1) For an $\fm$-torsion-free module $M\in \mod^{\ZZ} A$, we have
$\Ext^1_{\mod^{\ZZ} A}(M,\omega) \simeq D\H_{\fm}^{0}(M)_0=0$ by local duality (Theorem \ref{local duality}).
An exact sequence 
\[\Ext^1_{\mod^{\ZZ} A}(M,\omega) \to \Ext^1_{\qgr A}(M,\omega) \to 
\lim_{n\to \infty }\Ext_{\mod^{\ZZ} A}^2(M/M_{\ge n}, \omega) =0\]
implies $\Ext^1_{\qgr A}(M,\omega)=0$.
Hence $\omega$ is an injective object in $\qgr A$.
By Proposition \ref{prop.lc1}, $A$ is also an injective object in $\qgr A$.

(2) This assertion follows from (1) and Proposition \ref{prop.Q2}(2).
\end{proof}

As a consequence, we obtain the following observations.

\begin{cor}\label{cor.qql}
Let $A$ be a basic $\NN$-graded AS-Gorenstein algebra of dimension $1$.
\begin{enumerate}
\item $\mod^{\ZZ}Q\simeq \qgr A\simeq\mod\Lambda$ is a Frobenius abelian length category. 
\item $\Lambda$ is a finite dimensional selfinjective algebra.
\item $Q$ is $\ZZ$-graded Artinian, that is, $Q$ has finite length as a $\ZZ$-graded $Q$-module on both sides.
\end{enumerate}
\end{cor}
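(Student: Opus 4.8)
The plan is to transport all three statements through the equivalences $\mod^{\ZZ}Q\simeq\qgr A\simeq\mod\Lambda$ coming from Proposition \ref{prop.Q2}(2) and Theorem \ref{prop.Q4}(3), feeding in the injectivity assertions of Proposition \ref{lem.injective}. First I would record that $\Lambda=\End_{\qgr A}(P)$ is finite dimensional over $k$: this is immediate from Proposition \ref{qgr Hom-finite}, since $P=\bigoplus_{i=1}^{q}A(i)$ is a finite direct sum and $\qgr A$ is Hom-finite. Hence $\mod\Lambda$, and thus each of the three equivalent categories, is an abelian length category, and $\mod\Lambda$ has enough projectives ($\add\Lambda$) and enough injectives ($\add D\Lambda$, where $D=\Hom_k(-,k)$). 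This reduces statements (1) and (2) to the single assertion that $\Lambda$ is selfinjective.

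To prove that, I would use that the equivalence $\Hom_{\qgr A}(P,-)\colon\qgr A\to\mod\Lambda$ carries $P$ to the regular right module $\Lambda_{\Lambda}$. Since the degree shift $(1)$ is an autoequivalence of $\qgr A$ and $A$ is injective in $\qgr A$ by Proposition \ref{lem.injective}(1), every $A(i)$ is injective in $\qgr A$, and hence so is $P$; as an equivalence preserves injective objects, $\Lambda_{\Lambda}$ is an injective $\Lambda$-module, i.e.\ $\Lambda$ is selfinjective. This gives (2). Over a finite dimensional selfinjective algebra the classes of projective and of injective modules coincide, so $\mod\Lambda$, and therefore also $\qgr A$ and $\mod^{\ZZ}Q$, is a Frobenius abelian length category; this gives (1). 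One can equally argue directly inside $\mod^{\ZZ}Q$: by Proposition \ref{lem.injective}(2) and the shift autoequivalence each $Q(i)$ is injective, so the projective generator $\bigoplus_{i=1}^{q}Q(i)$ of $\mod^{\ZZ}Q$ from Theorem \ref{prop.Q4}(1) is injective.

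For (3), I would observe that $Q$ is a cyclic, in particular finitely generated, graded right $Q$-module, hence an object of the length category $\mod^{\ZZ}Q$, so $Q$ has finite length as a graded right $Q$-module. For the left-hand side, $A^{\op}$ is again a basic $\NN$-graded AS-Gorenstein algebra of dimension $1$ (the AS-Gorenstein condition is left-right symmetric by \eqref{Ext-S 2}, and basicness and injective dimension are symmetric by definition), and by Proposition \ref{lem.sf2}(2) its graded total quotient ring is $Q^{\op}$; applying the already proved right-hand case to $A^{\op}$ shows $Q^{\op}$ has finite length as a graded right $Q^{\op}$-module, that is, $Q$ has finite length as a graded left $Q$-module. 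Thus $Q$ is $\ZZ$-graded Artinian on both sides.

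The only genuinely substantive step is identifying the injective object $P$ of $\qgr A$ with the regular module $\Lambda_{\Lambda}$ under the Morita equivalence of Theorem \ref{prop.Q4}(3); but this is precisely what Proposition \ref{lem.injective} was set up to provide, so I do not expect a real obstacle here---the remaining work is just bookkeeping of the three equivalences and the left-right symmetry used in (3).
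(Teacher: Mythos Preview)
Your proof is correct and follows essentially the same approach as the paper: the equivalences from Proposition \ref{prop.Q2}(2) and Theorem \ref{prop.Q4}(3), finite-dimensionality of $\Lambda$ from Proposition \ref{qgr Hom-finite}, and selfinjectivity from Proposition \ref{lem.injective}. The paper simply records ``(3) follows from (1)'' without spelling out the left-right symmetry; your explicit appeal to $A^{\op}$ via Proposition \ref{lem.sf2}(2) is a clean way to handle the left-module side and is more careful than the paper's terse statement.
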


\begin{proof}
(1)(2) The equivalences follow from Proposition \ref{prop.Q2}(2) and Theorem \ref{prop.Q4}(3).
Then $\Lambda$ is finite dimensional by Proposition \ref{qgr Hom-finite} and selfinjective by Proposition  \ref{lem.injective}. Thus $\mod\Lambda$ is a Frobenius abelian length category. 

(3) This follows from (1).
\end{proof}

\subsection{The singularity category $\D_{\sg,0}^{\ZZ}(A)$}

For a Noetherian $\ZZ$-graded algebra $A$, we define the \emph{singularity category} by
\begin{align*}
\D_{\sg}^{\ZZ}(A):= \Db(\mod^{\ZZ} A)/\Kb(\proj^{\ZZ} A).
\end{align*}
Assume that $A$ is a basic AS-Gorenstein algebra of dimension $d$. Then the category $\D_{\sg}^{\ZZ}(A)$ has a natural enhancement given as follows:
We call $M\in \mod^{\ZZ} A$ graded \emph{maximal Cohen-Macaulay} (\emph{CM} for short) if $\Ext^i_A(M ,A) =0$ for all $i \neq 0$.
By Proposition \ref{prop.lc1}(2) and local duality (Theorem \ref{local duality}), $M\in \mod^{\ZZ} A$ is graded CM if and only if $\Ext^i_A(M, \omega) =0$ for all $i \neq 0$ if and only if $\H_{\fm}^i(M)=0$ for all $i\neq d$.
We write $\CM^{\ZZ}A$ for the full subcategory of $\mod^{\ZZ} A$ consisting of graded CM modules. Then the dualities in Proposition \ref{A dual} and \ref{prop.d3}(3) restrict to the dualities
\begin{align}\notag
\Hom_A(-,A):\CM^{\ZZ}A\rightleftarrows\CM^{\ZZ}A^{\op}:\Hom_{A^{\op}}(-,A),\\ \label{omega dual}
\Hom_A(-,\omega):\CM^{\ZZ}A\rightleftarrows\CM^{\ZZ}A^{\op}:\Hom_{A^{\op}}(-,\omega).
\end{align}

The \emph{stable category} $\umod^{\ZZ}A$ has the same objects as $\mod^{\ZZ}A$ and the morphisms
\[\underline{\Hom}_A^{\ZZ}(M,N):=\Hom_{\umod^{\ZZ}A}(M,N)=\Hom_A^{\ZZ}(M,N)/P^{\ZZ}(M,N),\]
where $P^{\ZZ}(M,N)$ is the subgroup of $\Hom^{\ZZ}_A(M,N)$ consisting of morphisms factoring through objects in $\proj^{\ZZ}A$.
For $M \in \mod^{\ZZ} A$,
we define its \emph{syzygy} $\Omega M$ to be the kernel of the projective cover $P \to M$. This gives the \emph{syzygy functor}
\[
\Omega: \umod^{\ZZ}A \to \umod^{\ZZ}A.
\]
The stable category of $\CM^{\ZZ}A$ is denoted by $\uCM^{\ZZ}A$.
Then $\uCM^{\ZZ}A$ has a canonical structure of a triangulated category with respect to the translation functor $M[-1] := \Omega M$.
It is well-known that we have an equivalence \cite{Bu}
\begin{equation}\label{stable singular}
\uCM^{\ZZ}A \simeq \D_{\sg}^{\ZZ}(A).
\end{equation}

In the rest of this subsection, let $A$ be a basic $\NN$-graded AS-Gorenstein algebra of dimension $1$, and $Q$ the graded total quotient ring of $A$. 

\begin{dfn}
We define
\begin{align*}
&\mathscr C_A := \thick\{\mod^{\ZZ} _{0} A,\ \proj^{\ZZ} A \} \; \subset \Db(\mod^{\ZZ} A)\\
&\D_{\sg, 0}^{\ZZ}(A):= \mathscr C_A /\Kb(\proj^{\ZZ} A).
\end{align*}
Also in view of \eqref{CM_0 for order}, we define
\begin{align*}
\CM^{\ZZ}_0A :=\{M \in \CM^{\ZZ}{A} \mid M\otimes_A Q\in\proj^{\ZZ}Q\}.
\end{align*}
The stable category of $\CM^{\ZZ}_0A$ is denoted by $\uCM^{\ZZ}_0A$.
\end{dfn}

We have the following description of $\CM_0^{\ZZ}A$ in terms of the category $\qgr A$.

\begin{prop}\label{prop.CM0}
Let $A$ be a basic $\NN$-graded AS-Gorenstein algebra of dimension $1$. We have 
\begin{align*}
\CM^{\ZZ}_0A=\{M \in \CM^{\ZZ}{A} \mid M \ \text{is a projective object in $\qgr A$}\}.
\end{align*}
\end{prop}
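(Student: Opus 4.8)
The plan is to show the two inclusions separately, using the characterization of projective objects in $\qgr A$ from Theorem~\ref{prop.Q4}(1) (equivalently Proposition~\ref{prop.Q2}(1)), namely that the projectives in $\qgr A$ are exactly $\add\{A(i)\mid i\in\ZZ\}\simeq\pi(\proj^{\ZZ}A)$, together with the equivalence $-\otimes_AQ:\pi(\proj^{\ZZ}A)\xrightarrow{\sim}\proj^{\ZZ}Q$ of Lemma~\ref{tensor Q from qgr}(1) and the fact that $\qgr A\simeq\mod^{\ZZ}Q$ via $-\otimes_AQ$ (Proposition~\ref{prop.Q2}(2),(3)).

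First I would treat the easy inclusion: if $M\in\CM^{\ZZ}A$ is a projective object in $\qgr A$, then $\pi(M)\in\pi(\proj^{\ZZ}A)$, so $M\otimes_AQ\simeq\pi(M)\otimes_AQ\in\proj^{\ZZ}Q$ by Lemma~\ref{tensor Q from qgr}(1) (using that $-\otimes_AQ$ factors through $\pi$, Lemma~\ref{tensor Q from qgr}(3)); hence $M\in\CM_0^{\ZZ}A$. For the converse, suppose $M\in\CM_0^{\ZZ}A$, so $M\otimes_AQ\in\proj^{\ZZ}Q$. Under the equivalence $-\otimes_AQ:\qgr A\xrightarrow{\sim}\mod^{\ZZ}Q$ (Proposition~\ref{prop.Q2}(2),(3)), the object $\pi(M)$ corresponds to $M\otimes_AQ$, which is projective in $\mod^{\ZZ}Q$; since the equivalence carries projectives to projectives and $\proj^{\ZZ}Q\simeq\pi(\proj^{\ZZ}A)$, it follows that $\pi(M)$ is a projective object in $\qgr A$. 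Thus $M$ is a projective object in $\qgr A$, giving the reverse inclusion.

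I expect the main point requiring care — rather than a genuine obstacle — is to make sure the identifications are compatible: that $\pi(M)\otimes_AQ$ (in the sense of the functor $\qgr A\to\mod^{\ZZ}Q$) agrees with $M\otimes_AQ$ (the ordinary tensor product), which is exactly the content of the commutative triangle in Lemma~\ref{tensor Q from qgr}(3); and that "projective object in $\qgr A$" for an object of the form $\pi(M)$ with $M\in\mod^{\ZZ}A$ is detected correctly, i.e.\ that $\pi(M)$ projective in $\qgr A$ forces $\pi(M)\in\add\{A(i)\}$ and hence does not require $M$ itself to be projective in $\mod^{\ZZ}A$. Both are immediate from the cited results, so the proof is short. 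One could equivalently phrase the whole argument by noting that $-\otimes_AQ\colon\qgr A\to\mod^{\ZZ}Q$ is an equivalence under which $\pi(M)$ is projective if and only if $M\otimes_AQ$ is projective, and then intersecting with $\CM^{\ZZ}A$.
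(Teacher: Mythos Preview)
Your proof is correct and follows essentially the same approach as the paper: the paper's proof is a one-liner observing that since $-\otimes_AQ:\qgr A\to\mod^{\ZZ}Q$ is an equivalence by Proposition~\ref{prop.Q2}(2)(3), an object of $\qgr A$ is projective if and only if its image in $\mod^{\ZZ}Q$ is projective. You have simply unpacked this with more care, making explicit the commutative triangle from Lemma~\ref{tensor Q from qgr}(3) and the identification of projectives from Proposition~\ref{prop.Q2}(1).
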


\begin{proof}
Since the functor $-\otimes_AQ: \qgr A\simeq\mod^{\ZZ}Q$ is an equivalence by Proposition \ref{prop.Q2}(2)(3), the assertion follows.
\end{proof}

We need the following analog of \eqref{stable singular} for $\CM_0^{\ZZ}A$.

\begin{prop}\label{prop.uCM0}
Let $A$ be a basic $\NN$-graded AS-Gorenstein algebra of dimension $1$. We have equivalences
\[\uCM^{\ZZ}_0A \simeq \D_{\sg,0}^{\ZZ}(A)\ \mbox{ and }\ \umod^{\ZZ}Q \simeq \D_{\sg}^{\ZZ}(Q).\]
\end{prop}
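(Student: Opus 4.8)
```latex
\begin{proof}[Proof proposal]
The plan is to establish the two equivalences by the same mechanism that yields the classical equivalence \eqref{stable singular}, namely by exhibiting each stable category as a Verdier quotient of a bounded derived category by a thick subcategory of ``perfect'' complexes, and then identifying the two quotients. For the second equivalence $\umod^{\ZZ}Q\simeq\D_{\sg}^{\ZZ}(Q)$, the key point is Corollary \ref{cor.qql}: $Q$ is $\ZZ$-graded Artinian and $\mod^{\ZZ}Q\simeq\mod\Lambda$ is a Frobenius abelian length category with $\Lambda$ finite dimensional selfinjective. Over a selfinjective (graded) algebra every finitely generated graded module is graded CM, so $\CM^{\ZZ}Q=\mod^{\ZZ}Q$ and the general equivalence \eqref{stable singular} specializes to $\umod^{\ZZ}Q=\uCM^{\ZZ}Q\simeq\D_{\sg}^{\ZZ}(Q)$. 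I would either invoke \eqref{stable singular} directly in this form, or, if a self-contained argument is wanted, note that $\Kb(\proj^{\ZZ}Q)$ consists exactly of the complexes with finite projective (= finite injective) dimension, which in the Frobenius case are the acyclic complexes up to the relevant identification, so Buchweitz's theorem applies verbatim.

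For the first equivalence $\uCM_0^{\ZZ}A\simeq\D_{\sg,0}^{\ZZ}(A)$, I would restrict the known equivalence $\uCM^{\ZZ}A\simeq\D_{\sg}^{\ZZ}(A)$ of \eqref{stable singular} along the inclusion $\mathscr C_A\hookrightarrow\Db(\mod^{\ZZ}A)$. Concretely: under Buchweitz's equivalence, an object of $\CM^{\ZZ}A$ viewed in $\D_{\sg}^{\ZZ}(A)$ lands in $\D_{\sg,0}^{\ZZ}(A)=\mathscr C_A/\Kb(\proj^{\ZZ}A)$ precisely when it lies in the thick subcategory generated by $\mod_0^{\ZZ}A$, and I claim this happens if and only if $M\in\CM_0^{\ZZ}A$. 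The forward direction: if $M\in\CM_0^{\ZZ}A$ then, by Proposition \ref{prop.CM0}, $\pi(M)$ is projective in $\qgr A$, hence by Theorem \ref{prop.Q4}(1) a summand of $\bigoplus A(i)$; lifting this splitting one finds an exact sequence relating $M$ to a graded projective $A$-module modulo a finite-length module, so $M\in\mathscr C_A$ modulo $\Kb(\proj^{\ZZ}A)$. The converse: any $N\in\mod_0^{\ZZ}A$ has $N\otimes_AQ=0$ by Lemma \ref{tensor Q from qgr}(2), hence $\pi(N)=0$ in $\qgr A$; so every object of $\mathscr C_A/\Kb(\proj^{\ZZ}A)$ has image in $\qgr A$ built from the projectives $A(i)$, forcing the corresponding CM module to lie in $\CM_0^{\ZZ}A$ by Proposition \ref{prop.CM0}.

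The cleanest way to package this is to observe that $\mathscr C_A\supseteq\Kb(\proj^{\ZZ}A)$ is a thick subcategory of $\Db(\mod^{\ZZ}A)$ containing $\Kb(\proj^{\ZZ}A)$, so the quotient $\D_{\sg,0}^{\ZZ}(A)=\mathscr C_A/\Kb(\proj^{\ZZ}A)$ is the thick subcategory of $\D_{\sg}^{\ZZ}(A)$ generated by the image of $\mod_0^{\ZZ}A$. Transporting along \eqref{stable singular}, this is the thick subcategory of $\uCM^{\ZZ}A$ generated by the syzygies of finite-length modules. It then remains to identify this thick subcategory with $\uCM_0^{\ZZ}A$; since $\CM_0^{\ZZ}A$ is closed under the relevant operations (syzygies, cosyzygies via the duality \eqref{omega dual}, direct summands) and contains all such syzygies, and conversely every object of $\CM_0^{\ZZ}A$ is reached as in the previous paragraph, the two coincide.

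I expect the main obstacle to be the bookkeeping in the forward inclusion $\CM_0^{\ZZ}A\subseteq\mathscr C_A/\Kb(\proj^{\ZZ}A)$: one must turn the statement ``$\pi(M)$ is a projective object of $\qgr A$'' into an actual two-term complex of graded projective $A$-modules whose cone is a finite-length module. This is where the Serre-quotient description of $\qgr A$ (the canonical functor $\pi$ and its section up to finite-length modules) has to be used carefully, together with the fact, from Theorem \ref{prop.Q4}(1), that the projectives of $\qgr A$ are exactly $\add\{A(i)\}$. Everything else—triangulated quotient formalism, closure properties of $\CM_0^{\ZZ}A$, and the Frobenius argument for $Q$—is routine once this identification is in place.
\end{proof}
```
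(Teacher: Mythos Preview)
Your proposal is correct and follows essentially the same route as the paper. For $\umod^{\ZZ}Q\simeq\D_{\sg}^{\ZZ}(Q)$ the paper also reduces to the selfinjective algebra $\Lambda$ via Corollary \ref{cor.qql}; for the converse inclusion $\D_{\sg,0}^{\ZZ}(A)\subset\uCM_0^{\ZZ}A$ the paper likewise uses that $-\otimes_AQ$ kills $\mod_0^{\ZZ}A$ (Lemma \ref{tensor Q from qgr}(2)), phrased as a commutative square comparing $\uCM^{\ZZ}A\simeq\D_{\sg}^{\ZZ}(A)$ with $\umod^{\ZZ}Q\simeq\D_{\sg}^{\ZZ}(Q)$. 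The only point where the paper is more concrete than your sketch is exactly the step you flagged as the main obstacle: to show $M\in\CM_0^{\ZZ}A$ lies in $\mathscr C_A$, the paper takes an epimorphism $P\to M$ with $P\in\proj^{\ZZ}A$, notes it splits in $\qgr A$, and then uses that this forces the truncation $P_{\ge n}\to M_{\ge n}$ to split in $\mod^{\ZZ}A$ for $n\gg0$; since $P_{\ge n}$ and $M/M_{\ge n}$ visibly lie in $\mathscr C_A$, so does $M$. This is precisely the ``lifting'' you anticipated, made explicit via high truncation.
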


\begin{proof}
The second equivalence follows from the following commutative diagram
\[\xymatrix@R=2pc@C=4pc{\umod^{\ZZ}Q\ar[r]\ar[d]^{\wr}&\D_{\sg}^{\ZZ}(Q)\ar[d]^{\wr}\\
\umod \Lambda\ar[r]^{\sim}&\D_{\sg}(\Lambda),
}\]
where the vertical equivalences are induced by the equivalence $\mod^{\ZZ}Q\simeq\mod\Lambda$ given in Corollary \ref{cor.qql}.

In the rest, we prove the first equivalence.
If $M \in \CM^{\ZZ}_0A$, then $M$ is projective in $\qgr A$ by Proposition \ref{prop.CM0}.
Let $P \to M$ be an epimorphism in $\mod^{\ZZ} A$, where $P \in \proj^{\ZZ}A$.
Then this induces a split epimorphism in $\qgr A$. Thus 
$P _{\geq n} \to M_{\geq n}$ is a split epimorphism in $\mod^{\ZZ}A$ for some $n\gg 0$. Since $P, P/P _{\geq n} \in \thick\{\mod^{\ZZ} _{0} A,\ \proj^{\ZZ} A \}$, so is $P _{\geq n}$. Since $M_{\geq n}$ is a direct summand of $P _{\geq n}$,
we have $M_{\geq n} \in \thick\{\mod^{\ZZ} _{0} A,\ \proj^{\ZZ} A \}$.
Moreover, since $M/M_{\geq n} \in \thick\{\mod^{\ZZ} _{0} A,\ \proj^{\ZZ} A \}$, 
so is $M$.

Conversely, we have the following commutative diagram.
\[\xymatrix@R=2pc@C=5pc{
\underline{\CM}^{\ZZ}A\ar[r]^\sim\ar[d]^{-\otimes_AQ}&\D_{\sg}^{\ZZ}(A)\ar[d]^{-\otimes_AQ}\\
\underline{\mod}^{\ZZ}Q\ar[r]^\sim&\D_{\sg}^{\ZZ}(Q)
}\]
Since $(\mod^{\ZZ}_0A)\otimes_AQ=0$ by Lemma \ref{tensor Q from qgr}(2), for each $M\in\D_{\sg,0}^{\ZZ}(A)$, we have $M\otimes_AQ=0$ in $\D_{\sg}^{\ZZ}(Q)$. 
Since $\uCM^{\ZZ}_0A$ consists of all $M\in \uCM^{\ZZ}A$ satisfying $M\otimes_AQ=0$ in $\umod^{\ZZ}Q$, we have $\D_{\sg,0}^{\ZZ}(A)\subset\underline{\CM}^{\ZZ}_0A$. 
\end{proof}

\begin{lem}\label{lem.sf3}
If $N \in \CM^{\ZZ}_0{A}$, then $\bigoplus_{i \in \ZZ} \Hom_{\qgr A}(A,N(i))$
is a flat module in $\Mod^{\ZZ}A$.
\end{lem}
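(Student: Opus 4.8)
The plan is to reduce everything to two facts that are already available: $N\otimes_A Q$ is finitely generated graded projective over $Q$ because $N\in\CM^{\ZZ}_0A$, and $Q$ is flat over $A$ by Proposition \ref{lem.sf2}(1).

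First I would identify the $\ZZ$-graded $A$-module $\bigoplus_{i\in\ZZ}\Hom_{\qgr A}(A,N(i))$ with $N\otimes_A Q$. Since the degree shift $(i)$ is an autoequivalence of $\qgr A$, we have $\Hom_{\qgr A}(A,N(i))\simeq\Hom_{\qgr A}(A(-i),N)$ for every $i\in\ZZ$, so
\[\bigoplus_{i\in\ZZ}\Hom_{\qgr A}(A,N(i))\simeq\bigoplus_{i\in\ZZ}\Hom_{\qgr A}(A(-i),N).\]
Applying Proposition \ref{prop.Q2}(3) to $\pi(N)\in\qgr A$, and using that $\pi(N)\otimes_A Q=N\otimes_A Q$ by the commutative diagram in Lemma \ref{tensor Q from qgr}(3), the right-hand side is isomorphic to $N\otimes_A Q$ in $\mod^{\ZZ}Q$, hence in $\Mod^{\ZZ}A$ after restriction of scalars along the structure map $A\to Q$. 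So it suffices to show that $N\otimes_A Q$ is flat in $\Mod^{\ZZ}A$.

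Next, since $N\in\CM^{\ZZ}_0A$, by definition $N\otimes_A Q\in\proj^{\ZZ}Q$; thus $N\otimes_A Q$ is a direct summand of a finite direct sum of shifts of $Q$ in $\mod^{\ZZ}Q$ (for instance of $(\bigoplus_{i=1}^{q}Q(i))^{\oplus m}$ for some $m$, by Theorem \ref{prop.Q4}(1)). By Proposition \ref{lem.sf2}(1), $Q$ is flat as an $A$-module; hence each shift $Q(n)$ is flat in $\Mod^{\ZZ}A$, a finite direct sum of such modules is flat, and the direct summand $N\otimes_A Q$ is flat in $\Mod^{\ZZ}A$, as required.

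I do not expect a genuine obstacle; the whole content is the identification in the first step. The only point needing a little care is that the $A$-module structure on $\bigoplus_{i\in\ZZ}\Hom_{\qgr A}(A(-i),-)$ provided by Proposition \ref{prop.Q2}(3) coincides with the one obtained by restricting the $Q$-action along $A\to Q$ — but this is built into the construction of the natural transformation $\eta$ from $-\otimes_A Q$ in the proof of that proposition, so it causes no trouble.
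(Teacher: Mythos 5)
Your proof is correct and is essentially the same argument as the paper's: both reduce to the fact that $N\otimes_A Q$ (equivalently, $\pi(N)$) is a direct summand of a finite direct sum of shifts of $Q$, and then invoke the flatness of $Q$ over $A$ from Proposition~\ref{lem.sf2}(1). The only cosmetic difference is that the paper works directly with $N$ being a summand of $\bigoplus_{j=1}^m A(s_j)$ in $\qgr A$ and applies $\bigoplus_{i\in\ZZ}\Hom_{\qgr A}(A,-(i))$ term by term, whereas you first identify this functor with $-\otimes_A Q$ via Proposition~\ref{prop.Q2}(3) and then use the definition of $\CM^{\ZZ}_0 A$; both routes are valid and interchangeable.
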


\begin{proof}
Since $N$ is projective in $\qgr A$ by Proposition \ref{prop.CM0},
$N$ is a direct summand of $\bigoplus_{j=1}^m A(s_j)$ in $\qgr A$,
so $\bigoplus_{i \in \ZZ} \Hom_{\qgr A}(A,N(i))$ is a direct summand of
$\bigoplus_{i \in \ZZ} \Hom_{\qgr A}(A, (\bigoplus_{j=1}^m A(s_j))(i))= \bigoplus_{j=1}^m Q(s_j)$.
Since $Q$ is a flat $A$-module by Proposition \ref{lem.sf2}(1), 
so is $\bigoplus_{j=1}^m Q(s_j)$ and so is $\bigoplus_{i \in \ZZ} \Hom_{\qgr A}(A, N(i))$.
\end{proof}

We define the graded \emph{Auslander-Reiten translation} by the composition
\[
\tau:
\xymatrix@C3em{
\uCM^{\ZZ} A \ar[rr]^-{\Omega_{A^{\op}}\Tr(-)}
&& \uCM^{\ZZ}{A^{\op}} \ar[rr]^-{\Hom_{A^{\op}}(-,\omega)}
&& \uCM^{\ZZ} A,}
\]
where $\Tr(-)$ is the Auslander-Bridger transpose.
It is easy to see that 
\begin{align}\label{eq.AR}
\tau \simeq \Omega\Hom_{A^{\op}}(\Hom_A(-,A),\omega)\simeq\Omega(-\otimes_A\omega):
\uCM^{\ZZ}A \to \uCM^{\ZZ}A.
\end{align}
Now we prove the Auslander-Reiten-Serre duality \cite{AR, IT, Yo}, which is already shown if $A$ is a Gorenstein order (see Proposition \ref{lem-Gorder-ASG}) or a connected $\NN$-graded AS-Gorenstein isolated singularity of dimension $d\geq 2$ (see \cite[Theorem 1.3]{Uey1}).

\begin{thm}[Auslander-Reiten-Serre duality for $\uCM^{\ZZ}_0A$] \label{thm.Serref}
Let $A$ be a basic $\NN$-graded AS-Gorenstein algebra of dimension $1$. Then there exists a functorial isomorphism
\[ \Hom_{\uCM^{\ZZ}_0A}(M,N) \simeq D\Ext^1_{\mod^{\ZZ}A}(N,\tau M)\simeq D\Hom_{\uCM^{\ZZ}_0A}(N,M\otimes_A\omega)\]
for $M, N \in \uCM^{\ZZ}_0A$. In other words, $\uCM^{\ZZ}_0A$ has a Serre functor $-\otimes_A\omega$.
\end{thm}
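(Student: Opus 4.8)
The plan is to split the claimed chain of isomorphisms into two parts: the \emph{dimension shift} $D\Ext^1_{\mod^{\ZZ}A}(N,\tau M)\simeq D\underline{\Hom}_A(N,M\otimes_A\omega)$, which is essentially formal, and the \emph{Auslander--Reiten formula} $\underline{\Hom}_A(M,N)\simeq D\Ext^1_{\mod^{\ZZ}A}(N,\tau M)$, which carries all the content and is equivalent to saying that $-\otimes_A\omega$ is a Serre functor on $\uCM^{\ZZ}_0A$. For the dimension shift, recall from \eqref{eq.AR} that $\tau M\simeq\Omega(M\otimes_A\omega)$; applying $\Hom_A(N,-)$ to a short exact sequence $0\to\tau M\to P\to M\otimes_A\omega\to 0$ with $P\in\proj^{\ZZ}A$ and using $\Ext^{>0}_A(N,\proj^{\ZZ}A)=0$ (valid since $N\in\CM^{\ZZ}A$) identifies $\Ext^1_A(N,\tau M)$ with the cokernel of $\Hom_A(N,P)\to\Hom_A(N,M\otimes_A\omega)$, i.e.\ with $\underline{\Hom}_A(N,M\otimes_A\omega)$.

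Before attacking the main formula I would record two preliminary facts. First, $\uCM^{\ZZ}_0A$ is stable under $-\otimes_A\omega$: the autoequivalence $-\otimes_A\omega$ of $\mod^{\ZZ}A$ (Proposition \ref{prop.d3}(2)) preserves finite length, hence descends to an autoequivalence of $\qgr A$; it sends $\pi(A)$ to $\pi(\omega)$, which is projective in $\qgr A$ since $\omega\in\proj^{\ZZ}A$ (Proposition \ref{prop.lc1}(2)), so it preserves projective objects of $\qgr A$ and therefore, by Proposition \ref{prop.CM0}, restricts to an autoequivalence of $\CM^{\ZZ}_0A$; the same argument via the duality $\Db(\qgr A)\simeq\Db(\qgr A^{\op})$ shows that $\Hom_A(-,A)$ and $\Hom_A(-,\omega)$ (see \eqref{omega dual}) restrict to dualities between $\CM^{\ZZ}_0A$ and $\CM^{\ZZ}_0A^{\op}$. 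Second, the crucial finiteness input: for $M,N\in\CM^{\ZZ}_0A$ one has $\Ext^i_A(M,N)\in\mod^{\ZZ}_0A$ for every $i\ge 1$. Indeed $Q$ is flat over $A$ on both sides (Proposition \ref{lem.sf2}(1)) and $A$ is Noetherian, so $\Ext^i_A(M,N)\otimes_AQ\simeq\Ext^i_Q(M\otimes_AQ,N\otimes_AQ)$, which vanishes for $i\ge 1$ because $M\otimes_AQ,N\otimes_AQ\in\proj^{\ZZ}Q$; since $X\otimes_AQ=0$ forces $X\in\mod^{\ZZ}_0A$ (Lemma \ref{tensor Q from qgr} together with the equivalence $\qgr A\simeq\mod^{\ZZ}Q$ of Proposition \ref{prop.Q2}), the claim follows, and in particular these groups are finite dimensional over $k$. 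This is precisely the point at which the passage from $\CM^{\ZZ}A$ to $\CM^{\ZZ}_0A$ is indispensable.

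The key step is then to establish the dimension-one analogue of the Serre-type formula \eqref{serre}: for $M\in\Db(\mod^{\ZZ}_0A)$ and $N\in\Kb(\proj^{\ZZ}A)$ there is a bifunctorial isomorphism
\[\Hom_{\D(\Mod^{\ZZ}A)}(M,N\otimes_A^{\mathrm{L}}\omega[1])\;\simeq\;D\Hom_{\D(\Mod^{\ZZ}A)}(N,M).\]
To prove it I would use that $M$ is $\fm$-torsion, so $\RGamma_{\fm}(M)\simeq M$ and local duality (Theorem \ref{local duality}) gives $DM\simeq\RHOM_A(M,\omega[1])$ in $\D(\Mod^{\ZZ}A^{\op})$; combining this with the perfectness of $N$ (which lets one move $\RHom_A$ past $\otimes_A^{\mathrm{L}}N$, as $\omega$ is a projective module) and the invertibility of $\omega$ (Proposition \ref{prop.d3}), a short chain of canonical isomorphisms and passage to $\H^0$ yields the displayed formula.

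Finally I would deduce the Auslander--Reiten formula from the Serre-type formula exactly as in the classical case of Gorenstein orders underlying Proposition \ref{lem-Gorder-ASG}(3): for $M,N\in\CM^{\ZZ}_0A$ one takes a projective presentation of $M$, forms the transpose $\Tr M$ and uses $\tau M\simeq\Hom_{A^{\op}}(\Omega_{A^{\op}}\Tr M,\omega)$, rewrites $\Ext^1_A(N,\tau M)$ --- which is finite length by the second paragraph --- as a $\Hom$-group out of $N$ into a finite-length complex built from $\tau M$, and then dualizes via the Serre-type formula to land on $\underline{\Hom}_A(M,N)$; the identification of $P^{\ZZ}(M,N)$ with the part of the resulting pairing that pairs trivially is handled using $\uCM^{\ZZ}_0A\simeq\D_{\sg,0}^{\ZZ}(A)$ (Proposition \ref{prop.uCM0}). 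I expect the main obstacle to be exactly this last matching: keeping track of all left/right module structures in the Serre-type formula and verifying that it produces not merely an abstract isomorphism of finite-dimensional vector spaces but a \emph{perfect} pairing that descends to the stable category, so that $-\otimes_A\omega$ is genuinely a Serre functor on $\uCM^{\ZZ}_0A$.
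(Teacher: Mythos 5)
Your plan differs materially from the paper's. The paper proves the theorem by a direct six-step chain of isomorphisms: it starts from Auslander's formula $\underline{\Hom}^{\ZZ}_A(M,N)\simeq\Tor^A_1(N,\Tr M)_0$ (Lemma~\ref{lem.sf1}), applies $-\otimes_A\Tr M$ to the exact sequence $0\to N\to\bigoplus_{i\in\ZZ}\Hom_{\qgr A}(A,N(i))\to\H^1_\fm(N)\to 0$ from Proposition~\ref{H_m and qgr}, and uses the flatness of the middle term (Lemma~\ref{lem.sf3}) to shift $\Tor^A_1(N,\Tr M)\simeq\Tor^A_2(\H^1_\fm(N),\Tr M)$; local duality then replaces $\H^1_\fm(N)$ by $D\Hom_A(N,\omega)$, $\Tor$--$\Ext$ duality over $k$ converts $\Tor_2$ of the $k$-dual into $D\Ext^2_{A^{\op}}$, and a syzygy shift plus the $\omega$-duality \eqref{omega dual} land on $D\Ext^1_A(N,\tau M)_0$. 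You instead want to first establish the dimension-one Serre-type pairing $\Hom_\D(M',N'\Lotimes\omega[1])\simeq D\Hom_\D(N',M')$ between $\Db(\mod^{\ZZ}_0A)$ and $\Kb(\proj^{\ZZ}A)$, and then descend it to the quotient $\D^{\ZZ}_{\sg,0}(A)$.

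Your preparatory steps are sound: the dimension shift $D\Ext^1(N,\tau M)\simeq D\underline{\Hom}(N,M\otimes_A\omega)$ is exactly the paper's final step; the stability of $\CM^{\ZZ}_0A$ under $-\otimes_A\omega$ and under the dualities is correct; and your finiteness observation ($\Ext^i_A(M,N)\in\mod^{\ZZ}_0A$ for $i\geq 1$ via flatness of $Q$) is a valid use of Proposition~\ref{lem.sf2}(1). But the final step, which you correctly flag as the main obstacle, is a real gap and not merely a bookkeeping issue. Your Serre-type formula pairs a finite-length complex against a perfect complex, whereas both $M$ and $N$ are objects of $\CM^{\ZZ}_0A$ — neither finite length nor perfect. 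You propose to ``rewrite $\Ext^1_A(N,\tau M)$ as a $\Hom$-group out of $N$ into a finite-length complex built from $\tau M$,'' but $\tau M$ itself is a CM module, not finite length, so such a rewriting does not exist on the face of it; what you would actually need is precisely the resolution of $N$ by the flat module $\bigoplus_{i\in\ZZ}\Hom_{\qgr A}(A,N(i))$ with $\fm$-torsion cokernel $\H^1_\fm(N)$, and the flatness statement Lemma~\ref{lem.sf3} is the engine that turns this into the $\Tor$-dimension shift. That mechanism is absent from your sketch; the flatness of $Q$ appears in your argument only to establish finite-lengthness of $\Ext$, not to perform the shift. Moreover, the ``classical case'' you appeal to (Auslander, \cite[Ch.~I, Prop.~8.8]{Au2}) is itself a direct computation exploiting module-finiteness over the commutative base and the concrete injective-hull duality — it is not a reduction to an abstract Serre-type pairing, so invoking it does not fill the gap. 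To repair your approach you would essentially have to prove Lemma~\ref{lem.sf3} and redo the $\Tor$-shift, at which point you have reconstructed the paper's proof.
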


To prove this, we need the following basic observation (e.g.\ \cite[Proposition 7.1]{Au1}, \cite[Lemma 3.3]{IT}).

\begin{lem} \label{lem.sf1}
Let $A$ be an AS-Gorenstein algebra of dimension $d$. Then there is a functorial isomorphism
\[\underline{\Hom}^{\ZZ}_A(M, N) \simeq \Tor^{A}_1(N, \Tr M)_0\]
for each $M,N\in\mod^{\ZZ}A$.
\end{lem}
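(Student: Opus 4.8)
The plan is to prove the statement by a direct computation with the Auslander--Bridger transpose; in fact it is a formal consequence of the definition of $\Tr$ and does not use the AS-Gorenstein hypothesis at all, only that $A$ is graded Noetherian so that $\Tr$ is defined. First I would fix a minimal graded projective presentation $P_1\xrightarrow{f}P_0\xrightarrow{g}M\to 0$ in $\mod^{\ZZ}A$ and apply $(-)^{*}:=\Hom_A(-,A)$ to get the exact sequence $0\to M^{*}\xrightarrow{g^{*}}P_0^{*}\xrightarrow{f^{*}}P_1^{*}\to\Tr M\to 0$ in $\mod^{\ZZ}A^{\op}$, so that $P_0^{*}\xrightarrow{f^{*}}P_1^{*}\to\Tr M\to 0$ is a graded projective presentation of $\Tr M$ with $\Ker f^{*}\simeq M^{*}$. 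Choosing a graded epimorphism $h\colon P_2\twoheadrightarrow M^{*}$ with $P_2\in\proj^{\ZZ}A$ extends this to the start of a graded projective resolution $P_2\xrightarrow{g^{*}h}P_0^{*}\xrightarrow{f^{*}}P_1^{*}\to\Tr M\to 0$. Applying $N\otimes_A-$ and using right exactness (so that $1_N\otimes(g^{*}h)$ and $1_N\otimes g^{*}$ have the same image), I obtain an isomorphism of graded $k$-modules
\[
\Tor^{A}_1(N,\Tr M)\;\simeq\;\frac{\Ker(1_N\otimes f^{*})}{\Im(1_N\otimes g^{*})},
\]
all terms sitting inside $N\otimes_A P_i^{*}$.

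Next I would bring in the natural evaluation morphisms $\theta_L\colon N\otimes_A\Hom_A(L,A)\to\Hom_A(L,N)$, $n\otimes\psi\mapsto(\ell\mapsto n\psi(\ell))$, which are graded of degree $0$, natural in $L$, and isomorphisms when $L$ is finitely generated graded projective. Naturality applied to $g\colon P_0\to M$ produces a commutative square relating $1_N\otimes g^{*}$ to the map $\theta_M$ (not invertible in general) and the isomorphism $\theta_{P_0}$. Combining this with the left exact sequence $0\to\Hom_A(M,N)\xrightarrow{g^{*}}\Hom_A(P_0,N)\xrightarrow{f^{*}}\Hom_A(P_1,N)$ obtained by applying $\Hom_A(-,N)$ to the presentation of $M$, together with the isomorphisms $\theta_{P_0},\theta_{P_1}$, I identify $\Ker(1_N\otimes f^{*})\simeq\Hom_A(M,N)$ and, under this identification, $\Im(1_N\otimes g^{*})$ with $\Im\theta_M\subseteq\Hom_A(M,N)$ (using that $g^{*}$ is injective). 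Hence $\Tor^{A}_1(N,\Tr M)\simeq\Hom_A(M,N)/\Im\theta_M$.

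It remains to identify $\Im\theta_M$ with the subgroup of maps factoring through a graded projective and to pass to degree $0$. Every element of $\Im\theta_M$ is a finite sum of morphisms of the form $M\xrightarrow{\psi}A\xrightarrow{n\cdot}N$, hence factors through a finitely generated graded free module; conversely, since $M$ is finitely generated, any morphism factoring through a graded projective factors through a finitely generated graded free summand, so it lies in $\Im\theta_M$. Thus $\Im\theta_M=P^{\ZZ}(M,N)$ after taking the degree $0$ component (all maps occurring above are homogeneous of degree $0$, which is a routine grading check), giving $\Tor^{A}_1(N,\Tr M)_0\simeq\Hom_A^{\ZZ}(M,N)/P^{\ZZ}(M,N)=\underline{\Hom}^{\ZZ}_A(M,N)$. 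Functoriality in $N$ is immediate; functoriality in $M$ follows because all constructions are compatible with the functor $M\mapsto\Tr M$ on the stable category $\umod^{\ZZ}A$ (the dependence of $\Tr M$ on the chosen presentation is only up to a graded projective summand, which $\Tor^{A}_1(N,-)$ annihilates).

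The proof is essentially bookkeeping rather than conceptual: the only points requiring care are the naturality and degree-preserving property of the evaluation maps $\theta_L$, and the identification $\Im\theta_M=P^{\ZZ}(M,N)$, which uses finite generation of $M$ in an essential way.
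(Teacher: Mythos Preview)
Your argument is correct and is essentially the classical proof due to Auslander. The paper does not actually prove this lemma; it records it as a well-known fact with references to \cite[Proposition 7.1]{Au1} and \cite[Lemma 3.3]{IT}, and your write-up reconstructs precisely that standard argument via the evaluation map $\theta_L\colon N\otimes_A L^{*}\to\Hom_A(L,N)$ and the identification $\Im\theta_M=P^{\ZZ}(M,N)$. Two small points: the module $P_2$ you introduce should lie in $\proj^{\ZZ}A^{\op}$ rather than $\proj^{\ZZ}A$, since $M^{*}$ and $\Tr M$ are left $A$-modules; and you are right that the AS-Gorenstein hypothesis is irrelevant here --- the statement holds for any Noetherian locally finite graded algebra.
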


\begin{proof}[Proof of Theorem \ref{thm.Serref}]
Applying $-\otimes _A \Tr M$ to an exact sequence 
$0 \to N \to \bigoplus_{i \in \ZZ} \Hom_{\qgr A}(A, N(i)) \to \H^1_{\fm}(N) \to 0$ given by Proposition \ref{H_m and qgr}, we have 
\begin{align}\label{eq.flat}
\Tor^A_{2}(\H^1_{\fm}(N), \Tr M)  \simeq \Tor^A_{1}(N, \Tr M)
\end{align}
since $\bigoplus_{i \in \ZZ} \Hom_{\qgr A}(A, N(i))$ is flat by Lemma \ref{lem.sf3}. 
Therefore we obtain
\begin{align*}
&\Hom_{\uCM^{\ZZ}_0A}(M,N)
\stackrel{{\rm Lem.\,\ref{lem.sf1}}}{\simeq} \Tor^{A}_1(N, \Tr M)_0
\stackrel{\eqref{eq.flat}}{\simeq} \Tor^{A}_2(\H^1_{\fm}(N), \Tr M)_0\\
&\stackrel{{\rm Thm.\,\ref{local duality}}}{\simeq}\Tor^{A}_2(D\Hom_A(N, \omega), \Tr M)_0
\simeq D\Ext_{A^{\op}}^2(\Tr M, \Hom_A(N, \omega))_0\\
&\simeq\hspace{1truemm} D\Ext_{A^{\op}}^1(\Omega_{A^{\op}} \Tr M, \Hom_A(N, \omega))_0
\stackrel{\eqref{omega dual}}{\simeq}D\Ext_{A}^1(N, \Hom_{A^{\op}}(\Omega_{A^{\op}}\Tr M, \omega))_0= D\Ext_{A}^1(N, \tau M)_0\\ 
&\simeq\hspace{1truemm}
D\Hom_{\uCM^{\ZZ}_0A}(N, \tau M[1])\stackrel{\eqref{eq.AR}}{\simeq}
D\Hom_{\uCM^{\ZZ}_0A}(N, \Omega(M\otimes_A \omega)[1])
\simeq D\Hom_{\uCM^{\ZZ}_0A}(N, M\otimes_A \omega).\qedhere
\end{align*}
\end{proof}

\section{Gorenstein parameters of Artin-Schelter Gorenstein algebras}

In this section, we study Gorenstein parameters of AS-Gorenstein algebras.

\subsection{Basic properties}
Let $A$ be a basic $\NN$-graded AS-Gorenstein algebra of dimension $d$ and let $\omega$ be the canonical module of $A$.
Since $\omega$ is a $\ZZ$-graded invertible $A$-bimodule by Proposition \ref{prop.d3}(2), we have an auto-equivalence of categories
\begin{align}\label{tensor omega}
-\otimes_A \omega : \mod^{\ZZ}A \longrightarrow \mod^{\ZZ}A,
\end{align}
and this restricts to an auto-equivalence on $\mod_0^{\ZZ}A$.

\begin{lem}\label{* and omega}
We have the following isomorphisms of functors on $\mod_0^{\ZZ}A$ and $\mod_0^{\ZZ}A^{\op}$.
\begin{enumerate}
\item $\Ext_A^d(-, \omega) \simeq D(-)$ and $\Ext_{A^{\op}}^d(-, \omega) \simeq D(-)$.
\item $D\Ext_A^d(-, A) \simeq -\otimes_A\omega$ and $D\Ext_{A^{\op}}^d(-, A) \simeq \omega\otimes_A-$.
\item $\Ext_{A^{\op}}^d(D(-), A) \simeq\Hom_A(\omega,-)$ and $\Ext_A^d(D(-), A) \simeq\Hom_{A^{\op}}(\omega,-)$.
\end{enumerate}
\end{lem}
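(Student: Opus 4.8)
The plan is to deduce all three isomorphisms from Local Duality (Theorem~\ref{local duality}) together with the fact, recorded in Proposition~\ref{prop.d3}, that $\omega$ is a graded invertible $A$-bimodule; everything happens on finite-length modules, which in $\mod^{\ZZ}A$ are exactly the $\fm$-torsion ones. For \emph{Part (1)}: if $M\in\mod_0^{\ZZ}A$ then $M$ has finite length, hence is $\fm$-torsion, so $\H_{\fm}^{0}(M)=M$ and $\H_{\fm}^{i}(M)=0$ for every $i\neq0$. Plugging this into the last assertion of Theorem~\ref{local duality}, namely $D\H_{\fm}^{i}(M)\simeq\Ext_A^{d-i}(M,\omega)$, immediately gives $\Ext_A^{d}(M,\omega)\simeq DM$ and $\Ext_A^{i}(M,\omega)=0$ for $i\neq d$, functorially since local duality is. The statement over $\mod_0^{\ZZ}A^{\op}$ is the same argument applied to $A^{\op}$, whose canonical module is again $\omega$ with the two sides exchanged.

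For \emph{Part (2)}, by Proposition~\ref{prop.d3}(2) the functor $-\otimes_A\omega\colon\mod^{\ZZ}A\to\mod^{\ZZ}A$ of \eqref{tensor omega} is an autoequivalence; it sends $A$ to $A\otimes_A\omega=\omega$ and, as already noted, restricts to an autoequivalence of $\mod_0^{\ZZ}A$. An equivalence of abelian categories induces natural isomorphisms on all Ext-groups, so for $M\in\mod_0^{\ZZ}A$
\[\Ext_A^{d}(M,A)\;\simeq\;\Ext_A^{d}(M\otimes_A\omega,\;A\otimes_A\omega)\;=\;\Ext_A^{d}(M\otimes_A\omega,\omega)\;\simeq\;D(M\otimes_A\omega),\]
the last step being Part~(1) applied to the finite-length module $M\otimes_A\omega$. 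Applying $D$ and using $DD\simeq\id$ on finite-length modules gives $D\Ext_A^{d}(M,A)\simeq M\otimes_A\omega$; the $A^{\op}$-statement is the identical computation over $A^{\op}$, where the relevant autoequivalence is $\omega\otimes_A-$.

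For \emph{Part (3)}, I would apply Part~(2), now for $A^{\op}$, to the finite-length left $A$-module $DM$ attached to $M\in\mod_0^{\ZZ}A$: this yields $D\Ext_{A^{\op}}^{d}(DM,A)\simeq\omega\otimes_A DM$, hence $\Ext_{A^{\op}}^{d}(DM,A)\simeq D(\omega\otimes_A DM)$. A tensor--hom adjunction, contracting $DM$ against the right $A$-module structure of $\omega$, gives a natural isomorphism
\[D(\omega\otimes_A DM)\;=\;\Hom_k(\omega\otimes_A DM,k)\;\simeq\;\Hom_A(\omega,\Hom_k(DM,k))\;=\;\Hom_A(\omega,M),\]
and composing the two isomorphisms proves $\Ext_{A^{\op}}^{d}(D(-),A)\simeq\Hom_A(\omega,-)$; the remaining half follows once more by swapping $A$ and $A^{\op}$.

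None of the steps is deep, so the \emph{main obstacle} is purely bookkeeping: one must keep straight left versus right module structures and the internal $\ZZ$-grading throughout, in particular checking that $-\otimes_A\omega$ genuinely preserves $\mod_0^{\ZZ}A$ and is grading-compatible, that the Ext-isomorphisms coming from this autoequivalence are the canonical degree-preserving ones, and that the adjunction isomorphism used in Part~(3) is $A$-linear and homogeneous. Once these identifications are pinned down, all six isomorphisms are formal consequences of Theorem~\ref{local duality} and the invertibility of $\omega$.
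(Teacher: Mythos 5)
Your proof is correct and follows essentially the same path as the paper: Part (1) is local duality specialized to $\fm$-torsion modules (where $\Gamma_{\fm}$ acts as the identity), Part (2) is obtained by transporting across the autoequivalence $-\otimes_A\omega$ and applying Part (1), and Part (3) is a formal dualization of Part (2). The only difference is that you spell out the tensor--Hom adjunction in Part (3) explicitly, whereas the paper simply states that (3) follows from (2); your details are correct.
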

\begin{proof}
(1) By local duality (Theorem \ref{local duality}), we have $D \simeq D\Gamma_{\fm} \simeq \Ext_A^d(-,\omega)$. The second claim is the opposite version.

(2) We have $D\Ext_A^d(-, A) \stackrel{\eqref{tensor omega}}{\simeq} D\Ext_A^d(-\otimes_A\omega, \omega) \stackrel{\text{(1)}}{\simeq} DD(-\otimes_A\omega) \simeq -\otimes_A\omega$.
The second claim is the opposite version.

(3) This follows from (2).
\end{proof}

We recall the following convention given in Definition \ref{define I_A}.
Let $1=\sum_{i\in \II_A}e_i$ be a complete set of primitive orthogonal idempotents of $A$. For each $i\in \II_A$, consider the simple $A$-module $S_i=\top e_iA_0$ and the simple $A^{\op}$-module $S^{\op}_i=\top A_0e_i$.
The Nakayama permutation  $\nu: \II_A \to \II_A$ 
and the Gorenstein parameters $p_i\in\ZZ$ are defined by
$\Ext^d_A(S_i,A)\simeq S^{\op}_{\nu i}(p_i)$ or equivalently, $D\Ext^d_A(S_i,A)\simeq S_{\nu i}(-p_i)$.

\begin{prop}\label{lem-nu-eAe}
Let $A$ be a basic $\NN$-graded AS-Gorenstein algebra of dimension $d$.
\begin{enumerate}
	\item For each $i\in \II_A$, we have isomorphisms
	\begin{align*}
	&e_i\omega\simeq e_{\nu i}A(-p_i)\ \mbox{ and }\ S_i\otimes_A\omega\simeq S_{\nu i}(-p_i)\ \mbox{ in }\ \mod^{\ZZ}A,\\
	&\omega e_{\nu i} \simeq Ae_i(-p_i)\ \mbox{ and }\ \omega\otimes_AS^{\op}_{\nu i}\simeq S^{\op}_i(-p_i)\  \mbox{ in }\ \mod^{\ZZ}A^{\op}.
	\end{align*}
	\item For each $i,j\in \II_A$ and $\ell\in\ZZ$, we have an isomorphism $e_j A_{\ell}e_i\simeq e_{\nu j} A_{\ell+p_i-p_j}e_{\nu i}$ of $k$-vector spaces.
	\item $p_i\leq 0$ holds for each $i\in\II_A$ if and only if $(\mod^{<0}A)\otimes_A\omega\subset\mod^{<0}A$.
\end{enumerate}
\end{prop}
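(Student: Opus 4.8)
The plan is to bootstrap everything from Proposition~\ref{prop.lc1}(1), which already supplies the first isomorphism on each line of~(1), together with the fact (Proposition~\ref{prop.d3}(2)) that $-\otimes_A\omega\colon\mod^{\ZZ}A\to\mod^{\ZZ}A$ and $\omega\otimes_A-\colon\mod^{\ZZ}A^{\op}\to\mod^{\ZZ}A^{\op}$ are equivalences of abelian categories. For part~(1), the isomorphisms $e_i\omega\simeq e_{\nu i}A(-p_i)$ and $\omega e_{\nu i}\simeq Ae_i(-p_i)$ are precisely Proposition~\ref{prop.lc1}(1); for the two remaining isomorphisms I would apply the equivalence $-\otimes_A\omega$ to the projective cover $e_iA\twoheadrightarrow S_i$. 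It carries $e_iA$ to $e_iA\otimes_A\omega=e_i\omega$ and the simple module $S_i$ to a simple module, which is therefore a simple quotient of $e_i\omega\simeq e_{\nu i}A(-p_i)$, hence isomorphic to its top $S_{\nu i}(-p_i)$. The statement $\omega\otimes_AS^{\op}_{\nu i}\simeq S^{\op}_i(-p_i)$ is obtained symmetrically, applying $\omega\otimes_A-$ to $Ae_{\nu i}\twoheadrightarrow S^{\op}_{\nu i}$ and using $\omega e_{\nu i}\simeq Ae_i(-p_i)$.

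For part~(2), I would compute the graded Hom-space $\Hom_A^{\ZZ}(e_iA,e_jA(\ell))\simeq e_jA_\ell e_i$ in a second way by transporting it along the equivalence $-\otimes_A\omega$ and then invoking part~(1):
\[\Hom_A^{\ZZ}(e_iA,e_jA(\ell))\simeq\Hom_A^{\ZZ}\bigl(e_i\omega,(e_j\omega)(\ell)\bigr)\simeq\Hom_A^{\ZZ}\bigl(e_{\nu i}A(-p_i),e_{\nu j}A(\ell-p_j)\bigr)\simeq e_{\nu j}A_{\ell+p_i-p_j}e_{\nu i}.\]
Comparing the two identifications gives the asserted isomorphism of $k$-vector spaces.

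For part~(3), the first point to record is that $\mod^{<0}A\subseteq\mod^{\ZZ}_0A$: over a locally finite $\NN$-graded algebra, a finitely generated graded module concentrated in negative degrees is finite dimensional, hence of finite length, and so admits a finite filtration whose subquotients have the form $S_j(n)$ with $j\in\II_A$ and $n\geq1$. Since $-\otimes_A\omega$ is exact, for any $M\in\mod^{<0}A$ the module $M\otimes_A\omega$ inherits a finite filtration with subquotients $S_j(n)\otimes_A\omega\simeq S_{\nu j}(n-p_j)$ by part~(1); as $S_{\nu j}(m)$ lies in $\mod^{<0}A$ exactly when $m\geq1$ and $\mod^{<0}A$ is closed under subobjects, quotients, and extensions, we get $M\otimes_A\omega\in\mod^{<0}A$ if and only if $n-p_j\geq1$ for every subquotient occurring. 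If $p_j\leq0$ for all $j$, then $n-p_j\geq n\geq1$, so $M\otimes_A\omega\in\mod^{<0}A$; conversely, applying the hypothesis to $M=S_j(1)\in\mod^{<0}A$ forces $S_{\nu j}(1-p_j)\in\mod^{<0}A$, i.e.\ $1-p_j\geq1$, i.e.\ $p_j\leq0$, for every $j\in\II_A$.

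None of the steps is genuinely difficult; the points requiring care are the bookkeeping of degree shifts and of the Nakayama permutation in~(1)--(2), and in~(3) the two observations that $\mod^{<0}A$ consists of finite-length modules, so that the reduction to simple subquotients is legitimate, and that the extreme shift $n=1$ is exactly what detects the sign condition $p_j\leq0$.
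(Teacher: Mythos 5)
Your proof is correct and follows the same route as the paper: part (1) is cited to Proposition \ref{prop.lc1}(1) together with ``take the top'' via the equivalence $-\otimes_A\omega$, and part (2) is exactly the same Hom-space computation the paper performs. For part (3) the paper just writes ``immediate from (2),'' whereas you spell out the reduction to simple composition factors (using the $S_i\otimes_A\omega\simeq S_{\nu i}(-p_i)$ isomorphism from part (1)) and test the converse on $S_j(1)$; this is a clean and correct way to realize the claimed immediacy, and arguably more transparent than the paper's one-line reference.
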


\begin{proof}
	(1) The left assertions are just Proposition \ref{prop.lc1}(1). Taking the top, we obtain the right assertions. Alternatively, they follow from $S_i\otimes_A\omega\simeq D\Ext_A^d(S_i,A)\simeq S_{\nu i}(-p_i)$ by Lemma \ref{* and omega}(2).
	
	(2) We have the following equations.
	\begin{align*}
		e_j A_{\ell}e_i & = \Hom^{\ZZ}_A(e_iA, e_jA(\ell)) 
		= \Hom^{\ZZ}_A(e_i\omega, e_j\omega(\ell)) \\
		& \stackrel{\text{(1)}}{=} \Hom^{\ZZ}_A(e_{\nu i}A(-p_i), e_{\nu j}A(\ell -p_j)) = e_{\nu j} A_{\ell+p_i -p_j}e_{\nu i},
	\end{align*}
	where for the second equation we use that $-\otimes_A\omega$ induces an auto-equivalence on $\Mod^{\ZZ}A$.

(3) Immediate from (2).
\end{proof}

The following notion plays a key role in this paper.

\begin{dfn}\label{average a-invariant}
Let $A$ be a basic $\NN$-graded AS-Gorenstein algebra of dimension $d$.	Assume that $A$ is ring-indecomposable.  We call 
	\[p^A_\mathrm{av}:=(\#\II_A)^{-1}\sum_{i \in \II_A}p_i\in \QQQ\]
	the \emph{average Gorenstein parameter} of $A$.
\end{dfn}

\begin{prop}\label{prop average a-invariant}
	Let $A$ be a ring-indecomposable basic $\NN$-graded AS-Gorenstein algebra of dimension $d$. Take an integer $g\ge1$ satisfying $\nu^g=1$. For each $i\in \II_A$, the average
	\begin{align*}
		g^{-1}\sum_{\ell=0}^{g-1}p_{\nu^\ell i}\in\QQQ
	\end{align*}
	does not depend on $i\in \II_A$, and hence it is equal to $p^A_\mathrm{av}$.
\end{prop}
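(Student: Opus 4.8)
The plan is to derive a degree-periodicity statement for the graded $k$-spaces $e_jA_\ell e_i$ from Proposition~\ref{lem-nu-eAe}(2), and then to spread an equality of partial $\nu$-orbit sums of Gorenstein parameters over all of $\II_A$ using Lemma~\ref{connected}. For $i\in\II_A$ put $P_i:=\sum_{t=0}^{g-1}p_{\nu^t i}\in\ZZ$. Since $g^{-1}\sum_{t=0}^{g-1}p_{\nu^t i}=g^{-1}P_i$, it suffices to prove that $i\mapsto P_i$ is constant on $\II_A$ and then to identify the common value with $p^A_\mathrm{av}$.

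First I would iterate the isomorphism $e_jA_\ell e_i\simeq e_{\nu j}A_{\ell+p_i-p_j}e_{\nu i}$ of Proposition~\ref{lem-nu-eAe}(2) exactly $g$ times; using $\nu^g=1$ this yields, for all $i,j\in\II_A$ and all $\ell\in\ZZ$,
\[
\dim_k e_jA_\ell e_i=\dim_k e_jA_{\ell+P_i-P_j}e_i .
\]
Suppose now $P_i\neq P_j$. Iterating the displayed identity in the direction that lowers the degree, and using that $A$ is $\NN$-graded and locally finite (so $A_\ell=0$ for $\ell<0$), forces $e_jA_\ell e_i=0$ for every $\ell$, i.e.\ $e_jAe_i=0$; by symmetry also $e_iAe_j=0$. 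In particular $\Ext^1_A(S_i,S_j)=0$ and $\Ext^1_A(S_j,S_i)=0$, since a nonzero extension between two of these simples requires the corresponding space $e_jAe_i$ or $e_iAe_j$ to be nonzero. Next I would invoke ring-indecomposability via Lemma~\ref{connected}: for $i\neq j$ there is a chain $i=i_0,i_1,\dots,i_r=j$ in $\II_A$ with $\Ext^1_A(S_{i_s},S_{i_{s+1}})\neq0$ or $\Ext^1_A(S_{i_{s+1}},S_{i_s})\neq0$ at each step, whence $P_{i_s}=P_{i_{s+1}}$ for every $s$ by the above, and therefore $P_i=P_j$. Thus $P_i\equiv P$ is constant, and $g^{-1}\sum_{t=0}^{g-1}p_{\nu^t i}=g^{-1}P$ does not depend on $i$. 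Finally, summing over $\II_A$ and using that each $\nu^t$ permutes $\II_A$,
\[
|\II_A|\,P=\sum_{i\in\II_A}P_i=\sum_{t=0}^{g-1}\sum_{i\in\II_A}p_{\nu^t i}=g\sum_{i\in\II_A}p_i ,
\]
so $g^{-1}P=|\II_A|^{-1}\sum_{i\in\II_A}p_i=p^A_\mathrm{av}$, as claimed.

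The $g$-fold iteration and the final counting are routine; the one step that requires genuine care, and which I regard as the crux, is converting the degree-periodicity of $\dim_k e_jA_\ell e_i$ into the vanishing $e_jAe_i=0$ (and $e_iAe_j=0$) when $P_i\neq P_j$, and hence into the vanishing of the relevant $\Ext^1$-groups, so that Lemma~\ref{connected} can be applied. Everything else is formal bookkeeping.
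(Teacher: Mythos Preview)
Your proof is correct and follows essentially the same strategy as the paper: reduce via Lemma~\ref{connected} to pairs $i,j$ linked by a nonzero $\Ext^1$, then use the $\nu^g$-periodicity coming from $\omega$ together with a finiteness property of $A$ to force $P_i=P_j$. The only difference is cosmetic: you iterate Proposition~\ref{lem-nu-eAe}(2) on the pieces $e_jA_\ell e_i$ and invoke $\NN$-gradedness to kill them, whereas the paper applies the autoequivalence $-\otimes_A\omega^{\otimes g}$ directly to $\Ext^1_{\mod^{\ZZ}A}(S_i,S_j(n))$ and invokes finite generation of the first syzygy of $S_i$ to rule out infinitely many nonvanishing shifts.
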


\begin{proof}
	Let $p_{[i]}:=g^{-1}\sum_{\ell=0}^{g-1}p_{\nu^\ell i}$. Then $S_i\otimes_A\omega^{\otimes g} \simeq S_i(-gp_{[i]})$.
	Since $A$ is ring-indecomposable and by Lemma \ref{connected}, it is enough to show $p_{[i]}=p_{[j]}$ when $\Ext_A^1(S_i, S_j)\neq 0$.
	Let $S_j\in\Sim A$ with $\Ext_A^1(S_i, S_j)\neq 0$. Take $n$ such that $\Ext_{\mod^{\ZZ}A}^1(S_i,S_j(n))\neq 0$. For each $\ell\ge0$, we have
	\begin{align*}
		&\Ext^1_{\mod^{\ZZ} A}(S_i, S_j(n)) \simeq \Ext^1_{\mod^{\ZZ} A}(S_i\otimes_A\omega^{\otimes \ell g}, S_j\otimes_A\omega^{\otimes\ell g}(n))\\ &\simeq \Ext^1_{\mod^{\ZZ} A}(S_i, S_j(\ell g(p_{[i]}-p_{[j]})+n))
	\end{align*}
	by Proposition \ref{lem-nu-eAe}(1). If $p_{[i]} \neq p_{[j]}$, then this implies that there are infinitely many integers $n'$ with $\Ext_{\mod^{\ZZ} A}^1(S_i, S_j(n'))\neq 0$, which is impossible since $S \in \Sim A$ admits a graded minimal projective resolution by finitely generated graded projective $A$-modules.
\end{proof}

We frequently use the following observations for the case $d=1$.

\begin{prop}\label{lem-nu-eAe2}
Let $A$ be a basic $\NN$-graded AS-Gorenstein algebra of dimension $1$.
\begin{enumerate}
\item   There is an exact sequence 
\begin{align}\label{exAQ}
0\to A\to Q\to D\omega\to0\ \mbox{ in }\ \Mod^{\ZZ} A^{\e}.
\end{align}
\item For each $i\in\II_A$, there is an exact sequence
\[0\to e_{\nu i}A\to e_{\nu i}Q\to D(Ae_i)(p_i)\to0\ \mbox{ in }\ \Mod^{\ZZ}A.\]
\item For each $i\in\II_A$, we have
\begin{align*}
p_i&=\min\{\ell\in\ZZ\mid (e_i\omega)_\ell\neq0\}=\min\{\ell\in\ZZ\mid (\omega e_{\nu i})_\ell\neq0\},\\
-p_i&=\max\{\ell\in\ZZ\mid ((Q/A)e_i)_\ell\neq0\}=\max\{\ell\in\ZZ\mid (e_{\nu i}(Q/A))_\ell\neq0\}.
\end{align*}
Thus $(Ae_i)_{>-p_i}=(Qe_i)_{>-p_i}$ and $(e_{\nu i}A)_{>-p_i}=(e_{\nu i}Q)_{>-p_i}$ hold.
\item For each $i\in\II_A$, we have 
\begin{align*}
(e_i\omega)_{-p_i}\simeq S_{\nu i}(p_i)\ \mbox{ and }\ (e_{\nu i}(Q/A))_{-p_i}\simeq S_i(p_i)\ \mbox{ in }\ \mod^{\ZZ}A,\\
(\omega e_{\nu i})_{-p_i}\simeq S^{\op}_i(p_i)\ \mbox{ and }\ ((Q/A)e_i)_{-p_i}\simeq S^{\op}_{\nu i}(p_i)\ \mbox{ in }\ \mod^{\ZZ}A^{\op}.
\end{align*}
\item For each $i\in\II_A$, we have
\begin{align*}
0&=\max\{\ell\in\ZZ\mid (((Q\otimes_A\omega)/\omega)e_{\nu i})_\ell\neq0\}=\max\{\ell\in\ZZ\mid (e_i((\omega\otimes_AQ)/\omega))_\ell\neq0\}.
\end{align*}
Thus $(Q\otimes_A\omega)_{>0}=\omega_{>0}=(\omega\otimes_AQ)_{>0}$ holds.
\end{enumerate}
\end{prop}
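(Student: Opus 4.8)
Everything rests on the short exact sequence of (1); parts (2)--(5) are extracted from it by applying exact functors — multiplication by an idempotent $e_j$ on one side, or the autoequivalences $-\otimes_A\omega$, $\omega\otimes_A-$ of $\mod^{\ZZ}A$, $\mod^{\ZZ}A^{\op}$ — and reading off graded components. So the plan is to prove (1) via local cohomology and then deduce (2)--(5) mechanically.

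\emph{Proof of \textup{(1)}.} Since $A$ is AS-Gorenstein of dimension $1$ we have $\H^i_{\fm}(A)=0$ for $i\neq1$, and $\H^1_{\fm}(A)\simeq D\omega$ by the definition of $\omega$ together with local duality (Theorem \ref{local duality}). Now apply the exact sequence of Proposition \ref{H_m and qgr} to $M=A(i)$ for every $i\in\ZZ$ and sum over $i$: using $A(i)_0=A_i$, $\Hom_{\qgr A}(A,A(i))=Q_i$ by \eqref{define Q}, $\H^0_{\fm}(A(i))_0=0$ and $\H^1_{\fm}(A(i))_0=(D\omega)_i$, one obtains $0\to A\to Q\to D\omega\to0$. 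Each term and each map here is natural in $A$ regarded as an $A$-bimodule, so this is a sequence in $\Mod^{\ZZ}A^{\e}$ whose first map is the canonical ring homomorphism.

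\emph{Proof of \textup{(2)--(5)}.} Multiplying the sequence of (1) on the left by $e_{\nu i}$ and using $e_{\nu i}D\omega\simeq D(\omega e_{\nu i})\simeq D(Ae_i(-p_i))=D(Ae_i)(p_i)$ (Proposition \ref{prop.lc1}(1)) gives (2). For (3): $e_i\omega\simeq e_{\nu i}A(-p_i)$ and $\omega e_{\nu i}\simeq Ae_i(-p_i)$ are $\NN$-graded projectives generated in degree $0$ and shifted down by $p_i$, so their least nonzero degree is $p_i$; dualizing via (1) ($(Q/A)e_i\simeq D(e_i\omega)$, $e_{\nu i}(Q/A)\simeq D(\omega e_{\nu i})$) turns this into ``supported in degrees $\le-p_i$, with degree $-p_i$ attained'', and the ``Thus'' equalities follow since the maps $Ae_i\to Qe_i$, $e_{\nu i}A\to e_{\nu i}Q$ coming from (1) have cokernels $(Q/A)e_i$, $e_{\nu i}(Q/A)$ vanishing in degrees $>-p_i$. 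For (4) one identifies the extremal simple subquotient of each module in the extremal degree found in (3): e.g.\ $(Q/A)e_i\simeq D(e_{\nu i}A)(p_i)$ is an indecomposable injective with socle $D(\top(e_{\nu i}A))(p_i)\simeq S^{\op}_{\nu i}(p_i)$ — here the duality $D$ carries $S_{\nu i}$ to $S^{\op}_{\nu i}$ because $A$ is basic — and the other three are symmetric. For (5): apply the exact autoequivalence $-\otimes_A\omega$ (Proposition \ref{prop.d3}(2)) to $0\to A\to Q\to Q/A\to0$ to get $0\to\omega\to Q\otimes_A\omega\to(Q/A)\otimes_A\omega\to0$, and compute $((Q/A)\otimes_A\omega)e_{\nu i}=(Q/A)\otimes_A(\omega e_{\nu i})\simeq(Q/A)\otimes_A Ae_i(-p_i)=(Q/A)e_i(-p_i)$, which by (3) is supported in degrees $\le0$ with degree $0$ attained; since $\nu$ is a bijection, $(Q/A)\otimes_A\omega$ vanishes in degrees $>0$, whence $(Q\otimes_A\omega)_{>0}=\omega_{>0}$, and applying $\omega\otimes_A-$ with $e_i\omega\simeq e_{\nu i}A(-p_i)$ gives $(\omega\otimes_A Q)_{>0}=\omega_{>0}$.

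\emph{Main obstacle.} The only non-formal point is (1): arranging that the fundamental sequence $0\to A\to Q\to D\omega\to0$ is natural enough to be a sequence of graded \emph{bimodules} (with first map the structure homomorphism), not merely of one-sided modules. After that, (2)--(5) are careful but routine bookkeeping with idempotents, degree shifts and the exactness of $-\otimes_A\omega$; in (4) the only thing to watch is which simple module (indexed by $i$ or $\nu i$, on which side) the duality $D$ returns.
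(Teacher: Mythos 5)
Your overall route is the same as the paper's: part (1) is extracted from the exact sequence of Proposition \ref{H_m and qgr} applied to each $A(i)$ together with $\H_\fm^0(A)=0$ and $D\H_\fm^1(A)\simeq\omega$; (2) is idempotent truncation of \eqref{exAQ} combined with $e_{\nu i}D\omega\simeq D(Ae_i)(p_i)$ from Proposition \ref{prop.lc1}(1); (3)--(4) are read off from the projective/injective shapes $e_i\omega\simeq e_{\nu i}A(-p_i)$, $\omega e_{\nu i}\simeq Ae_i(-p_i)$, $e_{\nu i}(Q/A)\simeq D(Ae_i)(p_i)$; and (5) is a transport along the graded autoequivalences $-\otimes_A\omega$ and $\omega\otimes_A-$ using Proposition \ref{lem-nu-eAe}(1) and part (3). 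The paper's proofs of (3)--(5) are compressed to one-line citations of Proposition \ref{lem-nu-eAe}(1) and part (2); your version simply spells them out. Your (5) also multiplies by an idempotent on the side where the computation is cleanest (right by $e_{\nu i}$ using $\omega e_{\nu i}\simeq Ae_i(-p_i)$, left by $e_i$ using $e_i\omega\simeq e_{\nu i}A(-p_i)$), which is the natural bookkeeping and agrees in substance with the paper.

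One caveat concerns (4). You argue via the \emph{socle} of the indecomposable injective $D(e_{\nu i}A)(p_i)$, namely $D(\top^{\ZZ}e_{\nu i}A)(p_i)\simeq S^{\op}_{\nu i}(p_i)$; this is indeed the unique simple submodule sitting in the extremal degree $-p_i$. But the statement is phrased in terms of the \emph{graded component} $(\,\cdot\,)_{-p_i}$, which for $((Q/A)e_i)_{-p_i}$ equals $D(e_{\nu i}A_0)$ --- an indecomposable injective $A_0^{\op}$-module, not the simple $S^{\op}_{\nu i}$ unless $A_0$ is semisimple. So your argument proves the socle/top identification rather than the literal degree-piece claim, and you should flag this gap (or observe that the degree-piece reading only holds under an additional hypothesis on $A_0$). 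There is a second, independent issue on the $\omega$-side: by (3) the smallest nonzero degree of $e_i\omega$ is $p_i$, not $-p_i$, so $(e_i\omega)_{-p_i}$ vanishes whenever $p_i>0$ (e.g.\ already for $A=k[x]$, where $p_1=1$ and $\omega\simeq A(-1)$), while $S_{\nu i}(p_i)$ does not; the intended assertion is surely $\top^{\ZZ}(e_i\omega)\simeq S_{\nu i}(-p_i)$, which lives in degree $p_i$. Your proof sketch ("one identifies the extremal simple subquotient ... in the extremal degree found in (3)") is morally aimed at this corrected statement, but it is worth stating explicitly that what is being identified is the top (on the $\omega$-side) and the socle (on the $Q/A$-side) rather than the raw graded component.
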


\begin{proof}
(1) Applying Proposition \ref{H_m and qgr} to $A(i)$, we obtain an exact sequence
\[0\to \H_{\fm}^0(A)\to A\to\bigoplus_{i\in\ZZ}\Hom_{\qgr A}(A,A(i))\to\H_{\fm}^1(A)\to0.\]
Since $\H_{\fm}^0(A)=0$ and $\H_{\fm}^1(A)=D\omega$, we obtain the assertion.

(2) Multiplying \eqref{exAQ} by $e_{\nu i}$ from left, we get an exact sequence $0\to e_{\nu i}A\to e_{\nu i}Q\to e_{\nu i}D\omega\to0$ in $\Mod^{\ZZ}A$.
Since $e_{\nu i}D\omega \simeq D(\omega e_{\nu i}) \simeq D(Ae_i)(p_i)$ holds by Proposition \ref{lem-nu-eAe}(1), we obtain the desired sequence. 

(3) (4) These follow from Proposition \ref{lem-nu-eAe}(1) and (2) above.

(5) Since $e_i((Q\otimes_A\omega)/\omega)\simeq e_{\nu i}(Q/A)(-p_i)$ by Proposition \ref{lem-nu-eAe}(1), we have
\[\max\{\ell\in\ZZ\mid (e_i((\omega\otimes_AQ)/\omega))_\ell\neq0\}=\max\{\ell\in\ZZ\mid (e_{\nu i}(Q/A))_\ell\neq0\}+p_i\stackrel{{\rm(3)}}{=}0.\]
The other assertion can be shown similarly.
\end{proof}

\subsection{Gorenstein parameters under graded Morita equivalences}\label{section: GP and GME}
In this subsection, we observe the change of Gorenstein parameters under graded Morita equivalences (see Definition-Proposition \ref{graded Morita equivalent}).
Let $A$ be a ring-indecomposable basic $\NN$-graded AS-Gorenstein algebra of dimension $d$. For given integers $\ell_i \ (i \in \II_A)$, let $P=\bigoplus_{i\in \II_A}e_iA(\ell_i) \in \mod^{\ZZ}A$. As in Proposition \ref{graded Morita equivalent 2}, we obtain a $\ZZ$-graded algebra $B=\End_A(P)$ which is graded Morita equivalent to $A$. The following observation on Gorenstein parameters is the starting point of this subsection.

\begin{prop}\label{prop-eq-a-inv}
Let $A, B$ be as above. Then we have $p^{B}_i = p^A_i -\ell_i+\ell_{\nu i}$ for each $i$.
In particular, $p^{B}_\mathrm{av} = p^{A}_\mathrm{av}$ holds.
\end{prop}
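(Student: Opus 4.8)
The plan is to track how the canonical module and the simple modules transform under the graded Morita equivalence $\End_A(-)$, using the explicit description of $\omega_A$ from Proposition \ref{prop.lc1}. Recall that $B = \End_A(P)$ with $P = \bigoplus_{i\in\II_A} e_iA(\ell_i)$, and that as ungraded algebras $A = B$, so the idempotent sets agree and we may index $\II_B$ by $\II_A$; the primitive idempotent $e_i \in A$ corresponds to the projector onto the summand $e_iA(\ell_i)$ of $P$. Under the equivalence $\Hom_A(P,-): \Mod^{\ZZ}A \xrightarrow{\sim} \Mod^{\ZZ}B$, the indecomposable graded projective $e_iA(\ell)$ is sent to the graded $B$-module $\Hom_A^{\ZZ}(P, e_iA(\ell)) = e_iB(\ell - \ell_i)$ (the grading shift being forced by the definition of $B$ via $P$), and the simple top $S_i^A$ concentrated in degree $0$ is sent to the simple top $S_i^B$ concentrated in degree $-\ell_i$, i.e. to $S_i^B(\ell_i)$. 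Dually $S_i^{A,\op}$ in degree $0$ corresponds to $S_i^{B,\op}$ in degree $-\ell_i$.

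The key step is then the commutative square from the proof of Lemma \ref{lem.basic 2}: for each $n$ the functor $\Ext^n_A(-,A)$ intertwines with $\Ext^n_B(-,B)$ along the horizontal equivalences $\Hom_A(P,-)$ and $P\otimes_A -$. Applying this with $n = d$ to $S_i^A$: on the $A$-side we have $\Ext^d_A(S_i^A, A) \simeq S_{\nu i}^{A,\op}(p^A_i)$ by definition. The equivalence $\Hom_A(P,-)$ carries $S_i^A$ to $S_i^B(\ell_i)$, hence $\Ext^d_B(S_i^B(\ell_i), B) \simeq \Ext^d_B(S_i^B, B)(-\ell_i)$ corresponds under $P\otimes_A -$ to the image of $S_{\nu i}^{A,\op}(p^A_i)$, namely $S_{\nu i}^{B,\op}(\ell_{\nu i})(p^A_i) = S_{\nu i}^{B,\op}(p^A_i + \ell_{\nu i})$. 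Combining, $\Ext^d_B(S_i^B, B) \simeq S_{\nu i}^{B,\op}(p^A_i + \ell_{\nu i} + \ell_i)$... here I need to be careful with the sign bookkeeping on the shift, but the upshot is an identity of the form $p^B_i = p^A_i - \ell_i + \ell_{\nu i}$, and the Nakayama permutation of $B$ is the same $\nu$. The averaging statement is then immediate: summing over $i \in \II_A$, the terms $-\ell_i$ and $\ell_{\nu i}$ cancel because $\nu$ is a bijection of $\II_A$, so $\sum_i p^B_i = \sum_i p^A_i$ and therefore $p^B_\mathrm{av} = p^A_\mathrm{av}$.

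An alternative, perhaps cleaner, route is to compute directly via the canonical module. Since graded Morita equivalence is realized by a progenerator, $\omega_B \simeq \Hom_A(P, \omega_A \otimes_A P)$ or more simply $e_i\omega_B = \Hom_A^{\ZZ}(P, e_i\omega_A)$ with appropriate shift; using $e_i\omega_A \simeq e_{\nu i}A(-p^A_i)$ from Proposition \ref{prop.lc1}(1) and the computation $\Hom_A^{\ZZ}(P, e_{\nu i}A(-p^A_i)) = e_{\nu i}B(-p^A_i - \ell_{\nu i} + \ell_i)$ (again up to the grading-shift convention), one reads off $e_i\omega_B \simeq e_{\nu i}B(-p^B_i)$ with $p^B_i = p^A_i + \ell_{\nu i} - \ell_i$. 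Either way the computation is short.

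The main obstacle is purely bookkeeping: getting the direction of every grading shift correct. The algebra $B = \End_A(P)$ has a grading determined by $P$, so $\Hom_A^{\ZZ}(e_iA(\ell_i), e_jA(\ell_j)) = e_j A_{\ell_i - \ell_j + \bullet} e_i$ sits in degree $\bullet$, and one must consistently decide whether the image of $S_i^A$ lands in degree $\ell_i$ or $-\ell_i$; a single sign error here flips the formula to $p^A_i + \ell_i - \ell_{\nu i}$. The safe way is to test the claimed formula on a trivial example (e.g. $\ell_i$ all equal, where $B = A$ with shifted grading and $p^B_i = p^A_i$) and on the case where only one $\ell_i$ is nonzero. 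Everything else — that $B$ is again a ring-indecomposable basic $\NN$-graded AS-Gorenstein algebra of dimension $d$, that $\nu$ is unchanged — follows from Lemma \ref{lem.basic 2} and Proposition \ref{graded Morita equivalent 2} and needs no further argument.
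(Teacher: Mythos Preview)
Your approach is essentially the same as the paper's: transport the defining $\Ext^d$ condition through the Morita equivalence and read off the shift. The difference is in execution. You track the full isomorphism $\Ext^d_A(S_i^A,A)\simeq S_{\nu i}^{A,\op}(p_i^A)$ through both horizontal equivalences of the square in Lemma~\ref{lem.basic 2}, which forces you to keep track of how simples transform on both the covariant and contravariant sides; this is exactly where your sign bookkeeping goes astray (your intermediate line $\Ext^d_B(S_i^B,B)\simeq S_{\nu i}^{B,\op}(p^A_i+\ell_{\nu i}+\ell_i)$ is off, as you suspected).

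The paper sidesteps this by using only the \emph{nonvanishing} criterion \eqref{prop-Ext-S}: start from $\Ext^d_B(S_i,e_{\nu i}B(-p_i^B))\neq0$, apply the equivalence $-\otimes_BP$, and use only the two easy identifications $S_i\otimes_BP\simeq S_i(\ell_i)$ and $e_{\nu i}P=e_{\nu i}A(\ell_{\nu i})$ to get $\Ext^d_A(S_i,e_{\nu i}A(-p_i^B-\ell_i+\ell_{\nu i}))\neq0$; then \eqref{prop-Ext-S} forces $-p_i^A=-p_i^B-\ell_i+\ell_{\nu i}$. This avoids ever touching the contravariant equivalence $P\otimes_A-$ on left simples, which is where your signs slipped. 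Your alternative route via $\omega_B$ would also work and is equally short once the shifts are pinned down; the averaging statement is fine as you wrote it.
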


\begin{proof}
By \eqref{prop-Ext-S}, $\Ext_{B}^d(S_i, e_{\nu i }B(-p_i^{B}))\neq 0$ holds.\
We have $S_i\otimes_{B}P\simeq S_i(\ell_i)$ and $e_{\nu i}P=e_{\nu i}A(\ell_{\nu i})$.
Thus by applying an equivalence $-\otimes_{B}P$, we have
\[
0 \neq \Ext_{A}^d(S_i\otimes_{B}P, e_{\nu i}P(-p_i^{B})) = \Ext_A^d(S_i, e_{\nu i }A(-p_i^{B}-\ell_i+\ell_{\nu i})).
\]
Again by \eqref{prop-Ext-S}, we get $-p_i^A=-p_i^{B}-\ell_i+\ell_{\nu i}$.
\end{proof}

The main theorem of this section is the following.

\begin{thm}\label{thm-a-inv}
Let $A$ be a ring-indecomposable basic $\NN$-graded AS-Gorenstein algebra $A$ of dimension $d$. Then there exists a ring-indecomposable basic $\NN$-graded AS-Gorenstein algebra $B$ of dimension $d$ satisfying the following conditions.
\begin{enumerate}
\item $B$ is graded Morita equivalent to $A$.
\item $|p_i^{B}-p^{B}_\mathrm{av}|<1$ holds for each $i\in\II_B$.
\end{enumerate}
In particular, if $p^A_\mathrm{av}\leq 0$ holds, then $p_i^{B}\leq 0$ for each $i\in\II_B$.
\end{thm}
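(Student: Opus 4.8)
The strategy is to reduce the assertion to a finite combinatorial problem about the Gorenstein parameters $(p_i)_{i\in\II_A}$, the Nakayama permutation $\nu$, and the (connected) quiver of $A$, and then to solve that problem by an iterated rebalancing procedure, the combinatorial heart of which is what is carried out in the Appendix. For the reduction, fix a tuple of integers $(\ell_i)_{i\in\II_A}$, put $P:=\bigoplus_{i\in\II_A}e_iA(\ell_i)$, and let $B:=\End_A(P)$. By Proposition~\ref{graded Morita equivalent 2} and Lemma~\ref{lem.basic 2}, $B$ is a ring-indecomposable basic $\ZZ$-graded AS-Gorenstein algebra of dimension $d$ graded Morita equivalent to $A$, and by Proposition~\ref{prop-eq-a-inv} its average Gorenstein parameter is still $c:=p^A_\mathrm{av}$ while $p^B_i=p_i-\ell_i+\ell_{\nu i}$. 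Since $B_m=\bigoplus_{i,j}e_jA_{\ell_j-\ell_i+m}e_i$, the algebra $B$ is $\NN$-graded if and only if $\ell_i-\ell_j\le k_{ij}$ for all $i,j$ with $e_jA_\bullet e_i\neq 0$, where $k_{ij}:=\min\{n\ge 0\mid e_jA_ne_i\neq 0\}$; call a tuple with this property \emph{admissible} --- for instance $(\ell_i)=0$, which gives $B=A$. It therefore suffices to produce an admissible tuple with $|p^B_i-c|<1$ for every $i$.

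Next I would record the arithmetic input. By Proposition~\ref{prop average a-invariant}, for each $\nu$-orbit $O$ of length $g_O$ one has $\sum_{i\in O}p_i=g_Oc$, and this sum is unchanged by the substitution $p_i\mapsto p_i-\ell_i+\ell_{\nu i}$ because the correction telescopes around the cycle $O$. Hence $|p^B_i-c|<1$ forces, on each orbit, all the $p^B_i$ to equal $c$ if $c\in\ZZ$, and otherwise exactly $g_O(c-\lfloor c\rfloor)$ of them to equal $\lceil c\rceil$ and the rest to equal $\lfloor c\rfloor$; since $g_O(c-\lfloor c\rfloor)=g_Oc-g_O\lfloor c\rfloor$ is a genuine integer in $[0,g_O)$, this prescription is at least arithmetically consistent. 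So the task is to choose an admissible tuple whose discrete derivative $\ell_{\nu i}-\ell_i$ takes, for each $i$, one of the two permitted values $\lfloor c\rfloor-p_i$ or $\lceil c\rceil-p_i$, with the prescribed multiplicities along each orbit.

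Finally I would proceed by iterated rebalancing. Start from the admissible tuple $(\ell_i)=0$. If the current admissible tuple already satisfies $|p^B_i-c|<1$ for all $i$, we are done. Otherwise, on some $\nu$-orbit $O$ some value $p^B_t$ lies strictly above $\lceil c\rceil$ (or, symmetrically, strictly below $\lfloor c\rfloor$); by the orbit-sum identity $\sum_{i\in O}p^B_i=g_Oc$ there is then an index $u\in O$ with $p^B_u\le\lfloor c\rfloor$ (resp.\ $p^B_u\ge\lceil c\rceil$), and transporting one unit of charge from $t$ to $u$ along $O$ --- realized by changing $(\ell_i)$ by $\pm1$ on a suitable arc of $O$, whose only effect is $p^B_t\mapsto p^B_t-1$ and $p^B_u\mapsto p^B_u+1$ (the other $p^B_i$ unchanged) --- lowers $\Phi:=\sum_i(p^B_i-c)^2$ by at least $2$. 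Since $\Phi\ge 0$, this can recur only finitely often, so finitely many rebalancing steps reach a tuple with $|p^B_i-c|<1$ for all $i$ --- \emph{provided each step can be carried out inside the admissible region}. Securing that is the main obstacle, and is the purely combinatorial content of the Appendix: one must show the inequalities $\ell_i-\ell_j\le k_{ij}$ always leave room for the transport. Ring-indecomposability is indispensable here --- the quiver of $A$ is connected (Lemma~\ref{connected}), and the relation $e_jA_\ell e_i\simeq e_{\nu j}A_{\ell+p_i-p_j}e_{\nu i}$ of Proposition~\ref{lem-nu-eAe}(2) forces $k_{\nu i,\nu j}=k_{ij}+p_i-p_j\ge 0$, which bounds how far the $p_i$ may spread along the quiver and provides exactly the slack required. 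Granting this, we obtain the desired $B$, and the ``in particular'' clause follows at once: if $c=p^A_\mathrm{av}\le 0$ then $p^B_i<c+1\le 1$, hence $p^B_i\le 0$ for every $i\in\II_B$ since $p^B_i\in\ZZ$.
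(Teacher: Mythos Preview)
Your reduction is correct and matches the paper exactly: graded Morita equivalence via $P=\bigoplus_i e_iA(\ell_i)$, the change-of-parameters formula $p^B_i=p_i-\ell_i+\ell_{\nu i}$ from Proposition~\ref{prop-eq-a-inv}, and the admissibility constraints coming from non-negativity of the regrading are precisely how the paper sets things up (your $k_{ij}$ is the paper's $m^A(i,j)$, and your key identity $k_{\nu i,\nu j}=k_{ij}+p_i-p_j$ is exactly \eqref{eq-m-a-a}). The ``in particular'' clause is also fine.

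The gap is in the combinatorial core. Your iterated rebalancing decreases $\Phi$ only if each transport step stays admissible, and you do not prove this; you explicitly defer it with ``granting this''. Concretely, shifting $\ell$ by $\pm1$ on an arc of a $\nu$-orbit changes $\ell_b-\ell_a$ by $\pm1$ for every pair with $a$ in the arc and $b$ outside it (including $b$ in a \emph{different} $\nu$-orbit), and any constraint $\ell_b-\ell_a\le k_{ab}$ that was already tight will then be violated. The relation $k_{\nu i,\nu j}=k_{ij}+p_i-p_j\ge0$ bounds $p_j-p_i$ along a single edge but does not by itself guarantee slack on the specific cross-arc constraints your step touches, so ``provides exactly the slack required'' is an assertion, not an argument. (There is also a minor issue: when $c\in\ZZ$, your choice $p^B_u\le\lfloor c\rfloor$ allows $p^B_u=c$ and the $\Phi$-drop can be zero; you should take $p^B_u<c$, which the orbit-sum argument still provides.) More importantly, what you attribute to the Appendix is not what it does: the paper does \emph{not} verify a greedy procedure. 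Instead it (i) characterises exactly when a map $m:I^2\to\ZZ$ has a non-negative conjugate by a cycle-positivity condition (Theorem~\ref{lem-mat-pos}), (ii) shows every m-data is conjugate to one of ``floor type'', which is automatically almost constant (Lemma~\ref{lem-cyclic-mdata}), and (iii) starting from floor type, passes to an auxiliary $\Sigma$-non-negative map $\overline{m}$ on the set of $\nu$-orbits and uses (i) there, then lifts the resulting conjugation back using floor-function identities (Theorem~\ref{thm-nonneg-app-constant 2}). This produces the desired conjugate in one shot rather than by local moves, which is precisely what sidesteps the admissibility obstruction your greedy approach runs into.
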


In the rest of this section, we give the proof of Theorem \ref{thm-a-inv}.
For each integer $\ell$, we define $\infty + \ell := \infty$ and regard $\infty > \ell$.
Let $\II=\II_A$ and $\ZZ_{\infty}:=\ZZ\sqcup\{\infty\}$.
To prove Theorem \ref{thm-a-inv}, we study the following map induced from the grading of $A$. Let $m^A :
\II^2 \to \ZZ_{\infty}$ be a map defined as follows:
\[\label{def-m}
m^A(i, j)=\left\{\begin{array}{ll}\min\{\ell\in\ZZ \mid e_j A_{\ell}e_i\neq 0\}&e_jAe_i\neq0,\\
\infty&e_jAe_i=0.
\end{array}\right.\]
Furthermore, let $a^A:\II \to \ZZ$ be a map defined by 
\[a^A(i)=-p_i.\]
By Proposition \ref{lem-nu-eAe}(2), we have
\begin{align}\label{eq-m-a-a}
	m^A(\nu i, \nu j)=m^A(i,j)+p_i-p_j=m^A(i,j)-a^A(i)+a^A(j).
\end{align}
We induce one operation on maps $m^A$ and $a^A$.

\begin{dfn}\label{define conjugation}
Let $m : \II^2 \to \ZZ_{\infty}$, $a:\II\to\ZZ$ be maps.
For a map $s : \II \to \ZZ$, we define maps $sm : \II^2 \to \ZZ_{\infty}$ and $sa:\II\to\ZZ$ by
\[
 (sm)(i, j):=m(i, j)+s(i)-s(j)\ \mbox{ and }\ 
 (sa)(i):=a(i)+s(i)-s(\nu i).
\]
We call this operation a \emph{conjugation} and call the pair $(sm,sa)$ a \emph{conjugate} of the pair $(m, a)$.
\end{dfn}

The following is an easy observation of graded Morita equivalences and conjugations of matrices.

\begin{prop}\label{lem-Morita-conj}
Let $A$ be a basic $\NN$-graded AS-Gorenstein algebra of dimension $d$, and $m^A$ and $a^A$ the maps defined as above. Then the following assertions hold.
\begin{enumerate}
\item If $B$ is graded Morita equivalent to $A$, then the pair $(m^{B},a^B)$ is a conjugate of $(m^A,a^A)$.
\item Any conjugate of $(m^A,a^A)$ comes from an algebra which is graded Morita equivalent to $A$.
\end{enumerate}
\end{prop}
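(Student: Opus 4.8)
The plan is to reduce both statements to the concrete description of graded Morita equivalences given by Proposition \ref{graded Morita equivalent 2}, and then to read $m^B$ and $a^B$ off from a single grading computation. The key step is the following identity. Fix integers $(\ell_i)_{i\in\II_A}$, put $P:=\bigoplus_{i\in\II_A}e_iA(\ell_i)\in\mod^{\ZZ}A$ and $B:=\End_A(P)$, and let $\epsilon_i\in B$ be the idempotent projecting $P$ onto the summand $e_iA(\ell_i)$. Since the $e_iA$ are pairwise non-isomorphic, so are the $\epsilon_iB$; hence $\{\epsilon_i\}_{i\in\II_A}$ is a complete set of primitive orthogonal idempotents of the basic algebra $B$, and I identify $\II_B=\II_A$ via $\epsilon_i\leftrightarrow i$ (this is the identification induced by the Morita equivalence $\Hom_A(P,-)$, and it carries $\nu^B$ to $\nu^A$). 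Using $\Hom^{\ZZ}_A(e_iA,N)=(Ne_i)_0$, one computes, for all $i,j\in\II_A$ and $n\in\ZZ$,
\[\epsilon_jB_n\epsilon_i\simeq\Hom_A^{\ZZ}(e_iA(\ell_i),e_jA(\ell_j)(n))\simeq(e_jAe_i)_{n+\ell_j-\ell_i},\]
so that $m^B(i,j)=m^A(i,j)+\ell_i-\ell_j$. For the other coordinate, Proposition \ref{prop-eq-a-inv} (alternatively: apply the shift-commuting equivalence $-\otimes_BP$, which sends $S_i$ to $S_i(\ell_i)$ and $e_jB(\ell)$ to $e_jA(\ell_j+\ell)$, to the characterization \eqref{prop-Ext-S} of $(\nu i,-p_i)$) gives $\nu^B=\nu^A$ and $p^B_i=p^A_i-\ell_i+\ell_{\nu i}$, hence $a^B(i)=a^A(i)+\ell_i-\ell_{\nu i}$. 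Writing $s:=(\ell_i)_{i\in\II_A}:\II_A\to\ZZ$, these two formulas together say precisely that $(m^B,a^B)=(sm^A,sa^A)$.

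Statement (2) then follows immediately: for a prescribed $s:\II_A\to\ZZ$, take $\ell_i:=s(i)$ and $B:=\End_A(\bigoplus_{i\in\II_A}e_iA(s(i)))$; this $B$ is graded Morita equivalent to $A$ by Proposition \ref{graded Morita equivalent 2} and is again a bounded-below basic AS-Gorenstein algebra of dimension $d$ (using Lemma \ref{lem.basic 2}), and the identity above yields $(m^B,a^B)=(sm^A,sa^A)$. For (1), let $B$ be an arbitrary graded algebra graded Morita equivalent to $A$. Passing from $B$ to its basic reduction $eBe$ (with $e=\sum_{i\in\II_B}e_i$) changes neither $m^B$ nor $a^B$ and keeps it graded Morita equivalent to $A$, so I may assume $B$ is basic. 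Then Definition-Proposition \ref{graded Morita equivalent}(2) provides a graded isomorphism $B\simeq\End_A(P)$ with $P$ a progenerator of $\Mod^{\ZZ}A$; since $B$ is basic and $P$ is a generator, $P$ has, up to shift, exactly one indecomposable summand isomorphic to each $e_iA$ $(i\in\II_A)$ and no others, i.e.\ $P\simeq\bigoplus_{i\in\II_A}e_iA(\ell_i)$ for suitable $\ell_i\in\ZZ$, and the identity above completes the argument.

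I do not expect a genuine obstacle here: the whole content is the displayed identity $\epsilon_jB_n\epsilon_i\simeq(e_jAe_i)_{n+\ell_j-\ell_i}$ together with the already-established formula $p^B_i=p^A_i-\ell_i+\ell_{\nu i}$. The two points that require some care are (i) the reduction of an arbitrary graded-Morita-equivalent algebra to the normal form $\End_A(\bigoplus_{i\in\II_A}e_iA(\ell_i))$, which uses Morita theory, the classification of indecomposable graded projectives over the basic algebra $A$, and the fact that ungraded-isomorphic indecomposable graded projectives differ by a shift; and (ii) the matching of the degree-shift conventions, so that one and the same map $s=(\ell_i)$ realizes the conjugation simultaneously in the $m$- and the $a$-coordinate.
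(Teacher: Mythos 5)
Your proposal is correct and follows essentially the same route as the paper: the degree computation $\epsilon_jB_n\epsilon_i\simeq e_jA_{n+\ell_j-\ell_i}e_i$ yields $m^B=sm^A$, and Proposition \ref{prop-eq-a-inv} yields $a^B=sa^A$, with $s=(\ell_i)$. The only place you go beyond the paper's one-line ``both assertions directly follow'' is in spelling out the reduction of an arbitrary graded-Morita-equivalent $B$ (via its basic idempotent and the classification of indecomposable graded projectives over the basic algebra $A$) to the normal form $\End_A(\bigoplus_{i}e_iA(\ell_i))$; this is a genuine small gap the paper leaves implicit, and your treatment of it is sound.
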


\begin{proof}
For integers $\ell_i \in\ZZ \ (i \in \II)$, let $P=\bigoplus_{i\in\II}e_iA(\ell_i)$ and $B=\End_A(P)$.
Since we have an equivalence $-\otimes_{B}P:\Mod^{\ZZ}B\to\Mod^{\ZZ}A$, we have $\Hom^{\ZZ}_B(e_iB, e_jB(\ell))=\Hom^{\ZZ}_A(e_iA(\ell_i), e_jA(\ell+\ell_j))$ for any integer $\ell$.
This induces a grading of $B$ as follows:
$e_jB_{\ell} e_i = e_j A_{\ell -\ell_i + \ell_j} e_i$.
Thus we have $m^{B}(i, j)=m^A(i, j)+\ell_i-\ell_j$.
Moreover, by Proposition \ref{prop-eq-a-inv}, we have $a^B(i)=a^A(i)+\ell_i-\ell_{\nu i}$.
Both assertions directly follow from these equations. 
\end{proof}

Theorem \ref{thm-a-inv} follows from the following statement.

\begin{lem}\label{almost constant}
In the setting of Theorem \ref{thm-a-inv}, let $m^A$ and $a^A$ be the maps defined as above. Then there exists a conjugate $(m',a')$ of $(m^A,a^A)$ which satisfies the following properties: 
\begin{itemize}
\item[$\mathrm{(C1)}$] for any $i\in\II$, there exists $c \in \ZZ$ such that $a'(\nu^\ell i) \in \{c, c+1\}$ for all $\ell \in \ZZ$.
\item[$\mathrm{(C2)}$] $m'(i, j)\geq 0$ holds for any $i, j\in \II$.
\end{itemize}
\end{lem}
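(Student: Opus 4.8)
The plan is to recast the statement as a feasibility problem for a system of integer difference constraints and then resolve it by the no-negative-cycle criterion. First, by Proposition~\ref{lem-Morita-conj} and Definition~\ref{define conjugation}, the conjugates of $(m^A,a^A)$ are exactly the pairs $(sm^A,sa^A)$ with $s\colon\II\to\ZZ$. By Proposition~\ref{prop average a-invariant}, the sum of $a^A$ over a $\nu$-orbit $O$ equals $-|O|\,p^A_\mathrm{av}\in\ZZ$, and this sum is unchanged by conjugation (the $s$-terms telescope); hence the constant $c$ occurring in (C1) is forced to be $\lfloor -p^A_\mathrm{av}\rfloor$ and may be taken equal to this single value on every orbit. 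Unwinding the definitions, $(sm^A,sa^A)$ satisfies (C1) and (C2) if and only if $s$ obeys $s(j)-s(i)\le m^A(i,j)$ for all $i,j$ with $e_jAe_i\neq 0$, together with $a^A(i)+s(i)-s(\nu i)\in\{c,c+1\}$ for all $i\in\II$, where $c:=\lfloor -p^A_\mathrm{av}\rfloor$. So it is enough to produce such an $s$.

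Next I would encode these constraints in a finite weighted digraph $H$ on the vertex set $\II$: an ``$m$-edge'' $i\to j$ of weight $m^A(i,j)$ for every pair with $e_jAe_i\neq 0$ (in particular a $0$-weight loop at each vertex), a ``forward edge'' $i\to\nu i$ of weight $a^A(i)-c$, and a ``backward edge'' $\nu i\to i$ of weight $c+1-a^A(i)$ (the membership $a^A(i)+s(i)-s(\nu i)\in\{c,c+1\}$ being the conjunction of the two one-sided inequalities). By the standard solvability criterion for difference constraints---a system $\{s(\mathrm{head})-s(\mathrm{tail})\le\mathrm{weight}\}$ is feasible exactly when the constraint graph has no cycle of negative total weight, a solution then being given by shortest-path distances from an added source---the lemma reduces to showing that $H$ has no negative cycle.

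To prove this I would reduce an arbitrary cycle to transparent base cases. A cycle built only from $m$-edges has non-negative weight since $m^A\ge 0$ (this is where the $\NN$-grading of $A$ enters); a full forward winding around an orbit $O$ has weight $\sum_{i\in O}(a^A(i)-c)=|O|(-p^A_\mathrm{av}-c)\ge 0$ by the choice of $c$; and a full backward winding has weight $|O|(c+1+p^A_\mathrm{av})>0$. For a general cycle I would fold it toward these using identity~\eqref{eq-m-a-a} in the form $m^A(\nu i,\nu j)=m^A(i,j)-a^A(i)+a^A(j)$: a short computation shows that the path $i\xrightarrow{\mathrm{fwd}}\nu i\xrightarrow{m}\nu j\xrightarrow{\mathrm{bwd}}j$ has weight $m^A(i,j)+1$, so replacing it by the direct $m$-edge $i\to j$, replacing the mixed pattern $\nu i\xrightarrow{\mathrm{bwd}}i\xrightarrow{m}j\xrightarrow{\mathrm{fwd}}\nu j$ by the $m$-edge $\nu i\to\nu j$, and collapsing any adjacent forward/backward backtrack, each lower the cycle's total weight by exactly $1$. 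Iterating, a cycle is reduced to a concatenation of $m$-cycles and full $\nu$-windings, all of non-negative weight; since every move subtracted a non-negative amount, the original cycle is itself non-negative.

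The step I expect to be the main obstacle is precisely this reduction: one must check that the fold moves terminate at the base cases for \emph{every} cycle of $H$, which is not automatic, since $\nu$-edges and $m$-edges may interleave arbitrarily and a cycle may wind a nonzero net amount around an orbit while still closing up (an $m$-edge can re-enter an orbit at a different level), so a naive induction on the number of $\nu$-edges fails. I would settle it either by a careful induction with a suitable complexity measure plus a case analysis on the consecutive edge-types along the cycle, or more conceptually by passing to the $\ZZ$-cover $\widehat H$ of $H$ with vertices $\II\times\ZZ$ on which, after twisting the $m$-edge weights by a rational potential $\phi$ with $\phi(\nu i)-\phi(i)=a^A(i)+p^A_\mathrm{av}$, the $m$-edge weights become level-independent, so that lifting a cycle of $H$ to a path of $\widehat H$ and comparing its endpoints isolates a non-negative $m$-part and a non-negative winding-part. (A fully elementary alternative that bypasses cycles is to induct on $\sum_i(\max\{a^A(i)-c-1,0\}+\max\{c-a^A(i),0\})$, decreasing it by transferring one unit of $a^A$-weight along an orbit---via a conjugation by a sum of vertex-indicators---from a vertex with $a^A\ge c+2$ to one with $a^A\le c$, choosing the transferred segment, by a minimality argument using $m^A\ge 0$ and~\eqref{eq-m-a-a}, so that no $0$-weight $m$-edge enters it from outside, which keeps (C2) intact.)
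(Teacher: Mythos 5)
Your reformulation is sound and genuinely different from the paper's route: you recast the lemma as the feasibility of an integer difference-constraint system on the graph $H$ whose arcs encode $(sm^A)(i,j)\ge 0$ together with the two-sided constraint $(sa^A)(i)\in\{c,c+1\}$ for $c=\lfloor -p^A_\mathrm{av}\rfloor$, and reduce to showing $H$ has no negative cycle. This is a legitimate alternative to the paper's argument, which proceeds instead via ``floor-type'' m-data: Lemma~\ref{lem-cyclic-mdata}(1) replaces $a$ by the floor-difference $f_{a_\mathrm{av},n_x}$, Lemma~\ref{lem-cyclic-mdata}(2) gives $\nu^g$-periodicity, and then a $g$-fold average plus an orbit-wise minimum produce $\overline m$ on $J^2$, to which Theorem~\ref{lem-mat-pos} (itself the feasibility criterion for the purely-$m$ constraints) is applied and then lifted back along the floor functions. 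Note in particular that by working only with the finitely-valued arcs of $H$ you neatly bypass the paper's use of Example~\ref{extend to m-data} to extend $m^A$ to all of $\II^2$.

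However, the crux -- that $H$ has no negative cycle -- is not established. Your fold/slide moves are correct ($m$-edges commute with fwd/bwd edges at equal weight by~\eqref{eq-m-a-a}, and each adjacent fwd--bwd cancellation drops the weight by exactly $1$), but the terminal configuration they reach is an $m$-path from $v_0$ to $v_\ell$ followed by $|w|$ forward (or backward) edges from $v_\ell$ to $\nu^{w}v_\ell=v_0$, where $w$ is the net winding. Unless $w$ is a multiple of the orbit size, this is \emph{not} a concatenation of $m$-cycles with full windings: the winding is partial and its weight $\sum_{t=0}^{|w|-1}(a^A(\nu^t v_\ell)-c)$ can be negative, while the $m$-path is a path, not a cycle, so $\Sigma$-non-negativity does not apply to it directly. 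What actually makes the residual non-negative is an averaging inequality along the orbit, namely that $\max_{r\ge 0}\sum_{t=r}^{r+|w|-1}a^A(\nu^t v_\ell)\ge |w|\,a_\mathrm{av}\ge |w|c$ combined with the bound $m^A(v_0,v_\ell)\ge A(v_0,r)-A(v_\ell,r)$ forced by iterating~\eqref{eq-m-a-a} against $m^A\ge 0$ -- this is precisely the content that the paper's floor-function construction packages. Your two fallback sketches do not supply it: the potential $\phi$ with $\phi(\nu i)-\phi(i)=a^A(i)-a_\mathrm{av}$ makes the twisted fwd/bwd weights the constants $a_\mathrm{av}-c\ge0$ and $c+1-a_\mathrm{av}\ge 0$ but can make twisted $m$-edge weights negative, so the non-negativity of the ``$m$-part'' is exactly the unproved claim; and the transfer-induction asserts, without argument, that a segment of the orbit can always be chosen with no incoming $0$-weight $m$-edge, which is not obviously true. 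So the approach is promising and I believe it can be completed, but as written it has a real gap at the step you yourself flagged.
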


\begin{proof}
This follows from Theorem \ref{thm-nonneg-app-constant} given in Appendix \ref{appendix}. In fact, thanks to Proposition \ref{prop average a-invariant}, \eqref{eq-m-a-a} and Example \ref{extend to m-data} for $N:=0$, one can replace $\infty$ appearing in the image of $m^A:\II^2\to\ZZ_\infty$ by certain non-negative integers to get a map $m':\II^2\to\ZZ_{\ge0}$ satisfying the following conditions.
\begin{itemize}
\item If $m^A(i,j)\neq\infty$, then $m'(i,j)=m^A(i,j)$.
\item For each $(i,j)\in \II^2$, we have $m'(\nu i,\nu j)=m'(i,j)-a^A(i)+a^A(j)$.
\end{itemize}
Then $(m',a^A,\nu)$ satisfies the assumption of Theorem \ref{thm-nonneg-app-constant}. Thus there exists $s:\II\to\ZZ$ such that $(sm',sa^A,\nu)$ is almost constant and non-negative. Then $(m',a'):=(sm',sa^A)$ satisfies the desired conditions.
\end{proof}

Now we are ready to prove Theorem \ref{thm-a-inv}.

\begin{proof}[Proof of Theorem \ref{thm-a-inv}]
By Lemma \ref{almost constant}, there exists a conjugate $(m',a')$ of $(m^A,a^A)$ satisfying the conditions (C1) and (C2). By Proposition \ref{lem-Morita-conj}(2), there exists an algebra $B$ which is graded Morita equivalent to $A$ and satisfies $(m^B,a^B)=(m',a')$. Then the algebra $B$ satisfies the desired conditions in Theorem \ref{thm-a-inv}.
\end{proof}

\section{Tilting theory for the singularity categories}\label{section: tilting theory}

\subsection{Our results}
The aim of this section is to discuss the existence of silting and tilting objects in the triangulated category $\D_{\sg,0}^{\ZZ}(A) \simeq \uCM^{\ZZ}_0A$. 
We start with the following observation.

\begin{prop}\label{non-existence silting}
Let $A$ be an $\NN$-graded AS Gorenstein algebra of dimension $d$.
If $\D_{\sg,0}^{\ZZ}(A)$ admits a silting object, then $\gldim A_0$ is finite.
\end{prop}

To prove this, we need the following simple observation.

\begin{lem}\label{ext ext}
For each $X,Y\in\mod A_0$ and $i>d$, we have $\Ext^i_A(\Omega^d_A(X),\Omega^d_A(Y))_0\simeq\Ext^i_{A_0}(X,Y)$.
\end{lem}

\begin{proof}
Take an exact sequence $0\to\Omega^d_A(Y)\to Q^{1-d}\to\cdots\to Q^0\to Y\to0$ in $\mod^{\ZZ}A$ with $Q^j\in\proj^{\ZZ}A$ for each $j$. 
Since $\Omega^d_A(X)\in\CM^{\ZZ}A$, we have $\Ext^j_A(\Omega^d_A(X),A)=0$ for each $j>0$.
Applying $\Hom_A(\Omega^d_A(X),-)$, we obtain
\[\Ext^i_A(\Omega^d_A(X),\Omega^d_A(Y))\simeq\Ext^{i-d}_A(\Omega^d_A(X),Y)\simeq\Ext^i_A(X,Y).\]
Take a projective resolution
\begin{equation}\label{proj res of X}
\cdots\to P^{-1}\to P^0\to X\to0
\end{equation}
in $\mod^{\ZZ}A$ such that each $P^j\in\proj^{\ZZ}A$ is generated in non-negative degrees. Applying $(-)_0$, we obtain a projective resolution
\begin{equation}\label{proj res of X 2}
\cdots\to (P^{-1})_0\to (P^0)_0\to X\to0
\end{equation}
in $\mod A_0$. Applying $\Hom_A(-,Y)_0$ to \eqref{proj res of X} and $\Hom_{A_0}(-,Y)$ to \eqref{proj res of X 2} and comparing their cohomologies, we obtain
\[\Ext^i_A(X,Y)_0\simeq\Ext^i_{A_0}(X,Y).\]
Thus the assertion follows.
\end{proof}

Now we are ready to prove Proposition \ref{non-existence silting}.

\begin{proof}[Proof of Proposition \ref{non-existence silting}]
Since $\D_{\sg,0}^{\ZZ}(A)$ admits a silting object, for each $M\in\D_{\sg,0}^{\ZZ}(A)$, it follows that $\Hom_{\D_{\sg,0}^{\ZZ}(A)}(M,M[i])=0$
for sufficiently large $i\in\ZZ$ by \cite[Proposition 2.4]{AI}.
Let $S:=A_0/\rad A_0$ and $M:=\Omega^d_A(S)\in\CM_0^{\ZZ}A$. For sufficiently large $i$, we have
\[\Ext^i_{A_0}(S,S)\stackrel{{\rm Lem.}\,\ref{ext ext}}{\simeq}\Ext^i_A(M,M)_0\simeq\Hom_{\D_{\sg,0}^{\ZZ}(A)}(M,M[i])=0.\]
Thus $\gldim A_0$ is finite.
\end{proof}

Our setting in the rest of this section is the following.
\begin{enumerate}
\item[(A1)] $A=\bigoplus_{i\in\NN}A_i$ is a ring-indecomposable basic $\NN$-graded AS-Gorenstein algebra of dimension $1$.
\item[(A2)] $\gldim A_0$ is finite.
\end{enumerate}
Let $Q$ be a graded total quotient ring of $A$, and $q\ge1$ an integer such that $\proj^{\ZZ}Q=\add\bigoplus_{i=1}^{q}Q(i)$ (see Theorem \ref{prop.Q4}(1)).
In some statements, we also assume the following condition.
\begin{enumerate}
\item[(A3)] $p_s\leq 0$ holds for each $s\in\II_A$.
\end{enumerate}
The following is the main result of this paper.

\begin{thm}\label{a and tilting}
Assume that the conditions \textup{(A1)} and \textup{(A2)} are satisfied.
\begin{enumerate}
\item $\D_{\sg,0}^{\ZZ}(A)$ has a silting object
\begin{equation}\label{define V}
V:=\bigoplus_{s\in\II_A}\bigoplus_{i=1}^{-p_s+q}e_{\nu s}A(i)_{\ge 0}.
\end{equation}
\item If the condition \textup{(A3)} is also satisfied, then the object $V$ in \eqref{define V} is a tilting object in $\D_{\sg,0}^{\ZZ}(A)$. Thus for $\Gamma:=\End_{\D_{\sg,0}^{\ZZ}(A)}(V)$, we have a triangle equivalence
\[\D_{\sg,0}^{\ZZ}(A)\simeq\per\Gamma.\]
\item $\D_{\sg,0}^{\ZZ}(A)$ admits a tilting object if and only if either $p^A_\mathrm{av}\leq0$ or $A$ is AS-regular.
\end{enumerate}
\end{thm}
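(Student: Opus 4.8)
The plan is to prove the three assertions in order, the first two by syzygy computations in $\D_{\sg,0}^{\ZZ}(A)\simeq\uCM_0^{\ZZ}A$ (Proposition~\ref{prop.uCM0}) and the third by combining (2) with the reduction of Gorenstein parameters in Theorem~\ref{thm-a-inv} and the Orlov-type semiorthogonal decompositions of Section~\ref{section: orlov}. For (1) I would check the two defining conditions of a silting object. The vanishing $\Hom_{\uCM_0^{\ZZ}A}(V,V[i])=0$ for $i\ge1$ is a syzygy computation based on the short exact sequence $0\to e_{\nu s}A(j)_{\ge0}\to e_{\nu s}A(j)\to C\to 0$ with $e_{\nu s}A(j)$ graded projective and $C$ of finite length: using that $A$ is $\NN$-graded and that (A2) makes the finite-length pieces behave well, one shows that $\Omega^{-1}$ of such a truncation is again, modulo $\proj^{\ZZ}A$, a truncation $e_tA(j')_{\ge0}$, and then that $\Hom$'s between truncations vanish in the required degrees. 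For $\thick V=\D_{\sg,0}^{\ZZ}(A)$ one first notes that the truncations $e_iA(j)_{\ge0}$ $(i\in\II_A,\ j\in\ZZ)$ generate $\D_{\sg,0}^{\ZZ}(A)$---every $M\in\CM_0^{\ZZ}A$ is built from its truncations, cf.\ the proof of Proposition~\ref{prop.uCM0}---and then uses the periodicity $A(q')\simeq A$ in $\qgr A$ (Theorem~\ref{prop.Q4}(1)) together with the explicit syzygies to cut the generating set down to the summands of $V$. For (2), part (1) leaves only $\Hom_{\uCM_0^{\ZZ}A}(V,V[i])=0$ for $i\le-1$; here I would apply Auslander--Reiten--Serre duality (Theorem~\ref{thm.Serref}), $\Hom_{\uCM_0^{\ZZ}A}(V,V[i])\simeq D\Hom_{\uCM_0^{\ZZ}A}(V,(V\otimes_A\omega)[-i])$, turning this into positive-$\Ext$ vanishing of $V$ against the twisted object $V\otimes_A\omega$; using $\omega\simeq\bigoplus_se_{\nu s}A(-p_s)$ (Proposition~\ref{prop.lc1}) and hypothesis (A3) that all $p_s\le0$, one checks that $(e_{\nu s}A(j)_{\ge0})\otimes_A\omega$ is, modulo projectives, again a truncation $e_tA(j')_{\ge0}$ in the range defining $V$, so the vanishing follows from the computation of (1) (equivalently, $V\otimes_A\omega$ lies in the coaisle of the co-$t$-structure determined by the silting object $V$). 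The triangle equivalence $\D_{\sg,0}^{\ZZ}(A)\simeq\per\Gamma$ is then the standard consequence of $V$ being a tilting object.

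For (3), ``$p^A_\mathrm{av}\le0$ or $A$ AS-regular $\Rightarrow$ a tilting object exists'' is the easy direction: if $A$ is AS-regular then $\gldim A=1$, so $\D_{\sg,0}^{\ZZ}(A)\subseteq\D_{\sg}^{\ZZ}(A)=0$ and the zero object is trivially tilting; and if $p^A_\mathrm{av}\le0$ then Theorem~\ref{thm-a-inv} produces a basic $\NN$-graded AS-Gorenstein algebra $B$, graded Morita equivalent to $A$, with $p^B_i\le0$ for all $i$, so the resulting triangle equivalence $\D_{\sg,0}^{\ZZ}(A)\simeq\D_{\sg,0}^{\ZZ}(B)$ transports (A1)--(A2) to $B$ and part (2) applied to $B$ supplies the tilting object $V_B$. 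For the converse, assume $\D_{\sg,0}^{\ZZ}(A)$ has a tilting object $T$ and $A$ is not AS-regular; we must show $p^A_\mathrm{av}\le0$. Since $\uCM_0^{\ZZ}A$ is $\Hom$-finite, $\Gamma:=\End_{\D_{\sg,0}^{\ZZ}(A)}(T)$ is finite-dimensional and $\D_{\sg,0}^{\ZZ}(A)\simeq\per\Gamma$; because $\D_{\sg,0}^{\ZZ}(A)$ admits a Serre functor by Theorem~\ref{thm.Serref}, $\Gamma$ is a smooth proper dg algebra, which for an ordinary algebra forces $\gldim\Gamma<\infty$, and hence $\D_{\sg,0}^{\ZZ}(A)\simeq\Db(\mod\Gamma)$.

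Suppose now, for contradiction, $p^A_\mathrm{av}>0$. Replacing $A$ by the graded Morita equivalent algebra of Theorem~\ref{thm-a-inv} (which preserves $p^A_\mathrm{av}$ by Proposition~\ref{prop-eq-a-inv}, the property of being non-AS-regular, and the existence of a tilting object, and for which (A1)--(A2) again hold), we may assume $p_i\ge0$ for all $i$ and not all $p_i=0$. In this range, the Orlov-type semiorthogonal decomposition of Section~\ref{section: orlov} exhibits $\per(\qgr A)$ as $\langle\D_{\sg,0}^{\ZZ}(A),\mathscr{E}\rangle$ with $\mathscr{E}\ne0$ generated by a finite exceptional collection, so $\mathscr{E}\simeq\per\Gamma'$ with $\gldim\Gamma'<\infty$. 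As $\D_{\sg,0}^{\ZZ}(A)\simeq\Db(\mod\Gamma)$ and $\mathscr{E}$ are both smooth and proper, so is the glued category $\per(\qgr A)$. But $\per(\qgr A)\simeq\per\Lambda$ with $\Lambda$ a \emph{selfinjective} finite-dimensional algebra (Corollary~\ref{cor.qql}), and $\per\Lambda$ is smooth only when $\gldim\Lambda<\infty$, i.e.\ only when $\Lambda$ is semisimple; one then checks that, together with $p_i\ge0$, this forces either all $p_i=0$ or $A$ AS-regular, both excluded---a contradiction, so $p^A_\mathrm{av}\le0$. I expect the converse of (3) to be the main obstacle: proving that the Orlov-type gluing of the smooth proper category $\Db(\mod\Gamma)$ with the exceptional-collection category $\mathscr{E}$ is again smooth proper (hence cannot be $\per\Lambda$ for a non-semisimple selfinjective $\Lambda$), and disposing of the boundary case where $\Lambda$ is semisimple. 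By comparison, the syzygy bookkeeping underlying (1) and (2)---checking that $\Omega^{\pm1}$ and $-\otimes_A\omega$ send the truncations $e_iA(j)_{\ge0}$ to (stable) truncations again with controlled degree shifts---is routine, but it is exactly where hypotheses (A2) and (A3) enter essentially.
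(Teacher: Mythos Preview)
Your approach to (1), (2), and the ``if'' direction of (3) is essentially the paper's: the vanishing of positive $\Ext$ is Lemma~\ref{ext vanishing}(1), the thick generation is Proposition~\ref{generate}, the negative-$\Ext$ vanishing under (A3) via Serre duality is Lemma~\ref{ext vanishing}(2), and the reduction to (A3) via Theorem~\ref{thm-a-inv} is exactly how the paper handles the ``if'' part. (One small inaccuracy: you say $\Omega^{-1}$ of a truncation is again a truncation. This is not how the paper argues, and it is not literally true in $\CM^{\ZZ}A$; the paper instead chases the diagram coming from $0\to A(i)_{\ge0}\to A(i)\to A(i)/A(i)_{\ge0}\to 0$ directly to kill the relevant $\Ext$ groups.)

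The genuine gap is in your ``only if'' argument for (3). You write that since $\uCM_0^{\ZZ}A$ has a Serre functor, $\Gamma=\End(T)$ is ``a smooth proper dg algebra, which for an ordinary algebra forces $\gldim\Gamma<\infty$''. This inference is false: for a finite-dimensional algebra $\Gamma$, the category $\per\Gamma$ has a Serre functor if and only if $\Gamma$ is Iwanaga--Gorenstein (this is the content of \cite[Proposition~4.4]{BIY}, which is what Proposition~\ref{information on V}(2) uses), not if and only if $\gldim\Gamma<\infty$. The dual numbers $\Gamma=k[x]/(x^2)$ already give a counterexample. Once $\D_{\sg,0}^{\ZZ}(A)$ is not known to be smooth, your gluing argument---that the semiorthogonal decomposition of $\per(\qgr A)$ forces $\Lambda$ to be semisimple---collapses; and you yourself flag the gluing step as the obstacle. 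The endgame (``$\Lambda$ semisimple together with $p_i\ge0$ forces all $p_i=0$ or $A$ AS-regular'') is also not argued, and it is not clear how to salvage it.

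The paper avoids all of this with a short direct argument (Proposition~\ref{non-existence tilting}): if $p^A_\mathrm{av}>0$ and $T$ is tilting, transport the Serre functor $-\otimes_A\omega$ to $\nu_\Gamma=-\Lotimes_\Gamma D\Gamma$ on $\per\Gamma$. On one hand $\nu_\Gamma^i(\Gamma)\in\D^{\le0}(\mod\Gamma)$ for all $i\ge0$. On the other hand, since $e_s\omega^{\otimes i}\simeq e_{\nu^is}A(-\sum_{j=0}^{i-1}p_{\nu^js})$ and $p^A_\mathrm{av}>0$, one has $\infd\omega^{\otimes i}\to\infty$; hence for $i\gg0$ the module $T\otimes_A\omega^{\otimes i}$ (and all its syzygies) lives in internal degrees strictly above the generators of $T$, so $\underline{\Hom}^{\ZZ}_A(T,\Omega^j(T\otimes_A\omega^{\otimes i}))=0$ for all $j\ge0$, i.e.\ $\H^{-j}(\nu_\Gamma^i(\Gamma))=0$. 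Thus $\nu_\Gamma^i(\Gamma)=0$, whence $T=0$ and $A$ is AS-regular. This uses only (A1) and the explicit form of $\omega$; no smoothness, no semiorthogonal decomposition, and no passage through Theorem~\ref{thm-a-inv}.
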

We will prove Theorem \ref{a and tilting} in Subsections \ref{existence} and \ref{non-existence}. We here give more information on the object $V$ and the algebra $\Gamma$. Let 
\begin{align*}
\widetilde{\II}_A&:=\{(s,i)\in\II_A\times\ZZ\mid 1\le i\le -p_s+q\},\\
\widetilde{\II}^1_A&:=\{(s,i)\in\widetilde{\II}_A\mid 1\le i\le -p_s\},\quad
\widetilde{\II}^2_A:=\{(s,i)\in\widetilde{\II}_A\mid\max\{1,-p_s+1\}\le i\le -p_s+q\},\\ 
V^\ell&:=\bigoplus_{(s,i) \in \widetilde{\II}^\ell_A}e_{\nu s}A(i)_{\ge 0}\ \mbox{ for }\ \ell=1,2.
\end{align*}
Then $\widetilde{\II}_A=\widetilde{\II}^1_A\sqcup\widetilde{\II}^2_A$ and $V=V^1\oplus V^2$ hold.

\begin{prop}\label{information on V}
Assume that the conditions \textup{(A1)} and \textup{(A2)} are satisfied. Let $V$ be an object given by \eqref{define V}, and $\Gamma=\End_{\D_{\sg,0}^{\ZZ}(A)}(V)$.
\begin{enumerate}
\item For sufficiently large integer $N$, we have $\add V=\add\bigoplus_{i=1}^NA(i)_{\ge0}$ as subcategories of $\D_{\sg,0}^{\ZZ}(A)$.
\end{enumerate}
Assume that \textup{(A3)} is also satisfied.
\begin{enumerate}
\setcounter{enumi}{1}
\item $\Gamma$ is an Iwanaga-Gorenstein algebra.
\item $\Gamma$ is isomorphic to a subalgebra of the full matrix algebra $\mathrm{ M}_{\widetilde{\II}_A}(Q)$ given by
\begin{align*}
\Hom_{\D_{\sg,0}^{\ZZ}(A)}(e_{\nu s}A(i)_{\ge 0},e_{\nu t}A(j)_{\ge 0})
\simeq \begin{cases}
	e_{\nu t}A_{j-i}e_{\nu s}&(s,i)\in\widetilde{\II}_A^1,\ (t,j)\in\widetilde{\II}_A^1\\
	0&(s,i)\in\widetilde{\II}_A^2,\ (t,j)\in\widetilde{\II}_A^1\\
	e_{\nu t}Q_{j-i}e_{\nu s}&(s,i)\in\widetilde{\II}_A,\ (t,j)\in\widetilde{\II}_A^2.
\end{cases}
\end{align*}
\item We have
\[\Gamma\simeq\left[\begin{array}{cc}\End_A^{\ZZ}(V^1)&0\\
\Hom^{\ZZ}_A(V^1,V^2)&\End^{\ZZ}_A(V^2)\end{array}\right].\]
\end{enumerate}
\end{prop}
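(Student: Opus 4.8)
The common thread is, for $i\ge 1$, to realise the module $e_{\nu s}A(i)_{\ge 0}\in\CM^{\ZZ}_0 A$ as the graded $A$-submodule $(e_{\nu s}A)_{\ge i}(i)$ of the graded projective $Q$-module $e_{\nu s}A(i)_{\ge 0}\otimes_A Q=e_{\nu s}Q(i)$ (using that $(e_{\nu s}A)_{\ge i}\subseteq e_{\nu s}A$ has finite-length cokernel and that $Q$ is flat, Proposition \ref{lem.sf2}), and to read off everything in $\D^{\ZZ}_{\sg,0}(A)\simeq\uCM^{\ZZ}_0 A$ from this picture. For (1): the inclusion $\add V\subseteq\add\bigoplus_{i=1}^{N}A(i)_{\ge 0}$ holds for any $N\ge\max_s(-p_s+q)$ since each summand $e_{\nu s}A(i)_{\ge 0}$ of $V$ is a direct summand of $A(i)_{\ge 0}$. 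For the reverse inclusion it suffices to show $e_{\nu s}A(i)_{\ge 0}\in\add V$ for every $i\ge 1$. If $1\le i\le -p_s$ it is a summand of $V^1$. If $i>-p_s$, then $(e_{\nu s}A)_{\ge i}=(e_{\nu s}Q)_{\ge i}$ by Proposition \ref{lem-nu-eAe2}(3), so $e_{\nu s}A(i)_{\ge 0}=(e_{\nu s}Q(i))_{\ge 0}$; since $\proj^{\ZZ}Q=\add\bigoplus_{j=1}^{q}Q(j)$ with $Q(q)\simeq Q$ in $\mod^{\ZZ}Q$ (Theorem \ref{prop.Q4}(1)) and a degree-preserving isomorphism of graded $Q$-modules restricts to an isomorphism of the degree-$\ge 0$ truncations (which are graded $A$-submodules), the isomorphism class of $(e_{\nu s}Q(i))_{\ge 0}$ in $\mod^{\ZZ}A$ depends only on that of $e_{\nu s}Q(i)$ in $\mod^{\ZZ}Q$, and hence is $q$-periodic in $i$ up to the permutation of indices induced by $Q(q)\simeq Q$; a short bookkeeping then identifies each such module, up to isomorphism in $\mod^{\ZZ}A$, with a direct summand of $V$. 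Taking $N$ large enough to exhaust the finitely many isomorphism classes that arise gives $\add V=\add\bigoplus_{i=1}^{N}A(i)_{\ge 0}$.

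For (3) and (4): extending $A$-linear maps $Q$-linearly and using $M\hookrightarrow M\otimes_A Q$ for $M\in\CM^{\ZZ}_0 A$ identifies $k$-linearly
\[
\Hom^{\ZZ}_A(e_{\nu s}A(i)_{\ge 0},e_{\nu t}A(j)_{\ge 0})=\{\,q\in e_{\nu t}Q_{j-i}e_{\nu s}\mid q\cdot(e_{\nu s}A)_{\ge i}\subseteq(e_{\nu t}A)_{\ge j}\,\}.
\]
If $(t,j)\in\widetilde{\II}^2_A$ then $j>-p_t$, so $(e_{\nu t}A)_{\ge j}=(e_{\nu t}Q)_{\ge j}$ and the containment is automatic for degree reasons; thus this space is all of $e_{\nu t}Q_{j-i}e_{\nu s}$, whatever the source is. If $(t,j)\in\widetilde{\II}^1_A$ one analyses the finite-length defect $(e_{\nu t}Q)_{\ge j}/(e_{\nu t}A)_{\ge j}$ via Proposition \ref{lem-nu-eAe2}; when additionally $(s,i)\in\widetilde{\II}^1_A$ the space becomes $e_{\nu t}A_{j-i}e_{\nu s}$. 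Next one computes the subspace $P^{\ZZ}$ of maps factoring through $\proj^{\ZZ}A$: since $\proj^{\ZZ}A$ is the class of projective-injective objects of the Frobenius category $\CM^{\ZZ}A$, a morphism factors through a projective iff it factors through the projective cover of its target, and an inspection shows that $P^{\ZZ}$ vanishes unless $(s,i)\in\widetilde{\II}^2_A$ and $(t,j)\in\widetilde{\II}^1_A$, in which case it is the whole $\Hom$-space. Passing to the stable quotient yields the three cases of (3). Part (4) then combines the vanishing of the $V^2\to V^1$ block with the fact that in the three remaining blocks the stable morphism space coincides with the ordinary graded one; compatibility of compositions is automatic because every morphism is (a restriction of) a left multiplication in $Q$, giving $\Gamma\simeq\left[\begin{smallmatrix}\End^{\ZZ}_A(V^1)&0\\ \Hom^{\ZZ}_A(V^1,V^2)&\End^{\ZZ}_A(V^2)\end{smallmatrix}\right]$.

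For (2): under (A3), Theorem \ref{a and tilting}(2) gives a triangle equivalence $\D^{\ZZ}_{\sg,0}(A)\simeq\per\Gamma=\Kb(\proj\Gamma)$, and by Theorem \ref{thm.Serref} this category has a Serre functor $\mathbb S$. Transporting $\mathbb S$, the object $\mathbb S\Gamma$ lies in $\Kb(\proj\Gamma)$ and, for all $n$,
\[
\Hom_{\Db(\mod\Gamma)}(\Gamma[n],\mathbb S\Gamma)\cong D\Hom_{\Kb(\proj\Gamma)}(\Gamma,\Gamma[n]),
\]
which is $D\Gamma$ for $n=0$ and $0$ otherwise; the same holds for $D\Gamma$ in place of $\mathbb S\Gamma$, so $\mathbb S\Gamma\simeq D\Gamma$ in $\Db(\mod\Gamma)$. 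Hence $D\Gamma\in\Kb(\proj\Gamma)$, i.e.\ $\injdim\Gamma_\Gamma<\infty$; the dual category $\Kb(\proj\Gamma)^{\op}\simeq\Kb(\proj\Gamma^{\op})$ likewise has a Serre functor (namely $\mathbb S^{-1}$), so the same argument gives $D\Gamma\in\Kb(\proj\Gamma^{\op})$, i.e.\ $\injdim{}_\Gamma\Gamma<\infty$, and $\Gamma$ is Iwanaga-Gorenstein. (Alternatively this follows from (4): $\End^{\ZZ}_A(V^2)$ is Morita equivalent to the selfinjective ring $Q$ and $\End^{\ZZ}_A(V^1)$ is a finite-dimensional algebra which turns out to be Iwanaga-Gorenstein, while the off-diagonal bimodule has finite projective dimension on both sides.)

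The main obstacle is the morphism computation in the two region-$1$ cases of (3): determining exactly which elements of $Q$ satisfy the truncation constraint $q\cdot(e_{\nu s}A)_{\ge i}\subseteq(e_{\nu t}A)_{\ge j}$, and pinning down $P^{\ZZ}$ precisely enough that the stable $\Hom$ collapses to $0$ in the $(\widetilde{\II}^2_A,\widetilde{\II}^1_A)$ block and to $e_{\nu t}A_{j-i}e_{\nu s}$ in the $(\widetilde{\II}^1_A,\widetilde{\II}^1_A)$ block. This requires careful tracking of the Gorenstein parameters through Proposition \ref{lem-nu-eAe2}, and is where the hypothesis (A3) is used.
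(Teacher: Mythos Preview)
Your overall strategy is correct and lands on the same conclusions as the paper, but your route through part (3) is more laborious than necessary, and your description of what happens in the $(\widetilde{\II}^2_A,\widetilde{\II}^1_A)$ block is misleading. The paper's key simplification is Proposition~\ref{j-i}(5): under \textup{(A3)} and for $i\ge 1$, one has $\underline{\Hom}^{\ZZ}_A(A(i)_{\ge 0},A(j)_{\ge 0})=\Hom^{\ZZ}_A(A(i)_{\ge 0},A(j)_{\ge 0})$, proved by observing that for any $k\in\ZZ$ at least one of $\Hom^{\ZZ}_A(A(i)_{\ge 0},A(k))$ and $\Hom^{\ZZ}_A(A(k),A(j)_{\ge 0})$ vanishes. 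So $P^{\ZZ}=0$ \emph{uniformly}, and the three cases of (3) are just computations of ordinary graded $\Hom$ (Proposition~\ref{j-i}(1)--(3)). In particular, in the $(\widetilde{\II}^2_A,\widetilde{\II}^1_A)$ block the \emph{ordinary} $\Hom$ is already zero (Proposition~\ref{j-i}(3)); there is no nontrivial cancellation of the form ``$P^{\ZZ}$ equals the whole $\Hom$-space'' to verify. Your embedding-into-$Q$ picture is valid and recovers the same answers, but the case analysis you flag as the ``main obstacle'' dissolves once you prove the uniform vanishing of $P^{\ZZ}$.

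For (1), your argument is exactly Lemma~\ref{induction sequence}(3) in the paper. For (2), your Serre-functor argument is precisely the content of \cite[Proposition~4.4]{BIY}, which the paper cites; your parenthetical alternative via the triangular form of (4) would require additional justification (e.g.\ that $\End^{\ZZ}_A(V^1)$ is Iwanaga--Gorenstein and that the off-diagonal bimodule has finite projective dimension on both sides), so it is safer to rely on the Serre-functor argument.
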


We will prove Proposition \ref{information on V} in Subsection \ref{proof of 2 results}.

\begin{ex}\label{End of connected case}
We give an example of the description of  $V\in\D_{\sg,0}^{\ZZ}(A)$ in Proposition \ref{information on V}(3)(4) above.
Assume that $A$ is connected (i.e.\ $A_0=k$) and $q=1$. For $a:=-p_1$, we have
\[\Gamma \simeq
	\End_{\D_{\sg,0}^{\ZZ}(A)}(V) \simeq
	\begin{bmatrix}
		k &0&\cdots&\cdots&0\\
		A_1&k &\ddots & &\vdots \\
		\vdots &\vdots&\ddots &\ddots&\vdots \\
		A_{a-1}&A_{a-2}&\cdots &k &0\\
		Q_{a}&Q_{a-1}&\cdots&Q_1&Q_0
	\end{bmatrix}.
	\]
\end{ex}

As a special case of Theorem \ref{a and tilting}, we obtain the following result.

\begin{cor}\label{a and tilting 2}
Assume that the conditions \textup{(A1)} and \textup{(A2)} are satisfied.
\begin{enumerate}
\item $\qgr A$ is semisimple if and only if $\mod^\ZZ Q$ is semisimple if and only if $\D_{\sg,0}^{\ZZ}(A)=\D_{\sg}^{\ZZ}(A)$.
\end{enumerate}
Assume that {\rm(A3)} and the equivalent conditions in \textup{(1)}  are satisfied. Let $V$ be the object \eqref{define V}.
\begin{enumerate}
\setcounter{enumi}{1}
\item $Q_0$ is semisimple, and $\Gamma=\End_{\D_{\sg}^{\ZZ}(A)}(V)$ has finite global dimension.
\item If the quiver of $A_0$ is acyclic, then there exists an ordering in the isomorphism classes of indecomposable direct summands of $V$, which forms a full strong exceptional collection in $\D_{\sg}^{\ZZ}(A)$.
\end{enumerate}
\end{cor}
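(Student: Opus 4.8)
The three parts will all follow from Theorem~\ref{a and tilting}, Proposition~\ref{information on V}, and the Frobenius/selfinjective picture recorded in Corollary~\ref{cor.qql}. For (1), the first equivalence is immediate from the equivalence $\qgr A\simeq\mod^{\ZZ}Q$ of Proposition~\ref{prop.Q2}(2). For the second one, I would use that, under the identifications $\D_{\sg,0}^{\ZZ}(A)\simeq\uCM^{\ZZ}_0A$ and $\D_{\sg}^{\ZZ}(A)\simeq\uCM^{\ZZ}A$, the subcategory $\D_{\sg,0}^{\ZZ}(A)$ is precisely the kernel of the triangle functor
\[G:=(-\otimes_AQ)\colon\D_{\sg}^{\ZZ}(A)\longrightarrow\D_{\sg}^{\ZZ}(Q)\simeq\umod^{\ZZ}Q,\]
which is essentially what is established in the proof of Proposition~\ref{prop.uCM0} together with Proposition~\ref{prop.CM0} (the functor being well defined because $Q$ is $A$-flat by Proposition~\ref{lem.sf2}(1)). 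Since $\pi$ and $-\otimes_AQ\colon\qgr A\to\mod^{\ZZ}Q$ are both essentially surjective, every object of $\mod^{\ZZ}Q$ lies in the essential image of $G$, so $G$ is dense. Hence $\D_{\sg,0}^{\ZZ}(A)=\D_{\sg}^{\ZZ}(A)$ if and only if $G=0$ if and only if $\D_{\sg}^{\ZZ}(Q)=0$; writing $\mod^{\ZZ}Q\simeq\mod\Lambda$ with $\Lambda$ finite dimensional selfinjective (Corollary~\ref{cor.qql}), this last condition means $\gldim\Lambda<\infty$, equivalently that $\Lambda$ — equivalently $\mod^{\ZZ}Q$ — is semisimple.

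For (2), assume (A3) and the equivalent conditions of (1). Then $Q_0=\End_{\mod^{\ZZ}Q}(Q)$ is the endomorphism ring of an object of the semisimple category $\mod^{\ZZ}Q$, hence semisimple. By Theorem~\ref{a and tilting}(2), $V$ is a tilting object of $\D_{\sg,0}^{\ZZ}(A)=\D_{\sg}^{\ZZ}(A)$ with $\Gamma=\End(V)$, and by Proposition~\ref{information on V}(4) it has the triangular form
\[\Gamma\simeq\begin{bmatrix}\End^{\ZZ}_A(V^1)&0\\\Hom^{\ZZ}_A(V^1,V^2)&\End^{\ZZ}_A(V^2)\end{bmatrix}.\]
I would show that each diagonal block has finite global dimension. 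The block $\End^{\ZZ}_A(V^2)=\bigoplus_{(s,i),(t,j)\in\widetilde{\II}^2_A}e_{\nu t}Q_{j-i}e_{\nu s}$ (Proposition~\ref{information on V}(3)) is, via $\Hom_{\qgr A}(e_{\nu s}A(i),e_{\nu t}A(j))=e_{\nu t}Q_{j-i}e_{\nu s}$, the ring $\End_{\qgr A}(\pi V^2)$, hence semisimple, being an endomorphism ring in the semisimple category $\qgr A\simeq\mod^{\ZZ}Q$. The block $\End^{\ZZ}_A(V^1)=\bigoplus_{(s,i),(t,j)\in\widetilde{\II}^1_A}e_{\nu t}A_{j-i}e_{\nu s}$ is a (ragged) Beilinson-type matrix algebra assembled from the graded pieces $A_{\ell}$ ($\ell\ge0$) of $A$, and I would prove it has finite global dimension by induction on the degree range, realizing it as an iterated one-point extension and using $\gldim A_0<\infty$ (A2) — the point being that each $A_{\ell}$ has finite projective dimension over $A_0$. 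Since a triangular matrix algebra over two algebras of finite global dimension again has finite global dimension, $\gldim\Gamma<\infty$ follows.

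For (3), once $\gldim\Gamma<\infty$ we have $\D_{\sg}^{\ZZ}(A)\simeq\per\Gamma=\Db(\mod\Gamma)$, and the pairwise non-isomorphic indecomposable summands $V_1,\dots,V_n$ of $V$ correspond to the indecomposable projective $\Gamma$-modules. I would first check that the (finite-dimensional) algebra $\Gamma$ has acyclic quiver: by the block form there is no arrow from a vertex of $\widetilde{\II}^2_A$ to one of $\widetilde{\II}^1_A$, the full subquiver on $\widetilde{\II}^2_A$ has no arrows (as $\End^{\ZZ}_A(V^2)$ is semisimple), and on $\widetilde{\II}^1_A$ an arrow $(s,i)\to(t,j)$ has $j\ge i$ (the relevant $\Hom$ being supported in $A_{j-i}$ with $A_{<0}=0$), with $j=i$ only for arrows $\nu s\to\nu t$ of the quiver of $A_0$; so any oriented cycle would lie in $\widetilde{\II}^1_A$ at constant degree and project onto an oriented cycle of the acyclic quiver of $A_0$. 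Ordering $V_1,\dots,V_n$ along a topological sort of the quiver of $\Gamma$ and transporting the Ext-computations for projective $\Gamma$-modules back along the equivalence gives $\Hom_{\D_{\sg}^{\ZZ}(A)}(V_a,V_b[\ell])=0$ for $\ell\neq0$, $\Hom(V_a,V_b)\cong e_b\Gamma e_a=0$ for $a>b$, and $\End(V_a)\cong e_a\Gamma e_a=k$ (no loops, the relevant residue division rings being $k$ under the standing convention implicit in speaking of the quiver of $A_0$); thus $(V_1,\dots,V_n)$ is a full strong exceptional collection in $\D_{\sg}^{\ZZ}(A)$.

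The step I expect to be the main obstacle is the finiteness of $\gldim\End^{\ZZ}_A(V^1)$: one must pin down the precise structure of this ``ragged Beilinson algebra'' attached to the $\NN$-grading of $A$ and run the homological induction cleanly — this is exactly the place where the hypothesis $\gldim A_0<\infty$, rather than semisimplicity of $\qgr A$, enters. By contrast the density of $G$, the semisimplicity of $\End^{\ZZ}_A(V^2)$ (which is where ``$\qgr A$ semisimple'' is used), and the quiver combinatorics for $\Gamma$ should all be routine given Proposition~\ref{information on V}.
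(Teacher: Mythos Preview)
Your proof is correct and follows essentially the same structure as the paper's, with one notable difference in the step you rightly flag as the main obstacle: the finite global dimension of $\End_A^{\ZZ}(V^1)$. Instead of your proposed induction via iterated one-point extensions, the paper (Lemma~\ref{End U}) uses a cleaner trick: complete $W^1:=\bigoplus_{s\in\II_A}\bigoplus_{1\le i\le -p_s}e_{\nu s}A(i)$ (whose endomorphism ring coincides with $\End_A^{\ZZ}(V^1)$ by Proposition~\ref{j-i}(2)) to the full rectangle $W^1\oplus W^2=\bigoplus_{i=1}^{a}A(i)$ with $a=\max_s(-p_s)$. The endomorphism ring of the latter is the standard block upper-triangular Beilinson algebra with copies of $A_0$ on the diagonal, hence has finite global dimension by (A2). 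Then Proposition~\ref{j-i}(3) gives $\Hom_A^{\ZZ}(W^2,W^1)=0$, so $\End_A^{\ZZ}(W^1\oplus W^2)$ is itself triangular with diagonal blocks $\End_A^{\ZZ}(W^1)$ and $\End_A^{\ZZ}(W^2)$, and each block inherits finite global dimension. This sidesteps all bookkeeping over the ragged index set $\widetilde{\II}^1_A$. For part~(1), the paper takes a slightly more direct route via Proposition~\ref{prop.CM0} (every object of $\qgr A$ is isomorphic to one in $\CM^{\ZZ}A$, so $\CM_0^{\ZZ}A=\CM^{\ZZ}A$ exactly when every object of $\qgr A$ is projective), but your kernel-of-$G$ argument is equally valid.
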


We will prove Corollary \ref{a and tilting 2} in Subsection \ref{proof of 2 results}.

\begin{cor}\label{grothendieck}
Under the assumption \textup{(A1)}, \textup{(A2)}, and $p^A_\mathrm{av}\leq0$, the Grothendieck group $K_0(\uCM_0^{\ZZ}A)$ of $\uCM_0^{\ZZ}A$ is a free abelian group of
\[\rank K_0(\uCM_0^{\ZZ}A)=-\sum_{s\in\II_A}p_s+\#\Ind(\proj^\ZZ Q).\]
\end{cor}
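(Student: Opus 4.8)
The plan is to reduce to the case where assumption (A3) holds and then quote Theorem~\ref{a and tilting}(2). First, by Theorem~\ref{thm-a-inv} (this is where $p^A_\mathrm{av}\le0$ is used) there is a ring-indecomposable basic $\NN$-graded AS-Gorenstein algebra $B$ of dimension $1$ which is graded Morita equivalent to $A$ and satisfies $p^B_i\le0$ for every $i\in\II_B$. A graded Morita equivalence gives an equivalence $\mod^{\ZZ}A\simeq\mod^{\ZZ}B$ commuting with the shift $(1)$, hence induces an equivalence $\qgr A\simeq\qgr B$ and, by Proposition~\ref{prop.CM0} together with Lemma~\ref{lem.basic 2}, an exact equivalence $\CM^{\ZZ}_0A\simeq\CM^{\ZZ}_0B$ of Frobenius categories, so that $\uCM^{\ZZ}_0A\simeq\uCM^{\ZZ}_0B$ as triangulated categories. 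Both quantities on the right-hand side of the claimed formula are graded Morita invariant: $\sum_{s\in\II}p_s$ is invariant since $p^B_i=p^A_i-\ell_i+\ell_{\nu i}$ by Proposition~\ref{prop-eq-a-inv} and $\nu$ is a bijection, while $\#\Ind(\proj^{\ZZ}Q)$ equals the number of isomorphism classes of indecomposable projective objects of $\qgr A$ by Proposition~\ref{prop.Q2}(1). Thus we may assume (A1), (A2) and (A3).

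Under (A1)--(A3), Theorem~\ref{a and tilting}(2) gives a triangle equivalence $\uCM^{\ZZ}_0A\simeq\D_{\sg,0}^{\ZZ}(A)\simeq\per\Gamma$ with $\Gamma=\End_{\D_{\sg,0}^{\ZZ}(A)}(V)$. The category $\uCM^{\ZZ}_0A$ is Krull--Schmidt (Lemma~\ref{lem.basic}(1)) and $\Hom$-finite over $k$ (Theorem~\ref{thm.Serref}), so $\Gamma$ is a finite-dimensional $k$-algebra and $K_0(\uCM^{\ZZ}_0A)\simeq K_0(\per\Gamma)=K_0(\Kb(\proj\Gamma))=K_0(\proj\Gamma)$ is the free abelian group on the set of isomorphism classes of indecomposable projective $\Gamma$-modules, which is in bijection with $\Ind(\add V)$. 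Hence it remains to prove $\#\Ind(\add V)=-\sum_{s\in\II_A}p_s+\#\Ind(\proj^{\ZZ}Q)$.

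For the count, write $V=V^1\oplus V^2$ as before. The matrix description in Proposition~\ref{information on V}(3)(4) shows $\Hom_{\D_{\sg,0}^{\ZZ}(A)}(V^2,V^1)=0$, so $V^1$ and $V^2$ have no common indecomposable summand and $\#\Ind(\add V)=\#\Ind(\add V^1)+\#\Ind(\add V^2)$. Reading off the $(\widetilde{\II}_A^1,\widetilde{\II}_A^1)$-block of Proposition~\ref{information on V}(3): the idempotents of $\Gamma$ indexed by $\widetilde{\II}_A^1$ are primitive and pairwise non-conjugate (the diagonal entries $e_{\nu s}A_0e_{\nu s}$ are local, and the block is triangular with respect to the second index), so $\#\Ind(\add V^1)=|\widetilde{\II}_A^1|=\sum_{s\in\II_A}(-p_s)$. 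For $V^2$, since $p_s\le0$ every $(s,i)\in\widetilde{\II}_A^2$ has $i>-p_s\ge0$, so Proposition~\ref{lem-nu-eAe2}(3) gives $e_{\nu s}A(i)_{\ge0}=e_{\nu s}Q(i)_{\ge0}$; using $\omega\simeq\bigoplus_{s\in\II_A}e_{\nu s}A(-p_s)$ (Proposition~\ref{prop.lc1}(2)) one rewrites $V^2=\bigoplus_{i=1}^{q}(\omega\otimes_AQ)(i)_{\ge0}$, where $\omega\otimes_AQ$ is a progenerator of $\mod^{\ZZ}Q$. The periodicity $Q(q)\simeq Q$ of Theorem~\ref{prop.Q4}(1) then implies that the $q$ consecutive twists $(\omega\otimes_AQ)(1),\dots,(\omega\otimes_AQ)(q)$ realise every indecomposable object of $\proj^{\ZZ}Q$, and, passing to truncations $(-)_{\ge0}$, that $\#\Ind(\add V^2)=\#\Ind(\proj^{\ZZ}Q)$. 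Adding the two contributions gives the desired rank.

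The only non-formal step is the count in the last paragraph: one must verify that no isomorphisms collapse the $|\widetilde{\II}_A^1|$ summands of $V^1$ and that, above the threshold, the truncation functor is injective on isomorphism classes of indecomposable objects of $\proj^{\ZZ}Q$, so that the $q$ periodic twists contribute exactly $\#\Ind(\proj^{\ZZ}Q)$ rather than fewer. Both facts are consequences of the explicit form of $\Gamma$ in Proposition~\ref{information on V} and of Theorem~\ref{prop.Q4}(1); everything else is a routine manipulation of Grothendieck groups of perfect derived categories together with the graded Morita reduction.
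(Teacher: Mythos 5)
Your proof is correct and follows essentially the same route as the paper: reduce to the case (A3) via Theorem \ref{thm-a-inv}, invoke Theorem \ref{a and tilting}(2) to identify $K_0$ with the free abelian group on $\Ind(\add V)$, and count $\#\Ind(\add V^1)$ and $\#\Ind(\add V^2)$ separately using the structure of $\Gamma$ from Proposition \ref{information on V} and the description of $\proj^{\ZZ}Q$. The only minor omission is in your ``triangular'' argument for $V^1$, which covers $i\ne j$ but tacitly skips the case $i=j$, $s\ne t$ (where non-isomorphism follows from basicness of $A$, as in the paper's Proposition \ref{j-i}(4), the lemma the paper cites directly for this count).
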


We will prove Corollary \ref{grothendieck} in Subsection \ref{proof of 2 results}.

\subsection{Existence of silting and tilting objects}\label{existence}

To simplify the notations, let
\[a_s:=-p_s\ \mbox{ for each }\ s \in \II_A.\]
The first step of the proof of Theorem \ref{a and tilting} is the following.

\begin{lem}\label{ext vanishing}
Assume that the condition \textup{(A1)} is satisfied.
\begin{enumerate}
\item For each $N\ge1$, the object $V':=\bigoplus_{i=1}^{N}A(i)_{\ge0}$  is presilting in $\D_{\sg}^{\ZZ}(A)$.
\item If the condition \textup{(A3)} is also satisfied, then the object $V'$ is pretilting in $\D_{\sg}^{\ZZ}(A)$.
\end{enumerate}
\end{lem}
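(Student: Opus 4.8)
The plan is to compute $\Hom_{\D_{\sg}^{\ZZ}(A)}(A(i)_{\ge 0}, A(j)_{\ge 0}[\ell])$ for all relevant $i,j$ and all $\ell$ (with $\ell\ne 0$ for (1), and all $\ell\ne 0$ together with $\ell = 0$ being already trivially nonzero but irrelevant — actually for (2) we need vanishing for $\ell\ne 0$, i.e.\ $A(i)_{\ge0}$ is a rigid object). Since $\D_{\sg}^{\ZZ}(A)\simeq\uCM^{\ZZ}A$ by \eqref{stable singular}, and each $A(i)_{\ge 0}$ is a graded CM module (it is a syzygy-type truncation of a projective, hence CM; more precisely $A(i)_{\ge 0}$ sits in an exact sequence $0\to A(i)_{\ge 0}\to A(i)\to A(i)_{<0}\to 0$ with $A(i)_{<0}$ of finite length, so $\Ext^{>0}_A(A(i)_{\ge 0},A)=0$ follows from $\Ext^{>1}_A(\text{f.l.},A)=0$ and a dimension-shift), it suffices to analyse morphisms and extensions in $\uCM^{\ZZ}A$. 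First I would record that in $\uCM^{\ZZ}A$ the positive self-extensions are detected by $\underline{\Hom}^{\ZZ}_A(A(i)_{\ge 0},A(j)_{\ge 0}[\ell])$ for $\ell\ge 1$, and one computes $[\ell]=\Omega^{-\ell}$. The key is to identify the cosyzygy $\Omega^{-1}(A(i)_{\ge 0})$: from $0\to A(i)_{\ge 0}\to A(i)\to A(i)_{<0}\to 0$ one sees that $A(i)_{<0}\in\mod_0^{\ZZ}A$ and $A(i)$ is projective, so in $\uCM^{\ZZ}A$ we get $A(i)_{\ge 0}\simeq (A(i)_{<0})[-1]$; but $A(i)_{<0}$ need not be CM. The cleaner route is to build a genuine CM cosyzygy by applying $\Hom_A(-,A)$ and using AS-Gorenstein duality, or to use the exact sequence from Proposition \ref{lem-nu-eAe2}(2): $0\to e_{\nu s}A\to e_{\nu s}Q\to D(Ae_s)(p_s)\to 0$. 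The truncations $e_{\nu s}A(i)_{\ge 0}$ are comparable to $e_{\nu s}Q(i)$ in high degrees.

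The heart of the argument is a degree count. For $\ell\ge 1$, $\Hom_{\uCM^{\ZZ}A}(A(i)_{\ge 0},A(j)_{\ge 0}[\ell])$ will be expressed, via iterated syzygies and the description of $\Omega(A(j)_{\ge 0})$, in terms of $\Ext$-groups between a finite-length module supported in a narrow band of degrees and a module whose generators lie in a band of degrees determined by $i$, and then one shows these bands do not overlap (using $1\le i,j\le N$ together with the AS-Gorenstein relation $e_jA_\ell e_i\simeq e_{\nu j}A_{\ell+p_i-p_j}e_{\nu i}$ from Proposition \ref{lem-nu-eAe}(2), and $p_s\le 0$ in part (2)). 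Concretely, I expect the computation to reduce to: $\Omega(A(j)_{\ge 0})$ is generated in degrees $\le 0$ (indeed $\Omega M$ for $M$ generated in degree $0$ is generated in degrees $\ge 1$, so $\Omega(A(j)_{\ge 0}) = \Omega^{\ZZ}$ of something generated in degree $0$ after the shift, hence generated in positive degrees relative to the shift), while $\Hom^{\ZZ}_A(-,A(j)_{\ge 0})$ only sees degrees $\ge -j\ge -N$; iterating $\ell$ times pushes the relevant generating degrees of the $\ell$-th syzygy strictly out of the range where $A(j)_{\ge 0}$ is nonzero, forcing $\underline{\Hom}=0$. For (1) one only needs $N\ge 1$ arbitrary because the obstruction is one-sided: $V'$ has summands $A(i)_{\ge 0}$ with $i\ge 1$, and syzygies only increase the relevant degree. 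For (2), the stronger hypothesis (A3), i.e.\ $p_s\le 0$ for all $s$, is exactly what is needed to also kill $\Hom(A(i)_{\ge 0},A(j)_{\ge 0}[\ell])$ for $\ell\le -1$, equivalently (by Auslander--Reiten--Serre duality, Theorem \ref{thm.Serref}, once we are inside $\uCM_0^{\ZZ}A$) the negative Ext's; the Serre functor is $-\otimes_A\omega[d-1]=-\otimes_A\omega$ since $d=1$, and $-\otimes_A\omega$ sends $e_{\nu s}A(i)_{\ge 0}$ to $e_{s}A(i+p_s)_{\ge p_s}$-type modules (via $e_i\omega\simeq e_{\nu i}A(-p_i)$), and (A3) ensures the shift $p_s\le 0$ keeps us in the truncation range so that the negative-side computation mirrors the positive-side one.

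The main obstacle I anticipate is handling the modules $A(i)_{\ge 0}$ carefully: they are truncations of projectives and their minimal projective presentations are not entirely transparent, so the syzygy $\Omega(A(i)_{\ge 0})$ must be controlled via an explicit resolution coming from the finiteness of $\gldim A_0$ (condition (A2)), which guarantees that $A_0$, hence $A(i)_{<0}$ and the various finite-length cokernels, have finite projective dimension over $A$ — this is presumably why (A2) is invoked, even though the lemma statement only cites (A1). So the real work is: (i) show $A(i)_{\ge 0}\in\CM^{\ZZ}A$; (ii) produce a usable (not necessarily minimal) projective resolution of $A(i)_{\ge 0}$ whose terms are generated in a controlled range of degrees, using (A2) to resolve the finite-length parts; (iii) combine with the duality $e_jA_\ell e_i\simeq e_{\nu j}A_{\ell+p_i-p_j}e_{\nu i}$ and, for (2), with (A3), to carry out the degree-disjointness estimate that forces the higher (and, for (2), also lower) $\underline{\Hom}$-groups to vanish. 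The bookkeeping of degree shifts through $q$, the $p_s$, and $\nu$ is where errors are most likely, so I would set up notation (as the paper does with $a_s:=-p_s$) and track a single scalar "generating degree" invariant through each syzygy step.
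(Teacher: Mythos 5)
Your plan identifies some of the right ingredients (the short exact sequence $0\to A(i)_{\ge 0}\to A(i)\to A(i)/A(i)_{\ge 0}\to 0$, the non-negativity of the support of syzygies, Serre duality together with (A3) for the negative side), but the central vanishing mechanism you propose does not work as stated, and you also import a hypothesis the lemma does not need.

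The claim that ``iterating $\ell$ times pushes the relevant generating degrees of the $\ell$-th syzygy strictly out of the range where $A(j)_{\ge 0}$ is nonzero'' is where the plan breaks: for $j\ge 1$ the module $A(j)_{\ge 0}$ is supported in \emph{every} degree $\ge 0$ (it is infinite-dimensional since $d=1$), so there is no upper bound to escape, and moreover when $\rad A_0\ne 0$ the minimal syzygy of a module generated in degree $0$ need not be generated in degrees $\ge 1$ (only its \emph{support} is guaranteed to stay in $\mod^{\ge 0}A$). The actual argument in the paper does not compare $\Omega^{\ell}(A(i)_{\ge 0})$ with $A(j)_{\ge 0}$ directly. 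Instead, one first uses $\injdim A=1$ (part of (A1)) to see that any $f:\Omega^\ell(A(i)_{\ge 0})\to A(j)_{\ge 0}$, composed with $A(j)_{\ge 0}\hookrightarrow A(j)$, extends along $\Omega^\ell\hookrightarrow P^{1-\ell}$ to a map $g:P^{1-\ell}\to A(j)$; the induced map $h:\Omega^{\ell-1}(A(i)_{\ge 0})\to A(j)/A(j)_{\ge 0}$ then vanishes, because $\Omega^{\ell-1}(A(i)_{\ge 0})\in\mod^{\ge 0}A$ while $A(j)/A(j)_{\ge 0}\in\mod^{<0}A$ — this is the finite-length module ``supported in a narrow band'' that carries the degree disjointness, not $A(j)_{\ge 0}$ itself. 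You mention this short exact sequence only in establishing $A(i)_{\ge 0}\in\CM^{\ZZ}A$, but it is the key to the vanishing as well. For part (2), your idea to use Auslander-Reiten-Serre duality and (A3) is correct: dualize to reduce to a positive-$\ell$ computation with $A(i)_{\ge 0}\otimes_A\omega$ in place of $A(j)_{\ge 0}$, and use Proposition \ref{lem-nu-eAe}(3) to guarantee $(A(i)/A(i)_{\ge 0})\otimes_A\omega\in\mod^{<0}A$. Finally, condition (A2) is not needed here and is not part of the lemma's hypotheses; the dimension shift that gives $\Ext^{\ge 1}_A(A(i)_{\ge 0},A)=0$ only uses $\injdim A=1$, so there is no need to resolve the finite-length pieces at all.
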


\begin{proof}
Take a minimal projective resolution
\[\cdots\to P^{-1}\to P^0\to A(i)_{\ge0}\to0.\]
Since $A=A_{\ge0}$, we have $P^{-i}\in\mod^{\ge0}A$ for each $i\ge0$.

(1) We prove $\Hom_{\D_{\sg}^{\ZZ}(A)}(V,V[\ell])=0$ for each $\ell\ge1$. It suffices to show that
\[
\Hom_{\D_{\sg}^{\ZZ}(A)}(A(i)_{\ge0},A(j)_{\ge0}[\ell])=\Ext^\ell_{\mod^{\ZZ}A}(A(i)_{\ge0},A(j)_{\ge0})=0
\]
for each $i,j\ge1$.
Since $\injdim A=1$, we have
$\Ext^\ell_A(A(i)_{\ge0},A)=\Ext^{\ell+1}_A(A(i)/A(i)_{\ge0},A)=0$.
In particular, for each morphism $f:\Omega^\ell(A(i)_{\ge0})\to A(j)_{\ge0}$, there exist morphisms $g$ and $h$ which make the following diagram commutative.
\[\xymatrix@R=2pc@C=2pc{
0\ar[r]&\Omega^\ell(A(i)_{\ge0})\ar[r]\ar[d]^f&P^{1-\ell}\ar[r]\ar[d]^g&\Omega^{\ell-1}(A(i)_{\ge0})\ar[r]\ar[d]^h&0\\
0\ar[r]&A(j)_{\ge0}\ar[r]&A(j)\ar[r]&A(j)/A(j)_{\ge0}\ar[r]&0
}\]
Since $\Omega^{\ell-1}(A(i)_{\ge0})\in\mod^{\ge0}A$ and $A(j)/A(j)_{\ge0}\in\mod^{<0}A$ holds, we have $h=0$. It is routine to check that $f$ factors through $P^{1-\ell}$. Thus we obtain $\Ext^\ell_{\mod^{\ZZ}A}(A(i)_{\ge0},A(j)_{\ge0})=0$, as desired.

(2) We prove $\Hom_{\D_{\sg}^{\ZZ}(A)}(V,V[-\ell])=0$ for each $\ell\ge1$.
It suffices to show that
\[\Hom_{\D_{\sg}^{\ZZ}(A)}(A(i)_{\ge0},A(j)_{\ge0}[-\ell])=0\]
for each $i,j\ge1$.
By Theorem \ref{thm.Serref}, $\D_{\sg,0}^{\ZZ}(A)$ has a Serre functor $-\otimes_A\omega$. Thus we have
\[\Hom_{\D_{\sg}^{\ZZ}(A)}(A(i)_{\ge0},A(j)_{\ge0}[-\ell])\simeq D\Hom_{\D_{\sg}^{\ZZ}(A)}(A(j)_{\ge0},A(i)_{\ge0}\otimes_A\omega[\ell]).\]
By (A3) and Proposition \ref{lem-nu-eAe}(3), we have $(A(j)/A(j)_{\ge0})\otimes_A\omega\in\mod^{<0}A$. Since $\omega\in\proj^{\ZZ}A$, by replacing the above diagram with the following diagram
\[\xymatrix@R=2pc@C=2pc{
	0\ar[r]&\Omega^\ell(A(j)_{\ge0})\ar[r]\ar[d]&P^{1-\ell}\ar[r]\ar[d]&\Omega^{\ell-1}(A(j)_{\ge0})\ar[r]\ar[d]&0\\
	0\ar[r]&A(i)_{\ge0}\otimes_A\omega\ar[r]&\omega(i)\ar[r]&(A(i)/A(i)_{\ge0})\otimes_A\omega\ar[r]&0,
}\]
the same argument in the proof of (1) shows the desired assertion.
\end{proof}

We also need the following technical observations.

\begin{lem}\label{induction sequence}
Assume that the condition \textup{(A1)} is satisfied. 
\begin{enumerate}
\item For each $s\in\II_A$ and $i\in\ZZ$ satisfying $a_s+1\le i\le0$, we have
\[e_{\nu s}Q(i)_{\ge0}\simeq e_{\nu s}A(i)_{\ge0}=e_{\nu s}A(i)\in\proj^{\ZZ}A.\]
\item We have $\add\bigoplus_{i=1}^qQ(i)=\proj^{\ZZ}Q=\add\bigoplus_{i=1}^q(\omega\otimes_AQ)(i)$ in $\mod^{\ZZ}Q$.
\item For each $i\in\ZZ$, we have
\[\add\bigoplus_{i=1}^qQ(i)_{\ge0}=\add V^2\ \mbox{ and }\ Q(i)_{\ge 0},A(i)_{\ge0}\in\add V\ \mbox{ in }\ \D_{\sg}^{\ZZ}(A).\]
\end{enumerate}
\end{lem}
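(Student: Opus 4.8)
The three parts are of increasing depth, so I would establish them in the stated order, each feeding into the next. For part (1): the hypothesis $a_s+1\le i\le 0$, i.e.\ $-p_s+1 \le i \le 0$, together with Proposition \ref{lem-nu-eAe2}(3) — which gives $(e_{\nu s}A)_{>-p_s} = (e_{\nu s}Q)_{>-p_s}$ — forces the truncations to agree. Concretely, $e_{\nu s}A(i)_{\ge 0} = (e_{\nu s}A)_{\ge i}$, and since $i \ge -p_s+1 > -p_s$ we have $(e_{\nu s}A)_{\ge i} = (e_{\nu s}Q)_{\ge i} = e_{\nu s}Q(i)_{\ge 0}$. Also $i \le 0$ means no truncation is actually taking place on $e_{\nu s}A(i)$ at degree $0$: the module $e_{\nu s}A$ is concentrated in degrees $\ge 0$, so $e_{\nu s}A(i)$ is concentrated in degrees $\ge -i \ge 0$, hence $e_{\nu s}A(i)_{\ge 0} = e_{\nu s}A(i) \in \proj^\ZZ A$. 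So the only real content is the first equality, which is exactly the cited consequence of Proposition \ref{lem-nu-eAe2}(3).

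For part (2): recall from Theorem \ref{prop.Q4}(1) that $\proj^\ZZ Q = \add\bigoplus_{i=1}^q Q(i)$. Since $\omega$ is a graded invertible $A$-bimodule (Proposition \ref{prop.d3}(2)), tensoring gives an autoequivalence $-\otimes_A\omega$ of $\mod^\ZZ A$, hence $\omega\otimes_A Q$ — or rather $-\otimes_A(\omega\otimes_A Q)$ viewed appropriately — still yields a progenerator picture; more directly, $\omega\otimes_A Q$ is again a projective $Q$-module (being $\omega\otimes_A -$ applied to the flat/projective $Q$-bimodule structure, or: $\omega$ is projective over $A$ by Proposition \ref{prop.lc1}(2), so $\omega\otimes_A Q$ is a direct sum of shifts of $Q$). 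Using Proposition \ref{prop.lc1}(2), $\omega \simeq \bigoplus_{i\in\II_A} e_{\nu i}A(-p_i)$, whence $\omega\otimes_A Q \simeq \bigoplus_{i\in\II_A} e_{\nu i}Q(-p_i)$, which lies in $\proj^\ZZ Q = \add\bigoplus_{i=1}^q Q(i)$; conversely, since $-\otimes_A\omega$ is an equivalence, $\add\bigoplus_{i=1}^q(\omega\otimes_A Q)(i)$ has the same number of indecomposables as $\add\bigoplus_{i=1}^q Q(i)$ and is contained in it, forcing equality (using that $\mod^\ZZ Q$ is Krull–Schmidt by Corollary \ref{cor.qql}). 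Alternatively, invoke $Q(q')\simeq Q$ from Theorem \ref{prop.Q4}(1) to see the shift action on $\Ind\proj^\ZZ Q$ is by a cyclic permutation, making both spanning sets equal to all of $\proj^\ZZ Q$.

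For part (3): the image of $\proj^\ZZ Q$ under $-\otimes_A Q^{-1}$… no — rather, one should pass through $\qgr A$. By Proposition \ref{prop.Q2}(1)–(3), $-\otimes_A Q : \pi(\proj^\ZZ A) \xrightarrow{\sim} \proj^\ZZ Q$ and the projectives of $\qgr A$ are $\add\{A(i)\}$. The point is that in $\D_{\sg}^\ZZ(A) \simeq \uCM^\ZZ A$, the module $Q(i)_{\ge 0}$ is CM (it lies in $\CM_0^\ZZ A$, being projective in $\qgr A$ — cf.\ Proposition \ref{prop.CM0}), and its class is controlled by the exact sequence $0\to e_{\nu s}A \to e_{\nu s}Q \to D(Ae_s)(p_s)\to 0$ of Proposition \ref{lem-nu-eAe2}(2) together with part (1) above. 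I expect the main work — and the main obstacle — to be showing $\add\bigoplus_{i=1}^q Q(i)_{\ge 0} = \add V^2$ in $\D_{\sg}^\ZZ(A)$: one inclusion is direct from the definition $V^2 = \bigoplus_{(s,i)\in\widetilde\II_A^2} e_{\nu s}A(i)_{\ge 0}$ once one identifies, via part (1) and a degree-shift argument using $Q(q')\simeq Q$, that each $Q(i)_{\ge 0}$ appears (up to the equivalence) among the $e_{\nu s}A(j)_{\ge 0}$ with $j$ in the range $\max\{1,-p_s+1\}\le j\le -p_s+q$; the reverse inclusion is the claim that each such $e_{\nu s}A(j)_{\ge 0}$ with $j > -p_s$ is, in $\D_{\sg}^\ZZ(A)$, a summand of a sum of $Q(i)_{\ge 0}$'s, which again follows from $(e_{\nu s}A)_{>-p_s} = (e_{\nu s}Q)_{>-p_s}$ giving $e_{\nu s}A(j)_{\ge 0} = e_{\nu s}Q(j)_{\ge 0}$ for such $j$. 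Finally $Q(i)_{\ge 0}, A(i)_{\ge 0}\in\add V$ for \emph{all} $i\in\ZZ$ follows because shifting by $q'$ (where $Q(q')\simeq Q$ in $\mod^\ZZ Q$, hence $A(q')\simeq A$ in $\qgr A$) acts periodically on the relevant objects of $\D_{\sg}^\ZZ(A)$, reducing every $i$ to a representative in a window of length $q$ covered by $V = V^1\oplus V^2$; the truncation bookkeeping at the boundary $i\le 0$ versus $i\ge 1$, handled by part (1), is where care is needed.
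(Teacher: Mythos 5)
Parts (1) and (2) of your proposal match the paper's proof in substance. For (1), your argument is exactly the intended one: Proposition \ref{lem-nu-eAe2}(3) gives agreement of $e_{\nu s}A$ and $e_{\nu s}Q$ in degrees $>a_s$, which forces the truncations to coincide for $i\ge a_s+1$, and $i\le 0$ makes the truncation of $e_{\nu s}A(i)$ trivial. (Your identification ``$e_{\nu s}A(i)_{\ge 0} = (e_{\nu s}A)_{\ge i}$'' should read $e_{\nu s}A(i)_{\ge 0} = ((e_{\nu s}A)_{\ge i})(i)$, but this does not affect the reasoning.) For (2), the paper simply transports the progenerator $\bigoplus_{i=1}^qA(i)$ of $\qgr A$ through the autoequivalence $-\otimes_A\omega$ and then through the equivalence $\qgr A\simeq\mod^\ZZ Q$; your Krull--Schmidt counting variant also works.

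There is a genuine gap in part (3), in the forward inclusion $\add\bigoplus_{i=1}^q Q(i)_{\ge0}\subseteq\add V^2$. You propose to show that ``each $Q(i)_{\ge 0}$ appears among the $e_{\nu s}A(j)_{\ge 0}$ with $j$ in the range $\max\{1,-p_s+1\}\le j\le -p_s+q$, via part (1) and a degree-shift argument using $Q(q')\simeq Q$.'' This does not work: to invoke $(e_{\nu s}A)_{>a_s}=(e_{\nu s}Q)_{>a_s}$ you need the degree to exceed $a_s$, so when $a_s\ge q$ no $i\in[1,q]$ qualifies; and shifting $i$ up by multiples of $q'$ until it exceeds $a_s$ then overshoots the $V^2$-window $[\max\{1,a_s+1\},a_s+q]$. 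Moreover, $Q(q')\simeq Q$ holds in $\mod^\ZZ Q$ but need not respect the decomposition by the $e_{\nu s}$ (which may not stay primitive over $Q$), so ``$e_{\nu s}Q(i)_{\ge0}$ reappears at a shifted degree'' is not automatic.

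The paper's route to the forward inclusion runs through $\omega$, which your proposal never brings into part (3). Using Proposition \ref{lem-nu-eAe2}(5), i.e.\ $(\omega\otimes_AQ)_{>0}=\omega_{>0}$, together with $e_s\omega\simeq e_{\nu s}A(a_s)$ from Proposition \ref{lem-nu-eAe}(1), one obtains
\[\bigoplus_{i=1}^{q}(\omega\otimes_AQ)(i)_{\ge0}\simeq\bigoplus_{i=1}^{q}\omega(i)_{\ge0}\simeq\bigoplus_{s\in\II_A}\bigoplus_{i=a_s+1}^{a_s+q}e_{\nu s}A(i)_{\ge0},\]
and the right-hand side is $V^2$ plus a graded projective by part (1). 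Combining with part (2) and additivity of the truncation functor then gives $\add\bigoplus_{i=1}^q Q(i)_{\ge0}=\add\bigoplus_{i=1}^q(\omega\otimes_AQ)(i)_{\ge0}=\add V^2$ in $\D_{\sg}^{\ZZ}(A)$. This is the key step your sketch is missing. A smaller warning on the last claim of (3): the paper shows $A(j)_{\ge0}\in\add V$ by a case split on $j$ (trivial for $j\le0$; for $j\ge1$, either $j\le a_s+q$ so $e_{\nu s}A(j)_{\ge0}$ is a summand of $V$, or $j>a_s+q$ so $e_{\nu s}A(j)_{\ge0}=e_{\nu s}Q(j)_{\ge0}\in\add V^2$). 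Your proposed appeal to $q'$-periodicity is unsafe here, since $A(j+q')_{\ge0}\not\simeq A(j)_{\ge0}$ in $\D_{\sg}^{\ZZ}(A)$ in general, even though $Q(j+q')_{\ge0}\simeq Q(j)_{\ge0}$.
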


\begin{proof}
(1) By the last assertion of Proposition \ref{lem-nu-eAe2}(3) and $a_s+1\le i$, we have $e_{\nu s}Q(i)_{\ge0}=e_{\nu s}A(i)_{\ge0}$, which equals $e_{\nu s}A(i)\in\proj^{\ZZ}A$ since $i\le0$.

(2) The left equality is the definition of $q$. 
Since $\bigoplus_{i=1}^qA(i)$ is a progenerator of $\qgr A$ and $-\otimes_A\omega:\qgr A\to\qgr A$ is an autoequivalence, $\bigoplus_{i=1}^q\omega(i)$ is a progenerator of $\qgr A$. By Proposition \ref{prop.Q2}(2)(3), $\bigoplus_{i=1}^q(\omega\otimes_AQ)(i)$ is a progenerator of $\mod^{\ZZ}Q$.
Thus the right equality follows.

(3) In $\mod^{\ZZ}A$, we have
\begin{align}\label{V^2 cup}
\bigoplus_{i=1}^{q}(\omega\otimes_AQ)(i)_{\ge0}\stackrel{{\rm Prop.\,\ref{lem-nu-eAe2}(5)}}{\simeq}\bigoplus_{i=1}^{q}\omega(i)_{\ge0}\simeq\bigoplus_{s\in\II_A}\bigoplus_{i=1}^{q}e_s\omega(i)_{\ge0}
\stackrel{{\rm Prop.\,\ref{lem-nu-eAe}(1)}}{\simeq}\bigoplus_{s\in\II_A}\bigoplus_{i=a_s+1}^{a_s+q}e_{\nu s}A(i)_{\ge0}
\end{align}
which is isomorphic to a direct sum of $V^2$ and an object in $\proj^{\ZZ}A$ by (1).
Thus $\add\bigoplus_{i=1}^qQ(i)_{\ge0}\stackrel{{\rm(2)}}{=}\add\bigoplus_{i=1}^q(\omega\otimes_AQ)(i)_{\ge0}\stackrel{\eqref{V^2 cup}}{=}\add V^2$ holds in $\D_{\sg}^{\ZZ}(A)$.

It remains to prove $A(j)_{\ge0}\in\add V$ in $\D_{\sg}^{\ZZ}(A)$ for each $j\in\ZZ$. If $j\le0$, then this is clear from $A(j)_{\ge0}=A(j)\in\proj^{\ZZ}A$. In the rest, fix $j\ge1$ and $s\in\II_A$. If $j\le a_s+q$, then $e_{\nu s}A(j)_{\ge0}\in\add V$ holds. If $j\ge a_s+1$, then Proposition \ref{lem-nu-eAe2}(3) and the first assertion imply $e_{\nu s}A(j)_{\ge0}\simeq e_{\nu s}Q(j)_{\ge0}\in\add V$ in $\D_{\sg}^{\ZZ}(A)$. Thus $A(j)_{\ge0}\simeq\bigoplus_{s\in\II_A}e_{\nu s}A(j)_{\ge0}\in\add V$ always holds.
\end{proof}

Now we prove the following key observation.

\begin{prop}\label{generate}
Assume that the conditions \textup{(A1)} and \textup{(A2)} are satisfied.
\begin{enumerate}
\item We have $\D_{\sg,0}^{\ZZ}(A)=\thick V$. The object $V$ in \eqref{define V} is silting in $\D_{\sg,0}^{\ZZ}(A)$.
\item If the condition \textup{(A3)} is also satisfied, then the object $V$ is tilting in $\D_{\sg,0}^{\ZZ}(A)$.
\end{enumerate}
\end{prop}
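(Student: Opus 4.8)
The plan is to establish the two defining properties of a silting (resp.\ tilting) object: that $V$ is presilting (resp.\ pretilting) in $\D_{\sg,0}^{\ZZ}(A)$, and that $\thick V=\D_{\sg,0}^{\ZZ}(A)$. Two preliminary observations do most of the work. First, for each $i\in\ZZ$ the module $A(i)_{\ge0}$ lies in $\CM_0^{\ZZ}A$: it is CM since $\injdim_AA=1$ gives $\Ext^{\ell}_A(A(i)_{\ge0},A)\simeq\Ext^{\ell+1}_A(A(i)/A(i)_{\ge0},A)=0$ for $\ell\ge1$, and $A(i)_{\ge0}\otimes_AQ\simeq Q(i)\in\proj^{\ZZ}Q$ because the finite-length module $A(i)/A(i)_{\ge0}$ is killed by $-\otimes_AQ$ (Lemma \ref{tensor Q from qgr}(2)); it also lies in $\mathscr C_A$ by the short exact sequence $0\to A(i)_{\ge0}\to A(i)\to A(i)/A(i)_{\ge0}\to0$. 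Hence the summand $V$ of finite direct sums of the $A(i)_{\ge0}$ is an object of $\D_{\sg,0}^{\ZZ}(A)$, which is a full triangulated subcategory of $\D_{\sg}^{\ZZ}(A)$, so $\thick V\subseteq\D_{\sg,0}^{\ZZ}(A)$. Second, Lemma \ref{induction sequence}(3) gives $A(i)_{\ge0}\in\add V$ in $\D_{\sg}^{\ZZ}(A)$ for all $i\in\ZZ$, while every indecomposable summand of $V$ is a summand of some $A(i)_{\ge0}$; therefore $\add V=\add\{A(i)_{\ge0}\mid i\in\ZZ\}$ in $\D_{\sg}^{\ZZ}(A)$, and in particular $\add V$ and $\thick V$ are stable under the degree shift $(1)$. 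Picking $N$ large enough that $\add V=\add\bigoplus_{i=1}^{N}A(i)_{\ge0}$ in $\D_{\sg}^{\ZZ}(A)$, Lemma \ref{ext vanishing} then yields $\Hom_{\D_{\sg}^{\ZZ}(A)}(V,V[\ell])=0$ for all $\ell\ge1$, and for all $\ell\neq0$ if additionally \textup{(A3)} holds; so $V$ is presilting, and pretilting under \textup{(A3)}.

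It remains to prove $\D_{\sg,0}^{\ZZ}(A)\subseteq\thick V$. By definition $\D_{\sg,0}^{\ZZ}(A)=\mathscr C_A/\Kb(\proj^{\ZZ}A)$ with $\mathscr C_A=\thick\{\mod^{\ZZ}_0A,\proj^{\ZZ}A\}$, and $\proj^{\ZZ}A$ becomes zero in $\D_{\sg}^{\ZZ}(A)$, so it suffices to prove that the image of every $M\in\mod^{\ZZ}_0A$ lies in $\thick V$. Such an $M$ has a finite filtration by its truncations $M_{\ge n}$, with subquotients $M_{\ge n}/M_{\ge n+1}$ concentrated in a single degree, so since $\thick V$ is closed under degree shift it is enough to treat an arbitrary $A_0$-module $N$, viewed as a graded $A$-module concentrated in degree $0$. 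The crucial computation is that the projective cover $e_jA\to e_jA_0$ in $\mod^{\ZZ}A$ has kernel $e_jA_{\ge1}=(e_jA(1)_{\ge0})(-1)$, whence $e_jA_0\simeq(e_jA(1)_{\ge0})(-1)[1]$ in $\D_{\sg}^{\ZZ}(A)$; since $e_jA(1)_{\ge0}$ is a summand of $A(1)_{\ge0}\in\add V$, this places $e_jA_0$ in $\thick V$ for every $j\in\II_A$. Finally, by \textup{(A2)} the module $N$ has a finite projective resolution $0\to P_t\to\cdots\to P_0\to N\to0$ over $A_0$ with each $P_k\in\add\{e_jA_0\mid j\in\II_A\}$; read as an exact sequence in $\mod^{\ZZ}_0A$ it exhibits $N$ as an iterated cone of the $P_k$, so $N\in\thick\{P_0,\dots,P_t\}\subseteq\thick V$. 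Combining this with the first paragraph proves (1) and, under \textup{(A3)}, (2).

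The step that I expect to need the most care is precisely the reduction of the generation claim to $A_0$-modules followed by the invocation of $\gldim A_0<\infty$: condition \textup{(A2)} is exactly what allows one to pass from the single module $e_jA_0$ to an arbitrary finite-length module through a \emph{finite} resolution, an $A_0$-module of infinite projective dimension being otherwise inaccessible inside $\thick V$. A secondary point that must be set up carefully is the equality $\add V=\add\{A(i)_{\ge0}\mid i\in\ZZ\}$ in $\D_{\sg}^{\ZZ}(A)$, which provides the degree-shift stability of $\thick V$ that legitimizes moving each single-degree layer $M_{\ge n}/M_{\ge n+1}$ back to degree $0$; this is immediate from Lemma \ref{induction sequence}(3) but is used throughout.
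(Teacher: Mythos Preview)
Your argument is correct. The presilting/pretilting half coincides with the paper's (both reduce to Lemma~\ref{ext vanishing} via the equality $\add V=\add\bigoplus_{i=1}^{N}A(i)_{\ge0}$), but your generation argument is genuinely different and more streamlined than the paper's. The paper proves $(\mod A_0)(i)\subset\thick V$ by two separate inductions: for $i\ge1$ it uses the exact sequence $0\to M\to A(i)/A(i)_{\ge0}\to A_0(i)\to0$ with $M$ supported in degrees $[1-i,-1]$, and for $i\le0$ it invokes the sequence $0\to e_{\nu s}A\to e_{\nu s}Q\to D(Ae_s)(p_s)\to0$ from Proposition~\ref{lem-nu-eAe2}(2), truncates, and works down to $(DA_0)(-i)$. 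You instead observe once that $\add V=\add\{A(i)_{\ge0}\mid i\in\ZZ\}$ (immediate from Lemma~\ref{induction sequence}(3)), hence $\thick V$ is stable under $(1)$; this collapses the problem to a single degree, where the identity $e_jA_0\simeq(e_jA(1)_{\ge0})(-1)[1]$ in $\D_{\sg}^{\ZZ}(A)$ and a finite $A_0$-projective resolution (via \textup{(A2)}) finish the job with no induction and no appeal to $Q$ or $DA_0$. Your approach is shorter; the paper's approach, by not relying on the degree-shift stability of $\add V$, makes slightly weaker use of Lemma~\ref{induction sequence}(3) (only the membership $A(i)_{\ge0},Q(i)_{\ge0}\in\add V$), but in practice this is no real saving since the full equality is an immediate consequence anyway.
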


\begin{proof}
Thanks to Lemma \ref{ext vanishing}, it suffices to prove $\D_{\sg,0}^{\ZZ}(A)=\thick V$. We only have to show $(\mod A_0)(i)\subset\thick V$ for each $i\in\ZZ$ since $\D_{\sg,0}^{\ZZ}(A)$ is generated by these subcategories.

By Lemma \ref{induction sequence}(3), we have
\begin{align}\label{Q(i) A(i) add V}
Q(i)_{\ge0}\oplus A(i)_{\ge0}\in\add V\ \mbox{ for each }\ i\in\ZZ.
\end{align}
Using induction on $i$, we prove $(\mod A_0)(i)\subset\thick V$ for each $i\ge1$.
By our assumption (A2), it suffices to show $A_0(i)\in\thick V$. For $i\ge1$,
assume that $(\mod A_0)(j)\in\thick V$ holds for each $1\le j<i$. Take an exact sequence
\[0\to M\to  A(i)/A(i)_{\ge0}\to A_0(i)\to0,\]
with $M\in\mod^{[1-i,-1]}A$. Since $M$ has a finite filtration by $(\mod A_0)(j)$ with $1\le j\le i-1$, it belongs to $\thick V$ by our induction hypothesis. Also $A(i)/A(i)_{\ge0}\simeq A(i)_{\ge0}[1]\in\thick V$ holds by \eqref{Q(i) A(i) add V}. Thus $A_0(i)\in\thick V$ holds, as desired.

Using induction on $i$, we prove $(\mod A_0)(-i)\subset\thick V$ for each $i\ge0$.
Again by our assumption (A2), it suffices to show $(DA_0)(-i)\in\thick V$. Assume that $(\mod A_0)(-j)\subset\thick V$ holds for each $j<i$.
For each $s\in\II_A$, Proposition \ref{lem-nu-eAe2}(2) gives an exact sequence
\[0\to e_{\nu s}A\to e_{\nu s}Q\to (D(Ae_s))(-a_s)\to0\]
of $\ZZ$-graded $A$-modules. Applying $(a_s-i)$ and $(-)_{\ge0}$, we obtain an exact sequences
\[0\to e_{\nu s}A(a_s-i)_{\ge0}\to e_{\nu s}Q(a_s-i)_{\ge0}\to (D(Ae_s))(-i)_{\ge0}\to0.\]
By \eqref{Q(i) A(i) add V}, the left and middle terms belong to $\add V$, and hence $(D(Ae_s))(-i)_{\ge0}\in\thick V$. Consider an exact sequence
\[0\to (D(A_0e_s))(-i)\to (D(Ae_s))(-i)_{\ge0}\to N\to0\]
with $N\in\mod^{[0,i-1]}A$. By our induction hypothesis, $N\in\thick V$ holds, and hence $(D(A_0e_s))(-i) \in\thick V$. Thus $(DA_0)(-i)=\bigoplus_{s\in\II_A}e_s(DA_0)(-i) =\bigoplus_{s\in\II_A}(D(A_0e_s))(-i)\in\thick V$ holds, as desired.
\end{proof}

\subsection{Non-existence of tilting objects}\label{non-existence}
In this subsection, we complete our proof of Theorem \ref{a and tilting} by showing the ``only if'' part of (2).
For $M\in \Mod^{\ZZ}A$, let
\[\infd M:= \inf\{i\in\ZZ \mid M_i \neq 0\}.\]
We need the following easy observations.

\begin{lem}\label{lem.aneg1}
Let $A$ be a Noetherian locally finite $\NN$-graded algebra.
\begin{enumerate}
\item For each $M\in\mod^{\ZZ}A$, we have $\infd M\le\inf\Omega M$.
\item Let $A$ be a ring-indecomposable basic $\NN$-graded AS-Gorenstein algebra with $p^A_\mathrm{av}>0$. Then $\lim_{i\to\infty}\infd \omega^{\otimes i}=\infty$.
\end{enumerate}
\end{lem}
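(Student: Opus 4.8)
The plan is to prove the two parts by essentially independent degree counts, the second being the substantive one.

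For (1), recall from the definition of the syzygy functor that $\Omega M$ is the kernel of a projective cover $P\to M$ in $\mod^{\ZZ}A$. Since $A$ is $\NN$-graded, the graded top $\top^{\ZZ}M$ is a quotient of $M$ and hence concentrated in degrees $\ge\infd M$; as $P/\rad P\simeq\top^{\ZZ}M$ and $A=A_{\ge0}$, every indecomposable summand $e_iA(-n)$ of $P$ must have $n\ge\infd M$, so $P\in\mod^{\ge\infd M}A$. The submodule $\Omega M\subseteq P$ therefore also lies in $\mod^{\ge\infd M}A$, which is exactly $\infd M\le\infd\Omega M$.

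For (2), the first step is to compute $\omega^{\otimes m}$ as a one-sided module. By Proposition \ref{prop.lc1}(1) we have $e_j\omega\simeq e_{\nu j}A(-p_j)$ in $\mod^{\ZZ}A$ for each $j\in\II_A$. Since $e_j(\omega\otimes_AN)\simeq(e_j\omega)\otimes_AN$ for any graded left $A$-module $N$, an induction on $m$ using this isomorphism gives
\[e_j\omega^{\otimes m}\simeq e_{\nu^m j}A\Bigl(-\sum_{\ell=0}^{m-1}p_{\nu^\ell j}\Bigr)\quad\text{in }\mod^{\ZZ}A,\]
hence $\omega^{\otimes m}\in\proj^{\ZZ}A$ and $\infd\omega^{\otimes m}=\min_{j\in\II_A}\sum_{\ell=0}^{m-1}p_{\nu^\ell j}$; in particular all the infima involved are finite integers. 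The second step is to fix $g\ge1$ with $\nu^g=1$ and invoke Proposition \ref{prop average a-invariant}, which yields $\sum_{\ell=0}^{g-1}p_{\nu^\ell j}=g\,p^A_\mathrm{av}$ for every $j$. Writing $m=ag+b$ with $0\le b<g$ and using $\nu^g=1$, one gets $\sum_{\ell=0}^{m-1}p_{\nu^\ell j}=ag\,p^A_\mathrm{av}+\sum_{\ell=0}^{b-1}p_{\nu^\ell j}$, and the finitely many possible values of the last sum (as $j$ and $b<g$ vary) are bounded below by some constant $C$. Thus $\infd\omega^{\otimes m}\ge ag\,p^A_\mathrm{av}+C$, and since $p^A_\mathrm{av}>0$ and $a=\lfloor m/g\rfloor\to\infty$, we conclude $\lim_{m\to\infty}\infd\omega^{\otimes m}=\infty$.

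The argument is routine; the only mild obstacle is the bookkeeping when $m$ is not a multiple of $g$, which is precisely why Proposition \ref{prop average a-invariant} is needed to reduce to the $\nu$-periodic case. One may also shortcut the last step by first observing $\infd\omega^{\otimes ag}=ag\,p^A_\mathrm{av}$ (independent of $j$) and then using $\infd(M\otimes_AN)\ge\infd M+\infd N$ to pass from multiples of $g$ to arbitrary $m$.
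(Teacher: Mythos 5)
Your proof is correct and follows essentially the same approach as the paper: for (1), the degree bound from the projective cover; for (2), the explicit computation $e_j\omega^{\otimes m}\simeq e_{\nu^m j}A(-\sum_{\ell=0}^{m-1}p_{\nu^\ell j})$ combined with Proposition~\ref{prop average a-invariant} to show the shift degrees grow without bound. The paper merely phrases the conclusion more tersely as $\sum_{j=0}^\infty p_{\nu^j s}=\infty$; your bookkeeping with $m=ag+b$ just makes that divergence explicit.
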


\begin{proof}
(1) Taking a projective cover $0\to\Omega M\to P\to M\to0$ in $\mod^{\ZZ}A$, we have $\infd M=\infd P\le\infd\Omega M$.

(2) Take any $s\in\II_A$. Since $e_s\omega  \simeq e_{\nu s}A(-p_s)$ by Proposition \ref{lem-nu-eAe}, we have 
$e_s\omega^{\otimes i}  \simeq e_{\nu^is}A(-\sum_{j=0}^{i-1}p_{\nu^js})$.
Since $p^A_\mathrm{av}>0$, we have $\sum_{j=0}^\infty p_{\nu^js}=\infty$. Thus the assertion holds.
\end{proof}

The following is a noncommutative version of \cite[Theorem 1.6(c)]{BIY}.

\begin{prop}\label{non-existence tilting}
Assume that the condition \textup{(A1)} holds. If $p^A_\mathrm{av}>0$ and $\uCM^{\ZZ}_0A \simeq \D_{\sg,0}^{\ZZ}(A)$ has a tilting object, then $A$ is AS-regular.
\end{prop}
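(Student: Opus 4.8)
The plan is to argue by contradiction: assume $p^A_\mathrm{av}>0$ and that $\D_{\sg,0}^{\ZZ}(A)$ has a tilting object $T$, but that $A$ is not AS-regular, so that $\D_{\sg,0}^{\ZZ}(A)\neq 0$. The key structural input is that $\D_{\sg,0}^{\ZZ}(A)$ carries the Serre functor $-\otimes_A\omega$ by Theorem \ref{thm.Serref}; iterating it gives autoequivalences $-\otimes_A\omega^{\otimes i}$ for all $i\ge 1$. Since a tilting object generates the category and $\D_{\sg,0}^{\ZZ}(A)$ is Hom-finite and Krull-Schmidt (it is a summand-closed subcategory of $\umod^{\ZZ}Q$, which is such by Corollary \ref{cor.qql}), the endomorphism algebra $\End_{\D_{\sg,0}^{\ZZ}(A)}(T)$ is finite-dimensional, so $\D_{\sg,0}^{\ZZ}(A)\simeq\per\Gamma$ with $\Gamma$ finite-dimensional. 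In particular $\Hom_{\D_{\sg,0}^{\ZZ}(A)}(T,T[\ell])=0$ for all but finitely many $\ell$.

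First I would produce a nonzero object whose ``bottom degree'' can be controlled. Every object of $\D_{\sg,0}^{\ZZ}(A)$ is represented (up to shift) by some nonzero $M\in\CM_0^{\ZZ}A$; fix such an $M$ coming from a direct summand of $T$, so $M$ is not projective. Using the CM property, $\injdim_AA=1$, and Proposition \ref{lem-nu-eAe}, the action of the Serre functor on $M$ is, after passing to a projective presentation, controlled by tensoring the representation of $M$ with $\omega$; since $\omega$ is projective on both sides (Proposition \ref{prop.lc1}(2)) with explicit shifts $e_i\omega\simeq e_{\nu i}A(-p_i)$, the functor $-\otimes_A\omega$ shifts degrees according to the Gorenstein parameters. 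The crucial consequence of $p^A_\mathrm{av}>0$ is Lemma \ref{lem.aneg1}(2): $\inf\omega^{\otimes i}\to\infty$. I would then combine this with Lemma \ref{lem.aneg1}(1), $\infd M\le\infd\Omega M$, to show that the minimal degree in which $M\otimes_A\omega^{\otimes i}$ (equivalently its stable representative via a CM model) is supported tends to $+\infty$ as $i\to\infty$, while the syzygies $M[\ell]=\Omega^{\ell}M$ have bottom degree bounded below by $\infd M$ uniformly in $\ell$. Hence, for $i$ large, $M\otimes_A\omega^{\otimes i}$ has no nonzero morphism to any shift $M[\ell]$ of $M$ coming ``from below'', and more precisely one can arrange that $\Hom_{\D_{\sg,0}^{\ZZ}(A)}(M,M\otimes_A\omega^{\otimes i}[\ell])=0$ for all $\ell$ simultaneously.

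Next I would invoke the Serre duality of Theorem \ref{thm.Serref} in the form $\Hom_{\D_{\sg,0}^{\ZZ}(A)}(X,Y)\simeq D\Hom_{\D_{\sg,0}^{\ZZ}(A)}(Y,X\otimes_A\omega)$. Applying this with $X=T$ and $Y=T\otimes_A\omega^{\otimes i}$ and unwinding, the vanishing obtained above forces $\Hom_{\D_{\sg,0}^{\ZZ}(A)}(T, T\otimes_A\omega^{\otimes i}[\ell])=0$ for all $\ell$ once $i$ is large. But $T\otimes_A\omega^{\otimes i}$ is again a tilting object (being the image of $T$ under an autoequivalence), hence a nonzero generator of $\D_{\sg,0}^{\ZZ}(A)$; a nonzero object of a triangulated category with a tilting object $T$ cannot be left-orthogonal to $T$ in all degrees, since $\D_{\sg,0}^{\ZZ}(A)=\thick T$ and ${}^{\perp}T=0$. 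This contradiction shows $\D_{\sg,0}^{\ZZ}(A)=0$, i.e.\ $\CM_0^{\ZZ}A=\proj^{\ZZ}A$; combined with the ring-indecomposability and the classification of when the singularity category vanishes (equivalently $A$ has finite graded global dimension), this yields that $A$ is AS-regular.

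The main obstacle I expect is the bookkeeping in the middle step: turning the statement ``$\inf\omega^{\otimes i}\to\infty$'' into an honest vanishing of Hom-spaces in the \emph{stable} category $\D_{\sg,0}^{\ZZ}(A)\simeq\uCM_0^{\ZZ}A$ rather than in $\mod^{\ZZ}A$. One must pass through CM approximations / syzygies carefully so that the degree bounds survive factoring out projectives, using that projective modules over the $\NN$-graded algebra $A$ are themselves concentrated in degrees $\ge 0$ after appropriate shift, and that $\Omega$ does not decrease the bottom degree (Lemma \ref{lem.aneg1}(1)). Once that degree estimate is in place, the Serre-duality reversal and the generation argument are formal. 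It is also worth isolating the elementary fact, perhaps as a one-line remark, that in a triangulated category with tilting object $T$ one has ${}^{\perp}T\cap\thick T=0$, which is what makes the final contradiction bite.
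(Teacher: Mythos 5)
Your proposal follows the paper's strategy closely — iterate the Serre functor, use $\infd\omega^{\otimes i}\to\infty$ from Lemma \ref{lem.aneg1}(2) to push the bottom degree of $T\otimes_A\omega^{\otimes i}$ above the generating degrees of $T$, and conclude $T=0$. But there is a genuine gap in the middle step, and the Serre-duality ``unwinding'' you propose does not fill it.

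The degree estimate only controls \emph{syzygies}: $\Omega^j$ does not decrease $\infd$, so for $i\gg0$ you get
$\Hom_{\uCM^{\ZZ}_0A}(T,\Omega^j(T\otimes_A\omega^{\otimes i}))=0$ for all $j\ge0$, i.e.\ $\Hom(T,T\otimes_A\omega^{\otimes i}[\ell])=0$ for all $\ell\le0$. Nothing in the degree argument touches the \emph{cosyzygy} direction $\ell>0$: cosyzygies can and do lower the bottom degree, so you cannot ``arrange that $\Hom(M,M\otimes_A\omega^{\otimes i}[\ell])=0$ for all $\ell$ simultaneously'' by this route alone. Your proposed Serre-duality rewrite $\Hom(T,T\otimes_A\omega^{\otimes i}[\ell])\simeq D\Hom(T\otimes_A\omega^{\otimes i-1},T[-\ell])\simeq D\Hom(T,T\otimes_A\omega^{\otimes(1-i)}[-\ell])$ converts the positive shift into a negative one, but at the cost of replacing $\omega^{\otimes i}$ by $\omega^{\otimes(1-i)}$, and $\infd\omega^{\otimes(1-i)}\to-\infty$ when $p^A_\mathrm{av}>0$, so the degree estimate is useless there; a map from a module concentrated in high degrees to a module concentrated in low degrees is not obstructed by degree considerations. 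So after your argument you only know $\nu_\Gamma^{i}(\Gamma)\in\D^{>0}(\mod\Gamma)$, not that it is acyclic.

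The paper supplies the missing half by a one-line structural observation that is independent of degrees: under the equivalence $\uCM^{\ZZ}_0A\simeq\per\Gamma$ sending $T\mapsto\Gamma$, the Serre functor becomes $\nu_\Gamma=-\Lotimes_\Gamma D\Gamma$, and $\nu_\Gamma^i(\Gamma)$ is an iterated derived tensor product of $\Gamma$-modules, hence automatically lies in $\D^{\le0}(\mod\Gamma)$. This gives $\Hom(T,T\otimes_A\omega^{\otimes i}[\ell])=0$ for all $\ell>0$ for free, for every $i$. Combining the two bounds yields acyclicity of $\nu_\Gamma^i(\Gamma)$, hence $\Gamma=0$ since $\nu_\Gamma$ is an autoequivalence, and then $T=0$, $\uCM^{\ZZ}_0A=0$, and $A$ AS-regular as in your final paragraph. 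To repair your proof you should add this $\D^{\le0}$ observation (or some equivalent ``upper'' bound on the cohomological amplitude of $T\otimes_A\omega^{\otimes i}$) explicitly; the degree-shifting and Serre-duality steps by themselves do not close the cosyzygy direction.
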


\begin{proof}
Assume that $\uCM^{\ZZ}_0A$ has a tilting object $T$. Let $\Gamma= \End_{\uCM^{\ZZ}_0 A}(T)$.
By Theorem \ref{thm.Serref} and \cite[Proposition 4.4]{BIY}, there is a triangle equivalence $\uCM^{\ZZ}_0 A \xrightarrow{\sim} \per \Gamma$ sending $T$ to $\Gamma$ and making the following diagram commutative.
\[\xymatrix@R=2pc@C=4pc{
\uCM^{\ZZ}_0 A\ar[r]^\sim\ar[d]_{-\otimes_A \omega}&\per \Gamma\ar[d]^{\nu_\Gamma}\\
\uCM^{\ZZ}_0 A\ar[r]^\sim&\per \Gamma,
}\]
where $\nu_\Gamma:=-\Lotimes_{\Gamma} D\Gamma$. For all $i \geq 0$, $\nu_\Gamma^{i}(\Gamma)\in \D^{\leq 0}(\mod \Gamma)$ holds clearly. Thus we have
\begin{equation}\label{>0}
\H^j(\nu_\Gamma^{i}(\Gamma))=0\ \mbox{ for each $i \geq 0$ and  $j>0$.}
\end{equation}
On the other hand, take an epimorphism $f:F \to T$ in $\mod^{\ZZ}A$, where $F$ is free of finite rank. For each $i,j\ge0$, since $f \otimes_A \omega^{\otimes i}: F \otimes_A\omega^{\otimes i} \to T\otimes_A\omega^{\otimes i}$ is an epimorphism, we have
\[\infd F + \infd \omega^{\otimes i} \leq \infd (T\otimes_A\omega^{\otimes i})\stackrel{\text{ Lem.\,\ref{lem.aneg1}(1)}}{\leq} \infd(\Omega^j(T \otimes_A \omega^{\otimes i})).\]
By Lemma \ref{lem.aneg1}(2), we can take $i \gg 0$ such that $\infd(T \otimes_A \omega^{\otimes i})$ is greater than all the degrees of the minimal generators of $T$.
Then for each $j\ge0$, we have $\Hom^{\ZZ}_A(T,\Omega^j(T \otimes_A \omega^{\otimes i}))=0$ and hence
\begin{align*} \H^{-j}(\nu_\Gamma^{i}(\Gamma))
&\simeq \Hom_{\Db(\mod \Gamma)}(\Gamma,\nu_\Gamma^{i}(\Gamma)[-j])\\
&\simeq \Hom_{\uCM^{\ZZ}_0 A}(T, (T \otimes_A \omega^{\otimes i})[-j])\\
&\simeq \Hom_{\uCM^{\ZZ}_0 A}(T, \Omega^j(T \otimes_A \omega^{\otimes i}))=0.
\end{align*}
This together with \eqref{>0} shows that $\nu_\Gamma^{i}(\Gamma)$ is acyclic and hence zero in $\Db(\mod\Gamma)$. Since $\nu_\Gamma$ is an autoequivalence, we have $T=0$ and hence $\underline{\CM}_0^{\ZZ}A=0$. Thus $A$ is AS-regular.
\end{proof}

\begin{proof}[Proof of Theorem \ref{a and tilting}]
(1) Proposition \ref{generate}(1) implies the assertion.

(2) Proposition \ref{generate}(2) implies the assertion.

(3) We prove the ``if'' part. If $A$ is AS-regular, then $\D_{\sg,0}^{\ZZ}(A)$ is zero and hence has a tilting object. If $p^A_\mathrm{av}\leq 0$, then Theorem \ref{thm-a-inv} shows that there exists $B$ which is graded Morita equivalent and satisfies the conditions (A1), (A2), and (A3). By (2), $\D_{\sg,0}^{\ZZ}(A)\simeq\D_{\sg,0}^{\ZZ}(B)$ has a tilting object.
The ``only if'' part follows from Proposition \ref{non-existence tilting}.
\end{proof}

\subsection{Proof of Proposition \ref{information on V} and Corollaries \ref{a and tilting 2} and \ref{grothendieck}}\label{proof of 2 results}

The following observation gives a description of the endomorphism algebra of $V$.

\begin{prop}\label{j-i}
Assume that the conditions \textup{(A1)} and \textup{(A2)} are satisfied. For each $i,j\in\ZZ$ and $s,t\in\II_A$, the following assertions hold.
\begin{enumerate}
\item We have
\[Q_{j-i}\simeq\Hom_A^{\ZZ}(A(i)_{\ge 0},Q(j))\simeq\Hom_A^{\ZZ}(A(i)_{\ge 0},Q(j)_{\ge 0}).\]
\item We have
\begin{align*}
e_{\nu t}A_{j-i}&\simeq\Hom_A^{\ZZ}(A(i),e_{\nu t}A(j))\subset\Hom_A^{\ZZ}(A(i)_{\ge 0},e_{\nu t}A(j))
\simeq\Hom_A^{\ZZ}(A(i)_{\ge 0},e_{\nu t}A(j)_{\ge 0}).
\end{align*}
If $j\le a_t$, then the middle inclusion is an isomorphism 
\item Assume $j\le a_t$. Then 
$\Hom_A^{\ZZ}(Q(i)_{\ge 0},e_{\nu t}A(j)_{\ge 0})=0$. If moreover $i>a_s$, then
\[\Hom_A^{\ZZ}(e_{\nu s}A(i),e_{\nu t}A(j))\simeq\Hom_A^{\ZZ}(e_{\nu s}A(i)_{\ge 0},e_{\nu t}A(j)_{\ge 0})=0.\]
\item If $i\le a_s$, $j\le a_t$ and $e_{\nu s}A(i)_{\geq 0} \simeq e_{\nu t}A(j)_{\geq 0}$, then $s=t$ and $i=j$ hold.
\item If \textup{(A3)} and $i\ge1$ hold, then
\[\Hom_A^{\ZZ}(A(i)_{\ge 0},A(j)_{\ge 0})=\underline{\Hom}_A^{\ZZ}(A(i)_{\ge 0},A(j)_{\ge 0}).\]
\end{enumerate}
\end{prop}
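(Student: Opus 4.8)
**The plan is to prove the five parts of Proposition \ref{j-i} by a sequence of elementary computations with graded Hom-spaces, working our way up from the simplest truncation identities.**

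First I would establish part (1). For $M\in\mod^\ZZ A$ bounded below and $X\in\Mod^\ZZ A$, a morphism $A(i)_{\ge0}\to X$ is determined by where the generators in degree $-i$ go, but more to the point, since $A(i)_{\ge0}$ is generated in degrees $\ge -i$ and $Q(j)_{\ge0}=Q(j)$ in those degrees (by Proposition \ref{lem-nu-eAe2}(3), as $Q/A$ is finite length and concentrated in low degrees, hence $A$ and $Q$ agree in all sufficiently high degrees --- and more precisely any graded map out of $A(i)_{\ge0}$ into $Q(j)$ automatically lands in $Q(j)_{\ge0}$ once $i\ge1$; one has to be slightly careful here, but the truncation $(-)_{\ge0}$ kills the relevant low-degree discrepancy). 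The key point is the adjunction-type identity $\Hom_A^\ZZ(A(i)_{\ge0},N)=N_{-i}$ when $N$ is concentrated in degrees $\ge -i$, or more robustly, $\Hom_A^\ZZ(A(i)_{\ge0},N)\simeq\Hom_A^\ZZ(A(i),N_{\ge0})=(N_{\ge0})_{-i}$ --- combined with the fact that $Q$ is flat (Proposition \ref{lem.sf2}) so that tensoring/truncating behaves well. This gives $\Hom_A^\ZZ(A(i)_{\ge0},Q(j))\simeq Q_{j-i}$ and the same with $Q(j)_{\ge0}$ in place of $Q(j)$.

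Next, parts (2) and (3) are variations on the same theme applied to $e_{\nu t}A(j)$ rather than $Q(j)$. For (2): $\Hom_A^\ZZ(A(i),e_{\nu t}A(j))=(e_{\nu t}A)_{j-i}=e_{\nu t}A_{j-i}$, and the inclusion into $\Hom_A^\ZZ(A(i)_{\ge0},e_{\nu t}A(j))$ is restriction along the surjection $A(i)\to A(i)_{\ge0}$; this is an isomorphism precisely when no nonzero map $A(i)/A(i)_{\ge0}\to e_{\nu t}A(j)$ exists in $\mod^\ZZ A$, i.e.\ when $\Hom_A^\ZZ(A(i)/A(i)_{\ge0},e_{\nu t}A(j))=0$, which by degree reasons holds when $j\le a_t$ (using Proposition \ref{lem-nu-eAe2}(3) that $e_{\nu t}A$ and $e_{\nu t}Q$ agree above degree $-p_t=a_t$, so $e_{\nu t}A(j)$ is supported in degrees $\ge -j\ge -a_t$, while $A(i)/A(i)_{\ge0}$ is supported in degrees $<-i\le\cdots$ — here I need the precise degree bookkeeping, but the mechanism is that the source of any such map lives in degrees $<-i$ while... one checks the overlap is empty). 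For (3): the vanishing $\Hom_A^\ZZ(Q(i)_{\ge0},e_{\nu t}A(j)_{\ge0})=0$ when $j\le a_t$ uses that $Q(i)_{\ge0}$ becomes projective-over-$Q$ after applying $-\otimes_AQ$ while $e_{\nu t}A(j)_{\ge0}$, again after tensoring with $Q$, is $e_{\nu t}Q(j)_{\ge0}$; alternatively one argues directly that a map from $Q(i)_{\ge0}$ must factor through the $\fm$-divisible part, which $e_{\nu t}A(j)_{\ge0}$ lacks in the relevant degrees. The subsequent claim for $i>a_s$ then follows since $e_{\nu s}A(i)_{\ge0}\simeq e_{\nu s}Q(i)_{\ge0}$ in that range by Proposition \ref{lem-nu-eAe2}(3) (indeed Lemma \ref{induction sequence}(1) gives this).

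For part (4), suppose $i\le a_s$, $j\le a_t$ and $e_{\nu s}A(i)_{\ge0}\simeq e_{\nu t}A(j)_{\ge0}$ in $\mod^\ZZ A$. When $i\le0$ we have $e_{\nu s}A(i)_{\ge0}=e_{\nu s}A(i)$ is indecomposable projective, so reading off the generator degree gives $i=j$ and then $e_{\nu s}A\simeq e_{\nu t}A$, so $\nu s=\nu t$, so $s=t$ since $\nu$ is a bijection. When $1\le i\le a_s$ one cannot immediately use projectivity, but $e_{\nu s}A(i)_{\ge0}$ is still generated in a single degree (degree $-i$) with a well-defined top, so comparing tops and generator degrees does the job; here I would use that the top of $e_{\nu s}A(i)_{\ge0}$ in $\mod^\ZZ A$ is $S_{\nu s}(i)$ together with the graded Krull-Schmidt property (Lemma \ref{lem.basic}). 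Part (5) asks that for $i\ge1$ under (A3), every graded endomorphism-type map $A(i)_{\ge0}\to A(j)_{\ge0}$ is nonzero in the stable category, i.e.\ no nonzero such map factors through a projective; this I would prove by showing that a map factoring as $A(i)_{\ge0}\to P\to A(j)_{\ge0}$ with $P\in\proj^\ZZ A$ must be zero, using that $P\in\mod^{\ge0}A$ (since $i\ge1$ forces the image of $A(i)_{\ge0}$, generated in degrees $\ge-i$, but actually in degrees $\ge1-i$... no: $A(i)_{\ge0}$ lives in degrees $\ge-i$; hmm) --- the cleaner route is: any map $A(i)_{\ge0}\to A(j)_{\ge0}$ that factors through projectives factors through $\add A$ placed in degrees $\ge$ something, and a degree count using (A3) (which via Proposition \ref{lem-nu-eAe}(3) controls $\omega$ and hence controls which projective shifts can appear) forces it to vanish; **this is the step I expect to be the main obstacle**, since it requires pinning down exactly which graded projectives can receive a map from $A(i)_{\ge0}$ and simultaneously map onto $A(j)_{\ge0}$, and ruling the composite out by degrees. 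I would model this argument on the diagram chase already used in the proof of Lemma \ref{ext vanishing}, where the key input was that $A(i)/A(i)_{\ge0}\in\mod^{<0}A$ while the comparison module is in $\mod^{\ge0}A$, and (A3) guarantees the $\omega$-twisted version of the same dichotomy.
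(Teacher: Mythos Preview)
Your argument for part (2) misidentifies the obstruction. The map $A(i)_{\ge0}\hookrightarrow A(i)$ is an \emph{inclusion}, not a surjection; applying $\Hom_A^\ZZ(-,e_{\nu t}A(j))$ to $0\to A(i)_{\ge0}\to A(i)\to A(i)/A(i)_{\ge0}\to0$ gives
\[0\to\Hom(A(i)/A(i)_{\ge0},e_{\nu t}A(j))\to\Hom(A(i),-)\to\Hom(A(i)_{\ge0},-)\to\Ext^1(A(i)/A(i)_{\ge0},e_{\nu t}A(j)).\]
The first term is always zero (a CM module has no finite-length submodule), so injectivity is free. The obstruction to surjectivity is the $\Ext^1$ term, and your degree argument cannot kill it: disjoint supports control $\Hom$, not $\Ext^1$. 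The missing input is the AS-Gorenstein formula \eqref{prop-Ext-S}, which gives $\Ext^1_A(S_s(k),e_{\nu t}A(j))\neq0$ iff $s=t$ and $j-k=a_t$; since the composition factors of $A(i)/A(i)_{\ge0}$ have $k\ge1$, the $\Ext^1$ vanishes exactly when $j\le a_t$. This is the heart of (2), and parts (3)--(5) all rely on it.

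Two further gaps downstream. In (4), your claim that $\top(e_{\nu s}A(i)_{\ge0})=S_{\nu s}(i)$ is unjustified for $1\le i\le a_s$: the truncation need not be generated in a single degree. The paper instead uses (2) to compute both $\Hom$-spaces as $e_{\nu t}A_{j-i}e_{\nu s}$ and $e_{\nu s}A_{i-j}e_{\nu t}$; both nonzero forces $i=j$, and then basicness of $A_0$ gives $\nu s=\nu t$. In (5), the clean idea you are missing is: a map factoring through $\proj^\ZZ A$ factors through some $A(k)$, and for each $k$ one of $\Hom_A^\ZZ(A(i)_{\ge0},A(k))$ or $\Hom_A^\ZZ(A(k),A(j)_{\ge0})$ vanishes. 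For $k>0$ the second is $(A(j)_{\ge0})_{-k}=0$; for $k\le0$, (A3) gives $k\le a_s$ for every $s$, so (2) yields $\Hom_A^\ZZ(A(i)_{\ge0},e_{\nu s}A(k))\simeq e_{\nu s}A_{k-i}=0$ since $k-i<0$.
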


\begin{proof}
(1) The right isomorphism is clear. We prove the left one.
We have an equivalence $-\otimes_AQ:\qgr A\to\mod^{\ZZ}Q$. Since $A(i)_{\ge0}\simeq A(i)$ in $\qgr A$, we have $A(i)_{\ge0}\otimes_AQ\simeq Q(i)$ in $\mod^{\ZZ}Q$. Thus 
\[\Hom_A^{\ZZ}(A(i)_{\ge 0},Q(j))\simeq \Hom_Q^{\ZZ}(A(i)_{\ge0}\otimes_AQ,Q(j))\simeq\Hom_Q^{\ZZ}(Q(i),Q(j))=Q_{j-i}.\]

(2) The left and right isomorphisms are clear. We prove the middle inclusion. There exists an exact sequence $0\to A(i)_{\ge0}\to A(i)\to A(i)/A(i)_{\ge0}\to0$ in $\mod^{\ZZ}A$.
Applying $\Hom_A^{\ZZ}(-,e_{\nu t}A(j))$ to it, we obtain an exact sequence
\begin{align}\notag
&\Hom_A^{\ZZ}(A(i)/A(i)_{\ge0},e_{\nu t}A(j))\to\Hom_A^{\ZZ}(A(i),e_{\nu t}A(j))\to\Hom_A^{\ZZ}(A(i)_{\ge 0},e_{\nu t}A(j))\\
\to&\Ext^1_{\mod^{\ZZ}A}(A(i)/A(i)_{\ge0},e_{\nu t}A(j)),\label{(-,A(j))}
\end{align}
where the first term is zero since $A(i)/A(i)_{\ge0}\in\mod^{\ZZ}_0A$. Thus we obtain the middle inclusion. If $j\le a_t$, then by \eqref{prop-Ext-S}, we have $\Ext^1_{\mod^{\ZZ}A}(S_s(k),e_{\nu t}A(j))=0$ for each $s\in\II$ and $k\ge1$. Thus the last term of \eqref{(-,A(j))} vanishes, and the middle inclusion is an isomorphism.

(3) We prove the first assertion. By Theorem \ref{prop.Q4}(1), there exists $q''\in\ZZ$ such that $Q\simeq Q(q'')$ in $\mod^{\ZZ}Q$ and $i+q''>a_s$ for each $s\in\II_A$. Then we have $Q(i)_{\ge0}\simeq Q(i+q'')_{\ge0}=A(i+q'')_{\ge0}$ and hence
\[\Hom_A^{\ZZ}(Q(i)_{\ge 0},e_{\nu t}A(j)_{\ge 0})\simeq \Hom_A^{\ZZ}(A(i+q'')_{\ge 0},e_{\nu t}A(j)_{\ge 0})\stackrel{\text{(2)}}{\simeq}e_{\nu t}A_{j-i-q''}=0\]
as desired. The second assertion follows from
\begin{align*}
\Hom_A^{\ZZ}(e_{\nu s}A(i),e_{\nu t}A(j))\stackrel{\text{(2)}}{\simeq}\Hom_A^{\ZZ}(e_{\nu s}A(i)_{\ge 0},e_{\nu t}A(j)_{\ge 0})
\stackrel{i>a_s}{\simeq}\Hom_A^{\ZZ}(e_{\nu s}Q(i)_{\ge 0},e_{\nu t}A(j)_{\ge 0})=0
\end{align*}
where the last equality follows from the first assertion.

(4) By (2), we have 
$\Hom_A^{\ZZ}(e_{\nu s}A(i)_{\ge 0},e_{\nu t}A(j)_{\ge 0})=e_{\nu t}A_{j-i}e_{\nu s}$ and
$\Hom_A^{\ZZ}(e_{\nu t}A(j)_{\ge 0},e_{\nu s}A(i)_{\ge 0})=e_{\nu s}A_{i-j}e_{\nu t}$.
Since they are nonzero, we have $i=j$. Since the image of the multiplication map $e_{\nu t}A_0e_{\nu s}\times e_{\nu s}A_0e_{\nu t}\to e_{\nu t}Ae_{\nu t}$ contains $e_{\nu t}$, we have $s=t$.

(5) It suffices to show that, for each $k\in\ZZ$, at least one of $\Hom_A^{\ZZ}(A(i)_{\ge0},A(k))$ and $\Hom_A^{\ZZ}(A(k),A(j)_{\ge0})$ is zero.
If $k>0$, then we have $\Hom_A^{\ZZ}(A(k),A(j)_{\ge0})\simeq (A(j)_{\ge0})_{-k}=0$. Assume $k\le0$. Then for each $s\in\II_A$, we have $k\le a_s$ by (A3) and hence
\[\Hom_A^{\ZZ}(A(i)_{\ge0},e_{\nu s}A(k))\stackrel{\text{(2)}}{\simeq}e_{\nu s}A_{k-i}\stackrel{k-i<0}{=}0.\]
Thus $\Hom_A^{\ZZ}(A(i)_{\ge0},A(k))=0$.
\end{proof}

We are ready to prove Proposition \ref{information on V}.

\begin{proof}[Proof of Proposition \ref{information on V}]
(1) If $N\ge\max\{-p_s+q\mid s\in\II_A\}$, then $\add V\subset\add\bigoplus_{i=1}^NA(i)_{\ge0}$ holds. The reverse inclusion follows from Lemma \ref{induction sequence}(3).

(2) By Theorem \ref{a and tilting}(2), we have a triangle equivalence $\per\Gamma\simeq\D^{\ZZ}_{\sg,0}(A)$. Thus $\per\Gamma$ has a Serre functor by Theorem \ref{thm.Serref}. Hence $\Gamma$ is Iwanaga-Gorenstein by \cite[Proposition 4.4]{BIY}.

(3) For each $(s,i),(t,j)\in\widetilde{\II}_A$, by Proposition \ref{j-i}(5), we have
\begin{align}\label{entry}
\Hom_{\D_{\sg,0}^{\ZZ}(A)}(e_{\nu s}A(i)_{\ge 0},e_{\nu t}A(j)_{\ge 0})=\Hom_A^{\ZZ}(e_{\nu s}A(i)_{\ge 0},e_{\nu t}A(j)_{\ge 0}).
\end{align}
If $(t,j)\in\widetilde{\II}_A^2$, then \eqref{entry} is $e_{\nu t}Q_{j-i}e_{\nu s}$ by Proposition \ref{j-i}(1). 
If $(t,j)\in\widetilde{\II}_A^1$, then \eqref{entry} is $e_{\nu t}A_{j-i}e_{\nu s}$ by Proposition \ref{j-i}(2), which is zero if moreover $(s,i)\in\widetilde{\II}_A^2$ by Proposition \ref{j-i}(3).

(4) This follows immediately from (3).
\end{proof}

We prepare the following observation.

\begin{lem}\label{End U}
Assume that the conditions \textup{(A1)} and \textup{(A2)} are satisfied. Let
\[W^1:=\bigoplus_{s\in\II_A}\bigoplus_{1\le i\le -p_s}e_{\nu s}A(i).\]
Then $\End_A^{\ZZ}(W^1)$ has finite global dimension.
\end{lem}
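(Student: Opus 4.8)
The plan is to realize $\Gamma:=\End_A^{\ZZ}(W^1)$ as a \emph{block-triangular} finite-dimensional algebra whose diagonal blocks are corner algebras of $A_0$ of a well-behaved type, and then to combine two standard facts: (i) a block-triangular finite-dimensional algebra all of whose diagonal blocks have finite global dimension has finite global dimension (reduce inductively to the $2\times2$ case $\left[\begin{smallmatrix}R&M\\0&S\end{smallmatrix}\right]$ with $R,S$ of finite global dimension and $M$ finite-dimensional); and (ii) if $C$ is a set of vertices of a finite-dimensional algebra $B$ admitting no nonzero path leaving $C$, then $e_CBe_C$ appears as the upper-left corner of a triangular decomposition of $B$, so $\gldim e_CBe_C\le\gldim B$. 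If $p_s>0$ for every $s\in\II_A$ then $W^1=0$ and there is nothing to prove, so I may assume some $p_s\le0$.

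The only new ingredient is an order-theoretic property of the Gorenstein parameter. Since $\omega$ is invertible, Proposition \ref{lem-nu-eAe}(2) gives $e_jA_\ell e_i\simeq e_{\nu j}A_{\ell+p_i-p_j}e_{\nu i}$ for all $i,j\in\II_A$ and $\ell\in\ZZ$; taking $\ell=0$ and using that $A$ is $\NN$-graded, this yields the implication $e_jA_0e_i\neq0\Rightarrow p_i\ge p_j$. In particular $p$ is weakly decreasing along the arrows of the quiver of $A_0$, so for every integer $c$ the set $C_c:=\{s\in\II_A\mid p_s\le c\}$ admits no nonzero path leaving it; writing $h_c:=\sum_{s\in C_c}e_s$, fact (ii) gives $\gldim h_cA_0h_c\le\gldim A_0<\infty$.

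Next I would twist $W^1$ by the canonical module. Because $-\otimes_A\omega\colon\mod^{\ZZ}A\to\mod^{\ZZ}A$ is an equivalence (Proposition \ref{prop.d3}(2)) and $e_sA\otimes_A\omega\simeq e_s\omega\simeq e_{\nu s}A(-p_s)$ (Proposition \ref{prop.lc1}(1)), the module $W'':=\bigoplus_{s\in\II_A}\bigoplus_{p_s+1\le m\le0}e_sA(m)$ satisfies $W''\otimes_A\omega\simeq W^1$, hence $\Gamma\simeq\End_A^{\ZZ}(W'')$. Now $\Hom_A^{\ZZ}(e_sA(m),e_{s'}A(m'))=e_{s'}A_{m'-m}e_s=0$ whenever $m'<m$, so ordering the indecomposable summands of $W''$ by decreasing twist displays $\End_A^{\ZZ}(W'')$ as block-triangular, with diagonal block at twist $m\le 0$ equal to $\End_A^{\ZZ}\bigl(\bigoplus_{s:\,p_s\le m-1}e_sA(m)\bigr)\simeq h_{m-1}A_0h_{m-1}$. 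By the previous paragraph each diagonal block has finite global dimension, and the off-diagonal blocks are finite-dimensional, so fact (i) gives $\gldim\Gamma=\gldim\End_A^{\ZZ}(W'')<\infty$.

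I do not anticipate a genuine obstacle here: the implication $e_jA_0e_i\neq0\Rightarrow p_i\ge p_j$ is immediate from Proposition \ref{lem-nu-eAe}(2), and everything else is the homological algebra of triangular matrix algebras together with results already in hand. The points that require care are the bookkeeping of twists in identifying the diagonal blocks of $\End_A^{\ZZ}(W'')$ with the corners $h_{m-1}A_0h_{m-1}$, and verifying that the vertex sets $C_c$ are closed under outgoing paths so that fact (ii) applies on the correct ($R$-) side of the triangular decomposition of $A_0$.
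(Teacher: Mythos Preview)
Your proof is correct and takes a route genuinely different from the paper's. The paper completes $W^1$ to $W^1\oplus W^2=\bigoplus_{i=1}^{a}A(i)$ with $a=\max_s(-p_s)$, notes that $\End_A^{\ZZ}\bigl(\bigoplus_{i=1}^{a}A(i)\bigr)$ is lower block-triangular with all diagonal blocks equal to $A_0$ and hence of finite global dimension by (A2), and then invokes Proposition~\ref{j-i}(3) to get $\Hom_A^{\ZZ}(W^2,W^1)=0$; this exhibits $\End_A^{\ZZ}(W^1\oplus W^2)$ as triangular with corners $\End_A^{\ZZ}(W^1)$ and $\End_A^{\ZZ}(W^2)$, forcing both to have finite global dimension. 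You instead twist by $\omega^{-1}$ and decompose $\End_A^{\ZZ}(W'')$ directly, with diagonal blocks the corner algebras $h_{m-1}A_0h_{m-1}$. The monotonicity $e_jA_0e_i\neq0\Rightarrow p_i\ge p_j$ you extract from Proposition~\ref{lem-nu-eAe}(2) is precisely what underlies the paper's vanishing $\Hom_A^{\ZZ}(W^2,W^1)=0$, stated more transparently. Your approach avoids the auxiliary module $W^2$ and makes explicit the role of the Gorenstein parameter as a ``height function'' on the quiver of $A_0$; the paper's approach is marginally shorter because it needs only the triangular-matrix bound (your fact~(i)) and not the corner bound (your fact~(ii), which is valid here since $(1-h_c)A_0h_c=0$ makes $h_c$ a stratifying idempotent, so that applying the exact functor $(-)h_c$ to a $B$-projective resolution of an $h_cA_0h_c$-module yields an $h_cA_0h_c$-projective resolution of at most the same length).
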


\begin{proof}
Let $a:=\max\{-p_s\mid s\in\II_A\}$ and
$W^2:=\bigoplus_{s\in\II_A}\bigoplus_{i=-p_s+1}^{a}e_{\nu s}A(i)$.
Since $W^1\oplus W^2=\bigoplus_{i=1}^{a}A(i)_{\ge0}$, we have 
\[\End^{\ZZ}_A(W^1\oplus W^2)\simeq\End^{\ZZ}_A(\bigoplus_{i=1}^{a}A(i))\simeq\begin{bmatrix}A_0&0&\cdots&0\\ A_1&A_0&\cdots&0\\ \vdots&\vdots&\ddots&\vdots\\ A_a&A_{a-1}&\cdots&A_0\end{bmatrix}.\]
Since $A_0$ has finite global dimension, so does $\End^{\ZZ}_A(W^1\oplus W^2)$.
Since $\Hom_A^{\ZZ}(W^2,W^1)=0$ holds by Proposition \ref{j-i}(3), we have
\[\End^{\ZZ}_A(W^1\oplus W^2)\simeq\left[\begin{array}{cc}\End_A^{\ZZ}(W^1)&0\\
\Hom_A^{\ZZ}(W^1,W^2)&\End^{\ZZ}_A(W^2)\end{array}\right].\]
Thus $\End^{\ZZ}_A(W^i)$ also has finite global dimension for $i=1,2$.
\end{proof}

We prove Corollary \ref{a and tilting 2}.

\begin{proof}[Proof of Corollary \ref{a and tilting 2}]
(1) The first equivalence follows from Proposition \ref{prop.Q2}(2). The second equivalence follows from Proposition \ref{prop.CM0} since each object in $\qgr A$ is isomorphic to an object in $\CM^{\ZZ}A$.

(2) Using Proposition \ref{information on V}(4)(3), we have 
\begin{align*}
\Gamma=\End_{\D_{\sg,0}^{\ZZ}(A)}(V)&\simeq\left[\begin{array}{cc}\End_A^{\ZZ}(V^1)&0\\
\Hom_A^{\ZZ}(V^1,V^2)&\End^{\ZZ}_A(V^2)\end{array}\right]
\simeq\left[\begin{array}{cc}\End_A^{\ZZ}(W^1)&0\\
\Hom_A^{\ZZ}(V^1,V^2)&\End^{\ZZ}_Q(V^2\otimes_AQ)\end{array}\right].
\end{align*}
Moreover we see that $\End^{\ZZ}_A(W^1)$ has finite global dimension by Lemma \ref{End U}, and $\End^{\ZZ}_Q(V^2\otimes_AQ)$ is semisimple by (1). Thus $\Gamma$ has finite global dimension.

(3) Since the quiver of $\End_{\D_{\sg,0}^{\ZZ}(A)}(V)$ is also acyclic by Proposition \ref{information on V}(3)(4), there is an ordering in the isomorphism classes of the indecomposable projective $\Gamma$-modules $\per\Gamma$ which forms a full strong exceptional collection.
\end{proof}

We end this section with proving Corollary \ref{grothendieck}.

\begin{proof}[Proof of Corollary \ref{grothendieck}]
Thanks to Theorem \ref{thm-a-inv}, we can assume $p_s\le0$ for each $s\in\II_A$.
By Theorem \ref{a and tilting}(2), $\uCM_0^{\ZZ}A$ admits a tilting object $V$, and hence $K_0(\uCM_0^{\ZZ}A)$ is a free abelian group of rank $\#\Ind(\add V)$.
By Proposition \ref{information on V}(4), we have $\#\Ind(\add V)=\#\Ind(\add V^1)+\#\Ind(\add V^2)$.
We have $\#\Ind(\add V^1)=-\sum_{s\in\II_A}p_s$ by Proposition \ref{j-i}(4) and $\#\Ind(\add V^2)=\#\Ind(\proj^{\ZZ}Q)$ by Lemma \ref{induction sequence}(3).
Thus the assertion follows.
\end{proof}

\section{Orlov-type semiorthogonal decompositions}\label{section: orlov}

Let $\XX$ and $\YY$ be full subcategories in a triangulated category $\TT$.
We denote by $\XX*\YY$ the full subcategory of $\TT$ whose objects consisting of $Z\in\TT$ such that there is a triangle $X\to Z\to Y\to X[1]$ with $X\in\XX$ and $Y\in\YY$.
When $\Hom_{\TT}(\XX,\YY)=0$ holds, we write $\XX*\YY=\XX\perp\YY$. For full subcategories $\XX_1,\ldots,\XX_n$, we define $\XX_1*\cdots*\XX_n$ and $\XX_1\perp\cdots\perp\XX_n$ inductively.
If $\TT=\XX_1\perp\cdots\perp\XX_n$ for thick subcategories $\XX_1,\ldots,\XX_n$ of $\TT$, we say that $\TT=\XX_1\perp\cdots\perp\XX_n$
is a (weak) \emph{semiorthogonal decomposition} of $\TT$ \cite{Or}.

We start this section with the following elementary distributive law of thick subcategories.

\begin{lem} \label{distributive}
Let $\TT$ be a triangulated category, and $\AA$, $\BB$, $\CC$ and $\DD$ thick subcategories of $\TT$.
\begin{enumerate}
\item If $\AA\subset\CC$, then $(\AA*\BB)\cap\CC=\AA*(\BB\cap\CC)$. Similarly, if $\BB\subset\CC$, then $(\AA*\BB)\cap\CC=(\AA\cap\CC)*\BB$.
\item If $\TT=\AA*\BB=\CC*\DD$ with $\AA\subset\CC$ and $\BB\supset\DD$, then
\[\TT=\AA*(\BB\cap\CC)*\DD.\]
\item If $\AA\subset\CC\subset\AA*\BB$ and $\BB\subset\DD\subset\AA*\BB$, then
\[\CC\cap\DD=(\AA\cap\DD)*(\BB\cap\CC).\]
\end{enumerate}
\end{lem}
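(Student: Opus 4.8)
The plan is to prove the three statements by direct manipulation of triangles, using only the defining property of the operation $*$ and the fact that all four subcategories are thick (hence closed under cones, shifts, and direct summands). For part (1), suppose $\AA\subset\CC$ and take $Z\in(\AA*\BB)\cap\CC$. By definition there is a triangle $X\to Z\to Y\to X[1]$ with $X\in\AA$ and $Y\in\BB$. Since $X\in\AA\subset\CC$ and $Z\in\CC$ and $\CC$ is thick, the cone $Y$ of $X\to Z$ lies in $\CC$; hence $Y\in\BB\cap\CC$, so $Z\in\AA*(\BB\cap\CC)$. The reverse inclusion $\AA*(\BB\cap\CC)\subset(\AA*\BB)\cap\CC$ is clear: if $X\in\AA\subset\CC$ and $Y\in\BB\cap\CC$ sit in a triangle with $Z$ in the middle, then $Z\in\AA*\BB$ and, by thickness of $\CC$ again, $Z\in\CC$. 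The second assertion of (1) is symmetric: with $\BB\subset\CC$, rotating the triangle $X\to Z\to Y\to X[1]$ to $Y[-1]\to X\to Z\to Y$ exhibits $X$ as the cone of $Y[-1]\to Z$ (up to shift), and thickness of $\CC$ forces $X\in\CC$ whenever $Z\in\CC$.

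For part (2), we are given $\TT=\AA*\BB=\CC*\DD$ with $\AA\subset\CC$ and $\DD\subset\BB$. Apply the first assertion of (1) with the triple $(\AA,\BB,\CC)$: since $\AA\subset\CC$, we get $(\AA*\BB)\cap\CC=\AA*(\BB\cap\CC)$, and because $\AA*\BB=\TT$ the left side is just $\CC$. Thus $\CC=\AA*(\BB\cap\CC)$. Now substitute this into the decomposition $\TT=\CC*\DD$ to obtain $\TT=\AA*(\BB\cap\CC)*\DD$, which is the claim. (Here one uses that $*$ is associative, which is the standard octahedral-axiom argument; it is legitimate to invoke it since the statement writes iterated $*$ without parentheses.)

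For part (3), assume $\AA\subset\CC\subset\AA*\BB$ and $\BB\subset\DD\subset\AA*\BB$. Take $Z\in\CC\cap\DD$. Since $Z\in\AA*\BB$, choose a triangle $X\xrightarrow{f}Z\to Y\to X[1]$ with $X\in\AA$, $Y\in\BB$. I want to show $X\in\AA\cap\DD$ and $Y\in\BB\cap\CC$. We have $X\in\AA\subset\CC$, and since $Z\in\CC$ is thick, $Y$, being the cone of $f$, lies in $\CC$; hence $Y\in\BB\cap\CC$. Dually, $Y\in\BB\subset\DD$ and $Z\in\DD$ thick, so from the rotated triangle $Y[-1]\to X\to Z\to Y$ the object $X$ is (a shift of) the cone of a morphism $Y[-1]\to Z$ between objects of $\DD$, whence $X\in\DD$; so $X\in\AA\cap\DD$. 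This gives $Z\in(\AA\cap\DD)*(\BB\cap\CC)$. The reverse inclusion is immediate: $\AA\cap\DD\subset\AA$ and $\BB\cap\CC\subset\BB$ so $(\AA\cap\DD)*(\BB\cap\CC)\subset\AA*\BB$; meanwhile $\AA\cap\DD\subset\DD$ and $\BB\cap\CC\subset\CC$, and using the inclusions $\CC\subset\AA*\BB$, $\DD\subset\AA*\BB$ together with part (1) one checks the middle term of any such triangle lies in both $\CC$ and $\DD$. Concretely, $(\AA\cap\DD)*(\BB\cap\CC)\subset\AA*(\BB\cap\CC)\subset(\AA*\BB)\cap\CC=\CC$ wait—here I should instead argue $(\AA\cap\DD)*(\BB\cap\CC)\subset(\AA\cap\DD)*\BB$, intersect with $\DD$ via (1) using $\BB\subset\DD$ is false in general, so the cleanest route is: $\CC\supset\AA\supset\AA\cap\DD$ lets (1) give $(\AA\cap\DD)*(\BB\cap\CC)\subset\CC$ only after noting $\BB\cap\CC\subset\CC$ and $\AA\cap\DD\subset\CC$ forces the cone into $\CC$; symmetrically for $\DD$ using $\AA\cap\DD\subset\DD$ and $\BB\cap\CC\subset\BB\subset\DD$. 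So both containments hold.

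The routine verifications are the ``immediate'' reverse inclusions and the bookkeeping with rotated triangles; the only place demanding a little care is the reverse inclusion in (3), where one must correctly pick which of the two defining inclusions of $\CC$ (resp.\ $\DD$) to use so that thickness closes the cone inside the right subcategory. I expect this to be the main obstacle, though it is still elementary; everything else is a direct application of the cone-closure property of thick subcategories and of associativity of $*$.
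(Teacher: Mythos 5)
Your proof is correct and, for parts (1) and (2), follows the paper's argument exactly: in (1) you take a defining triangle $X\to Z\to Y\to X[1]$ and use cone-closure of $\CC$ to push $Y$ into $\BB\cap\CC$; in (2) you apply (1) to $\TT=\AA*\BB$ intersected with $\CC$ to get $\CC=\AA*(\BB\cap\CC)$ and substitute into $\TT=\CC*\DD$ (the hypothesis $\DD\subset\BB$ is indeed not used here, just as in the paper).

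For part (3) you depart from the paper. The paper simply applies (1) twice: having $\CC=\AA*(\BB\cap\CC)$ from the proof of (2), it intersects with $\DD$ and uses the \emph{second} form of (1) (valid because $\BB\cap\CC\subset\BB\subset\DD$) to land directly on $\CC\cap\DD=(\AA\cap\DD)*(\BB\cap\CC)$. You instead reprove it from scratch with a defining triangle of $Z\in\CC\cap\DD$; the forward containment is carried out cleanly and correctly, and the reverse containment is also correct, but your write-up there is visibly tangled, with several abandoned lines of reasoning and one wrong parenthetical (``$\BB\subset\DD$ is false in general'' — it is in fact one of the standing hypotheses of (3)). The clean statement you eventually reach is the right one and needs no invocation of (1) at all: $\AA\cap\DD\subset\AA\subset\CC$ and $\BB\cap\CC\subset\CC$ force the middle term into $\CC$ by cone-closure, and symmetrically $\AA\cap\DD\subset\DD$ and $\BB\cap\CC\subset\BB\subset\DD$ force it into $\DD$. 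So the mathematics is sound and slightly more direct than the paper's route, but the exposition of (3) should be rewritten to present only the final argument.
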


\begin{proof}
(1) We only show the former assertion. Clearly ``$\supset$'' holds. To prove ``$\subset$'', for each $C\in(\AA*\BB)\cap\CC$, take a triangle $A\to C\to B\to A[1]$ with $A\in\AA$ and $B\in\BB$. Since $A\in\AA\subset\CC$, we have $B\in\BB\cap\CC$, and hence $C\in \AA*(\BB\cap\CC)$.

(2) Applying $-\cap\CC$ to $\TT=\AA*\BB$ and using (1), we obtain $\CC=\AA*(\BB\cap\CC)$. Thus $\TT=\CC*\DD=\AA*(\BB\cap\CC)*\DD$.

(3) We still have $\CC=\AA*(\BB\cap\CC)$. Applying $-\cap\DD$ and using (1) again, we have  the assertion.
\end{proof}

Throughout this section, we assume the following.
\begin{enumerate}
\item[\rm(B1)]  $A$ is a $\ZZ$-graded Iwanaga-Gorenstein ring such that $A=\bigoplus_{i\ge0}A_i$.
\end{enumerate}
The aim of this section is to realize Verdier quotients
\begin{align*}
\D_{\sg}^{\ZZ}(A):= \Db(\mod^{\ZZ} A)/\Kb(\proj^{\ZZ} A)\ \mbox{ and }\ \Db(\qgr A)=\Db(\mod^{\ZZ} A)/\Db(\mod_0^{\ZZ} A)
\end{align*}
of $\Db(\mod^{\ZZ}A)$ as thick subcategories of $\Db(\mod^{\ZZ}A)$. 
By (B1), we have a duality
\[(-)^*=\RHom_A(-,A):\Db(\mod^{\ZZ}A)\to \Db(\mod^{\ZZ}A^{\op}).\]
Let $i\in\ZZ$. Consider full subcategories $\mod^{\ge i}A$ and $\mod^{<i}A$ of $\mod^{\ZZ}A$ given in \eqref{define mod>0A}, and let
\[\DD^{\ge i}_A:=\Db(\mod^{\ge i}A)\ \mbox{ and }\ \DD^{<i}_A:=\Db(\mod^{<i}A).\]
Define full subcategories of $\proj^{\ZZ}A$ by
\begin{align*}
\proj^{<i}A:=\add\{A(j)\mid j>-i\}&\ \mbox{ and }\ \proj^{\ge i}A:=\add\{A(j)\mid j\le-i\}.
\end{align*}
Then $(\proj^{<i}A,\proj^{\ge i}A)$ gives a torsion pair in $\proj^{\ZZ}A$. Also define full subcategories of $\Kb(\proj^{\ZZ}A)$ by
\begin{align*}
\PP^{<i}_A:=\Kb(\proj^{<i}A)
&\ \mbox{ and }\ \PP^{\ge i}_A:=\Kb(\proj^{\ge i}A).
\end{align*}
Then $(\PP^{<i}_{A},\PP^{\ge i}_{A})$ gives a semiorthogonal decomposition of $\Kb(\proj^{\ZZ}A)$.

Now we consider the following condition.
\begin{enumerate}
\item[\rm(B2)] $\gldim A_0$ is finite.
\end{enumerate}
Under this condition, we can realize the $\ZZ$-graded singularity category $\D_{\sg}^{\ZZ}(A)$ in $\Db(\mod^{\ZZ}A)$.

\begin{thm}\label{SOD of sg}
Assume that the assumptions \textup{(B1)} and \textup{(B2)} hold. For each $i\in\ZZ$, we have a semiorthogonal decomposition
\begin{align}\label{SOD0'}
\Db(\mod^{\ZZ}A)&=\PP^{<i}_A\perp(\DD^{\ge i}_A\cap(\DD^{>-i}_{A^{\op}})^*)\perp\PP^{\ge i}_A.
\end{align}
Thus we have a triangle equivalence
\begin{align*}
F_i:\D_{\sg}^{\ZZ}(A)&\simeq\DD^{\ge i}_A\cap(\DD^{>-i}_{A^{\op}})^*\subset\Db(\mod^{\ZZ}A).
\end{align*}
\end{thm}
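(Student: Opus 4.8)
The statement is an Orlov-type semiorthogonal decomposition, so the strategy is the classical one: realize the two "obvious" decompositions of $\Db(\mod^{\ZZ}A)$ — one coming from truncation of modules (the torsion pair $(\mod^{<i}A,\mod^{\ge i}A)$), the other coming from the torsion pair $(\proj^{<i}A,\proj^{\ge i}A)$ on projectives — and then apply the distributive Lemma \ref{distributive} to merge them into the three-term decomposition \eqref{SOD0'}. Concretely, the first step is to establish that $(\DD^{<i}_A,\DD^{\ge i}_A)$ is a semiorthogonal decomposition of $\Db(\mod^{\ZZ}A)$, i.e. $\Db(\mod^{\ZZ}A)=\DD^{<i}_A\perp\DD^{\ge i}_A$; here $\Hom(\DD^{\ge i}_A,\DD^{<i}_A)=0$ is immediate from degree considerations, and the generation statement $\DD^{<i}_A*\DD^{\ge i}_A=\Db(\mod^{\ZZ}A)$ comes from the short exact sequence $0\to M_{\ge i}\to M\to M/M_{\ge i}\to 0$ for every $M\in\mod^{\ZZ}A$ (this is where \textup{(B2)}, finite $\gldim A_0$, enters — it guarantees $M/M_{\ge i}\in\mod^{<i}A$ lies in $\Db$ and, via Noetherianity plus finite global dimension of $A_0$, that bounded complexes suffice). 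Simultaneously one has $\Db(\mod^{\ZZ}A)=\PP^{<i}_A\perp\PP^{\ge i}_A$ on the level of $\Kb(\proj^{\ZZ}A)$ — but this only decomposes the perfect subcategory, so the honest input is the semiorthogonal decomposition $\Kb(\proj^{\ZZ}A)=\PP^{<i}_A\perp\PP^{\ge i}_A$ together with the description of $\PP^{<i}_A$ and $\PP^{\ge i}_A$ as thick subcategories of $\Db(\mod^{\ZZ}A)$.

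**Key steps in order.** (1) Prove $\Db(\mod^{\ZZ}A)=\DD^{<i}_A\perp\DD^{\ge i}_A$ as above. (2) Prove $\Kb(\proj^{\ZZ}A)=\PP^{<i}_A\perp\PP^{\ge i}_A$; the Hom-vanishing $\Hom(\PP^{<i}_A,\PP^{\ge i}_A)=0$... wait, one must get the orientation right: from $\Hom^{\ZZ}_A(A(j),A(j'))=A_{j'-j}=0$ for $j'-j<0$, we get $\Hom(\proj^{\ge i}A,\proj^{<i}A)=0$, so actually $(\proj^{<i}A,\proj^{\ge i}A)$ with $\Hom(\text{second},\text{first})=0$ — the semiorthogonal decomposition reads $\Kb(\proj^{\ZZ}A)=\PP^{<i}_A\perp\PP^{\ge i}_A$ with $\Hom(\PP^{\ge i}_A,\PP^{<i}_A)=0$, consistent with the convention in the excerpt. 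Generation is clear since every $A(j)$ lies in $\proj^{<i}A\cup\proj^{\ge i}A$. (3) Identify $(\PP^{<i}_A)^*=\PP^{>-i}_{A^{\op}}$ under the duality $(-)^*=\RHom_A(-,A)$, since $A(j)^*=A(-j)$ and $j>-i\iff -j<i$; similarly $(\PP^{\ge i}_A)^*=\PP^{\le -i}_{A^{\op}}$. (4) Now apply Lemma \ref{distributive}(2): take $\mathscr T=\Db(\mod^{\ZZ}A)$, $\AA=\PP^{<i}_A$, $\BB=\PP^{<i}_A{}^{\perp}=$ the right perpendicular, but more directly — use that $\PP^{<i}_A\subset\DD^{<i}_A$ (a complex of $A(j)$'s with $j>-i$, i.e. shifts $A(j)=A(-(-j))$ with $-j<i$... need $A(j)\in\mod^{<i}A$: but $A(j)$ is concentrated in degrees $\ge -j$, and $-j$ can be $<i$, yet $A(j)$ has arbitrarily large degrees too, so $A(j)\notin\mod^{<i}A$!). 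This mismatch is exactly the subtlety: one does \emph{not} have $\PP^{<i}_A\subset\DD^{<i}_A$ literally. Instead the correct containments are $\PP^{<i}_A\subset\DD^{\ge i'}_A{}^{\perp}$-type statements, or one works with the dual pair. The right move is: $\PP^{\ge i}_A\subset\DD^{\ge i}_A$ (a complex of $A(j)$ with $j\le -i$ is concentrated in degrees $\ge -j\ge i$), and dually $\PP^{<i}_A{}^*=\PP^{>-i}_{A^{\op}}\subset\DD^{>-i}_{A^{\op}}$, equivalently $\PP^{<i}_A\subset(\DD^{>-i}_{A^{\op}})^*{}^{\perp}$... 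Let me instead set it up as: apply Lemma \ref{distributive}(2) with the two decompositions $\Db(\mod^{\ZZ}A)=\PP^{<i}_A\perp(\text{its right perp})$ and $\Db(\mod^{\ZZ}A)=(\text{left perp of }\PP^{\ge i}_A)\perp\PP^{\ge i}_A$, check the nesting $\PP^{<i}_A\subset{}^\perp\PP^{\ge i}_A$ and $(\PP^{<i}_A)^\perp\supset\PP^{\ge i}_A$, which hold by step (2), to get $\Db(\mod^{\ZZ}A)=\PP^{<i}_A\perp\big((\PP^{<i}_A)^\perp\cap{}^\perp\PP^{\ge i}_A\big)\perp\PP^{\ge i}_A$. (5) Finally, identify the middle term $(\PP^{<i}_A)^\perp\cap{}^\perp\PP^{\ge i}_A$ with $\DD^{\ge i}_A\cap(\DD^{>-i}_{A^{\op}})^*$: the condition $X\in{}^\perp\PP^{\ge i}_A$ means $\RHom(X,A(j))=0$ for $j\le -i$, i.e. $H^\ell(X^*)_m=0$ for $m\ge i$ for all $\ell$, which by (B2)+Iwanaga-Gorenstein translates to $X^*\in\DD^{<i}_{A^{\op}}$, equivalently $X\in(\DD^{<i}_{A^{\op}})^*$ — hmm, I want $(\DD^{>-i}_{A^{\op}})^*$, so I need to be careful with the direction of the duality and whether it's a left or right perp; this bookkeeping with signs and $\ge$ vs $>$ is precisely where errors creep in, and the cleanest route is to phrase everything in terms of $\RHom_A(X,A(j))$ vanishing ranges and dualize once at the end. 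Similarly $X\in(\PP^{<i}_A)^\perp$ means $\RHom(A(j),X)=0$ for $j>-i$, i.e. $H^\ell(X)_m=0$ for $m\le i-1$... that gives $X\in\DD^{\ge i}_A$. So the middle term is $\DD^{\ge i}_A\cap(\text{dual condition})$, matching \eqref{SOD0'} after correctly matching the dual condition to $(\DD^{>-i}_{A^{\op}})^*$.

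**Main obstacle.** The conceptual content is standard (this is Orlov's theorem in the graded-noncommutative Iwanaga-Gorenstein setting, as in Buchweitz and the references cited), so the genuine difficulty is entirely bookkeeping: getting the four truncation/perpendicularity conditions aligned with the correct inequalities ($\ge i$ vs $>-i$, open vs closed), and correctly interpreting "$X\in{}^\perp\PP^{\ge i}_A$" as a vanishing range on the cohomology of $X^*$ — this last step is where finite $\gldim A_0$ is truly used, since it lets one pass between the homological condition ($\RHom$ into/out of $A(j)$ vanishing) and the naive grading condition (cohomology modules supported in a degree range), and it also ensures the relevant subcategories $\DD^{<i}_A, \DD^{\ge i}_A$ are honestly generated by modules concentrated in those degrees. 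Once step (1) and step (2) are in place, the fusion via Lemma \ref{distributive}(2) is formal, and the final triangle equivalence $F_i\colon\D_{\sg}^{\ZZ}(A)\simeq\DD^{\ge i}_A\cap(\DD^{>-i}_{A^{\op}})^*$ follows because a semiorthogonal summand complementary to $\Kb(\proj^{\ZZ}A)=\PP^{<i}_A\perp\PP^{\ge i}_A$ maps equivalently onto the Verdier quotient by that perfect subcategory.
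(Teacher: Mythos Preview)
Your overall architecture---two semiorthogonal decompositions of $\Db(\mod^{\ZZ}A)$ fused via Lemma~\ref{distributive}(2)---is exactly the paper's. But the proposal has a genuine gap, and it is precisely at the step you flag as ``bookkeeping''.

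The decomposition you need as input is not the module-truncation SOD of your step~(1), nor the $\Kb(\proj^{\ZZ}A)$ decomposition of step~(2): it is
\[
\Db(\mod^{\ZZ}A)=\PP^{<i}_A\perp\DD^{\ge i}_A
\]
(the paper's \eqref{SOD1'}), together with its dual $\Db(\mod^{\ZZ}A)=(\DD^{>-i}_{A^{\op}})^*\perp\PP^{\ge i}_A$. In step~(4) you simply assume these hold (``$\PP^{<i}_A\perp(\text{its right perp})$''), and in step~(5) you correctly identify the perpendiculars as $\DD^{\ge i}_A$ and $(\DD^{>-i}_{A^{\op}})^*$. But identifying the perpendicular of a thick subcategory is not the same as proving that the subcategory is admissible: you must show \emph{generation}, i.e.\ that every $X\in\Db(\mod^{\ZZ}A)$ fits into a triangle $P\to X\to D\to P[1]$ with $P\in\PP^{<i}_A$ and $D\in\DD^{\ge i}_A$. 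Nothing in your steps~(1)--(3) gives this.

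This generation statement is exactly where \textup{(B2)} enters, and not in the way you suggest. The module-truncation sequence $0\to M_{\ge i}\to M\to M_{<i}\to 0$ exhibits $M\in\DD^{\ge i}_A*\DD^{<i}_A$, which is the \emph{opposite} order to what you wrote in step~(1) (and in any case irrelevant here, since $\DD^{<i}_A\not\subset\PP^{<i}_A$). The correct argument: take a minimal projective resolution $\cdots\to P^{-1}\to P^0\to M$. If $M$ is generated in degrees $\ge a$, then $(\Omega M)_a=\Omega_{A_0}(M_a)$, so by \textup{(B2)} after $\gldim A_0+1$ steps the bottom degree of the syzygy strictly increases; iterating, $P^{-n}\in\proj^{\ge i}A$ for all $n\gg0$. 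Now split each $P^{-n}=Q^{-n}\oplus R^{-n}$ with $Q^{-n}\in\proj^{<i}A$, $R^{-n}\in\proj^{\ge i}A$; since $\Hom(\proj^{<i}A,\proj^{\ge i}A)=0$ (note: this direction, not the one you wrote in step~(2)), $Q^\bullet$ is a subcomplex, it is bounded, and the quotient $R^\bullet$ has cohomology in $\mod^{\ge i}A$. This gives the required triangle.

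The paper does exactly this: it states \eqref{SOD1'} and its $A^{\op}$-analogue \eqref{SOD2'} as consequences of \textup{(B1)}+\textup{(B2)}, applies $(-)^*$ to the latter to obtain \eqref{SOD3'}, and then invokes Lemma~\ref{distributive}(2) using the containment $\PP^{\ge i}_A\subset\DD^{\ge i}_A$. No separate identification of perpendiculars is needed once \eqref{SOD1'} and \eqref{SOD3'} are in hand.
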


\begin{proof}
Although this is exactly \cite[Corollary 2.4]{IY}, we include a complete proof which will also give an idea for other proofs. By (B1) and (B2), we have semiorthogonal decompositions
\begin{align}\label{SOD1'}
\Db(\mod^{\ZZ}A)&=\PP^{<i}_A\perp \DD^{\ge i}_A,\\ \label{SOD2'}
\Db(\mod^{\ZZ}A^{\op})&=\PP^{\le -i}_{A^{\op}}\perp \DD^{>-i}_{A^{\op}}.
\end{align}
Applying $(-)^*$ to \eqref{SOD2'}, we obtain a semiorthogonal decomposition
\begin{equation}\label{SOD3'}
\Db(\mod^{\ZZ}A)=(\DD^{>-i}_{A^{\op}})^*\perp(\PP^{\le -i}_{A^{\op}})^*=(\DD^{>-i}_{A^{\op}})^*\perp\PP^{\ge i}_A.
\end{equation}
Since $\DD^{\ge i}_A\supset\PP^{\ge i}_A$, we can apply Lemma \ref{distributive}(2) to \eqref{SOD1'} and \eqref{SOD3'} to obtain \eqref{SOD0'}.
\end{proof}

Our next results require the following assumptions.
\begin{enumerate}
\item[\rm(B3)] The $\ZZ$-graded ring $A$ is a $\ZZ$-graded $k$-algebra over a field $k$ such that $\dim_kA_0<\infty$.
\item[\rm(B4)] There exists $W\in\Db(\mod^{\ZZ}A^{\e})$ such that $-\Lotimes_AW:\Db(\mod^{\ZZ}A)\simeq\Db(\mod^{\ZZ}A)$ is an autoequivalence and $\RHom_A(-,W)\simeq D$ as functors $\Db(\mod^{\ZZ}_0A)\simeq\Db(\mod^{\ZZ}_0A^{\op})$ for the $k$-dual $D$.
\end{enumerate}
In this case, $\RHom_A(W,W)=A$ holds, a quasi-inverse of $-\Lotimes_AW$ is given by $-\Lotimes_AW^{-1}$ for $W^{-1}:=\RHom_A(W,A)\simeq\RHom_{A^{\op}}(W,A)\in\Db(\mod^{\ZZ}A^{\e})$,
and we have equivalences $-\otimes_AW:\Kb(\proj^{\ZZ}A)\simeq\Kb(\proj^{\ZZ}A):-\Lotimes_AW^{-1}$. Also we have an isomorphism of functors
\begin{equation}\label{* omega commute}
(-)^*\otimes_AW\simeq(W^{-1}\otimes_A-)^*.
\end{equation}
Let $i\in\ZZ$. We define full subcategories by
\begin{align*}
&\mod_0^{\ge i}A:=\mod^{\ge i}A\cap\mod_0^{\ZZ}A\subset\mod^{\ZZ}A\ \mbox{ and }\ \SS^{\ge i}_A:=\Db(\mod_0^{\ge i}A)\subset\Db(\mod^{\ZZ}A).
\end{align*}
Then $(\SS^{\ge i}_{A},\DD^{<i}_{A})$ gives a semiorthogonal decomposition of $\Db(\mod_0^{\ZZ}A)$.
For a subcategory $\CC$ of $\Db(\mod^{\ZZ}A^{\op})$, we write $W^{-1}\Lotimes_A\CC:=\{W^{-1}\Lotimes_AX\mid X\in\CC\}$,
and define full subcategories of $\Db(\mod^{\ZZ}A^{\op})$ by
\begin{align*}
\MM^{>i}_{A^{\op}}:=W^{-1}\Lotimes_A\DD^{>i}_{A^{\op}}
&\ \mbox{ and }\ \MM^{\le i}_{A^{\op}}:=W^{-1}\Lotimes_A\DD^{\le i}_{A^{\op}}.
\end{align*}
Then $(\MM^{>i}_{A^{\op}},\MM^{\le i}_{A^{\op}})$ gives a semiorthogonal decomposition of $\Db(\mod^{\ZZ}A^{\op})$.
Clearly the $k$-duality $D$ gives a duality
\[D:\mod^{<i}A\to \mod^{>-i}_0A^{\op},\]
which gives the following descriptions of the categories $\MM^{>-i}_{A^{\op}}$ and $\MM^{\le-i}_{A^{\op}}$.

\begin{lem}
We have 
\begin{align}\label{X^*}
(\DD^{<i}_A)^*&=\MM^{>-i}_{A^{\op}}\cap\Db(\mod_0^{\ZZ}A^{\op}),\\ \label{Y^*}
(\SS^{\ge i}_A)^*&=\MM^{\le-i}_{A^{\op}}.
\end{align}
\end{lem}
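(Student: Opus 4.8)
The plan is to deduce both equalities from one concrete description of the duality $(-)^*$ on the torsion derived category, namely that $W\Lotimes_A M^{*}\simeq DM$ (equivalently $M^{*}\simeq W^{-1}\Lotimes_A DM$) for every $M\in\Db(\mod_0^{\ZZ}A)$. Before that I would collect the relevant finiteness facts. Since $A$ and $A^{\op}$ are Noetherian, $\NN$-graded and locally finite, a finitely generated graded module that is bounded above in degree is finite-dimensional over $k$, hence of finite length; thus $\DD^{<i}_A\subseteq\Db(\mod_0^{\ZZ}A)$, $\SS^{\ge i}_A\subseteq\Db(\mod_0^{\ZZ}A)$, and --- what will be crucial for \eqref{Y^*} --- $\mod^{\le -i}A^{\op}=\mod_0^{\le -i}A^{\op}$, so $\DD^{\le -i}_{A^{\op}}=\Db(\mod_0^{\le -i}A^{\op})$. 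I would also use that the $k$-duality $D$ restricts to dualities $D\colon\mod^{<i}A\to\mod_0^{>-i}A^{\op}$ (as observed just before the Lemma) and $D\colon\mod_0^{\ge i}A\to\mod_0^{\le -i}A^{\op}$, hence to equivalences $D\colon\DD^{<i}_A\xrightarrow{\sim}\Db(\mod_0^{>-i}A^{\op})$ and $D\colon\SS^{\ge i}_A\xrightarrow{\sim}\Db(\mod_0^{\le -i}A^{\op})$.

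For the key description I would argue as follows. Since $-\Lotimes_A W$ is an autoequivalence of $\Db(\mod^{\ZZ}A)$ with quasi-inverse $-\Lotimes_A W^{-1}$, for any $M$ we have $\RHom_A(M,W)=\RHom_A(M,A\Lotimes_A W)\simeq\RHom_A(M\Lotimes_A W^{-1},A)=(M\Lotimes_A W^{-1})^{*}$. When $M$ is torsion, $\RHom_A(M,W)\simeq DM$ by (B4), while $(M\Lotimes_A W^{-1})^{*}\simeq W\Lotimes_A M^{*}$ by \eqref{* omega commute}; combining these gives $W\Lotimes_A M^{*}\simeq DM$, and applying the quasi-inverse $W^{-1}\Lotimes_A-$ of the autoequivalence $W\Lotimes_A-$ of $\Db(\mod^{\ZZ}A^{\op})$ yields $M^{*}\simeq W^{-1}\Lotimes_A DM$. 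I would separately check that $W\Lotimes_A-$ and $W^{-1}\Lotimes_A-$ preserve $\Db(\mod_0^{\ZZ}A^{\op})$: because $W^{\pm1}$ lies in $\Kb(\proj^{\ZZ}A)$, replacing it by a bounded complex of finitely generated projective right $A$-modules shows that tensoring a finite-length complex with $W^{\pm1}$ gives a complex with finite-dimensional cohomology, hence again a finite-length complex.

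With these in hand the two identities follow by a formal argument. Since $(-)^{*}$ is a duality it sends the thick subcategory generated by a module subcategory to the one generated by the image, so by the previous paragraph $(\SS^{\ge i}_A)^{*}=W^{-1}\Lotimes_A D(\SS^{\ge i}_A)=W^{-1}\Lotimes_A\Db(\mod_0^{\le -i}A^{\op})=W^{-1}\Lotimes_A\DD^{\le -i}_{A^{\op}}=\MM^{\le -i}_{A^{\op}}$, the third equality being $\mod^{\le -i}A^{\op}=\mod_0^{\le -i}A^{\op}$; this proves \eqref{Y^*}. Likewise $(\DD^{<i}_A)^{*}=W^{-1}\Lotimes_A\Db(\mod_0^{>-i}A^{\op})$; now $\Db(\mod_0^{>-i}A^{\op})=\DD^{>-i}_{A^{\op}}\cap\Db(\mod_0^{\ZZ}A^{\op})$, and since $W^{-1}\Lotimes_A-$ is an autoequivalence of $\Db(\mod^{\ZZ}A^{\op})$ preserving $\Db(\mod_0^{\ZZ}A^{\op})$, it commutes with this intersection, giving $(\DD^{<i}_A)^{*}=(W^{-1}\Lotimes_A\DD^{>-i}_{A^{\op}})\cap\Db(\mod_0^{\ZZ}A^{\op})=\MM^{>-i}_{A^{\op}}\cap\Db(\mod_0^{\ZZ}A^{\op})$, which is \eqref{X^*}.

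The main obstacle is the key description in the second paragraph: one has to correctly combine three separate compatibilities --- the Hom-tensor adjunction furnished by the autoequivalence $-\Lotimes_A W$, the torsion-duality property (B4) of $W$, and the intertwining \eqref{* omega commute} of $(-)^{*}$ with the $W$-twist --- and, what is equally delicate, verify that $W^{\pm1}$-twisting preserves the torsion subcategory on the $A^{\op}$-side, since it is precisely this that allows the autoequivalence to be pulled through the intersection with $\Db(\mod_0^{\ZZ}A^{\op})$ in \eqref{X^*}.
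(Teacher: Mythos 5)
Your proposal is correct and follows essentially the same route as the paper. The paper's proof is a one-liner that invokes the identity $D\simeq W\Lotimes_A(-)^*$ on $\Db(\mod_0^{\ZZ}A)$ (stated to follow from (B4)) and then computes $(\DD^{<i}_A)^*=W^{-1}\Lotimes_AD(\DD^{<i}_A)=W^{-1}\Lotimes_A\SS^{>-i}_{A^{\op}}=\MM^{>-i}_{A^{\op}}\cap\Db(\mod_0^{\ZZ}A^{\op})$, declaring the proof of \eqref{Y^*} ``completely parallel''; you reproduce exactly this computation but unpack the two implicit ingredients — deriving $W\Lotimes_AM^*\simeq DM$ from (B4) together with \eqref{* omega commute}, and checking that $W^{\pm1}\Lotimes_A-$ preserves $\Db(\mod_0^{\ZZ}A^{\op})$ because $W^{\pm1}$ restricts to $\Kb(\proj^{\ZZ}A)$ — which the paper leaves as understood.
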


\begin{proof}
Since $D\simeq W\otimes_A(-)^*$ holds on $\Db(\mod_0^{\ZZ}A)$ by (B4), we have
\[(\DD^{<i}_A)^*=W^{-1}\Lotimes_AD(\DD^{<i}_A)=W^{-1}\Lotimes_A\SS^{>-i}_{A^{\op}}=\MM^{>-i}_{A^{\op}}\cap\Db(\mod_0^{\ZZ}A^{\op}).\]
Thus the first assertion follows. The proof of the second one is completely parallel.
\end{proof}

The derived category $\Db(\qgr A)$ can be realized as a thick subcategory of $\Db(\mod^{\ZZ}A)$ as follows.

\begin{thm}\label{SOD of qgr}
Assume that the assumptions \textup{(B1)}, \textup{(B3)} and \textup{(B4)} hold. For each $i\in\ZZ$, we have a semiorthogonal decomposition
\begin{align}\label{SOD0}
\Db(\mod^{\ZZ}A)=\SS^{\ge i}_A\perp(\DD^{\ge i}_A\cap(\MM^{>-i}_{A^{\op}})^*)\perp\DD^{<i}_A.
\end{align}
Thus we have a triangle equivalence
\[G_i:\Db(\qgr A)\simeq\DD^{\ge i}_A\cap(\MM^{>-i}_{A^{\op}})^*\subset\Db(\mod^{\ZZ}A).\]
\end{thm}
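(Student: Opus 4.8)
The plan is to transcribe the proof of Theorem~\ref{SOD of sg}, using the semiorthogonal decomposition $\Db(\mod^{\ZZ}_0A)=\SS^{\ge i}_A\perp\DD^{<i}_A$ of $\Db(\mod^{\ZZ}_0A)$ in place of the torsion pair $(\PP^{<i}_A,\PP^{\ge i}_A)$ on $\Kb(\proj^{\ZZ}A)$ that was used there. Concretely, I would produce two semiorthogonal decompositions of $\Db(\mod^{\ZZ}A)$ having $\SS^{\ge i}_A$ and $\DD^{<i}_A$ respectively as an outer term, and glue them by Lemma~\ref{distributive}(2).

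The first is the truncation decomposition: for $M\in\mod^{\ZZ}A$ the submodule $M_{\ge i}$ (a submodule since $A=\bigoplus_{j\ge0}A_j$) gives a triangle $M_{\ge i}\to M\to M_{<i}\to M_{\ge i}[1]$, while $\Ext^n_A(M,N)=0$ for all $n$ whenever $M\in\mod^{\ge i}A$ and $N\in\mod^{<i}A$, because a minimal graded projective resolution of such an $M$ has all its terms concentrated in degrees $\ge i$. Hence
\[\Db(\mod^{\ZZ}A)=\DD^{\ge i}_A\perp\DD^{<i}_A.\]
The second comes by dualizing: applying the duality $(-)^{*}=\RHom_{A^{\op}}(-,A)$ of Proposition~\ref{A dual} to the decomposition $\Db(\mod^{\ZZ}A^{\op})=\MM^{>-i}_{A^{\op}}\perp\MM^{\le-i}_{A^{\op}}$, and using $(\SS^{\ge i}_A)^{*}=\MM^{\le-i}_{A^{\op}}$ from \eqref{Y^*} together with $(-)^{**}\simeq\id$, we obtain
\[\Db(\mod^{\ZZ}A)=\SS^{\ge i}_A\perp(\MM^{>-i}_{A^{\op}})^{*}.\]

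Next I would verify the two hypotheses of Lemma~\ref{distributive}(2). The inclusion $\SS^{\ge i}_A\subset\DD^{\ge i}_A$ is clear, and $\DD^{<i}_A\subset(\MM^{>-i}_{A^{\op}})^{*}$ follows by applying $(-)^{*}$ to the inclusion $(\DD^{<i}_A)^{*}=\MM^{>-i}_{A^{\op}}\cap\Db(\mod^{\ZZ}_0A^{\op})\subset\MM^{>-i}_{A^{\op}}$ supplied by \eqref{X^*}. Feeding the two decompositions above into Lemma~\ref{distributive}(2) with $\AA=\SS^{\ge i}_A$, $\BB=(\MM^{>-i}_{A^{\op}})^{*}$, $\CC=\DD^{\ge i}_A$, $\DD=\DD^{<i}_A$ then yields
\[\Db(\mod^{\ZZ}A)=\SS^{\ge i}_A\perp\bigl(\DD^{\ge i}_A\cap(\MM^{>-i}_{A^{\op}})^{*}\bigr)\perp\DD^{<i}_A,\]
which is genuinely a semiorthogonal decomposition, the three orthogonality relations being inherited from those of the two decompositions used (exactly as for \eqref{SOD0'}).

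Finally, to obtain $G_i$: the two outer terms $\SS^{\ge i}_A$ and $\DD^{<i}_A$ glue to $\Db(\mod^{\ZZ}_0A)=\SS^{\ge i}_A*\DD^{<i}_A$ by the given semiorthogonal decomposition of $\Db(\mod^{\ZZ}_0A)$, so by the standard compatibility of Verdier quotients with semiorthogonal decompositions — quotienting out the two outer terms leaves the middle one, exactly as $F_i$ is produced in Theorem~\ref{SOD of sg} — the composite
\[\DD^{\ge i}_A\cap(\MM^{>-i}_{A^{\op}})^{*}\hookrightarrow\Db(\mod^{\ZZ}A)\longrightarrow\Db(\mod^{\ZZ}A)/\Db(\mod^{\ZZ}_0A)=\Db(\qgr A)\]
is a triangle equivalence, and $G_i$ is a quasi-inverse. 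The only step that is more than bookkeeping is the inclusion $\DD^{<i}_A\subset(\MM^{>-i}_{A^{\op}})^{*}$: this is where the twist by $W$ is forced, i.e.\ the passage from $\DD^{>-i}_{A^{\op}}$ to $\MM^{>-i}_{A^{\op}}=W^{-1}\Lotimes_A\DD^{>-i}_{A^{\op}}$, since it is precisely the identity $D\simeq W\otimes_A(-)^{*}$ on $\Db(\mod^{\ZZ}_0A)$ from (B4) — already packaged in \eqref{X^*} and \eqref{Y^*} — that places $(\SS^{\ge i}_A)^{*}$ and $(\DD^{<i}_A)^{*}$ in the twisted categories $\MM^{\le-i}_{A^{\op}}$, $\MM^{>-i}_{A^{\op}}$ rather than in the naive $\DD^{\le-i}_{A^{\op}}$, $\DD^{>-i}_{A^{\op}}$. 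Note that, unlike in Theorem~\ref{SOD of sg}, this argument never invokes finiteness of $\gldim A_0$, consistently with the hypotheses of Theorem~\ref{SOD of qgr}.
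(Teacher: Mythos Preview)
Your proposal is correct and follows essentially the same route as the paper: establish the truncation decomposition $\Db(\mod^{\ZZ}A)=\DD^{\ge i}_A\perp\DD^{<i}_A$, apply $(-)^*$ to $\MM^{>-i}_{A^{\op}}\perp\MM^{\le-i}_{A^{\op}}$ and identify $(\MM^{\le-i}_{A^{\op}})^*=\SS^{\ge i}_A$ via \eqref{Y^*}, then glue by Lemma~\ref{distributive}(2). Your explicit verification of the inclusion $\DD^{<i}_A\subset(\MM^{>-i}_{A^{\op}})^*$ via \eqref{X^*} is a detail the paper leaves implicit, and your closing remarks on the role of $W$ and the irrelevance of (B2) are accurate commentary rather than a different argument.
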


\begin{proof}
We have semiorthogonal decompositions
\begin{align}\label{SOD1}
\Db(\mod^{\ZZ}A)&=\DD^{\ge i}_A\perp \DD^{<i}_A,\\ \label{SOD2}
\Db(\mod^{\ZZ}A^{\op})&=\DD^{>-i}_{A^{\op}}\perp \DD^{\le-i}_{A^{\op}}.
\end{align}
Applying $W^{-1}\Lotimes_A-$ to \eqref{SOD2}, we have a semiorthogonal decomposition
\begin{align}\label{SOD3}
\Db(\mod^{\ZZ}A^{\op})=\MM^{>-i}_{A^{\op}}\perp\MM^{\le-i}_{A^{\op}}.
\end{align}
Applying $(-)^*$, we obtain a semiorthogonal decomposition
\begin{align}\label{SOD4}
\Db(\mod^{\ZZ}A)=(\MM^{\le-i}_{A^{\op}})^*\perp(\MM^{>-i}_{A^{\op}})^*\stackrel{\eqref{Y^*}}{=}\SS^{\ge i}_A\perp(\MM^{>-i}_{A^{\op}})^*.
\end{align}
Since $\DD^{\ge i}_A\supset\SS^{\ge i}_A$, we can apply Lemma \ref{distributive}(2) to \eqref{SOD1} and \eqref{SOD4} to obtain \eqref{SOD0}.
The last assertion is clear from \eqref{SOD0} and
\[\Db(\qgr A)=\Db(\mod^{\ZZ}A)/(\SS^{\ge i}_A\perp\DD^{<i}_A).\qedhere\]
\end{proof}

Now we give a semiorthogonal decomposition which gives a direct connection between $\D_{\sg}^{\ZZ}(A)$ and $\Db(\qgr A)$. Let
\[\QQ^{<i}_A:=\PP^{<i}_A\Lotimes_AW
\ \mbox{ and }\ \QQ^{\ge i}_A:=\PP^{\ge i}_A\Lotimes_AW.\]
Then $(\QQ^{<i}_{A},\QQ^{\ge i}_{A})$ gives a semiorthogonal decomposition of $\Kb(\proj^{\ZZ}A)$.

\begin{thm}\label{sg vs qgr}
Assume that \textup{(B1)}, \textup{(B2)}, \textup{(B3)} and \textup{(B4)} hold. Let $i\in\ZZ$. \begin{enumerate}
\item If $W^{-1}\in\DD^{\ge0}_A$, then we have semiorthogonal decompositions:
\begin{align}\label{sg=qgr*-}
F_i(\D_{\sg}^{\ZZ}(A))&=(\DD^{\ge i}_A\cap\MM^{<i}_A)\perp G_i(\Db(\qgr A))\\ \label{sg=-*qgr}
&=(G_i(\Db(\qgr A))\Lotimes_AW^{-1})\perp(\DD^{\ge i}_A\cap\MM^{<i}_A).
\end{align}
\item If $W^{-1},W\in\DD^{\ge0}_A$, then we have
\begin{align*}
F_i(\D_{\sg}^{\ZZ}(A))=G_i(\Db(\qgr A)).
\end{align*}
\item If $W\in\DD^{\ge0}_A$, then we have semiorthogonal decompositions:
\begin{align}\label{qgr=-*sg}
G_i(\Db(\qgr A))&=(\PP^{\ge i}_A\cap\QQ^{<i}_A)\perp(F_i(\D_{\sg}^{\ZZ}(A))\otimes_AW)\\ \label{qgr=sg*-}
&=F_i(\D_{\sg}^{\ZZ}(A))\perp(\PP^{\ge i}_A\cap\QQ^{<i}_A).
\end{align}
\end{enumerate}
\end{thm}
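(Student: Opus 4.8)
The plan is to derive all three parts from the two three-term decompositions already in hand — Theorem \ref{SOD of sg} and Theorem \ref{SOD of qgr} — together with the distributive law of Lemma \ref{distributive}, the isomorphism of functors \eqref{* omega commute}, and one combinatorial observation about minimal complexes. The preliminary fact I would isolate is
\[
\Kb(\proj^{\ZZ}A)\cap\DD^{\ge i}_A=\PP^{\ge i}_A,\qquad \Kb(\proj^{\ZZ}A)\cap{}^\perp\PP^{\ge i}_A=\PP^{<i}_A ,
\]
together with their images under $-\Lotimes_AW$ and under $-\Lotimes_AW^{-1}$. The proof is short: a nonzero \emph{minimal} bounded complex $C^{\bullet}$ of finitely generated graded projectives has a lowest internal degree $\delta$, on which all differentials vanish (their entries lie in $A_{\ge1}$), so $(\H^\ast(C^{\bullet}))_\delta=(C^{\bullet})_\delta\neq0$; hence if $\H^\ast(C^{\bullet})$ lives in internal degrees $\ge i$ then $\delta\ge i$, which forces every summand $A(\ell)$ occurring in $C^{\bullet}$ to satisfy $\ell\le-i$, i.e. $C^{\bullet}\in\PP^{\ge i}_A$. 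Using \eqref{* omega commute} (so that $(\MM^{>-i}_{A^{\op}})^{\ast}=(\DD^{>-i}_{A^{\op}})^{\ast}\Lotimes_AW={}^\perp\QQ^{\ge i}_A$, where ${}^\perp\PP^{\ge i}_A=(\DD^{>-i}_{A^{\op}})^{\ast}$ by \eqref{SOD3'}), I would then rewrite
\[
G_i(\Db(\qgr A))=\DD^{\ge i}_A\cap{}^\perp\QQ^{\ge i}_A ,
\]
in exact parallel with $F_i(\D_{\sg}^{\ZZ}(A))=\DD^{\ge i}_A\cap{}^\perp\PP^{\ge i}_A$ (from \eqref{SOD0'}). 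Consequently, if $W\in\DD^{\ge0}_A$ then $\QQ^{\ge i}_A=\PP^{\ge i}_A\Lotimes_AW\subseteq\Kb(\proj^{\ZZ}A)\cap\DD^{\ge i}_A=\PP^{\ge i}_A$, whence ${}^\perp\PP^{\ge i}_A\subseteq{}^\perp\QQ^{\ge i}_A$, $\PP^{<i}_A\subseteq\QQ^{<i}_A$, $\DD^{\ge i}_A\Lotimes_AW\subseteq\DD^{\ge i}_A$ and $F_i(\D_{\sg}^{\ZZ}(A))\subseteq G_i(\Db(\qgr A))$; dually $W^{-1}\in\DD^{\ge0}_A$ gives $\PP^{\ge i}_A\subseteq\QQ^{\ge i}_A$ and $G_i(\Db(\qgr A))\subseteq F_i(\D_{\sg}^{\ZZ}(A))$. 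Part (2) then follows at once from parts (1) and (3).

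For part (3) I would first rewrite Theorem \ref{SOD of sg} as a decomposition of $\DD^{\ge i}_A$ itself: comparing $\Db(\mod^{\ZZ}A)=\PP^{<i}_A\perp\DD^{\ge i}_A$ (i.e. \eqref{SOD1'}) with \eqref{SOD0'} gives $\DD^{\ge i}_A=F_i(\D_{\sg}^{\ZZ}(A))\perp\PP^{\ge i}_A$. Applying $-\Lotimes_AW$ to $\Db(\mod^{\ZZ}A)={}^\perp\PP^{\ge i}_A\perp\PP^{\ge i}_A$ produces $\Db(\mod^{\ZZ}A)={}^\perp\QQ^{\ge i}_A\perp\QQ^{\ge i}_A$; since ${}^\perp\PP^{\ge i}_A\subseteq{}^\perp\QQ^{\ge i}_A$ and $\QQ^{\ge i}_A\subseteq\PP^{\ge i}_A$, Lemma \ref{distributive}(2) yields $\Db(\mod^{\ZZ}A)={}^\perp\PP^{\ge i}_A\perp(\PP^{\ge i}_A\cap{}^\perp\QQ^{\ge i}_A)\perp\QQ^{\ge i}_A$, and the middle term equals $\PP^{\ge i}_A\cap\QQ^{<i}_A$ by the preliminary fact. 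Intersecting with $\DD^{\ge i}_A$ via Lemma \ref{distributive}(1) (both $\PP^{\ge i}_A\cap\QQ^{<i}_A$ and $\QQ^{\ge i}_A$ lie in $\DD^{\ge i}_A$) gives $\DD^{\ge i}_A=F_i(\D_{\sg}^{\ZZ}(A))\perp(\PP^{\ge i}_A\cap\QQ^{<i}_A)\perp\QQ^{\ge i}_A$; cutting off the rightmost admissible piece $\QQ^{\ge i}_A$ — legitimate since $F_i(\D_{\sg}^{\ZZ}(A)),\ \PP^{\ge i}_A\cap\QQ^{<i}_A\subseteq{}^\perp\QQ^{\ge i}_A$ — yields \eqref{qgr=sg*-}. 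For \eqref{qgr=-*sg} I run the same computation, this time starting from $\Db(\mod^{\ZZ}A)=\PP^{<i}_A\perp\DD^{\ge i}_A$ and the $W$-twist $\Db(\mod^{\ZZ}A)=\QQ^{<i}_A\perp(F_i(\D_{\sg}^{\ZZ}(A))\Lotimes_AW)\perp\QQ^{\ge i}_A$ of \eqref{SOD0'}, using $\PP^{<i}_A\subseteq\QQ^{<i}_A$ together with Lemma \ref{distributive}(2), and then the preliminary fact in the form $\PP^{<i}_A\cap\DD^{\ge i}_A=0$ to clear the leftmost piece before intersecting with $\DD^{\ge i}_A$.

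Part (1) is the mirror argument, with $\Kb(\proj^{\ZZ}A)$ replaced by $\Db(\mod_0^{\ZZ}A)$, the categories $\PP^{\bullet}_A$ by $\SS^{\ge i}_A$ and $\DD^{<i}_A$, the category $\D_{\sg}^{\ZZ}(A)$ by $\Db(\qgr A)$, and $-\Lotimes_AW$ by $-\Lotimes_AW^{-1}$: one rewrites Theorem \ref{SOD of qgr} as $\DD^{\ge i}_A=\SS^{\ge i}_A\perp G_i(\Db(\qgr A))$ (comparing \eqref{SOD0} with \eqref{SOD1}), twists the relevant decompositions by $-\Lotimes_AW^{-1}$, and uses the now trivial analogue $\Db(\mod_0^{\ZZ}A)\cap\DD^{\ge i}_A=\SS^{\ge i}_A$. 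The residual piece comes out as $\DD^{\ge i}_A\cap\MM^{<i}_A$; this is where condition (B4) is used, since it is (B4) that makes $-\Lotimes_AW^{-1}$ restrict to an autoequivalence of $\Db(\mod_0^{\ZZ}A)$ compatible with the decomposition $\Db(\mod_0^{\ZZ}A)=\SS^{\ge i}_A\perp\DD^{<i}_A$ and hence lets one identify $\MM^{<i}_A$ and the middle term. Finally, for part (2), when $W,W^{-1}\in\DD^{\ge0}_A$ both inclusions $F_i(\D_{\sg}^{\ZZ}(A))\subseteq G_i(\Db(\qgr A))$ and $G_i(\Db(\qgr A))\subseteq F_i(\D_{\sg}^{\ZZ}(A))$ hold, so they are equal.

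The step I expect to be the genuine obstacle is the bookkeeping in part (1): unlike in part (3), twisting by $W^{-1}$ does not keep the torsion subcategories in place (for instance $\SS^{\ge i}_A\Lotimes_AW^{-1}$ straddles $\DD^{<i}_A$ and $\DD^{\ge i}_A$), so one must track carefully which orthogonal complements are formed inside $\Db(\mod_0^{\ZZ}A)$ as opposed to inside $\Db(\mod^{\ZZ}A)$, and pin down the exact shape of $\MM^{<i}_A$ and of the residual category $\DD^{\ge i}_A\cap\MM^{<i}_A$. Everything else is a mechanical, if lengthy, iteration of Lemma \ref{distributive} and the minimal-complex lemma; the only genuinely new inputs are \eqref{* omega commute} and the sign conditions on $W$.
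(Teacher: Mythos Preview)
Your approach is correct and reaches the same conclusions, but it takes a different route from the paper. The paper applies Lemma \ref{distributive}(3) directly: for part (1) it uses the decomposition $\Db(\mod^{\ZZ}A)=\MM^{\ge i}_A\perp\MM^{<i}_A$ together with the inclusions $\MM^{\ge i}_A\subset\DD^{\ge i}_A$ and $\MM^{<i}_A\subset(\DD^{>-i}_{A^{\op}})^*$ (the latter from \eqref{X^*}) to split $F_i(\D_{\sg}^{\ZZ}(A))$ in one stroke; for part (3) it does the same with $\Db(\mod^{\ZZ}A)=\QQ^{<i}_A\perp\WW^{\ge i}_A$. The second decomposition in each part is then obtained by the symmetry swap $A\leftrightarrow A^{\op}$, $i\leftrightarrow 1-i$, followed by $(-)^*$. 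Your approach instead iterates Lemma \ref{distributive}(1) and (2), which is longer but perfectly valid; the rewriting $G_i(\Db(\qgr A))=\DD^{\ge i}_A\cap{}^\perp\QQ^{\ge i}_A$ is a nice conceptual gain.

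Two comments. First, your minimal-complex argument for $\Kb(\proj^{\ZZ}A)\cap\DD^{\ge i}_A=\PP^{\ge i}_A$ has a small slip: minimality only forces the differentials into $\rad A_0\oplus A_{\ge1}$, not into $A_{\ge1}$, so $(\H^*(C^\bullet))_\delta=(C^\bullet)_\delta$ is false when $\rad A_0\neq0$; you only get that the degree-$\delta$ part is a nonzero minimal complex of projective $A_0$-modules, hence (using (B2)) has nonzero cohomology. More to the point, this detour is unnecessary: the equality is immediate from the semiorthogonal decomposition \eqref{SOD1'}, since $X\in\PP^{<i}_A\cap\DD^{\ge i}_A$ forces $\Hom(X,X)=0$. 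Second, your ``mirror argument'' for part (1) does go through (essentially via $\DD^{\ge i}_A=\SS^{\ge i}_A\perp G_i$ and Lemma \ref{distributive}(1), together with the identification $(\DD^{>-i}_{A^{\op}})^*\cap\Db(\mod_0^{\ZZ}A)=\MM^{<i}_A$), but the paper's use of Lemma \ref{distributive}(3) and the $A\leftrightarrow A^{\op}$ swap avoids exactly the bookkeeping you flag as the obstacle.
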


\begin{proof}
Notice that $W^{-1}\in\DD^{\ge0}_A$ is equivalent to $\MM^{\ge0}_A\subset\DD^{\ge0}_A$, and $W\in\DD^{\ge0}_A$ is equivalent to $\MM^{\ge0}_A\supset\DD^{\ge0}_A$.

(1) By \eqref{X^*}, we have  $\MM^{<i}_A\subset(\DD^{>-i}_{A^{\op}})^*$. Applying Lemma \ref{distributive}(3) to $\Db(\mod^{\ZZ}A)=\MM^{\ge i}_A\perp\MM^{<i}_A$, we obtain \eqref{sg=-*qgr}:
\begin{align*}
\DD^{\ge i}_A\cap(\DD^{>-i}_{A^{\op}})^*=(\MM^{\ge i}_A\cap(\DD^{>-i}_{A^{\op}})^*)\perp(\DD^{\ge i}_A\cap\MM^{<i}_A)=(G_i(\Db(\qgr A))\Lotimes_AW^{-1})\perp(\DD^{\ge i}_A\cap\MM^{<i}_A).
\end{align*}
Replacing $A$ by $A^{\op}$ and $i$ by $1-i$, we obtain
\begin{align*}
\DD^{>-i}_{A^{\op}}\cap(\DD^{\ge i}_A)^*&=(\MM^{>-i}_{A^{\op}}\cap(\DD^{\ge i}_A)^*)\perp(\DD^{>-i}_{A^{\op}}\cap\MM^{\le-i}_{A^{\op}}).
\end{align*}
Applying $(-)^*$, we obtain \eqref{sg=qgr*-}:
\begin{align*}
\DD^{\ge i}_A\cap(\DD^{>-i}_{A^{\op}})^*=(\DD^{>-i}_{A^{\op}}\cap\MM^{\le-i}_{A^{\op}})^*\perp(\DD^{\ge i}_A\cap(\MM^{>-i}_{A^{\op}})^*)
\stackrel{\eqref{X^*}\eqref{Y^*}}{=}(\DD^{\ge i}_A\cap\MM^{<i}_A)\perp G_i(\Db(\qgr A)).
\end{align*}

(2) This is immediate from
\[F_i(\D_{\sg}^{\ZZ}(A))=\DD^{\ge i}_A\cap(\DD^{>-i}_{A^{\op}})^*=\DD^{\ge i}_A\cap(\MM^{>-i}_{A^{\op}})^*=G_i(\Db(\qgr A)).\]

(3) Let $\WW^{\ge i}_A:=\DD^{\ge i}_A\Lotimes_AW \subset\Db(\mod^{\ZZ}A)$.
Applying $-\otimes_AW$ to \eqref{SOD1'}, we obtain a semiorthogonal decomposition
\begin{align}\label{SOD5}
\Db(\mod^{\ZZ}A)=\QQ_A^{<i}\perp\WW_A^{\ge i}.
\end{align}
By our assumption, $\WW_A^{\ge i}\subset\DD^{\ge i}_A$. Also we have $\QQ^{<i}_A=(W^{-1}\Lotimes_A\PP^{>-i}_{A^{\op}})^*\subset(\MM^{>-i}_{A^{\op}})^*$. Applying Lemma \ref{distributive}(3) to \eqref{SOD5}, we obtain \eqref{qgr=-*sg}:
\begin{align*}
\DD^{\ge i}_A\cap(\MM^{>-i}_{A^{\op}})^*=(\DD^{\ge i}_A\cap\QQ^{<i}_A)\perp(\WW_A^{\ge i}\cap(\MM^{>-i}_{A^{\op}})^*)
=(\PP^{\ge i}_A\cap\QQ^{<i}_A)\perp(F_i(\D_{\sg}^{\ZZ}(A))\otimes_AW).
\end{align*}
Replacing $A$ by $A^{\op}$ and $i$ by $1-i$, we obtain
\begin{align*}
\DD^{>-i}_{A^{\op}}\cap(\MM^{\ge i}_A)^*&=(\PP^{>-i}_{A^{\op}}\cap\QQ^{\le-i}_{A^{\op}})\perp(\WW_{A^{\op}}^{>-i}\cap(\MM^{\ge i}_A)^*).
\end{align*}
Applying $W^{-1}\Lotimes_A-$, we obtain
\begin{align*}
\MM^{>-i}_{A^{\op}}\cap(\DD^{\ge i}_A)^*&=((W^{-1}\Lotimes_A\PP^{>-i}_{A^{\op}})\cap\PP^{\le-i}_{A^{\op}})\perp(\DD^{>-i}_{A^{\op}}\cap(\DD^{\ge i}_A)^*).
\end{align*}
Applying $(-)^*$, we obtain \eqref{qgr=sg*-}:
\begin{align*}
\DD^{\ge i}_A\cap(\MM^{>-i}_{A^{\op}})^*=(\DD^{\ge i}_A\cap(\DD^{>-i}_{A^{\op}})^*)\perp((\PP^{\le-i}_{A^{\op}})^*\cap(W^{-1}\Lotimes_A\PP^{>-i}_{A^{\op}})^*)=F_i(\D_{\sg}^{\ZZ}(A))\perp(\PP^{\ge i}_A\cap\QQ^{<i}_A).&\qedhere
\end{align*}
\end{proof}

In the rest of this subsection, we consider thick subcategories
\begin{align*}
{\mathscr C}_A &= \thick\{\mod^{\ZZ} _{0} A,\ \proj^{\ZZ} A \} \; \subset \Db(\mod^{\ZZ} A),\\
\D_{\sg, 0}^{\ZZ}(A)&= {\mathscr C}_A /\Kb(\proj^{\ZZ} A)\subset\D_{\sg}^{\ZZ}(A),\\
\per(\qgr A)&={\mathscr C}_A/\Db(\mod_0A)\subset\Db(\qgr A).
\end{align*}
The following is immediate from Theorems \ref{SOD of sg}, \ref{SOD of qgr}, and \ref{sg vs qgr}.

\begin{cor}\label{sg vs qgr 2}
Let $i\in\ZZ$.
\begin{enumerate}
\item Assume that the assumptions \textup{(B1)} and \textup{(B2)} hold. Then we have semiorthogonal decompositions
\begin{align*}
{\mathscr C}_A&=\PP^{<i}_A\perp(\DD^{\ge i}_A\cap(\DD^{>-i}_{A^{\op}})^*\cap{\mathscr C}_A)\perp\PP^{\ge i}_A.
\end{align*}
Thus we have a triangle equivalence
\begin{align*}
F_i:\D_{\sg,0}^{\ZZ}(A)&\simeq\DD^{\ge i}_A\cap(\DD^{>-i}_{A^{\op}})^*\cap{\mathscr C}_A\subset{\mathscr C}_A.
\end{align*}
\item Assume that the assumptions \textup{(B1)}, \textup{(B3)} and \textup{(B4)} hold. Then we have a semiorthogonal decomposition
\begin{align*}
{\mathscr C}_A=\SS^{\ge i}_A\perp(\DD^{\ge i}_A\cap(\MM^{>-i}_{A^{\op}})^*\cap{\mathscr C}_A)\perp\DD^{<i}_A.
\end{align*}
Thus we have a triangle equivalence
\[G_i:\per(\qgr A)\simeq\DD^{\ge i}_A\cap(\MM^{>-i}_{A^{\op}})^*\cap{\mathscr C}_A\subset{\mathscr C}_A.\]
\item Assume that the assumptions \textup{(B1)}, \textup{(B2)}, \textup{(B3)} and \textup{(B4)} hold.
\begin{enumerate}
\item If $W^{-1}\in\DD^{\ge0}_A$, then we have semiorthogonal decompositions:
\begin{align*}
F_i(\D_{\sg,0}^{\ZZ}(A))&=(\DD^{\ge i}_A\cap\MM^{<i}_A)\perp G_i(\per(\qgr A))\\
&=(G_i(\per(\qgr A))\Lotimes_AW^{-1})\perp(\DD^{\ge i}_A\cap\MM^{<i}_A).
\end{align*}
\item If $\MM^{\ge0}_A=\DD^{\ge0}_A$, then we have
\begin{align*}
F_i(\D_{\sg,0}^{\ZZ}(A))=G_i(\per(\qgr A)).
\end{align*}
\item If $W\in\DD^{\ge0}_A$, then we have semiorthogonal decompositions:
\begin{align*}
G_i(\per(\qgr A))&=(\PP^{\ge i}_A\cap\QQ^{<i}_A)\perp(F_i(\D_{\sg,0}^{\ZZ}(A))\otimes_AW)\\
&=F_i(\D_{\sg,0}^{\ZZ}(A))\perp(\PP^{\ge i}_A\cap\QQ^{<i}_A).
\end{align*}
\end{enumerate}
\end{enumerate}
\end{cor}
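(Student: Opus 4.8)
The plan is to derive all three parts purely formally from the ambient semiorthogonal decompositions of Theorems \ref{SOD of sg}, \ref{SOD of qgr} and \ref{sg vs qgr} by intersecting them with the thick subcategory $\mathscr{C}_A\subseteq\Db(\mod^{\ZZ}A)$, the only tool being the distributive law of Lemma \ref{distributive}(1). The whole argument hinges on the observation that every ``boundary'' subcategory occurring in those decompositions already lies in $\mathscr{C}_A$: the perfect pieces $\PP^{<i}_A$, $\PP^{\ge i}_A$, $\QQ^{<i}_A$ lie in $\Kb(\proj^{\ZZ}A)\subseteq\mathscr{C}_A$, while the finite-length pieces $\SS^{\ge i}_A$ and $\DD^{<i}_A$ lie in $\Db(\mod^{\ZZ}_0A)\subseteq\mathscr{C}_A$ --- for $\DD^{<i}_A$ one uses that a finitely generated graded module over an $\NN$-graded locally finite algebra concentrated in degrees $<i$ is finite-dimensional, so $\mod^{<i}A\subseteq\mod^{\ZZ}_0A$ (here $A$ is locally finite under \textup{(B3)}). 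Since moreover $\Kb(\proj^{\ZZ}A)$ and $\Db(\mod^{\ZZ}_0A)$ are contained in $\mathscr{C}_A$, the Verdier quotients $\D^{\ZZ}_{\sg,0}(A)=\mathscr{C}_A/\Kb(\proj^{\ZZ}A)$ and $\per(\qgr A)=\mathscr{C}_A/\Db(\mod^{\ZZ}_0A)$ embed fully faithfully into $\D^{\ZZ}_{\sg}(A)$ and $\Db(\qgr A)$.

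For \textup{(1)} I would start from $\Db(\mod^{\ZZ}A)=\PP^{<i}_A\perp M\perp\PP^{\ge i}_A$ with $M:=\DD^{\ge i}_A\cap(\DD^{>-i}_{A^{\op}})^*$ (Theorem \ref{SOD of sg}). Since $\PP^{\ge i}_A\subseteq\mathscr{C}_A$, Lemma \ref{distributive}(1) gives $\mathscr{C}_A=((\PP^{<i}_A*M)\cap\mathscr{C}_A)*\PP^{\ge i}_A$, and since $\PP^{<i}_A\subseteq\mathscr{C}_A$ a second application gives $(\PP^{<i}_A*M)\cap\mathscr{C}_A=\PP^{<i}_A*(M\cap\mathscr{C}_A)$; the $\Hom$-vanishings are inherited from the ambient category, so $\mathscr{C}_A=\PP^{<i}_A\perp(M\cap\mathscr{C}_A)\perp\PP^{\ge i}_A$. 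The equivalence $F_i$ of Theorem \ref{SOD of sg} is induced by the Verdier quotient $\Db(\mod^{\ZZ}A)\to\D^{\ZZ}_{\sg}(A)$, and because $\Kb(\proj^{\ZZ}A)\subseteq\mathscr{C}_A$ every object of $\mathscr{C}_A$ has, up to $\Kb(\proj^{\ZZ}A)$, a representative again in $M\cap\mathscr{C}_A$; hence $F_i$ restricts to an equivalence $\D^{\ZZ}_{\sg,0}(A)\xrightarrow{\ \sim\ }M\cap\mathscr{C}_A$. Part \textup{(2)} is identical with Theorem \ref{SOD of qgr} in place of Theorem \ref{SOD of sg}, the boundary pieces now being $\SS^{\ge i}_A$ and $\DD^{<i}_A$, both in $\Db(\mod^{\ZZ}_0A)\subseteq\mathscr{C}_A$, and with $\per(\qgr A)=\mathscr{C}_A/\Db(\mod^{\ZZ}_0A)$.

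For \textup{(3)} I would intersect each of the three identities of Theorem \ref{sg vs qgr} with $\mathscr{C}_A$. This needs two extra inputs, both visible in the proof of Theorem \ref{sg vs qgr}: first, $\DD^{\ge i}_A\cap\MM^{<i}_A\subseteq\SS^{\ge i}_A\subseteq\mathscr{C}_A$, since that piece is identified there with $(\DD^{>-i}_{A^{\op}}\cap\MM^{\le-i}_{A^{\op}})^*$, a subcategory of $(\MM^{\le-i}_{A^{\op}})^*=\SS^{\ge i}_A$ by \eqref{Y^*}; and $\PP^{\ge i}_A\cap\QQ^{<i}_A\subseteq\Kb(\proj^{\ZZ}A)\subseteq\mathscr{C}_A$. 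Second, the autoequivalences $-\Lotimes_AW$ and $-\Lotimes_AW^{-1}$ of $\Db(\mod^{\ZZ}A)$ preserve $\mathscr{C}_A$, since they preserve $\Kb(\proj^{\ZZ}A)$ (as recorded in Section \ref{section: orlov}) and $\Db(\mod^{\ZZ}_0A)$, hence the thick subcategory these two generate; consequently $(\CC\Lotimes_AW^{\pm1})\cap\mathscr{C}_A=(\CC\cap\mathscr{C}_A)\Lotimes_AW^{\pm1}$ for any subcategory $\CC$. With these in hand, applying Lemma \ref{distributive}(1) to the decompositions \eqref{sg=qgr*-}--\eqref{qgr=sg*-}, and using the identifications $F_i(\D^{\ZZ}_{\sg,0}(A))=F_i(\D^{\ZZ}_{\sg}(A))\cap\mathscr{C}_A$ and $G_i(\per(\qgr A))=G_i(\Db(\qgr A))\cap\mathscr{C}_A$ from \textup{(1)} and \textup{(2)}, reproduces the three decompositions of \textup{(3)(a)--(c)} verbatim; part \textup{(3)(b)} only requires intersecting $F_i(\D^{\ZZ}_{\sg}(A))=G_i(\Db(\qgr A))$ with $\mathscr{C}_A$.

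Since the statement is labelled ``immediate'', I do not expect a genuine obstacle. The only points needing a little care are the verification that the torsion boundary piece $\DD^{<i}_A$ of \textup{(2)} and the correction piece $\DD^{\ge i}_A\cap\MM^{<i}_A$ of \textup{(3)} genuinely lie in $\mathscr{C}_A$, and that $-\Lotimes_AW^{\pm1}$ stabilize $\mathscr{C}_A$; all of these are routine bookkeeping with the definitions of Section \ref{section: orlov}.
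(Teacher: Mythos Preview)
Your proposal is correct and is precisely the natural way to make the paper's one-word justification ``immediate'' precise: intersect each ambient decomposition with $\mathscr{C}_A$ via Lemma \ref{distributive}(1), using that all boundary pieces already live in $\mathscr{C}_A$. The paper gives no further argument, so there is nothing to compare; your bookkeeping on $\DD^{<i}_A\subseteq\Db(\mod_0^{\ZZ}A)$, on $\DD^{\ge i}_A\cap\MM^{<i}_A\subseteq\SS^{\ge i}_A$ via \eqref{Y^*}, and on the stability of $\mathscr{C}_A$ under $-\Lotimes_AW^{\pm1}$ is exactly what is needed.
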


\section{Tilting theory via semiorthogonal decompositions}\label{section: tilting theory 2}

The aim of this subsection is to give another proof of Theorem \ref{a and tilting}(2) as an application of a semiorthogonal decomposition given in Theorem \ref{sg vs qgr}.
Throughout this section, let $A$ be a ring-indecomposable basic $\NN$-graded AS-Gorenstein algebra of dimension $1$ and Gorenstein parameter $(p_i)_{i\in\II_A}$.
Recall that $Q= \bigoplus_{i \in \ZZ} \Hom_{\qgr A}(A, A(i))$ is the total quotient ring, $q$ is a positive integer such that $\proj^{\ZZ}Q=\add\bigoplus_{i=1}^qQ(i)$ (see Theorem \ref{prop.Q4}(1)), and $a_s:=-p_s$ for each $s \in \II_A$.
For $M \in \mod^{\ZZ}A$, let $M_{<0}:=M/M_{\geq 0}$.
We consider objects
\begin{align*}
T:=\bigoplus_{i\ge0}\omega(-i)_{<0}[-1]\in(\mod^{\ZZ}A)[-1]\ \mbox{ and }\ U:=\bigoplus_{i=1}^{q} Q(i)_{\ge 0}\in\mod^{\ZZ}A,
\end{align*}
where the direct sum is clearly finite. We state our main result in the following form.

\begin{thm}\label{another proof}
Assume that \textup{(A1)}, \textup{(A2)} and \textup{(A3)} are satisfied.
\begin{enumerate}
\item $F_0(\D_{\sg,0}^{\ZZ}(A))\otimes_A\omega=\WW_A^{\ge0}\cap(\MM^{>0}_{A^{\op}})^*\cap{\mathscr C}_A$ has a tilting object $T\oplus U$.
\item $\D_{\sg,0}^{\ZZ}(A)$ has a tilting object $T\oplus U$ such that  $\add(T\oplus U)=\add V$ holds for $V$ in \eqref{define V}.
\end{enumerate}
\end{thm}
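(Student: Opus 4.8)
The plan is to prove (1) first and then deduce (2) from it via the semiorthogonal decomposition machinery of Section \ref{section: orlov}. Observe that the hypotheses (A1), (A2), (A3) allow us to invoke Corollary \ref{sg vs qgr 2}: conditions (B1) and (B2) follow from (A1), (A2) together with Proposition \ref{prop.d3} (Iwanaga-Gorensteinness of $A$) and finite $\gldim A_0$; and (B3), (B4) hold with $W=\omega$ because, by Proposition \ref{prop.d3}(2), $\omega$ is a graded invertible bimodule inducing an autoequivalence of $\Db(\mod^{\ZZ}A)$, while Lemma \ref{* and omega}(1) gives $\RHom_A(-,\omega)\simeq D$ on $\Db(\mod^{\ZZ}_0A)$. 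Moreover, by (A3) and Proposition \ref{lem-nu-eAe}(3) (equivalently Proposition \ref{lem-nu-eAe2}(5)) we have $\omega\in\DD^{\ge0}_A$, so the hypothesis ``$W\in\DD^{\ge0}_A$'' of Corollary \ref{sg vs qgr 2}(3)(c) is satisfied. Applying that part with $i=0$ yields the semiorthogonal decomposition
\[
G_0(\per(\qgr A))=\bigl(\PP^{\ge 0}_A\cap\QQ^{<0}_A\bigr)\perp\bigl(F_0(\D^{\ZZ}_{\sg,0}(A))\otimes_A\omega\bigr),
\]
and identifies $F_0(\D^{\ZZ}_{\sg,0}(A))\otimes_A\omega$ with $\WW^{\ge0}_A\cap(\MM^{>0}_{A^{\op}})^*\cap\mathscr C_A$, which is the equality asserted in (1).

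For the tilting object in (1), I would proceed as follows. First, $\per(\qgr A)$ has the tilting object $P=\bigoplus_{i=1}^q A(i)$ by Theorem \ref{prop.Q4}(4); transporting it through the equivalence $G_0$ and then through $-\otimes_A\omega$ gives a tilting object of $F_0(\D^{\ZZ}_{\sg,0}(A))\otimes_A\omega$, namely $G_0(P)\otimes_A\omega$ up to the correction term $\PP^{\ge0}_A\cap\QQ^{<0}_A$ appearing in the decomposition above. Concretely, one computes $G_0$ on each $A(i)$ using its definition as the composite of truncations implicit in Corollary \ref{sg vs qgr 2}(2): for $1\le i\le q$ one should get, up to shift, the $A$-module part $Q(i)_{\ge0}$ (this is where $U=\bigoplus_{i=1}^q Q(i)_{\ge0}$ comes from, using Proposition \ref{prop.CM0} that $Q(i)_{\ge0}\in\CM^{\ZZ}_0A$ and Proposition \ref{j-i}(1) for the Hom-computations). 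The complementary summand $T=\bigoplus_{i\ge0}\omega(-i)_{<0}[-1]$ arises as the image under $-\otimes_A\omega$ (equivalently, as the ``defect'' $\PP^{\ge0}_A\cap\QQ^{<0}_A$ tensored appropriately): indeed $A(-i)_{<0}[-1]\cong A(-i)_{\ge0}$ in the singularity category, and tensoring with $\omega$ turns $A(-i)_{\ge0}$ into $\omega(-i)_{\ge0}$, whose stable class is $\omega(-i)_{<0}[-1]$. The $\Hom$-vanishing $\Hom(T\oplus U,(T\oplus U)[j])=0$ for $j\ne0$ and the generation $\thick(T\oplus U)=F_0(\D^{\ZZ}_{\sg,0}(A))\otimes_A\omega$ then follow by transporting the corresponding properties of $P$ through $G_0$ and $-\otimes_A\omega$, combined with the semiorthogonality in the displayed decomposition (which forces the cross-Ext-terms between the $T$-part and the $U$-part to vanish in the appropriate directions — this is the analogue of Proposition \ref{information on V}(3)(4)).

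For (2), apply $-\otimes_A\omega^{-1}$ (an autoequivalence of $\D^{\ZZ}_{\sg,0}(A)$ by Proposition \ref{prop.d3}(2), or rather transport back via $F_0^{-1}$) to the tilting object in (1): since $-\otimes_A\omega$ is a Serre functor by Theorem \ref{thm.Serref}, it is in particular a triangle autoequivalence, so $(T\oplus U)\otimes_A\omega^{-1}$ is a tilting object of $\D^{\ZZ}_{\sg,0}(A)$; one checks $\add((T\oplus U)\otimes_A\omega^{-1})=\add(T'\oplus U')$ where $U'=\bigoplus_{i=1}^q Q(i)_{\ge0}$ is unchanged (since $\omega\otimes_AQ$ generates the same category as $Q$ by Lemma \ref{induction sequence}(2)) and the $T$-part becomes $\bigoplus_{i\ge0}A(-i)_{<0}[-1]\cong\bigoplus_{i\ge0}A(-i)_{\ge0}$ in the stable category — but $A(-i)_{\ge0}=A(-i)\in\proj^{\ZZ}A$ for $i\ge0$, so only finitely many non-projective terms survive, and a cardinality/degree bookkeeping using Lemma \ref{induction sequence}(3) and Proposition \ref{lem-nu-eAe}(1) matches $\add(T'\oplus U')$ with $\add V$ for $V=\bigoplus_{s\in\II_A}\bigoplus_{i=1}^{-p_s+q}e_{\nu s}A(i)_{\ge0}$. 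The main obstacle I expect is the explicit computation of the functor $G_0$ on the generators $A(i)$ and the careful identification of the ``correction'' subcategory $\PP^{\ge0}_A\cap\QQ^{<0}_A$ with the span of the $T$-summands: this requires unwinding the iterated truncation description of $G_i$ from the proof of Theorem \ref{SOD of qgr} and keeping precise track of degree shifts, and it is exactly here that the hypothesis $\omega\in\DD^{\ge0}_A$ (i.e.\ (A3)) is used in an essential way to keep the relevant truncations in the right half-line.
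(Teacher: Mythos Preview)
Your proposal contains a sign error that inverts the entire structure. Under (A3) (i.e.\ $p_i\le 0$ for all $i$), Proposition~\ref{lem-nu-eAe2}(3) says $(e_i\omega)_\ell=0$ for $\ell<p_i$, so $\omega$ lives in degrees $\ge\min_ip_i\le 0$; hence $\omega\notin\DD^{\ge0}_A$ in general. What (A3) actually gives (via $e_{\nu i}\omega^{-1}\simeq e_iA(p_i)$, concentrated in degrees $\ge -p_i\ge0$) is $\omega^{-1}\in\DD^{\ge0}_A$. Consequently you must invoke Corollary~\ref{sg vs qgr 2}(3)(a), not (3)(c), and the resulting semiorthogonal decomposition (after tensoring with $\omega$) is
\[
F_0(\D_{\sg,0}^{\ZZ}(A))\otimes_A\omega \;=\; G_0(\per(\qgr A))\perp\bigl(\WW_A^{\ge0}\cap\DD^{<0}_A\bigr),
\]
the exact opposite containment from the one you wrote. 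In particular $F_0\otimes_A\omega$ contains $G_0(\per(\qgr A))$, not the other way round, and the ``correction'' factor is $\WW_A^{\ge0}\cap\DD^{<0}_A$, not $\PP^{\ge0}_A\cap\QQ^{<0}_A$. The latter lies in $\Kb(\proj^{\ZZ}A)$, whereas $T=\bigoplus_{i\ge0}\omega(-i)_{<0}[-1]$ consists of shifted finite-length modules; your identification of $T$ with a subcategory of perfect complexes cannot work.

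With the corrected SOD, the paper's argument runs as follows: $U$ is a tilting object in $G_0(\per(\qgr A))$ (this requires checking $U$ actually lands there, which is nontrivial and uses Proposition~\ref{j-i}(3)), and $T[1]$ is a progenerator of the abelian category $((\mod^{\ge0}A)\otimes_A\omega)\cap\mod^{<0}A$ whose derived category is $\WW_A^{\ge0}\cap\DD^{<0}_A$, with $\End_A^{\ZZ}(T)$ of finite global dimension. The semiorthogonality gives $\Hom(U,T[\ell])=0$ for all $\ell$, but the remaining vanishing $\Hom(T,U[\ell])=0$ for $\ell\ne0$ is \emph{not} forced by the SOD and must be checked by hand: for $\ell>0$ one uses that $Q(j)_{\ge0}$ is injective in $\mod^{\ge0}A$ (Lemma~\ref{lem.inj}). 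Your claim that ``the semiorthogonality forces the cross-Ext-terms to vanish in the appropriate directions'' misses this computation. Finally, for part~(2) no application of $-\otimes_A\omega^{-1}$ is needed: $T\oplus U$ is already an object of $\D_{\sg,0}^{\ZZ}(A)$, and one just rewrites its summands (using $\omega(-i)_{<0}[-1]\simeq\omega(-i)_{\ge0}\simeq e_{\nu s}A(a_s-i)_{\ge0}$ in the stable category and Lemma~\ref{induction sequence}(3) for the $U$-part) to match $V^1\oplus V^2$.
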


We start with the following elementary observation.

\begin{lem}\label{lem.inj}
$Q(i)_{\ge 0}$ is an injective object in $\mod^{\ge 0}A$ for any $i \in \ZZ$
\end{lem}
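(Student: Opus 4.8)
The plan is to show $Q(i)_{\ge 0}$ is injective in the abelian category $\mod^{\ge 0}A$ by exhibiting it as a truncation of an object that is already known to be injective-like, namely $Q$, combined with the fact that $Q$ is injective in $\mod^{\ZZ}Q \simeq \qgr A$. Concretely, I would first recall from Proposition \ref{lem.injective}(1) that $A$ (and more generally any object of $\add\{A(j)\}$) is an injective object in $\qgr A$, hence that $Q \simeq \bigoplus_{i\in\ZZ}\Hom_{\qgr A}(A,A(i))$ corresponds to an injective object in $\mod^{\ZZ}Q$ by Proposition \ref{lem.injective}(2). The key point is that the fully faithful functor $(-)_{\ge 0}$ (or its essential image) relates $\Ext$-groups in $\mod^{\ge 0}A$ to $\Ext$-groups in $\qgr A$ for the relevant modules.

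The main step is a cohomology-vanishing argument. Let $M\in\mod^{\ge 0}A$; I want $\Ext^1_{\mod^{\ge 0}A}(M, Q(i)_{\ge 0})=0$. The idea is to take a short exact sequence $0\to M\to M'\to M''\to 0$ in $\mod^{\ge 0}A$ and show any such sequence ending at $Q(i)_{\ge0}$ splits — equivalently, that $\mathrm{Hom}(M',Q(i)_{\ge0})\to\mathrm{Hom}(M,Q(i)_{\ge0})$ is surjective. A cleaner route: since $Q(i)_{\ge0}$ and $A(i)_{\ge0}$ are isomorphic in $\qgr A$ (because $A(i)_{\ge0}\simeq A(i)$ there and $A(i)\otimes_A Q\simeq Q(i)$, using Proposition \ref{prop.Q2} and Lemma \ref{tensor Q from qgr}), and since $\Hom_{\mod^{\ge 0}A}(M, N) = \Hom_{\mod^{\ZZ}A}(M, N)$ for $M, N\in\mod^{\ge0}A$ with $M$ finitely generated, I can pass to $\qgr A$. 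Specifically, for $M$ finitely generated and $\fm$-torsion-free (which I can reduce to, since $\mod^{\ge 0}A$ objects have a torsion subobject supported in bounded degrees), $\Hom_{\mod^{\ZZ}A}(M, Q(i)_{\ge0})\to\Hom_{\qgr A}(M, A(i))$ should be injective, and an element of $\Ext^1_{\mod^{\ge 0}A}$ lifts to $\qgr A$ where it vanishes by injectivity of $A(i)$. Then one pulls the splitting back to $\mod^{\ge 0}A$ using that truncation at $0$ of the splitting lands back in $\mod^{\ge 0}A$.

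Actually the most efficient argument is likely the following. Baer's criterion adapted to $\mod^{\ge 0}A$: it suffices to check that for every graded right ideal $I\subseteq A_{\ge 0}(i) = A(i)_{\ge 0}$ with $A(i)_{\ge0}/I\in\mod^{\ge0}A$, every map $I\to Q(i)_{\ge0}$ extends to $A(i)_{\ge0}$. Since $Q$ is injective in $\mod^{\ZZ}Q$, the composite $I\to Q(i)_{\ge0}\to$ (its image in $\mod^{\ZZ}Q$, i.e. $I\otimes_A Q\to Q(i)$) extends over $A(i)\otimes_A Q = Q(i)$; pulling back via the natural map $A(i)_{\ge0}\to A(i)_{\ge0}\otimes_A Q = Q(i)$ and using that the original map into $Q(i)_{\ge0}$ has image in degrees $\ge 0$ gives an extension $A(i)_{\ge0}\to Q(i)_{\ge0}$. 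The subtlety, and the step I expect to be the main obstacle, is checking that the extension produced in $\mod^{\ZZ}Q$ (or $\qgr A$), when restricted along $A(i)_{\ge0}\to Q(i)$, actually lands in the submodule $Q(i)_{\ge0}$ rather than in all of $Q(i)$ — this requires a degree-control argument using that $A$ is $\NN$-graded (so $A(i)_{\ge0}$ is generated in degrees $\ge -i$... one must be careful here) together with the explicit relation between $A$ and $Q$ in low degrees from Proposition \ref{lem-nu-eAe2}(3), which says $(e_{\nu s}A)_{>-p_s}=(e_{\nu s}Q)_{>-p_s}$, i.e. $A$ and $Q$ agree in sufficiently high degree, so the discrepancy between $A(i)_{\ge0}$ and $Q(i)_{\ge0}$ is finite-length and supported in bounded degrees, which lets one reduce to a finite check. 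I would organize the write-up around this reduction.

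\begin{proof}
Recall from Proposition \ref{lem.injective}(1) that $A$, and hence every object of $\add\{A(j)\mid j\in\ZZ\}$, is injective in $\qgr A$; under the equivalence $-\otimes_AQ\colon\qgr A\xrightarrow{\sim}\mod^{\ZZ}Q$ of Proposition \ref{prop.Q2}(2)(3), the object $A(i)$ goes to $A(i)\otimes_AQ\simeq Q(i)$, so $Q(i)$ is injective in $\mod^{\ZZ}Q$ (cf.\ Proposition \ref{lem.injective}(2)). Since $A(i)_{\ge0}\simeq A(i)$ in $\qgr A$ by construction of $\qgr A$, we also have $Q(i)_{\ge0}\otimes_AQ\simeq Q(i)$ in $\mod^{\ZZ}Q$ by Lemma \ref{tensor Q from qgr}.

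Let $0\to Q(i)_{\ge0}\xrightarrow{f} M\xrightarrow{g} N\to0$ be an exact sequence in $\mod^{\ge0}A$; we show it splits. Applying the exact functor $-\otimes_AQ$ (exact by Proposition \ref{lem.sf2}(1)) and using Lemma \ref{tensor Q from qgr}(2), we obtain an exact sequence $0\to Q(i)\xrightarrow{f\otimes1} M\otimes_AQ\to N\otimes_AQ\to0$ in $\mod^{\ZZ}Q$, which splits since $Q(i)$ is injective. Let $r\colon M\otimes_AQ\to Q(i)$ be a retraction of $f\otimes 1$.

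Since $A$ is $\NN$-graded, $N\in\mod^{\ge0}A$, and $g$ is surjective, the image of $f$ lies in $M_{\ge0}$; moreover by Proposition \ref{lem-nu-eAe2}(3) there is an integer $n_0$ with $(A(i))_{\ge n_0}=(Q(i))_{\ge n_0}$ on each indecomposable summand, so the natural map $\eta_{M}\colon M\to M\otimes_AQ$ (see Proposition \ref{prop.Q2}(3)) is an isomorphism in all degrees $\ge n_0$, and $M\otimes_AQ$ differs from (a free resolution of) $M$ only in finitely many bounded degrees. Composing $\eta_M$ with $r$ gives a morphism $\rho:=r\circ\eta_M\colon M\to Q(i)$ of graded $A$-modules with $\rho\circ f=\eta_{Q(i)_{\ge0}}$ in degrees $\ge n_0$. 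For the finitely many degrees in $[0,n_0)$, since both $M$ and $Q(i)_{\ge0}$ are objects of $\mod^{\ge0}A$ and $f$ is a split monomorphism there after applying $-\otimes_AQ$, one adjusts $\rho$ on this bounded range so that $\rho\circ f=\mathrm{id}$ and $\mathrm{Im}\,\rho\subseteq Q(i)_{\ge0}$, using that the torsion discrepancy between $Q(i)$ and $Q(i)_{\ge0}$ is finite length and supported in degrees $<0$. The resulting map $M\to Q(i)_{\ge0}$ is a retraction of $f$, so the sequence splits.

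Therefore $\mathrm{Ext}^1_{\mod^{\ge0}A}(N,Q(i)_{\ge0})=0$ for all $N\in\mod^{\ge0}A$, i.e.\ $Q(i)_{\ge0}$ is injective in $\mod^{\ge0}A$.
\end{proof}
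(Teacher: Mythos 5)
Your strategy shares the heart of the paper's proof --- pass to $\mod^{\ZZ}Q$ by tensoring with $Q$, exploit that $Q(i)$ is injective there, and return via the degree constraint imposed by $\mod^{\ge 0}A$ --- but the paper packages it more tightly: it records natural isomorphisms of functors on $\mod^{\ge 0}A$
\[\Hom^{\ZZ}_A(-,Q(i)_{\ge 0})\simeq\Hom^{\ZZ}_A(-,Q(i))\simeq\Hom^{\ZZ}_Q(-\otimes_A Q,\, Q(i)),\]
the first because any degree-zero morphism from a module concentrated in degrees $\ge 0$ into $Q(i)$ has image in $Q(i)_{\ge 0}$, the second by Hom-tensor adjunction, and notes that the right-hand side is a composite of the exact functor $-\otimes_A Q$ (Proposition \ref{lem.sf2}(1)) with the exact functor $\Hom^{\ZZ}_Q(-,Q(i))$ (Proposition \ref{lem.injective}(2)). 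The ``subtlety'' you flag (degree control) is precisely that one-line first isomorphism; nothing more is needed.

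The gap in your write-up is the ``adjustment'' paragraph. By naturality of $\eta$ and the fact that $r$ retracts $f\otimes 1$, the equality $\rho\circ f = \eta_{Q(i)_{\ge 0}}$ holds in \emph{all} degrees --- the ``in degrees $\ge n_0$'' qualifier is incorrect, and there is nothing to fix on the range $[0,n_0)$. What is actually needed, and what your argument never supplies, is the observation that $\rho(M)\subseteq (Q(i))_{\ge 0}=Q(i)_{\ge 0}$ automatically, since $M\in\mod^{\ge 0}A$ and $\rho$ is degree-preserving; the corestriction $\rho'\colon M\to Q(i)_{\ge 0}$ then satisfies $\rho'\circ f=\mathrm{id}$ because the inclusion $Q(i)_{\ge 0}\hookrightarrow Q(i)$ is a monomorphism. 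By contrast, the ``adjustment'' as you describe it --- modifying $\rho$ on a bounded degree range with an appeal to Proposition \ref{lem-nu-eAe2}(3) --- is not a valid step: truncating a graded morphism to a degree range is not $A$-linear, and Proposition \ref{lem-nu-eAe2}(3) plays no role. Once you delete the incorrect qualifier and replace the hand-waved adjustment by the single degree observation, your proof is correct and is, in substance, the paper's.
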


\begin{proof}
We have isomorphisms of functors on $\mod^{\ge0} A$:
\[\Hom^{\ZZ}_A(-,Q(i)_{\ge0}) \simeq
\Hom^{\ZZ}_A(-,Q(i)) \simeq
\Hom^{\ZZ}_Q(-\otimes_A Q, Q(i)).\]
This is an exact functor since $Q$ is a flat $A$-module by Proposition \ref{lem.sf2}(1) and $Q(i)$ is an injective object in $\mod^{\ZZ}Q$ by Proposition \ref{lem.injective}(2). Thus $Q(i)_{\ge0}$ is injective in $\mod^{\ge0}A$.
\end{proof}

The following assertion is a slight modification of Theorem \ref{prop.Q4}(4).

\begin{lem}\label{U is tilting}
$U$ is a tilting object in $G_0(\per(\qgr A))=\DD^{\ge0}_A\cap(\MM^{>0}_{A^{\op}})^*\cap{\mathscr C}_A$.
\end{lem}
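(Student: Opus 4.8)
The plan is to transport the tilting object of $\per(\qgr A)$ produced in Theorem \ref{prop.Q4}(4) across the triangle equivalence $G_0$ of Corollary \ref{sg vs qgr 2}(2), and then to identify its image explicitly with $U$. Recall that $G_0$ identifies $\per(\qgr A)$ with the middle component $\DD^{\ge0}_A\cap(\MM^{>0}_{A^{\op}})^*\cap\mathscr{C}_A$ of the semiorthogonal decomposition of $\mathscr{C}_A$ in Corollary \ref{sg vs qgr 2}(2); as such it is a section of the Verdier localization $\mathscr{C}_A\to\per(\qgr A)$, so that any object $Y$ lying in $\DD^{\ge0}_A\cap(\MM^{>0}_{A^{\op}})^*\cap\mathscr{C}_A$ whose image in $\Db(\qgr A)$ is isomorphic to $\pi(A(i))$ must satisfy $Y\simeq G_0(\pi(A(i)))$. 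Writing $P:=\bigoplus_{i=1}^qA(i)\in\mod^\ZZ A$, since $G_0$ is a triangle equivalence the object $G_0(\pi(P))=\bigoplus_{i=1}^qG_0(\pi(A(i)))$ is automatically a tilting object of $G_0(\per(\qgr A))$, so it suffices to prove $G_0(\pi(A(i)))\simeq Q(i)_{\ge0}$ for each $1\le i\le q$; this then gives $G_0(\pi(P))\simeq U$ and finishes the proof.

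Next I would check the four defining properties of $Q(i)_{\ge0}$. By Proposition \ref{lem-nu-eAe2}(1) one has $Q/A\simeq D\omega$, and $\omega$ is bounded below by Proposition \ref{prop.lc1}, so $Q/A$ is bounded above; consequently $(Q(i)/A(i))_{\ge0}$ is finite-dimensional. Using the exactness of the truncation functor $(-)_{\ge0}$, the short exact sequences $0\to A(i)_{\ge0}\to A(i)\to A(i)/A(i)_{\ge0}\to0$ and $0\to A(i)_{\ge0}\to Q(i)_{\ge0}\to (Q(i)/A(i))_{\ge0}\to0$ in $\mod^\ZZ A$ show that $Q(i)_{\ge0}\in\mod^\ZZ A$, that $Q(i)_{\ge0}\in\mathscr{C}_A$ (the outer terms lie in $\mod^\ZZ_0A$ and $\proj^\ZZ A$), and that $\pi(Q(i)_{\ge0})\simeq\pi(A(i))$ in $\qgr A$, so their images in $\Db(\qgr A)$ agree. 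Finally $Q(i)_{\ge0}\in\DD^{\ge0}_A$ is clear.

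The essential point is $Q(i)_{\ge0}\in(\MM^{>0}_{A^{\op}})^*$, and this is where I expect the real work. The semiorthogonal decomposition \eqref{SOD4} (taken with $i=0$), namely $\Db(\mod^\ZZ A)=\SS^{\ge0}_A\perp(\MM^{>0}_{A^{\op}})^*$, gives $(\MM^{>0}_{A^{\op}})^*=(\SS^{\ge0}_A)^{\perp}$, so I need $\Hom_{\Db(\mod^\ZZ A)}(X,Q(i)_{\ge0}[n])=0$ for all $n\in\ZZ$ and all finite-dimensional $X\in\mod^{\ge0}_0A$. For $n<0$ this is trivial. For $n>0$, every such $X$ admits a projective resolution in $\mod^{\ge0}A$ by modules $A(-m)$ with $m\ge0$ (which are projective both in $\mod^{\ge0}A$ and in $\mod^\ZZ A$), so $\Hom_{\Db(\mod^\ZZ A)}(X,Q(i)_{\ge0}[n])\simeq\Ext^n_{\mod^{\ge0}A}(X,Q(i)_{\ge0})$, which vanishes because $Q(i)_{\ge0}$ is an injective object of $\mod^{\ge0}A$ by Lemma \ref{lem.inj}. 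For $n=0$, $\Hom_{\mod^\ZZ A}(X,Q(i)_{\ge0})\subseteq\Hom_{\mod^\ZZ A}(X,Q(i))=0$, because $X$ is $\fm$-torsion while $Q$ — hence its shift $Q(i)$ — is $\fm$-torsion-free: indeed every $\fm$-torsion module is the filtered union of its finitely generated (hence finite-length, hence $-\otimes_AQ$-annihilated by Lemma \ref{tensor Q from qgr}(2)) submodules, so $\Gamma_\fm(Q)\otimes_AQ=0$, and then the factorization $\Gamma_\fm(Q)=\Gamma_\fm(Q)\otimes_AA\to\Gamma_\fm(Q)\otimes_AQ\to Q\otimes_AQ\simeq Q$ of the inclusion $\Gamma_\fm(Q)\hookrightarrow Q$ forces $\Gamma_\fm(Q)=0$.

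Thus the hard part is exactly the membership $Q(i)_{\ge0}\in(\MM^{>0}_{A^{\op}})^*$, where Lemma \ref{lem.inj}, the torsion-freeness of $Q$, and the comparison of $\mathrm{Ext}$ over $\mod^\ZZ A$ and $\mod^{\ge0}A$ all play a role; everything else is a formal consequence of the semiorthogonal decompositions of Section \ref{section: orlov} together with the basic properties of $Q$ and $\qgr A$ from Section 3. As a variant I could instead verify $Q(i)_{\ge0}\in(\MM^{>0}_{A^{\op}})^*$ through local duality: via \eqref{* omega commute} with $W=\omega$ this condition is equivalent to $\RHom_A(Q(i)_{\ge0},\omega)\in\DD^{>0}_{A^{\op}}$, which by Theorem \ref{local duality} means $\RGamma_\fm(Q(i)_{\ge0})$ is concentrated in negative degrees, and this follows from the triangle attached to $0\to Q(i)_{\ge0}\to Q(i)\to Q(i)/Q(i)_{\ge0}\to0$ together with $\RGamma_\fm(Q(i))=0$ and the finite-dimensionality of $Q(i)/Q(i)_{\ge0}$.
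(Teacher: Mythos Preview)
Your proof is correct and shares the paper's overall strategy: verify $U\in\DD^{\ge0}_A\cap(\MM^{>0}_{A^{\op}})^*\cap\mathscr{C}_A$ and then transport the tilting object of Theorem \ref{prop.Q4}(4) across $G_0$. The difference lies in how you establish $Q(i)_{\ge0}\in(\MM^{>0}_{A^{\op}})^*$. The paper argues by a direct Hom computation: using $e_s\omega(-j)\simeq e_{\nu s}A(a_s-j)$ (Proposition \ref{lem-nu-eAe}(1)) together with Proposition \ref{j-i}(3), one gets $\Hom^\ZZ_A(Q(i)_{\ge0},e_s\omega(-j))=0$ for all $j\ge0$ and $s\in\II_A$, hence $\omega\otimes_A(Q(i)_{\ge0})^*=\Hom_A(Q(i)_{\ge0},\omega)\in\DD^{>0}_{A^{\op}}$, i.e.\ $(Q(i)_{\ge0})^*\in\MM^{>0}_{A^{\op}}$. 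You instead use the characterization $(\MM^{>0}_{A^{\op}})^*=(\SS^{\ge0}_A)^\perp$ coming from \eqref{SOD4} and check the required Ext-vanishing via Lemma \ref{lem.inj} and torsion-freeness of $Q$. The paper's route is a one-liner because Proposition \ref{j-i}(3) (proved in Section \ref{section: tilting theory}) already packages the relevant calculation; your main approach is more self-contained within Sections \ref{section: orlov}--\ref{section: tilting theory 2} and makes Lemma \ref{lem.inj} the visible structural reason. Your local-duality variant lands closer in spirit to the paper's direct argument. A minor remark: the isomorphism $Q\otimes_AQ\simeq Q$ you invoke for the $n=0$ step is true but not needed; the Hom--tensor adjunction $\Hom_A(X,Q(i))\simeq\Hom_Q(X\otimes_AQ,Q(i))$ together with Lemma \ref{tensor Q from qgr}(2) already gives $\Hom_A(X,Q(i))=0$.
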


\begin{proof}
We check that $U$ belongs to $G_0(\per(\qgr A))$.
Clearly $U\in \DD_A^{\geq 0}$ holds. For each $i,j\ge0$, we have 
\[\Hom^{\ZZ}_A(Q(i)_{\ge0},e_s\omega(-j))=\Hom^{\ZZ}_A(Q(i)_{\ge0},e_{\nu s}A(a_s-j))\stackrel{\text{Prop.\,\ref{j-i}(3)}}{=}0.\]
Thus $\omega\otimes_A(Q(i)_{\ge 0})^*=\Hom_A(Q(i)_{\ge0},\omega)\in\DD^{>0}_{A^{\op}}$ holds, and hence $(Q(i)_{\ge 0})^*\in\MM^{>0}_{A^{\op}}$. Thus $U\in G_0(\per(\qgr A))$.

Since $U$ is a tilting object in $\per(\qgr A)$ by Theorem \ref{prop.Q4}(4), it is also a tilting object in $G_0(\per(\qgr A))$.
\end{proof}

By our assumption (A3) and Corollary \ref{sg vs qgr 2}(3)(a), we have a semiorthogonal decomposition
\begin{align}\label{sg vs qgr apply}
F_0(\D_{\sg, 0}^{\ZZ}(A))\otimes_A\omega=G_0(\per(\qgr A))\perp(\WW_A^{\ge0}\cap\DD^{<0}_A),
\end{align}
which plays a key role in the proof of Theorem \ref{another proof}.
Another crucial step is the following.

\begin{prop}\label{W is tilting}
Let $\AA:=((\mod^{\ge0}A)\otimes_A\omega)\cap\mod^{<0}A$.
\begin{enumerate}
\item $T$ is a progenerator in $\AA$ .
\item $E:=\End^{\ZZ}_A(T)$ has finite global dimension.
\item $T$ is a tilting object in $\Db(\AA)=\WW_A^{\ge0}\cap\DD^{<0}_A$.
\end{enumerate}
\end{prop}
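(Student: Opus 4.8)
The plan is to analyze the category $\AA=((\mod^{\ge0}A)\otimes_A\omega)\cap\mod^{<0}A$ concretely and identify $T=\bigoplus_{i\ge0}\omega(-i)_{<0}[-1]$ (viewed up to shift as $\bigoplus_{i\ge0}\omega(-i)_{<0}$) as a progenerator. First I would record that, by Proposition \ref{lem-nu-eAe}(1) and assumption (A3), $\omega(-i)$ has nonzero components only in degrees $\ge -a_s-i$ on the $e_{\nu s}$-summand, where $a_s=-p_s\ge0$; hence $\omega(-i)_{<0}$ is a finite-length module, zero for $i$ large (precisely once $i>a_s$ on each summand), so the direct sum defining $T$ is finite and $T\in\mod_0^{<0}A$. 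Since $\mod^{\ge0}A\otimes_A\omega$ is the essential image of an autoequivalence of $\mod^{\ZZ}A$ restricted appropriately, $\AA$ is an exact/abelian subcategory; the truncation functor $M\mapsto M_{<0}$ applied to $\omega(-i)=e$-shifts of $A$ produces exactly the "finite parts'' that generate $\AA$. To see $T$ is a generator: any object of $\AA$ is of the form $(N\otimes_A\omega)$ with $N\in\mod^{\ge0}A$ and $(N\otimes_A\omega)_{\ge0}=0$; writing a projective presentation of $N$ in $\mod^{\ge0}A$ by sums of $e_sA(-i)$ with $i\ge0$ and applying $-\otimes_A\omega$ and the (right-exact) truncation $(-)_{<0}$, one expresses any object of $\AA$ as a quotient of a sum of $\omega(-i)_{<0}$-summands. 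Projectivity of $T$ in $\AA$ should follow from Lemma \ref{lem.inj} together with the duality $\Hom_A(-,\omega)$: indeed $Q(i)_{\ge0}$ is injective in $\mod^{\ge0}A$, and dualizing via $\Hom_A(-,\omega)$ (or directly via Proposition \ref{lem-nu-eAe2}(1)'s sequence $0\to A\to Q\to D\omega\to0$) turns the $\omega(-i)_{<0}$ into the "socle-like'' projectives of $\AA$. So I would prove $\Ext^1_{\AA}(T,-)=0$ by identifying $\Ext^1_{\AA}(\omega(-i)_{<0},M)$ with a $\Hom$-group that vanishes by Lemma \ref{lem.inj}.

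For part (2), once (1) is established we have $\AA\simeq\mod E$ with $E=\End^{\ZZ}_A(T)$, and $E$ is a finite-dimensional algebra because $T$ has finite length. To compute $\gldim E<\infty$ I would exhibit $E$ explicitly as an (upper or lower) triangular matrix algebra: the summands $\omega(-i)_{<0}$ for varying $i$ (and the finitely many $s\in\II_A$) are linearly ordered by the degree shift, and $\Hom^{\ZZ}_A(\omega(-i)_{<0},\omega(-j)_{<0})$ is governed by $e_{\nu t}A_\bullet e_{\nu s}$ in a bounded range of degrees, exactly as in Lemma \ref{End U} and Example \ref{End of connected case}. In fact $E$ should be (Morita equivalent to, or literally) a quotient of the algebra $\End^{\ZZ}_A(\bigoplus_{i=1}^{a}A(i))$ of Lemma \ref{End U} — the "$W^1$-part'' — whose finite global dimension was already deduced there from $\gldim A_0<\infty$ (assumption (A2)). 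So part (2) reduces to the bookkeeping that taking the torsion-truncation $(-)_{<0}$ of these modules corresponds to passing to a triangular-matrix subquotient, which preserves finiteness of global dimension.

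Part (3) is then immediate from (1), (2) and the basic fact that a progenerator $T$ of an abelian category $\AA$ with $\gldim\End_\AA(T)<\infty$ is a tilting object in $\Db(\AA)$, with $\Db(\AA)\simeq\per\End_\AA(T)$; the identification $\Db(\AA)=\WW_A^{\ge0}\cap\DD^{<0}_A$ comes from $\AA=((\mod^{\ge0}A)\otimes_A\omega)\cap\mod^{<0}A$ together with the definitions $\WW_A^{\ge0}=\DD^{\ge0}_A\Lotimes_A\omega$ and $\DD^{<0}_A=\Db(\mod^{<0}A)$, using that $\AA$ is a bounded derived-equivalence-closed abelian subcategory and that objects of $\WW_A^{\ge0}\cap\DD^{<0}_A$ have all cohomology in $\AA$ (this last point needs the observation that $(N\otimes_A\omega)$ lands in $\mod^{<0}A$ iff $N_{\ge ?}$ is killed, controlled by (A3) and Proposition \ref{lem-nu-eAe}(3)). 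The main obstacle I anticipate is cleanly proving that $\AA$ is abelian and that $\Db(\AA)$ embeds as the stated intersection $\WW_A^{\ge0}\cap\DD^{<0}_A$ — i.e. that there are no "hidden'' objects with cohomology outside $\AA$ — and, relatedly, verifying projectivity of $T$ in $\AA$ rather than merely in $\mod^{\ZZ}A$; the injectivity statement Lemma \ref{lem.inj} combined with the $\Hom_A(-,\omega)$-duality is the tool I would lean on, but making the duality between "injectives in $\mod^{\ge0}A$'' and "projectives in $\AA$'' precise (tracking degree shifts and the Nakayama permutation $\nu$) is where the real care is needed.
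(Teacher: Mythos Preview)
Your proposal has the right shape for parts (2) and (3), but your argument for projectivity in part (1) takes an unnecessary and probably unworkable detour. The paper's proof of (1) is much more direct and does not invoke Lemma \ref{lem.inj} or any duality. The key is the elementary isomorphism of functors on $\mod^{<0}A$:
\[\Hom^{\ZZ}_A(\omega(-i)_{<0},-)\simeq\Hom^{\ZZ}_A(\omega(-i),-),\]
which holds simply because any morphism from $\omega(-i)$ into an object concentrated in negative degrees automatically annihilates $\omega(-i)_{\ge0}$. Since $\add\{A(-i)\mid i\ge0\}$ is a progenerator of $\mod^{\ge0}A$, its image $\add\{\omega(-i)\mid i\ge0\}$ under the equivalence $-\otimes_A\omega$ is a progenerator of $(\mod^{\ge0}A)\otimes_A\omega$; the functor isomorphism above then transports both projectivity and the generating property of $\omega(-i)$ in $(\mod^{\ge0}A)\otimes_A\omega$ to those of $\omega(-i)_{<0}$ in $\AA$. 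Your route through the $\Hom_A(-,\omega)$-duality and the injectivity of $Q(i)_{\ge0}$ runs into trouble because that duality is between $\CM$-modules on the two sides and does not apply to the finite-length objects $\omega(-i)_{<0}$; there is no clean correspondence making ``injectives in $\mod^{\ge0}A$'' into ``projectives in $\AA$''.

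For part (2), the paper does not pass to a quotient algebra (and note that quotients do not in general preserve finite global dimension, so your ``triangular-matrix subquotient'' heuristic is not a valid argument). Instead it runs the Lemma \ref{End U} argument verbatim with $W^1,W^2$ replaced by the un-truncated modules $T^1=\bigoplus_{s}\bigoplus_{i=0}^{a_s}e_s\omega(-i)$ and $T^2=\bigoplus_{s}\bigoplus_{i=a_s+1}^{a}e_s\omega(-i)$: since $T^1\oplus T^2=\bigoplus_{i=0}^a\omega(-i)$ is the image of $\bigoplus_{i=0}^a A(-i)$ under the autoequivalence $-\otimes_A\omega$, its endomorphism algebra is the same triangular matrix algebra over $A_0$, and the corner argument goes through. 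Your concern in part (3) about identifying $\Db(\AA)$ with $\WW_A^{\ge0}\cap\DD^{<0}_A$ is reasonable but routine once you observe that both $(\mod^{\ge0}A)\otimes_A\omega$ and $\mod^{<0}A$ are Serre subcategories of $\mod^{\ZZ}A$.
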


\begin{proof}
(1) We have $\omega(-i)=A(-i)\otimes_A\omega\in(\mod^{\ge0}A)\otimes_A\omega$ and hence its factor object $\omega(-i)_{<0}$ belongs to $((\mod^{\ge0}A)\otimes_A\omega)\cap\mod^{<0}A$.
Since $\add\{A(-i)\mid i\ge0\}$ is a progenerator of $\mod^{\ge0}A$, the subcategory $\add\{\omega(-i)\mid i\ge0\}$ is a progenerator of $(\mod^{\ge0}A)\otimes_A\omega$. Since we have an isomorphism of functors on $\mod^{<0}A$:
\[\Hom^{\ZZ}_A(\omega(-i)_{<0},-)\simeq\Hom^{\ZZ}_A(\omega(-i),-),\]
$\add\{\omega(-i)_{<0}\mid i\ge0\}$ is a progenerator of $((\mod^{\ge0}A)\otimes_A\omega)\cap\mod^{<0}A=\AA$, as desired.

(2) The proof is completely parallel to that of Lemma \ref{End U}. Instead of $W^1$ and $W^2$, we need to consider 
\[T^1:=\bigoplus_{s\in\II_A}\bigoplus_{i=0}^{a_s}e_s\omega(-i)\ \mbox{ and }\ T^2:=\bigoplus_{s\in\II_A}\bigoplus_{i=a_s+1}^{a}e_s\omega(-i).\]
Then the parallel argument works.

(3) By (1), we have an equivalence $H:\AA\simeq\mod E$, and $H(T)$ is a progenerator in $\mod E$. By (2), $H(T)$ is a tilting object in $\per E=\Db(\mod E)\simeq\Db(\AA)=\WW_A^{\ge0}\cap\DD^{<0}_A$, and the assertion follows.
\end{proof}

Now we are ready to prove Theorem \ref{another proof}.

\begin{proof}[Proof of Theorem \ref{another proof}]
(1) We use the semiorthogonal decomposition \eqref{sg vs qgr apply}.
By Lemma \ref{U is tilting} and Proposition \ref{W is tilting},  $G_0(\per(\qgr A))$ and $\WW_A^{\ge0}\cap\DD^{<0}_A$ have tilting objects
\[U=\bigoplus_{i=1}^{q} Q(i)_{\ge 0} \in G_0(\per(\qgr A))\ \mbox{ and }\ T=\bigoplus_{i\ge0}\omega(-i)_{<0}[-1]\in\WW_A^{\ge0}\cap\DD^{<0}_A\]
respectively. In particular, we have
\[\thick(T\oplus U)=F_0(\D_{\sg,0}^{\ZZ}(A))\otimes_A\omega.\]
Moreover, for each $\ell\in\ZZ$, we have
\begin{align*}
&\Hom_{\Db(\mod^{\ZZ}A)}(U,U[\ell])=0=\Hom_{\Db(\mod^{\ZZ}A)}(T,T[\ell])\ \mbox{ if $\ell\neq0$,}\\
&\Hom_{\Db(\mod^{\ZZ}A)}(U,T[\ell])=0.
\end{align*}
It remains to check
\[\Hom_{\Db(\mod^{\ZZ}A)}(\omega(-i)_{<0}[-1], Q(j)_{\ge0}[\ell])=0\]
for each $i,j$ and $\ell\neq0$. If $\ell<-1$, then this is clear since $\omega(-i)_{<0}$ and $Q(j)_{\ge0}$ are modules.
If $\ell=-1$, then this is also clear since $\omega(-i)_{<0}\in\mod^{<0}A$ and $Q(j)_{\ge0}\in \mod^{\ge0}A$.
If $\ell>0$, then
\begin{align*}
\Hom_{\Db(\mod^{\ZZ}A)}( \omega(-i)_{<0}[-1], Q(i)_{\ge 0}[\ell]) &\simeq \Ext^{\ell+1}_{\mod^{\ZZ}A}(\omega(-i)_{<0}, Q(j)_{\ge 0})\\
&\simeq\Ext^{\ell}_{\mod^{\ZZ}A}(\omega(-i)_{\ge0}, Q(j)_{\ge 0})\stackrel{\text{Lem.\,\ref{lem.inj}}}{=}0.
\end{align*}
Thus the assertion follows.

(2) By (1), $T\oplus U$ is a tilting object in $\D_{\sg,0}^{\ZZ}(A)$.
It remains to show $T\oplus U\simeq V$ in $\D_{\sg,0}^{\ZZ}(A)$. For each $s\in\II_A$, in $\D_{\sg,0}^{\ZZ}(A)$, we have
\begin{align*}
&T=\bigoplus_{i\ge0}e_s\omega(-i)_{<0}[-1]\simeq\bigoplus_{i\ge0}e_s\omega(-i)_{\ge0}\simeq\bigoplus_{i\ge0}e_{\nu s}A(a_s-i)_{\ge0}\simeq V^1,\\
&\add U=\add\bigoplus_{i=1}^qQ(i)_{\ge0}\stackrel{{\rm Lem.\,\ref{induction sequence}(3)}}{=}\add V^2.
\end{align*}
Thus we have $\add(T\oplus U)=\add V$.
\end{proof}

\section{Gorenstein tiled orders}

\subsection{Tilting theory for Gorenstein tiled orders}

We recall the definition of tiled orders \cite{ZK}.
We refer to \cite{KKMPZ} for basic properties of Gorenstein tiled orders (see also \cite{Kirichenko}).
We fix an integer $n\geq 1$ and let $\mathbb{I}=\{1, 2, \dots, n\}$.

\begin{dfn}\label{dfn-tiled-order}
Let $R=k[x]$ be the polynomial ring in one variable over a field $k$ with $\deg x=1$. Then $R$ has a unique graded maximal ideal $Rx$, and we denote by $K=k[x,x^{-1}]$ the Laurent polynomial ring.
A \emph{tiled order} over $R$ is an $R$-subalgebra of $Q:=\rM_n(K)$ of the form
\begin{align*}
A=\begin{bmatrix}
Rx^{m(1,1)} & Rx^{m(1,2)} & \cdots & Rx^{m(1,n)} \\
Rx^{m(2,1)} & Rx^{m(2,2)} & \cdots & Rx^{m(2,n)} \\
\vdots & \vdots & \ddots & \vdots \\
Rx^{m(n,1)} & Rx^{m(n,2)} & \cdots & Rx^{m(n,n)}
\end{bmatrix}
\subset Q=\rM_n(K)
\end{align*}
for $m(i,j)\in\ZZ$, where $m(i,i)=0$ for each $1\le i\le n$.
We regard $Q$ as a $\ZZ$-graded algebra by $Q_i=\rM_n(k)x^i$, and $A$ as a $\ZZ$-graded algebra with respect to the induced $\ZZ$-grading.
\end{dfn}

Under the setting above, $Q=A\otimes_R K$ is a graded total quotient ring of $A$.
Notice that $A$ is an $R$-subalgebra of $Q$ if and only if
\begin{align}\label{eq dfn order}
m(i,k)+m(k,j)\geq m(i,j)
\end{align}
holds for any $i,j,k\in \mathbb{I}$.
Clearly $A$ can be recovered from the integer matrix
\[\mathrm{v}(A):=(m(i,j))\in \rM_n(\ZZ).\]

Let $E_{ij}$ be a matrix unit, $e_i=E_{ii}$ and $S_i=\top e_i A$.
We can see that $A$ is basic if and only if $m(i,j) + m(j, i) > 0$ holds for any $i\neq j$. A basic tiled order $A$ is a Gorenstein order if and only if there exists a (unique) permutation $\nu : \II \to \II$ and $(\ell_i)\in\ZZ^{\mathbb{I}}$ such that
\begin{align}\label{R dual for tiled order}
\Hom_R(Ae_i,R)\simeq e_{\nu i}A(\ell_i)
\end{align}
in $\mod^{\ZZ}A$ holds for each $i\in\mathbb{I}$. The isomorphism \eqref{R dual for tiled order} is equivalent to the equality
\begin{equation}\label{gorenstein tiled}
m(\nu i, j)+m(j, i)=\ell_i\ \mbox{ for each }\ j\in\mathbb{I}.
\end{equation}
Evaluating $j=i$, we have $\ell_i=m(\nu i,i)$. In this case $A$ is an AS-Gorenstein algebra of dimension $1$ by Proposition \ref{lem-Gorder-ASG}(1).
Since $\omega_R=R(-1)$, by applying $(-1)$ to \eqref{R dual for tiled order}, we obtain
\[\Hom_R(Ae_i,\omega_R)\simeq e_{\nu i}A(m(\nu i,i)-1).\]
By this isomorphism and Proposition \ref{lem-Gorder-ASG}(2), we have that $\nu$ is the Nakayama permutation and that the Gorenstein parameter $p_i$ of $S_i$ satisfies
\begin{align}\label{gto a inv}
p_i = 1 - m(\nu i, i).
\end{align}
Since the category $\mod^{\ZZ}Q$ is semisimple, we have $\CM^{\mathbb{Z}}_0A = \CM^{\mathbb{Z}}A$ and $\D_{\sg,0}^{\ZZ}(A) = \D_{\sg}^{\ZZ}(A)$ by Corollary \ref{a and tilting 2}(1).

The \emph{rank} of $M\in\CM^{\mathbb{Z}}A$ is the length of the object $M\otimes_RK$ in $\mod^{\ZZ}Q$. It is elementary that the rank of $M$ is one if and only if there exists $v=(v_1,\ldots,v_n)\in\ZZ^n$ such that
\[M\simeq\rL(v):=[Rx^{v_1}\, \cdots\, Rx^{v_n}]\ \mbox{ in }\ \CM^{\ZZ}A.\]
In this case, we call
\[\mathrm{v}(M):=v=(v_1, \dots, v_n)\in\ZZ^\II\]
the \emph{exponent vector} of $M$.
Thus each $M\in\CM^{\ZZ}A$ with rank one is uniquely determined by its exponent vector $\mathrm{v}(M)$.
For instance, for each $i\in\II$, we have
\[\mathrm{v}(e_iA)=(m(i,1),\ldots,m(i,n)).\]
For each $M\in\CM^{\mathbb{Z}}A$ with rank one and $j\in\ZZ$, we have
\begin{align*}
\mathrm{v}(M(j)) = \mathrm{v}(M)-j\mathbf{1}\ \mbox{ and }\ \mathrm{v}(M_{\ge0})=\max\{\mathrm{v}(M),\bm{0}\},
\end{align*}
where $\bm{1}:=(1,\ldots,1)$ and $\max\{v,w\}=(\max\{v_1,w_1\},\ldots,\max\{v_n,w_n\})$.
In particular, $\mathrm{v}(M(j)_{\geq 0})=\max\{\mathrm{v}(M)-j\bm{1},\bm{0}\}$.

In the rest of this subsection, let $A$ be a basic $\NN$-graded Gorenstein tiled order with $\mathrm{v}(A)=(m(i, j))\in \rM_n(\NN)$.
Then we have $Q(1) \simeq Q$ in $\mod^{\ZZ}Q$, and we can choose $q=1$ in Theorem \ref{prop.Q4}(1).
Assume that $p_i\leq 0$ for any $i\in\II$.
By Theorem \ref{a and tilting}, $\D_{\sg}^{\ZZ}(A)$ admits a tilting object $V=\bigoplus_{i\in\II}\bigoplus_{j\geq 1}^{1-p_i} e_{\nu i}A(j)_{\geq 0}$.
To give a description of $\End_{\D_{\sg}^{\ZZ}(A)}(V)$, we consider a poset $(\ZZ^\mathbb{I},\le)$, where for $v, w\in\ZZ^\mathbb{I}$, we write $v\le w$ if $v_i \leq w_i$ for any $i\in \mathbb{I}$. Then the subposet
\begin{align}\label{define V_A}
\mathbb{V}_A &:=\{\mathrm{v}(e_i A(j)_{\geq 0}) \mid i\in\mathbb{I},\ 1\le j\}\subset\ZZ^\II
\end{align}
plays a key role.
Notice that $e_iA(j)_{\ge0}=[R\, \cdots\, R]$ and $\mathrm{v}(e_iA(j)_{\ge0})=\bm{0}$ hold for $j\gg0$.

Let $(P, \le)$ a finite poset.
We denote by $P^{\op}$ the opposite poset of $P$ and $[x, y]:=\{ z \in P \mid x \le z \le y\}$.
The \emph{incidence algebra} $kP$ of $(P, \le)$ is a $k$-algebra whose underlying $k$-vector space is $kP=\bigoplus_{x\le y}k[x, y]$ with a product $[x, y]\cdot [x', y']:=\delta_{y, x'}[x, y']$, where $\delta$ is the Kronecker delta. Clearly the Gabriel quiver of $kP$ is the Hasse quiver of $(P, \le)$ which is acyclic. Thus the global dimension of $kP$ is finite.

The main result of this section is the following.
\begin{thm}\label{thm-GTO-Inc}
Let $A$ be a basic $\NN$-graded Gorenstein tiled order given in Definition \ref{dfn-tiled-order} such that $p_i\leq 0$ for any $i\in\mathbb{I}$, and
$(\mathbb{V}_A, \le)$ the poset as above.
Then the following statements hold.
\begin{enumerate}
\item $V=\bigoplus_{i\in\II}\bigoplus_{j=1}^{1-p_i}e_{\nu i}A(j)_{\ge0}$ is a tilting object in $\D_{\sg}^{\ZZ}(A)$.
\item $|V|=1-\sum_{i\in\II}p_i$ holds.
\item $\End_{\D_{\sg}^{\ZZ}(A)}(V)$ is Morita equivalent to $k\mathbb{V}_A^{\op}$. In particular, the global dimension of $\End_{\D_{\sg}^{\ZZ}(A)}(V)$ is finite, and we have a triangle equivalence
\[\D_{\sg}^{\ZZ}(A)\simeq\per k\mathbb{V}_A^{\op} \simeq \Db(\mod k\mathbb{V}_A^{\op}).\]
\end{enumerate}
\end{thm}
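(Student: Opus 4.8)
The plan is to deduce this from Theorem \ref{a and tilting}, which we may apply since a basic $\NN$-graded Gorenstein tiled order $A$ satisfies (A1) (it is ring-indecomposable being a subalgebra of a matrix algebra over $K$ with all diagonal exponents zero, and it is a Gorenstein $R$-order hence AS-Gorenstein of dimension $1$ by Proposition \ref{lem-Gorder-ASG}(1)), and (A2) holds because $A_0$ is a product of copies of $k$, so $\gldim A_0=0$. Combined with the assumption $p_i\le0$ for all $i$ (which is exactly (A3)) and the earlier observation $\CM_0^{\ZZ}A=\CM^{\ZZ}A$, $\D_{\sg,0}^{\ZZ}(A)=\D_{\sg}^{\ZZ}(A)$ from Corollary \ref{a and tilting 2}(1), statement (1) is immediate: since $Q(1)\simeq Q$ we may take $q=1$ in Theorem \ref{prop.Q4}(1), and then $V$ in \eqref{define V} specializes to $\bigoplus_{i\in\II}\bigoplus_{j=1}^{-p_i+1}e_{\nu i}A(j)_{\ge0}$, which is a tilting object by Theorem \ref{a and tilting}(2).

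For (2), I would count the indecomposable direct summands of $V$. By Corollary \ref{grothendieck} (or directly by Proposition \ref{information on V}(4) together with Proposition \ref{j-i}(4) and Lemma \ref{induction sequence}(3)), the number of non-isomorphic indecomposable summands is $-\sum_{i\in\II}p_i+\#\Ind(\proj^\ZZ Q)$; since $\mod^{\ZZ}Q$ is semisimple with $\proj^\ZZ Q=\add\bigoplus_{i=1}^{1}Q(i)=\add Q$, and $Q=\rM_n(K)$ is simple (as a graded ring up to shift), $\#\Ind(\proj^\ZZ Q)=1$. Hence $|V|=1-\sum_{i\in\II}p_i$. One should double-check here that the rank-one modules $e_{\nu i}A(j)_{\ge0}$ indexed by $\mathbb{V}_A$ are pairwise non-isomorphic in $\D_{\sg}^{\ZZ}(A)$ exactly when their exponent vectors differ, and that $|\mathbb{V}_A|=1-\sum_{i\in\II}p_i$; this matches the summand count and is a routine bookkeeping using $\mathrm{v}(e_iA(j)_{\ge0})=\max\{\mathrm{v}(e_iA)-j\bm1,\bm0\}$ and \eqref{gto a inv}.

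For (3), the strategy is to identify $\End_{\D_{\sg}^{\ZZ}(A)}(V)$ with the incidence algebra $k\mathbb{V}_A^{\op}$ by computing Hom-spaces between the summands. By Proposition \ref{information on V}(3) all the relevant Hom-spaces are either $e_{\nu t}A_{j-i}e_{\nu s}$ or $e_{\nu t}Q_{j-i}e_{\nu s}$; for rank-one modules $\rL(v),\rL(w)$ over a tiled order one has $\Hom^{\ZZ}_A(\rL(v),\rL(w))\cong\bigcap_i Rx^{\,w_i-v_i}\cdot(\text{something})$, and the key elementary fact is that $\underline{\Hom}_{\D_{\sg}^{\ZZ}(A)}(\rL(v),\rL(w))$ is one-dimensional (spanned by the natural inclusion, up to scalar) precisely when $v\ge w$ as elements of $\mathbb{V}_A$ — i.e.\ when $\rL(w)_{\ge0}\supseteq\rL(v)_{\ge0}$ componentwise — and is zero otherwise. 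Establishing this, and checking that composition of these maps corresponds to the product $[x,y]\cdot[x',y']=\delta_{y,x'}[x,y']$ in $k\mathbb{V}_A^{\op}$, is the main obstacle: one must verify that no nonzero morphism factors through a projective when $v\ge w$, and that all morphisms are scalar multiples of the canonical one. Once the algebra isomorphism $\End_{\D_{\sg}^{\ZZ}(A)}(V)\cong k\mathbb{V}_A^{\op}$ (after passing to a basic model) is in hand, the Gabriel quiver is the Hasse quiver of $\mathbb{V}_A$, which is acyclic, so $\gldim<\infty$; then $\per\Gamma=\Db(\mod\Gamma)$ for $\Gamma=k\mathbb{V}_A^{\op}$, and Theorem \ref{a and tilting}(2) gives $\D_{\sg}^{\ZZ}(A)\simeq\per\Gamma\simeq\Db(\mod k\mathbb{V}_A^{\op})$.
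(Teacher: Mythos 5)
Your proposal is correct and follows essentially the same route as the paper's proof: deduce (1) from Theorem \ref{a and tilting}(2) after checking (A1)--(A3) and $q=1$; obtain (2) by counting indecomposable summands (the paper does this by exhibiting a basic object $T$ with $\add T=\add V$ and $|T|=|\mathbb{V}_A|$ via Proposition \ref{prop-basic-V}, whereas you go through Corollary \ref{grothendieck} and $\#\Ind(\proj^{\ZZ}Q)=1$ — same count, slightly different bookkeeping); and for (3) identify the stable $\Hom$-spaces between rank-one lattices with the incidence relations, which is exactly the content of Proposition \ref{j-i}(5) (no morphisms factor through projectives) combined with the computation $\Hom_A^{\ZZ}(\rL(v),\rL(w))\simeq(Rx^{\max_i\{w_i-v_i\}})_0$ recorded in Proposition \ref{iso with incidence}. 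The only caveat is that the "routine bookkeeping" you flag — that $e_{\nu i}A(j)_{\ge0}\simeq\rL(\bm{0})$ for all $j\ge 1-p_i$, and that the vectors for $1\le j\le -p_i$ are pairwise distinct across all $i$ — really is where Proposition \ref{prop-basic-V}(1)(2) do work, so you should make sure to invoke them (or reprove them) rather than leave that step implicit.
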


To prove this, we give preparations.
The following result induces the part (1) of the theorem.

\begin{prop}\label{prop-basic-V}
Let $A$ be a basic Gorenstein tiled order given in Definition \ref{dfn-tiled-order} with $\mathrm{v}(A)=(m(i,j))\in\rM_n(\ZZ)$ and the Nakayama permutation $\nu$.
\begin{enumerate}
\item For each $i\in\II$, $\max\{ m(\nu i, j) \mid j\in\II\}=m(\nu i, i)=1-p_i$ holds.
\item For $i, j\in\II$ and $1\leq \ell, \ell' \leq \min\{-p_{i}, -p_{j}\}$, the following conditions are equivalent.
	\begin{enumerate}
	\item[(i)] $\mathrm{v}(e_{\nu i}A(\ell)_{\geq 0}) = \mathrm{v}(e_{\nu j}A(\ell')_{\geq 0})$
	\item[(ii)] $e_{\nu i}A(\ell)_{\geq 0} \simeq e_{\nu j}A(\ell')_{\geq 0}$ in $\mod^{\ZZ}A$
	\item[(iii)] $(i, \ell)=(j,\ell')$
	\end{enumerate}
\end{enumerate}
\end{prop}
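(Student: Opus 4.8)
The plan is to prove the three equivalences (i) $\Leftrightarrow$ (ii) $\Leftrightarrow$ (iii) and statement (1) by unwinding the combinatorial description of exponent vectors recorded just before the statement, together with the already-established facts about the Nakayama permutation $\nu$ and the Gorenstein parameters of a basic Gorenstein tiled order (equations \eqref{gorenstein tiled}, \eqref{gto a inv}, and the general AS-Gorenstein input of Proposition \ref{lem-nu-eAe}).

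\textbf{Proof of (1).} Recall from \eqref{gorenstein tiled} that $m(\nu i,j)+m(j,i)=\ell_i=m(\nu i,i)$ for every $j\in\II$, and that $A$ basic forces $m(j,i)+m(i,j)>0$, hence $m(j,i)\ge 0$ with equality only if $j=i$ (here we also use $m(i,i)=0$). Therefore $m(\nu i,j)=m(\nu i,i)-m(j,i)\le m(\nu i,i)$ for all $j$, with equality exactly when $j=i$; this is the first equality in (1). The second equality $m(\nu i,i)=1-p_i$ is precisely \eqref{gto a inv}. So (1) is essentially a restatement of formulas we already have, and it is the cheap part.

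\textbf{Proof of the equivalences in (2).} The implication (iii)$\Rightarrow$(ii) is trivial, and (ii)$\Rightarrow$(i) is immediate since the exponent vector is an isomorphism invariant of a rank-one CM module (as noted after the definition of $\mathrm{v}(M)$). The content is (i)$\Rightarrow$(iii). First I would compute $\mathrm{v}(e_{\nu i}A(\ell)_{\ge 0})=\max\{\mathrm{v}(e_{\nu i}A)-\ell\bm 1,\ \bm 0\}$, whose $t$-th coordinate is $\max\{m(\nu i,t)-\ell,\,0\}$. By part (1), for $1\le \ell\le -p_i$ we have $m(\nu i,i)-\ell=1-p_i-\ell\ge 1>0$, so the $i$-th coordinate of $\mathrm{v}(e_{\nu i}A(\ell)_{\ge0})$ is exactly $m(\nu i,i)-\ell=1-p_i-\ell$, which is strictly positive; and by (1) again this $i$-th coordinate is the \emph{maximum} coordinate of the vector $\mathrm{v}(e_{\nu i}A)-\ell\bm 1$, hence the maximum coordinate of $\mathrm{v}(e_{\nu i}A(\ell)_{\ge 0})$. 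Now suppose $\mathrm{v}(e_{\nu i}A(\ell)_{\ge0})=\mathrm{v}(e_{\nu j}A(\ell')_{\ge0})=:w$ with $1\le\ell,\ell'\le\min\{-p_i,-p_j\}$. Comparing maxima: the maximum coordinate of $w$ equals $1-p_i-\ell$ and also equals $1-p_j-\ell'$, so $p_i+\ell=p_j+\ell'$. Next I must pin down \emph{where} the maximum is attained. The argument above shows it is attained at coordinate $i$ (from the first presentation) and at coordinate $j$ (from the second). If I can show the maximizing coordinate is unique, I get $i=j$, whence $\ell=\ell'$ from $p_i+\ell=p_j+\ell'$, giving (iii).

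\textbf{The main obstacle: uniqueness of the maximizing coordinate.} This is the step that needs genuine input from the Gorenstein-order structure rather than just bookkeeping. The claim is: for fixed $i$, the coordinate $t=i$ is the \emph{strict} maximizer of $t\mapsto m(\nu i,t)$. From (1) we know $m(\nu i,t)\le m(\nu i,i)$ with equality iff $m(t,i)=0$; and $m(t,i)=0$ with $m(i,i)=0$ and the basic condition $m(t,i)+m(i,t)>0$ for $t\ne i$ forces... nothing immediately, since $m(t,i)=0$ is compatible with $m(i,t)>0$. So I need a further argument. The right move is: if $m(t,i)=0$ and also (from the other presentation at the shared vector $w$, applied with roles of the indices swapped, i.e. using that $j$ is the strict-or-not maximizer for the $\nu j$-row) one derives $m(\nu i, t)=m(\nu i,i)$, then one should exploit \eqref{eq dfn order} together with \eqref{gorenstein tiled} to deduce $m(i,t)=0$ as well, contradicting basicness. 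Concretely: $m(\nu i,t)=m(\nu i,i)$ combined with $m(\nu i,t)+m(t,i)=m(\nu i,i)$ (the case $j=t$ of \eqref{gorenstein tiled} with index $i$... wait, \eqref{gorenstein tiled} reads $m(\nu i,j)+m(j,i)=m(\nu i,i)$, so with $j=t$: $m(\nu i,t)+m(t,i)=m(\nu i,i)$) gives $m(t,i)=0$; and then applying \eqref{gorenstein tiled} for the index $t$ instead, $m(\nu t, j)+m(j,t)=m(\nu t,t)$, plus the order inequalities $m(i,t)+m(t,i)\ge m(i,i)=0$ and the basic inequality $m(i,t)+m(t,i)>0$, gives $m(i,t)>0$ but $m(t,i)=0$, which is \emph{not} a contradiction by itself. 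Hence the clean resolution is to observe that both $e_{\nu i}A(\ell)_{\ge0}$ and $e_{\nu j}A(\ell')_{\ge0}$ having the \emph{same} exponent vector $w$ forces, via the explicit formula $w_t=\max\{m(\nu i,t)-\ell,0\}=\max\{m(\nu j,t)-\ell',0\}$ for \emph{all} $t$, that $m(\nu i,t)-\ell=m(\nu j,t)-\ell'$ whenever either side is positive; in particular (taking $t=i$, which is positive on the left) $m(\nu j,i)-\ell'=m(\nu i,i)-\ell=1-p_i-\ell>0$, so $m(\nu j,i)>0$; but by (1) applied to index $j$, $m(\nu j,i)\le m(\nu j,j)=1-p_j$ with equality iff $m(i,j)=0$. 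Then one plays the symmetric game to force $m(i,j)=0=m(j,i)$, contradicting basicness unless $i=j$. I expect this last chain — turning the "same exponent vector" hypothesis into the equalities $m(i,j)=m(j,i)=0$ — to be the one place requiring care, using \eqref{eq dfn order}, \eqref{gorenstein tiled} and the basic condition in combination; once $i=j$ is established, $\ell=\ell'$ follows from $p_i+\ell=p_j+\ell'$ and we are done.
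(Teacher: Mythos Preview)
Your argument is essentially correct, but it takes a different route from the paper for part (2), and there is one small misstatement in your proof of (1) worth flagging.

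\textbf{On (1).} Your direct use of \eqref{gorenstein tiled} to get $m(\nu i,j)=m(\nu i,i)-m(j,i)\le m(\nu i,i)$ is fine (assuming $A$ is $\NN$-graded, which the context requires). However, your parenthetical claim that $m(j,i)=0$ forces $j=i$ is false: basicness only gives $m(j,i)+m(i,j)>0$ for $j\ne i$, so $m(j,i)=0$ with $m(i,j)>0$ is perfectly possible. Fortunately this claim is irrelevant to (1), which only asserts that the maximum is attained at $j=i$, not uniquely. The paper instead obtains (1) from the general description of $p_i$ in Proposition \ref{lem-nu-eAe2}(3); both routes are one line.

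\textbf{On (2).} Here the approaches genuinely diverge. The paper simply invokes Proposition \ref{j-i}(4), a general statement for AS-Gorenstein algebras proved via the Hom-space identity $\Hom_A^{\ZZ}(e_{\nu s}A(i)_{\ge0},e_{\nu t}A(j)_{\ge0})\simeq e_{\nu t}A_{j-i}e_{\nu s}$: if the two modules are isomorphic, both $e_{\nu t}A_{j-i}e_{\nu s}$ and $e_{\nu s}A_{i-j}e_{\nu t}$ are nonzero, forcing $i=j$ and then $s=t$ by basicness. Your combinatorial argument with exponent vectors is specific to tiled orders but self-contained: comparing the maximum coordinate of $w$ gives $p_i+\ell=p_j+\ell'$, and then reading the common vector at coordinates $i$ and $j$ yields $m(\nu j,i)=1-p_j=m(\nu j,j)$ and symmetrically $m(\nu i,j)=m(\nu i,i)$, whence $m(i,j)=m(j,i)=0$ by \eqref{gorenstein tiled}, contradicting basicness unless $i=j$. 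You have all the pieces; you just need to make the step $m(\nu j,i)=1-p_i-\ell+\ell'=1-p_j$ explicit using the max-coordinate identity, and drop the tentative tone. The paper's route is cleaner because it recycles a result already available; yours has the virtue of being entirely elementary and making the role of \eqref{gorenstein tiled} transparent.
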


\begin{proof}
(1)
By Proposition \ref{lem-nu-eAe2}(3), we have
\[p_i=-\max\{\ell\in\ZZ\mid (e_{\nu i}(\rM_n(K)/A))_\ell\neq0\} = -\max\{m(\nu i, j)-1 \mid j\in\II\}.\]
So the assertion holds by \eqref{gto a inv}.

(2) (i)$\Leftrightarrow$(ii) is clear, and (ii)$\Leftrightarrow$(iii) was shown in Proposition \ref{j-i}(4).
\end{proof}

To prove (2) of the theorem, we need more preparation.

\begin{prop}\label{iso with incidence}
Let $A$ be a Gorenstein tiled order given in Definition \ref{dfn-tiled-order}, $\mathbb{V}$ a finite subset of $\ZZ^n$ such that $\rL(v)\in\CM^{\ZZ}A$ holds for each $v\in\mathbb{V}$
and $V:=\bigoplus_{v\in\mathbb{V}}\rL(v)$.
Then we have an isomorphism of $k$-algebras
\[\End_A^{\ZZ}(V)\simeq k\mathbb{V}^{\op}.\]
\end{prop}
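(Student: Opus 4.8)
The plan is to identify the endomorphism algebra $\End_A^{\ZZ}(V)$ with the incidence algebra $k\mathbb{V}^{\op}$ by first computing the homomorphism spaces between the rank-one modules $\rL(v)$, and then checking that the composition of morphisms matches the multiplication rule of the incidence algebra. First I would observe that for $v,w\in\ZZ^n$, a morphism in $\mod^{\ZZ}A$ from $\rL(v)$ to $\rL(w)$ is the restriction of a morphism between the corresponding modules over $Q=\rM_n(K)$; since $\rL(v)\otimes_RK\simeq\rL(w)\otimes_RK$ is a simple $Q$-module (the unique one, up to shift), $\HOM_Q(\rL(v)\otimes_RK,\rL(w)\otimes_RK)$ is one-dimensional in each degree, so $\Hom^{\ZZ}_A(\rL(v),\rL(w))$ is at most one-dimensional. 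Concretely, $\Hom^{\ZZ}_A(\rL(v),\rL(w))$ is spanned by the scalar-multiplication map coming from the inclusion $[Rx^{v_1}\cdots Rx^{v_n}]\hookleftarrow\cdot$, and this is nonzero precisely when $Rx^{w_i}\supseteq Rx^{v_i}$ for all $i$, i.e. $w_i\le v_i$ for all $i$, i.e. $w\le v$ in $\ZZ^n$ — equivalently $v\le w$ in $(\ZZ^n)^{\op}$. (I should double-check the orientation: the module $\rL(v)$ with larger exponents is "smaller", and the natural map $\rL(v)\to\rL(w)$ exists when $\rL(w)\supseteq\rL(v)$ as submodules of $\rL(\bm 0)=[K\cdots K]$, which forces $w_i\le v_i$.)

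Next I would pin down the composition law. Given $v\le' w\le' u$ in $(\ZZ^n)^{\op}$ (so $u\le w\le v$ componentwise), let $\iota_{vw}:\rL(v)\to\rL(w)$ and $\iota_{wu}:\rL(w)\to\rL(u)$ be the canonical maps described above; each is, on each matrix column, literally the identity on the fraction field $K$ restricted to the appropriate $R$-lattice. Hence $\iota_{wu}\circ\iota_{vw}=\iota_{vu}$, which is exactly the relation $[v,w]\cdot[w,u]=[v,u]$ defining multiplication in $k\mathbb{V}^{\op}$, and when the intermediate containments fail the composite is zero, matching $\delta$. Assembling this, the $k$-linear map $k\mathbb{V}^{\op}\to\End_A^{\ZZ}(V)$ sending the basis element $[v,w]$ (with $v\le' w$) to the morphism $V\twoheadrightarrow\rL(v)\xrightarrow{\iota_{vw}}\rL(w)\hookrightarrow V$ is a well-defined algebra homomorphism; it is bijective since on each pair of summands both sides are one-dimensional exactly in the comparable case and zero otherwise, using that the $\rL(v)$ for $v\in\mathbb{V}$ are pairwise non-isomorphic (distinct exponent vectors give distinct modules, as recalled in the text).

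The main obstacle I anticipate is bookkeeping around degree shifts and the direction of the order: the paper works with graded Hom spaces $\Hom^{\ZZ}_A$ (no shift), so I must verify that among all the degree-$0$ maps the scalar map lies in degree $0$ precisely under the stated containment and that no extra degree-$0$ maps appear — this is where the computation $\Hom^{\ZZ}_A(\rL(v),\rL(w))=\{c\in K\mid cRx^{v_i}\subseteq Rx^{w_i}\ \forall i\}\cap(\text{degree }0)$ is carried out, giving a space that is $k$ if $v_i\ge w_i$ for all $i$ and $0$ otherwise. A secondary subtlety is making sure $V$ is genuinely multiplicity-free as stated (the hypothesis only says each $\rL(v)\in\CM^{\ZZ}A$, but distinctness of the $v\in\mathbb{V}$ as vectors gives distinctness of the modules), so that $\End_A^{\ZZ}(V)$ is basic and the dimension count against $k\mathbb{V}^{\op}$ is clean. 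Once the Hom-space computation and the trivial "restriction of identity on $K$" description of compositions are in hand, the rest is a direct matching of structure constants.
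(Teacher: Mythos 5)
Your proposal is correct and takes essentially the same route as the paper: identify $\Hom_A^{\ZZ}(\rL(v),\rL(w))$ with $\{\alpha\in K\mid \alpha\rL(v)\subseteq\rL(w)\}_0$ via $K\simeq\End_Q(S)$, compute that this is $k$ precisely when $v\ge w$ componentwise, and then observe that composition is multiplication of scalars in $K$, so it matches the incidence-algebra product. The only cosmetic difference is that the paper computes the full $\Hom_A(M,N)\simeq Rx^{\ell}$ with $\ell=\max\{w_i-v_i\}$ and then takes the degree-$0$ part, whereas you argue dimension-by-dimension via the simple $Q$-module; both give the same one-dimensional answer.
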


\begin{proof}
(i) Let $v=(v_1,\ldots,v_n),w=(w_1,\ldots,w_n)\in\mathbb{V}$, $M:=\rL(v)$ and $N:=\rL(w)$. We prove
\begin{align*}
\Hom_A^{\ZZ}(M,N)\simeq\left\{\begin{array}{ll}
k & v \ge w \\
0 & \mbox{else}.
\end{array}.\right.
\end{align*}
By definition, both $M$ and $N$ are submodules of a simple $Q$-module $S:=[K\,\cdots\, K]$.
Thus we have an identification $\Hom_A(M,N) = \{ f \in \End_{Q}(S) \mid f(M) \subseteq N\}$.
Then the isomorphism $K\simeq\End_{Q}(S)$; $\alpha\mapsto(\alpha\cdot)$ gives an isomorphism
\begin{align*}
\Hom_A(M, N) \simeq \{ \alpha\in K \mid \alpha M\subseteq N\}
=\{\alpha\in K\mid \forall i\in[1,n],\ \alpha(Rx^{v_i})\subseteq Rx^{w_i}\}=Rx^\ell
\end{align*}
for $\ell := \max\{w_i - v_i \mid i\in\mathbb{I}\}$.
By taking the degree $0$ part, we have that $\Hom_A^{\ZZ}(M, N)\neq 0$ if and only if $\ell\leq 0$ if and only if $v\geq w$.
In this case, $\Hom_A^{\ZZ}(M, N) = (Rx^{\ell})_0 = k$ holds.

(ii) The assertion (i) gives an isomorphism $\End^{\ZZ}_A(V)\simeq k\mathbb{V}^{\op}$ of $k$-vector spaces. It suffices to show that this commutes with the multiplications.
For each $u,v,w\in\mathbb{V}$ satisfying $u \geq v \geq w$, the diagram
\[
\begin{tikzcd}[ampersand replacement=\&]
\Hom_A^{\ZZ}(\rL(v),\rL(w)) \times \Hom_A^{\ZZ}(\rL(u),\rL(v)) \arrow[rr, "\text{comp.}"]  \arrow[d, "\wr"] \& \& \Hom_A^{\ZZ}(\rL(u),\rL(w)) \arrow[d, "\wr"] \\
k \times k \arrow[rr, "\text{mult.}"] \& \& k
\end{tikzcd}
\]
commutes. Thus the assertion follows.
\end{proof}

We are ready to prove Theorem \ref{thm-GTO-Inc}. 

\begin{proof}[Proof of Theorem \ref{thm-GTO-Inc}]
The statement (1) directly follows from Theorem \ref{a and tilting} since $q=1$.
We show (2) and (3).
By Proposition \ref{prop-basic-V}(1), $e_{\nu i}A(j)_{\geq 0} \simeq \rL(\bm{0})$ holds for any $i\in\II$ and any $j\geq 1-p_i$.
Thus, $\mathbb{V}_A$ is as follows, and let $T$ be the following $A$-module:
\begin{align*}
\mathbb{V}_A =\{\bm{0}\}\sqcup\bigsqcup_{i\in\II}\bigl\{\mathrm{v}(e_{\nu i} A(j)_{\geq 0}) \mid 1\leq j \leq -p_{i}\bigr\}, \qquad
T:=\rL(\bm{0})\oplus\bigoplus_{i\in\II} \bigoplus_{j=1}^{-p_{i}} e_{\nu i}A(j)_{\geq 0}.
\end{align*}
Then $T$ is basic by Proposition \ref{prop-basic-V}(2).
Therefore we have $|V|=|T| = |\mathbb{V}_A|=1-\sum_{i\in\II} p_i$.

Since $\add V = \add T$, $\underline{\End}^{\ZZ}_A(V)$ is Morita equivalent to $\underline{\End}^{\ZZ}_A(T)$.
By Proposition \ref{j-i}(5), we have $\underline{\End}^{\ZZ}_A(T)=\End^{\ZZ}_A(T)$.
By Proposition \ref{iso with incidence}, we have an isomorphism $\End^{\ZZ}_A(T)\simeq k\mathbb{V}_A^{\op}$.
Thus the assertions follow.
\end{proof}

\subsection{Cyclic Gorenstein tiled orders}

In this subsection, we give a family of Gorenstein tiled orders with cyclic Nakayama permutations, and describe the endomorphism algebras of the tilting objects given by Theorem \ref{thm-GTO-Inc}.

We start with non-negative integers $m_1,\ldots,m_n$ such that $\sum_{i=1}^nm_i\ge1$.
Let 
\begin{empheq}[left={m(i, j)=\empheqlbrace}]{alignat=2}
&\sum_{k=i}^{j-1}m_k &\quad&~i<j \\
&\sum_{k=i}^{n}m_k + \sum_{k=1}^{j-1}m_k&\quad&~i>j.  
\end{empheq}
Then $m(i,j)$ satisfy \eqref{eq dfn order}, and therefore $A=(Rx^{m(i,j)})\subset \rM_n(K)$ is a basic tiled order.
Moreover, $A$ is a Gorenstein tiled order with the Nakayama permutation $\nu=(1~2~\dots~n)$.
In fact, one can check that $m(i, j)$ satisfy \eqref{gorenstein tiled} for $\ell_i:=\sum_{k\in\II}m_k-m_i$.
The Gorenstein parameter of $A$ is given by
\begin{align}\label{p_i here}
p_i \stackrel{\eqref{gto a inv}}{=} 1-m(i+1, i)=  1+m_i-\sum_{k\in\II}m_k.
\end{align}
We refer to Example \ref{ex-gto-cyclic} below for an explicit form of $A$ in the case $n=4$.

Assume that $p_i\leq 0$ holds for each $i\in\II$.
Thanks to Theorem \ref{thm-GTO-Inc}, there exists a tilting object $V$ in $\D_{\sg}^{\ZZ}(A)$.
We consider the endomorphism algebra of $V$.

Let $(i, j):=\mathrm{v}(e_iA(j)_{\geq 0})$.
Then the Hasse quiver of $(\mathbb{V}_A, \leq)^{\op}$ is given as follows:
\begin{enumerate}
\item The set of vertices is $\bigsqcup_{i\in\mathbb{I}}\{ (i+1, j) \mid 1\leq j \leq -p_{i} \} \sqcup\{\bm{0}\}$.
\item We draw an arrow from $(i, j)$ to $(k, \ell)$ if one of the following conditions hold.
	\begin{enumerate}
	\item $k=i$ and $\ell=j+1$.
	\item $i=k+1$, $1 \leq j \leq -p_{k-1}-m_k$ and $\ell = j + m_k$.
	\item $j=-p_{i-1}$ and $(k, \ell)=\bm{0}$.
	\end{enumerate}
\end{enumerate}
The arrows between vertices of $\{ (k+1, j) \mid 1\leq j \leq -p_{k} \} \sqcup \{ (k, j) \mid 1\leq j \leq -p_{k-1} \} \sqcup \{\bm{0}\}$ look like the following:
\[
\begin{tikzcd}
(k+1, 1) \arrow[r] \arrow[rrd] & \cdots \arrow[r] & (k+1, -p_{k-1}-m_k) \arrow[r] \arrow[rrd] 
\arrow[d, "\dots\dots",  phantom, description] & \cdots \arrow[r] & (k+1, -p_{k}) \arrow[rd] \\
(k, 1) \arrow[r] & \cdots \arrow[r] & (k, 1+m_k) \arrow[r] & \cdots \arrow[r] & (k, -p_{k-1}) \arrow[r] & \bm{0}
\end{tikzcd}.
\]
The horizontal arrows are of type (a), the two arrows going to $\bm{0}$ are of type (c), the others are of type (b).

\begin{ex}\label{ex-gto-cyclic}
Let $n=4$ and put $a=m_1$, $b=m_2$, $c=m_3$ and $d=m_4$. Then $\mathrm{v}(A) = (m(i, j))\in\rM_4(\ZZ)$ is as follows.
\[
\begin{bmatrix}
0 & a & a+b & a+b+c  \\
b+c+d & 0 & b & b+c  \\
c+d & c+d+a & 0  & c \\
d& d+a & d+a+b & 0 
\end{bmatrix}
\]
Assume that $a,b,c,d\geq 0$.
Moreover assume that all Gorenstein parameters are non-positive, that is, $a+b+c, b+c+d, c+d+a, d+a+b \geq 1$ by \eqref{p_i here}.
Then we write the Hasse quiver of $(\mathbb{V}_A, \leq)^{\op}$ in Figure \ref{V for n=4}.
In the picture, each number represents the number of vertices in each indicated area.
The north line (respectively, east line, south line, and west line) presents the vertices of the form $\mathrm{v}(e_1A(j)_{\geq 0})$ (respectively, $\mathrm{v}(e_2A(j)_{\geq 0})$, $\mathrm{v}(e_3A(j)_{\geq 0})$ and $\mathrm{v}(e_4A(j)_{\geq 0})$).

\begin{figure}[h]
\[
\scalebox{0.85}[0.85]{
\begin{tikzpicture}[descr/.style={fill=white},text height=1.5ex, text depth=0.25ex, scale=1]
\node(0)at(0,0){$\bm{0}$};

\node(11)at(0,7.5){$\bullet$};
\node(1s)at(0,6){$\bullet$};
\node(1s1)at(0,5){$\bullet$};
\node(1sr1)at(0,3.5){$\bullet$};
\node(1sr)at(0,2.5){$\bullet$};
\node(1srq1)at(0,1){$\bullet$};
\node(1dot)at(-2.7,2.5){$\cdots$};
\node(1dot)at(-2.2,2.5){$\cdots$};
\node(1dot)at(-0.9,2.5){$\cdots$};
\path[->, thick, font=\scriptsize ,>=angle 45] (11) edge node[descr]{$\vdots$} (1s);
\path[->, thick, font=\scriptsize ,>=angle 45] (1s) edge (1s1);
\path[->, thick, font=\scriptsize ,>=angle 45] (1s1) edge node[descr]{$\vdots$} (1sr1);
\path[->, thick, font=\scriptsize ,>=angle 45] (1sr1) edge (1sr);
\path[->, thick, font=\scriptsize ,>=angle 45] (1sr) edge node[descr]{$\vdots$} (1srq1);
\path[->, thick, font=\scriptsize ,>=angle 45] (1srq1) edge (0);
\draw[thick,decorate,decoration={brace,raise=5pt}] (11.north) -- (1s.south) node[midway,right=10pt] {$a$};
\draw[thick,decorate,decoration={brace,raise=5pt}] (1s1.north) -- (1sr1.south) node[pos=0.7,right=10pt] {$b-1$};
\draw[thick,decorate,decoration={brace,raise=5pt}] (1sr.north) -- (1srq1.south) node[midway,right=10pt] {$c$};

\node(21)at(-7.5,0){$\bullet$};
\node(2p)at(-6,0){$\bullet$};
\node(2p1)at(-5,0){$\bullet$};
\node(2ps1)at(-3.5,0){$\bullet$};
\node(2ps)at(-2.5,0){$\bullet$};
\node(2psr1)at(-1,0){$\bullet$};
\node(2dot)at(-2.6,-2){$\rotatebox{45}{$\cdots$}$};
\node(2dot)at(-3,-2.4){$\rotatebox{45}{$\cdots$}$};
\node(2dot)at(-1.8,-1.2){$\rotatebox{45}{$\cdots$}$};
\path[->, thick, font=\scriptsize ,>=angle 45] (21) edge node[descr]{$\cdots$} (2p);
\path[->, thick, font=\scriptsize ,>=angle 45] (2p) edge (2p1);
\path[->, thick, font=\scriptsize ,>=angle 45] (2p1) edge node[descr]{$\cdots$} (2ps1);
\path[->, thick, font=\scriptsize ,>=angle 45] (2ps1) edge (2ps);
\path[->, thick, font=\scriptsize ,>=angle 45] (2ps) edge node[descr]{$\cdots$} (2psr1);
\path[->, thick, font=\scriptsize ,>=angle 45] (2psr1) edge (0);
\draw[thick,decorate,decoration={brace,raise=5pt}] (21.west) -- (2p.east) node[midway,above=10pt] {$d$};
\draw[thick,decorate,decoration={brace,raise=5pt}] (2p1.west) -- (2ps1.east) node[pos=0.7,above=10pt] {$a-1$};
\draw[thick,decorate,decoration={brace,raise=5pt}] (2ps.west) -- (2psr1.east) node[midway,above=10pt] {$b$};

\node(31)at(0,-7.5){$\bullet$};
\node(3q)at(0,-6){$\bullet$};
\node(3q1)at(0,-5){$\bullet$};
\node(3qp1)at(0,-3.5){$\bullet$};
\node(3qp)at(0,-2.5){$\bullet$};
\node(3qps1)at(0,-1){$\bullet$};
\node(3dot)at(2.7,-2.5){$\dots$};
\node(3dot)at(2.2,-2.5){$\dots$};
\node(3dot)at(0.9,-2.5){$\dots$};
\path[->, thick, font=\scriptsize ,>=angle 45] (31) edge node[descr]{$\vdots$} (3q);
\path[->, thick, font=\scriptsize ,>=angle 45] (3q) edge (3q1);
\path[->, thick, font=\scriptsize ,>=angle 45] (3q1) edge node[descr]{$\vdots$} (3qp1);
\path[->, thick, font=\scriptsize ,>=angle 45] (3qp1) edge (3qp);
\path[->, thick, font=\scriptsize ,>=angle 45] (3qp) edge node[descr]{$\vdots$} (3qps1);
\path[->, thick, font=\scriptsize ,>=angle 45] (3qps1) edge (0);
\draw[thick,decorate,decoration={brace,raise=5pt}] (31.south) -- (3q.north) node[midway,left=10pt] {$c$};
\draw[thick,decorate,decoration={brace,raise=5pt}] (3q1.south) -- (3qp1.north) node[pos=0.7,left=10pt] {$d-1$};
\draw[thick,decorate,decoration={brace,raise=5pt}] (3qp.south) -- (3qps1.north) node[midway,left=10pt] {$a$};

\node(41)at(7.5,0){$\bullet$};
\node(4r)at(6,0){$\bullet$};
\node(4r1)at(5,0){$\bullet$};
\node(4rq1)at(3.5,0){$\bullet$};
\node(4rq)at(2.5,0){$\bullet$};
\node(4rqp1)at(1,0){$\bullet$};
\node(4dot)at(3,2.4){$\rotatebox{45}{$\cdots$}$};
\node(4dot)at(2.6,2){$\rotatebox{45}{$\cdots$}$};
\node(4dot)at(1.8,1){$\rotatebox{45}{$\cdots$}$};
\path[->, thick, font=\scriptsize ,>=angle 45] (41) edge node[descr]{$\cdots$} (4r);
\path[->, thick, font=\scriptsize ,>=angle 45] (4r) edge (4r1);
\path[->, thick, font=\scriptsize ,>=angle 45] (4r1) edge node[descr]{$\cdots$} (4rq1);
\path[->, thick, font=\scriptsize ,>=angle 45] (4rq1) edge (4rq);
\path[->, thick, font=\scriptsize ,>=angle 45] (4rq) edge node[descr]{$\cdots$} (4rqp1);
\path[->, thick, font=\scriptsize ,>=angle 45] (4rqp1) edge (0);
\draw[thick,decorate,decoration={brace,raise=5pt}] (41.east) -- (4r.west) node[midway,below=10pt] {$b$};
\draw[thick,decorate,decoration={brace,raise=5pt}] (4r1.east) -- (4rq1.west) node[pos=0.7,below=10pt] {$c-1$};
\draw[thick,decorate,decoration={brace,raise=5pt}] (4rq.east) -- (4rqp1.west) node[midway,below=10pt] {$d$};

\path[->, thick, font=\scriptsize ,>=angle 45] (11) edge (2p1);
\path[->, thick, font=\scriptsize ,>=angle 45] (1s) edge (2ps);
\path[->, thick, font=\scriptsize ,>=angle 45] (1sr1) edge (2psr1);

\path[->, thick, font=\scriptsize ,>=angle 45] (21) edge (3q1);
\path[->, thick, font=\scriptsize ,>=angle 45] (2p) edge (3qp);
\path[->, thick, font=\scriptsize ,>=angle 45] (2ps1) edge (3qps1);

\path[->, thick, font=\scriptsize ,>=angle 45] (31) edge (4r1);
\path[->, thick, font=\scriptsize ,>=angle 45] (3q) edge (4rq);
\path[->, thick, font=\scriptsize ,>=angle 45] (3qp1) edge (4rqp1);

\path[->, thick, font=\scriptsize ,>=angle 45] (41) edge (1s1);
\path[->, thick, font=\scriptsize ,>=angle 45] (4r) edge (1sr);
\path[->, thick, font=\scriptsize ,>=angle 45] (4rq1) edge (1srq1);
\end{tikzpicture}
}
\]
\caption{The endomorphism algebra of $V$ for $n=4$}
\label{V for n=4}
\end{figure}

\end{ex}

\section{Noncommutative quadric hypersurfaces}\label{sec.nqh}

We discuss an application of our main result (Theorem \ref{a and tilting}) to the study of noncommutative quadric hypersurfaces. Let $B$ be a noncommutative quadric hypersurface (see Definition \ref{dfn.qh}).  
Smith-Van den Bergh \cite{SV} (and Mori-Ueyama \cite{MU3} in a more general setting) proved that $\uCM^{\ZZ} B$ has a tilting object using
the method originally developed by Buchweitz-Eisenbud-Herzog \cite{BEH}. In this subsection, we prove that if $\qgr B$ has finite global dimension, then $\Db(\qgr B)$ has a tilting object. 
The key point is that the opposite algebra of the Koszul dual of $B$ is an AS-Gorenstein algebra of dimension $1$.

Throughout this section, let $k$ be an algebraically closed field of characteristic zero.

\subsection{Preliminaries on noncommutative quadric hypersurfaces}

A connected $\NN$-graded algebra $A$ is called \emph{quadratic} if it is isomorphic to the quotient algebra $T(V)/(R)$, where $T(V)=\bigoplus_{i\in\NN} V^{\otimes i}$ is the tensor algebra on a finite-dimensional vector space $V$, and $R$ is a subspace of $T(V)_2=V\otimes_k V$. For a quadratic algebra $A=T(V)/(R)$, the \emph{quadratic dual} $A^!$ of $A$ is defined by 
$T(V^*)/(R^\perp)$, where $V^*$ is the $k$-linear dual of $V$, and $R^\perp$ is the subspace of $T(V^*)_2=V^*\otimes_k V^*$ consisting of elements which are orthogonal to any element of $R$.

Let $A$ be a connected $\NN$-graded algebra.
A graded module $M \in \Mod^{\ZZ}A$ has a \emph{linear free resolution} if the $i$-th
term in its minimal free resolution is a direct sum of copies of $A(-i)$ for each $i$ or, equivalently, if $\Ext^i_A(M, k)_j = 0$ for $i+j=0$. The full subcategory of $\mod^{\ZZ}A$ consisting of modules having a linear free resolution is denoted by $\lin A$. 
We say that  $A$ is \emph{Koszul} if $k=A/A_{\geq 1} \in \lin A$.

Let $B$ be a Koszul algebra. Then it is well-known that $B$ is a quadratic algebra, $B^!$ is also Koszul, and $B^!$ is isomorphic to the Yoneda algebra $\bigoplus_{i\in\NN}\Ext_B^i(k,k)$
(in this case, $B^!$ is also called the \emph{Koszul dual} of $B$). 
Let $A=({B^!})^{\op}$. Then there exists a duality
\[
E_B:= \bigoplus_{i\in\NN}\Ext_B^i(-,k): \lin B \to \lin A.
\]
\begin{lem}\label{lem.ED}{\cite[Lemma 3.6]{MU1}}
	Let $B$ be a Koszul algebra and let $A=({B^!})^{\op}$. If $M\in \lin B$, then $E_B(\Omega ^iM(i))\simeq E_B(M)(i)_{\geq 0}$ in $\Mod^{\ZZ} A$ and $E_B(M(i)_{\geq 0})\simeq \Omega^iE_B(M)(i)$ in $\Mod^{\ZZ} A$ for all $i\in \NN$.  
\end{lem}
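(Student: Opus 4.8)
The plan is to derive both isomorphisms from a single principle: under Koszul duality, passing to syzygies corresponds to truncating, and this correspondence is already visible at the level of the minimal linear free resolution. First I would fix the Koszul-duality setup. Since $B$ is Koszul, so is $A=(B^!)^{\op}$, one has a canonical identification $(A^!)^{\op}\simeq B$, and $E_B\colon\lin B\to\lin A$ and $E_A\colon\lin A\to\lin B$ are mutually quasi-inverse contravariant equivalences, i.e.\ $E_AE_B\simeq\id_{\lin B}$ and $E_BE_A\simeq\id_{\lin A}$. The concrete description I would invoke is the algebraic BGG dictionary: if $M\in\lin B$ has minimal linear free resolution $P^\bullet\to M$ with $P^j=B(-j)^{b_j}$, then $E_B(M)=\bigoplus_{j\ge0}\Hom_B(P^j,k)$ with $\Hom_B(P^j,k)\simeq k^{b_j}$ placed in degree $j$, and the graded $A$-module structure is induced from the linear differentials $P^{j+1}\to P^j$ by transposing their $B_1$-components to $A_1$.

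For the first isomorphism, fix $M\in\lin B$ with resolution as above. Beyond homological degree $i$ the tail of $P^\bullet$, shifted by $(i)$, is a minimal linear free resolution of $\Omega^iM(i)$: its $j$-th term is $P^{i+j}(i)=B(-j)^{b_{i+j}}$ and its differentials are $(P^{i+j+1}\to P^{i+j})(i)$; minimality is inherited, being the pointwise condition $d(P^{j+1})\subseteq(\rad B)P^j$. Hence $\Omega^iM(i)\in\lin B$ and, by the dictionary, $E_B(\Omega^iM(i))$ has graded pieces $k^{b_{i+j}}$ in degree $j$ with action determined by the differentials $P^{i+1}\to P^i,\ P^{i+2}\to P^{i+1},\dots$. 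On the other hand, since $A$ is non-negatively graded, $E_B(M)_{\ge i}$ is a graded submodule of $E_B(M)$, and the action on it is determined by the very same differentials; re-indexing it to begin in degree $0$ identifies it with $E_B(M)(i)_{\ge0}$. Comparing these two descriptions yields the graded $A$-module isomorphism $E_B(\Omega^iM(i))\simeq E_B(M)(i)_{\ge0}$.

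For the second isomorphism I would argue purely formally. Applying the first isomorphism with $B$ replaced by $A$ (valid because $A$ is Koszul and $(A^!)^{\op}\simeq B$, so $E_A\colon\lin A\to\lin B$) to the object $N:=E_B(M)\in\lin A$ gives $\Omega^iE_B(M)(i)\in\lin A$ and $E_A\bigl(\Omega^iE_B(M)(i)\bigr)\simeq E_A(N)(i)_{\ge0}\simeq M(i)_{\ge0}$, where the last step uses $E_AE_B\simeq\id_{\lin B}$; in particular $M(i)_{\ge0}\in\lin B$. Applying $E_B$ and using $E_BE_A\simeq\id_{\lin A}$ on $\Omega^iE_B(M)(i)$ then gives $E_B(M(i)_{\ge0})\simeq\Omega^iE_B(M)(i)$. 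The main obstacle is concentrated in the very first step: one must pin down the precise form of Koszul duality — the BGG dictionary between linear resolutions over $B$ and graded modules over $A=(B^!)^{\op}$, the mutual-quasi-inverse property, and the identity $(A^!)^{\op}\simeq B$ — and then be scrupulous about the internal-degree versus cohomological-degree conventions and the left/right module bookkeeping attached to $(B^!)^{\op}$. This is standard Koszul-duality machinery (indeed it is what underlies the very definition of $E_B$), so the difficulty is bookkeeping rather than conceptual; once the dictionary is fixed, the syzygy-versus-truncation matching is immediate.
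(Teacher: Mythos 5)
Your proposal is correct, but note that the paper itself offers no proof of this lemma: it is stated as a citation to Mori--Ueyama \cite[Lemma 3.6]{MU1}, so there is no in-text argument to compare against. What you have done is reconstruct the proof from scratch, and your reconstruction is sound. The first isomorphism is precisely the observation that in the explicit description of $E_B$ via the minimal linear free resolution $P^\bullet$ of $M$, discarding the first $i$ cohomological steps (i.e.\ passing to $\Omega^i M$ and shifting by $(i)$) produces exactly the $\ge i$ truncation of $E_B(M)$ re-indexed to start at degree $0$, as a graded vector space and as an $A$-module, since the $A_1$-action on both sides is read off from the same linear components $P^{j+1}\to P^j$ for $j\ge i$. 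Your second isomorphism by symmetry --- applying the first with the roles of $B$ and $A=(B^!)^{\op}$ exchanged (using $(A^!)^{\op}\simeq B$), feeding in $N=E_B(M)\in\lin A$, and then transporting back through $E_B E_A\simeq\id_{\lin A}$ --- is a clean formal derivation, and you correctly observe that it also delivers the needed fact $M(i)_{\ge 0}\in\lin B$ as a byproduct rather than assuming it. This is exactly the strategy one expects behind the cited lemma; the only residual work, which you flag yourself, is fixing the Koszul-duality conventions (the identification $(A^!)^{\op}\simeq B$, the mutual quasi-inverse property of $E_B$ and $E_A$ on $\lin$, and the sign/degree bookkeeping for $(-)^{\op}$), all of which are standard and contained in the references the paper relies on.
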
 

The next proposition is necessary for the proof of the main theorem of this section.

\begin{prop} \label{prop.Kd}
	If $B$ and $A=({B^!})^{\op}$ are both connected Koszul AS-Gorenstein algebras, then we have a duality
	\[F: \Db(\qgr B) \to \uCM^{\ZZ}A\]
	such that $F(\Omega ^ik(i)) \simeq A(i)_{\geq 0}$ for any $i\in \NN$.
\end{prop}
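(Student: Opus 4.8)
The plan is to produce the duality $F$ as a composite of several functors that are already under control, tracking the image of $\Omega^ik(i)$ at each stage. First I would recall the Buchweitz equivalence: since $A$ is AS-Gorenstein of some dimension $d_A$, there is a triangle equivalence $\uCM^{\ZZ}A\simeq\D_{\sg}^{\ZZ}(A)=\Db(\mod^{\ZZ}A)/\Kb(\proj^{\ZZ}A)$. The heart of the argument is to identify $\Db(\qgr B)$ with this Verdier quotient, up to a duality. The key input is Koszul duality in the form available in the excerpt: the duality $E_B=\bigoplus_i\Ext_B^i(-,k):\lin B\to\lin A$, which by Lemma \ref{lem.ED} intertwines the operation $M\mapsto\Omega^iM(i)$ on the $B$-side with $N\mapsto N(i)_{\ge0}$ on the $A$-side (and conversely). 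Applying this to $M=k\in\lin B$ gives precisely $E_B(\Omega^ik(i))\simeq k(i)_{\ge0}=A/A_{\ge1}$ shifted — wait, more precisely $E_B(k)=A/A_{\ge1}$ is the simple, so $E_B(\Omega^ik(i))\simeq (A/A_{\ge1})(i)_{\ge0}$; this is not yet $A(i)_{\ge0}$, so the passage to the singularity/CM category is essential, and there $A(i)_{\ge0}$ and $(A/A_{\ge1})(i)_{\ge0}[1]=\Omega^{-1}$-type shifts become related. I would instead route through the second half of Lemma \ref{lem.ED}: $E_B(M(i)_{\ge0})\simeq\Omega^iE_B(M)(i)$, and use $M=k$ on the $B$-side twisted appropriately.

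Concretely, the cleanest route: use the standard derived Koszul duality equivalence $\Db(\qgr B)\simeq\per\!\big((B^!)^{\op}\big)$-type statement, or rather work as follows. Since $B$ is connected Koszul AS-Gorenstein, $\qgr B$ has finite-dimensional Hom-spaces and $\Db(\qgr B)$ is generated by the "linear" objects; the BGG-type correspondence gives a contravariant equivalence between an appropriate subcategory of $\Db(\qgr B)$ built from $\lin B$ and a derived category on the Koszul dual side. The functor $E_B$ extends to a triangulated duality between suitable thick subcategories, and under it the line bundle $\mathcal O(i)=\pi B(i)$ on $\qgr B$ — more precisely its truncations — correspond to the modules $A(i)_{\ge0}$. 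To make $\Omega^ik(i)$ enter, recall that in $\Db(\qgr B)$ one has $\pi\Omega^ik(i)\simeq\pi k(i)$ up to the finite-length discrepancy $k(i)$ itself, which vanishes in $\qgr B$; so $\Omega^ik(i)$ as an object of $\mod^{\ZZ}B$ maps, under $\pi$, to $\mathcal O(i)[?]$ or to a genuine object of $\Db(\qgr B)$ that I can name. So the map of objects is: $\Omega^ik(i)\in\mod^{\ZZ}B\rightsquigarrow$ its image in $\Db(\qgr B)\rightsquigarrow$ under $F^{-1}$ to $A(i)_{\ge0}\in\uCM^{\ZZ}A$.

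The organizing steps are therefore: (1) set $A=(B^!)^{\op}$, note both are connected Koszul AS-Gorenstein; invoke $\uCM^{\ZZ}A\simeq\D_{\sg}^{\ZZ}(A)$ from \eqref{stable singular}. (2) Establish the Koszul-duality triangle equivalence relating $\Db(\qgr B)$ to $\D_{\sg}^{\ZZ}(A)^{\op}$ (or directly a duality $\Db(\qgr B)\to\uCM^{\ZZ}A$); this is where one cites the standard Koszul-BGG machinery — a contravariant version of Orlov/BGG — combined with the AS-Gorenstein hypothesis on $B$ which guarantees the needed finiteness (the resolution of $k$ over $B$ is finite-dimensional-step-wise and $B$ has finite injective dimension, so that the image of $\qgr B$ under $E_B$ lands in a bounded region). (3) Track $\Omega^ik(i)$ through this equivalence using Lemma \ref{lem.ED}: from $E_B(k(i)_{\ge0})\simeq\Omega^iE_B(k)(i)$ and $E_B(k)\simeq S$ (the simple $A$-module), deduce by the projective–injective/CM-approximation dictionary that the object corresponding to $\Omega^ik(i)$ is $A(i)_{\ge0}$; concretely one shows $E_B$ sends $\Omega^ik(i)$ to $E_B(k)(i)_{\ge0}=S(i)_{\ge0}$, and then in $\uCM^{\ZZ}A$ the module $S(i)_{\ge0}$ and $A(i)_{\ge0}$ agree because $S(i)_{\ge0}$ and $A(i)_{\ge0}$ differ by a finite-length module, which — for $d=1$, the case that will actually be used downstream — is killed in $\D_{\sg,0}^{\ZZ}$; in general one uses that the CM-approximation of $S(i)_{\ge0}$ is $A(i)_{\ge0}$.

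The main obstacle will be step (2): cleanly formulating and justifying the Koszul-duality duality $\Db(\qgr B)\to\uCM^{\ZZ}A$ at the level of triangulated categories (not just on the additive categories $\lin B$, $\lin A$), including checking it is essentially surjective and that the relevant bounded-derived-category conditions hold. This is exactly where the AS-Gorenstein hypothesis on \emph{both} $B$ and $A$ is needed, and where one must be careful that $E_B$ — a priori only defined on $\lin$ — extends to the right triangulated subcategories and descends through the Serre quotient $\mod^{\ZZ}B\to\qgr B$. I expect the rest (step (3), the object-tracking via Lemma \ref{lem.ED}) to be a routine diagram chase once step (2) is in place.
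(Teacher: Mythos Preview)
Your overall architecture is right and matches the paper: extend $E_B$ to a derived duality, show it descends to $\Db(\qgr B)\to\uCM^{\ZZ}A$, then track objects with Lemma~\ref{lem.ED}. The paper simply cites \cite[Proposition~4.5]{Mo1} for the extension $\overline{E}_B:\Db(\mod^{\ZZ}B)\to\Db(\mod^{\ZZ}A)$ and \cite[Theorem~5.3, Lemma~5.1]{Mo2} for the induced duality $F:\Db(\qgr B)\to\uCM^{\ZZ}A$, so you correctly identified step~(2) as the substantive external input rather than something to redo from scratch.

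Where you go wrong is step~(3). You write ``$E_B(k)=A/A_{\ge1}$ is the simple'', and this is false: under Koszul duality the simple and the free module are \emph{exchanged}. Concretely, $E_B(k)=\bigoplus_i\Ext^i_B(k,k)\simeq B^!$, and viewed as a right $A=(B^!)^{\op}$-module this is the regular module $A$, not the simple. Once you have $E_B(k)\simeq A$, Lemma~\ref{lem.ED} gives
\[
E_B(\Omega^ik(i))\simeq E_B(k)(i)_{\ge0}\simeq A(i)_{\ge0}
\]
directly in $\mod^{\ZZ}A$, with no need for CM-approximation, no appeal to finite-length corrections in $\D_{\sg}^{\ZZ}$, and no restriction to $d=1$. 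All of your subsequent contortions---passing to $S(i)_{\ge0}$, arguing it agrees with $A(i)_{\ge0}$ up to finite length, invoking $\D_{\sg,0}^{\ZZ}$---stem from this single miscalculation and should be deleted.
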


\begin{proof}
	The duality $E_B: \lin B \to \lin A$ extends to a duality $\overline{E}_B: \Db(\mod^{\ZZ} B) \to \Db(\mod^{\ZZ} A)$
	by \cite[Proposition 4.5]{Mo1}.
	Furthermore,  $\overline{E}_B$ induces a duality $F: \Db(\qgr B) \to \uCM^{\ZZ} A$ by \cite[Theorem 5.3 and Lemma 5.1]{Mo2}.
	The last isomorphism follows from Lemma \ref{lem.ED}.
\end{proof}

Recall that an element $f$ of a ring $S$ is called \emph{normal} if $Sf=fS$, and is called \emph{regular} is the multiplication maps $\cdot f:S\to S$ and $f\cdot:S\to S$ are injective.

Let $S=T(V)/(R)$ be a Koszul algebra and let $f \in S_2$ be a homogeneous regular normal element. Then $B:=S/(f) =T(V)/(R+kf)$ and there is the canonical surjection $\pi_S: S=T(V)/(R)\to T(V)/(R+kf)=B$. Moreover we have $S^!=T(V^*)/(R^\perp), B^!=T(V^*)/(R^\perp \cap f^\perp)$, so there is the canonical surjection $\pi_{B^!}:  B^!=T(V^*)/(R^\perp \cap f^\perp)\to T(V^*)/(R^\perp)=S^!$.
Then there is an element $w\in T(V^*)_2$ for which $R^\perp= (R^\perp \cap f^\perp)+kw$. By abuse of notation, let $w:=w+(R^\perp \cap f^\perp) \in B^!_2$. The following is known.

\begin{prop} \label{prop.KD} Let $S,f,B$ be as above.
	\begin{enumerate}
		\item  \textnormal{\cite[Theorem 1.2]{ST}} $B=S/(f)$ is Koszul.
		\item \textnormal{\cite[Corollary 1.4]{ST}} $w \in B^!$ is regular and normal such that $B^!/(w)\simeq S^!$.
	\end{enumerate}
\end{prop}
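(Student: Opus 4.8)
The statement to prove is Proposition \ref{prop.KD}, that for a Koszul algebra $S = T(V)/(R)$ and a homogeneous regular normal element $f \in S_2$, the quotient $B = S/(f)$ is Koszul and that the dual element $w \in B^!$ is regular and normal with $B^!/(w) \simeq S^!$. Since these are cited verbatim from Smith--Tate \cite{ST}, the proof should really amount to extracting the two assertions from that source, but let me describe how I would argue them from scratch if a self-contained treatment were wanted.

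For part (1), the plan is to use the numerical characterization of Koszulness via Hilbert series together with the standard short exact sequence coming from multiplication by a regular normal element. Because $f$ is regular and normal of degree $2$, there is a short exact sequence of graded $S$-bimodules $0 \to S(-2) \xrightarrow{\cdot f} S \to B \to 0$ (using normality to twist the bimodule structure appropriately on the left); this gives $H_B(t) = (1-t^2) H_S(t)$. Since $S$ is Koszul, $H_S(t) H_{S^!}(-t) = 1$. One then checks that $B$ is quadratic — which is automatic here since $B = T(V)/(R + kf)$ with $R + kf \subseteq T(V)_2$ — and computes $B^! = T(V^*)/(R+kf)^\perp = T(V^*)/(R^\perp \cap f^\perp)$. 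The key step is to show the Koszul-dual numerical identity $H_B(t) H_{B^!}(-t) = 1$, which forces Koszulness once one knows $B$ has a "linear up to the relevant degree" resolution; the cleanest route is the Priddy/Backelin criterion or, following \cite{ST}, a direct lifting of the linear resolution of $k$ over $S$ to one over $B$ using the Koszul complex of $f$. I expect this lifting argument — producing the minimal linear free resolution of $k$ over $B$ from that over $S$ — to be the technical heart, and it is exactly where I would simply cite \cite[Theorem 1.2]{ST}.

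For part (2), I would argue as follows. From $R^\perp = (R^\perp \cap f^\perp) + kw$ one has a surjection $\pi_{B^!} : B^! = T(V^*)/(R^\perp \cap f^\perp) \twoheadrightarrow T(V^*)/(R^\perp) = S^!$ whose kernel is the two-sided ideal generated by the image of $w$; this gives $B^!/(w) \simeq S^!$ immediately once $(w)$ is shown to be the full kernel, which is a degree-by-degree comparison using that $R^\perp/(R^\perp \cap f^\perp)$ is one-dimensional. Normality of $w$ in $B^!$ is dual to normality of $f$ in $S$: the normalizing automorphism of $f$ induces, via the quadratic-duality pairing on $V \otimes V$, a normalizing automorphism for $w$; this is a linear-algebra computation on the two-dimensional space spanned by $\{f, \ldots\}$ versus $\{w\}$. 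Finally, regularity of $w$ follows from comparing Hilbert series: $H_{B^!}(t) = H_{S^!}(t)/(1 - t^2)$ by part (1) and the Koszul duality identity, and a normal element $w$ of degree $2$ with $H_{B^!/(w)}(t) = (1-t^2) H_{B^!}(t)$ must be regular (the surjection $B^!(-2) \to wB^!$ is then forced to be injective by dimension count in each degree).

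The main obstacle, if one insists on a genuinely self-contained proof rather than citing \cite{ST}, is part (1): establishing that $B = S/(f)$ is Koszul. The Hilbert series identity is necessary but not by itself sufficient for Koszulness, so one genuinely needs the resolution-lifting argument (or an appeal to a criterion such as the existence of a noncommutative Gröbner basis with quadratic leading terms, which is not obviously available in this generality). Since the excerpt explicitly flags these as Propositions \ref{prop.KD}(1) and (2) quoted from \cite[Theorem 1.2]{ST} and \cite[Corollary 1.4]{ST}, the proof as it will appear in the paper is simply: "This is \cite[Theorem 1.2]{ST} and \cite[Corollary 1.4]{ST}." — and the substance above is the sketch one would give to make the citations transparent.
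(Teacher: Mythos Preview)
Your assessment is correct: the paper provides no proof of this proposition at all---it is stated with citations to \cite[Theorem 1.2]{ST} and \cite[Corollary 1.4]{ST} embedded directly in the statement, and the text moves immediately to the next definition. Your sketch of the underlying arguments (Hilbert series, duality of normality, regularity via dimension count) is accurate and goes well beyond what the paper offers, but as far as comparison with the paper's own proof goes, there is nothing to compare.
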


\begin{dfn}\label{dfn.qh}
	An $\NN$-graded algebra $B$ is called a \emph{noncommutative quadric hypersurface} of dimension $d-1$ if $B$ has a form $B=S/(f)$, where
	\begin{itemize}
		\item $S$ is a connected Koszul AS-regular algebra of dimension $d$, and 
		\item $f \in S$ is a homogeneous regular normal element of degree $2$.
	\end{itemize}
\end{dfn}

Notice that we do \emph{not} assume that $f$ is central as in \cite{SV}.

\begin{prop}\label{prop.KdGor} Let $B=S/(f)$ be a noncommutative quadric hypersurface of dimension $d-1$, and let $A=(B^{!})^{\op}$.
	\begin{enumerate}
		\item  $B$ is a connected Koszul AS-Gorenstein algebra of dimension $d-1$ and Gorenstein parameter $d-2$.
		\item $(S^!)^{\op}$ is a finite dimensional connected Koszul AS-Gorenstein algebra of dimension $0$ and  Gorenstein parameter $-d$
		\item There exists a regular normal element $w \in A$ of degree $2$ such that $A/(w)=(S^!)^{\op}$.
		\item  $A$ is a connected Koszul AS-Gorenstein algebra of dimension $1$ and  Gorenstein parameter $2-d$. 
	\end{enumerate}
\end{prop}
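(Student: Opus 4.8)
The plan is to prove (1)--(4) in order, with (1) and (2) obtained directly from known structural results and (4) deduced from (2) and (3) via a normal-element argument. For (1), I would invoke Proposition \ref{prop.KD}(1), which gives that $B = S/(f)$ is Koszul. Since $S$ is AS-regular of dimension $d$ and $f$ is a regular normal element of degree $2$, the standard fact that quotienting an AS-regular algebra by a regular normal element of degree $\ell$ yields an AS-Gorenstein algebra of dimension one less with Gorenstein parameter shifted by $\ell$ applies: $S$ has Gorenstein parameter $d$ (equivalently $\Ext^d_S(k,S) \simeq k(d)$ since $S$ is connected Koszul AS-regular of dimension $d$), so $B$ is AS-Gorenstein of dimension $d-1$ with Gorenstein parameter $d - 2$. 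This is the connected graded specialization of the classical change-of-rings behavior of $a$-invariants; I would cite it from the noncommutative quadric hypersurface literature (e.g.\ \cite{SV}, \cite{ST}, \cite{MU3}).

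For (2), I would apply the same reasoning twice. By Proposition \ref{prop.KD}(2) (together with Proposition \ref{prop.KD}(1) applied on the dual side), $S^! = B^!/(w)$ where $w \in B^!_2$ is regular and normal. Now $B^!$ is the Koszul dual of $B$, which by (1) is Koszul AS-Gorenstein of dimension $d-1$, so $B^!$ is Koszul AS-Gorenstein of dimension $1$ with an appropriate Gorenstein parameter; alternatively, and more cleanly, $S^!$ is the Koszul dual of the Koszul AS-regular algebra $S$ of dimension $d$, hence $S^!$ is a finite-dimensional Frobenius (in fact Koszul AS-Gorenstein of dimension $0$) algebra, with Gorenstein parameter $-d$ (the sign and magnitude come from the fact that the Nakayama automorphism / the socle of $S^!$ sits in degree $d$, dual to $\Ext^d_S(k,k) = S^!_d$). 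Passing to the opposite algebra $(S^!)^{\op}$ preserves dimension $0$, Koszulness, and the Gorenstein parameter, since by \eqref{Ext-S 2} $A$ is AS-Gorenstein iff $A^{\op}$ is, with the same parameters (up to the Nakayama permutation, which is trivial in the connected case). This gives (2).

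For (3), I would take $w \in B^!$ from Proposition \ref{prop.KD}(2) and transport it along the anti-isomorphism $B^! \to A = (B^!)^{\op}$; regularity and normality are both preserved under passing to the opposite ring, and the degree is unchanged, so $w$ (viewed in $A$) is a regular normal element of degree $2$ with $A/(w) \simeq (B^!/(w))^{\op} = (S^!)^{\op}$, which is precisely (2)'s algebra. Finally, for (4): $A$ is Koszul because it is the opposite of the Koszul algebra $B^!$, and it is connected $\NN$-graded. To see it is AS-Gorenstein of dimension $1$ with Gorenstein parameter $2 - d$, I would run the change-of-rings argument in reverse relative to (3): $A/(w) = (S^!)^{\op}$ is AS-Gorenstein of dimension $0$ with Gorenstein parameter $-d$ by (2), and $w$ is a regular normal element of $A$ of degree $2$; the standard lifting statement (the converse direction of the same classical result, e.g.\ via the Rees-type argument or directly as in \cite{ST}) shows that $A$ itself is AS-Gorenstein of dimension $0 + 1 = 1$ with Gorenstein parameter $-d + 2 = 2 - d$.

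The main obstacle I anticipate is bookkeeping of the Gorenstein parameter signs and the precise form of the change-of-rings statement in the noncommutative connected graded setting: one must be careful whether "Gorenstein parameter $p$" refers to $\Ext^d_A(S,A) \simeq S^{\op}(p)$ or to the $a$-invariant (which differ by a sign), and the normal element $w$ being non-central means one should quote the version of the lifting/descent lemma that does not assume centrality (this is exactly why Proposition \ref{prop.KD}, due to \cite{ST}, is invoked rather than the classical commutative statement). Once the conventions are pinned down consistently with Definition \ref{define AS-Gorenstein}, each of (1)--(4) is a short deduction, so no lengthy computation should be needed.
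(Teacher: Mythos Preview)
Your proposal is correct and follows essentially the same route as the paper: (1) and (4) via the Rees-type change-of-rings lemma for regular normal elements (the paper cites \cite[Proposition 3.4(b)]{Lev}), (2) via the standard Koszul-dual-of-AS-regular-is-Frobenius result (the paper cites \cite[Proposition 5.10]{Sm}), and (3) directly from Proposition \ref{prop.KD}(2) transported to the opposite ring. Your cautionary remark about sign conventions and non-central normal elements is well placed but does not reflect a gap---once the Rees lemma is quoted in the form valid for regular normal (not necessarily central) elements, each step is immediate, exactly as the paper's four one-line proofs indicate.
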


\begin{proof}
	(1) follows from Rees-Lemma (e.g.\ \cite[Proposition 3.4(b)]{Lev}).
	(2) follows from \cite[Proposition 5.10]{Sm}.
	(3) follows from Proposition \ref{prop.KD}.
	(4) follows from (3) and Rees-Lemma.
\end{proof}

Let $B=S/(f)$ be a noncommutative quadric hypersurface and let $A=(B^!)^{\op}$.
Since $A$ has a regular normal element $w \in A_2$, there exists a unique graded algebra automorphism $\psi_w$ of $A$ such that $aw = w\psi_w (a)$ for $a \in  A$.
Thus the multiplicative subset $\{w^i\mid i\ge0\}$ of $A$ satisfies the Ore condition, and we have a localization $A[w^{-1}]$.
An element of  $A[w^{-1}]$ is denoted as
$aw^{-i}$ with $a\in A, i\in \NN$.
Note that the $\ZZ$-graded algebra structure of $A[w^{-1}]$ is given by the following,
where $a,a'\in A$ and $i,j \in \NN$.

\begin{itemize}
	\item (addition) $aw^{-i}+a'w^{-j}=(aw^j+a'w^i)w^{-i-j}$,
	\item (multiplication) $(aw^{-i})(a'w^{-j})=a\psi_w^i(a')w^{-i-j}$,
	\item (grading) $\deg(aw^{-i})=\deg a-2i$.
\end{itemize}

By \cite[Proof of Proposition 4.6]{MU3}, the functor
$\mod^{\ZZ} A \to \mod^{\ZZ} A[w^{-1}]; M \mapsto M\otimes_A A[w^{-1}]$ induces an equivalence $\qgr A \simeq \mod^{\ZZ} A[w^{-1}]$. Thus the graded total quotient ring $Q_A$ defined in \eqref{define Q} is isomorphic to $A[w^{-1}]$.
We define
\begin{equation}\label{define C(A)}
 C(B):=A[w^{-1}]_0\simeq (Q_A)_0.
 \end{equation}
Since $A$ is generated by elements in degree $1$, we can choose $q=1$ in Theorem \ref{prop.Q4}(1). Thus $Q_A\simeq A[w^{-1}]$ is strongly graded, and we obtain the equivalences
\begin{align} \label{eq.qgrKos}
	\qgr A \simeq \mod^{\ZZ} A[w^{-1}] \simeq \mod C(B)
\end{align}
(see Corollary \ref{cor.qql}(1), \cite[Lemma 4.13]{MU3}). In particular, they induce an equivalence
\[G: \Db(\qgr A) \xrightarrow{\sim} \Db(\mod C(B)).\]
In addition, the following holds, where the equivalence of (3), (4), and (5) also follows from Corollary \ref{cor.qql}(1).
\begin{prop}\label{prop.gldimq}
	{\cite[Theorem 5.5]{MU3}}
	Let $B=S/(f)$ be a noncommutative quadric hypersurface of dimension $d-1$ with $d\geq2$, and let $A=(B^!)^{\op}$. 	
	Then the following are equivalent.
	\begin{enumerate}
		\item $\qgr B$ has finite global dimension.
		\item $\gldim(\qgr B)=d-2$.
		\item $C(B)$ is a semisimple algebra.
		\item $\qgr A$ has finite global dimension.
		\item $\gldim(\qgr A)=0$.
	\end{enumerate}
\end{prop}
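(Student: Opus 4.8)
The statement is, in fact, \cite[Theorem 5.5]{MU3}; here is how I would recover it from the machinery assembled above. The plan is to treat the three clusters $\{(3),(4),(5)\}$, $\{(1),(2)\}$, and then bridge them via Koszul duality. For $(3)\Leftrightarrow(4)\Leftrightarrow(5)$: by \eqref{eq.qgrKos} there is an equivalence $\qgr A\simeq\mod C(B)$, and since $A$ is connected and generated in degree $1$ one may take $q=1$ in Theorem~\ref{prop.Q4}(1), so that $C(B)=(Q_A)_0$ is precisely the algebra $\Lambda$ of Corollary~\ref{cor.qql}; hence $C(B)$ is a finite-dimensional selfinjective algebra and $\gldim\qgr A=\gldim C(B)$. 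A finite-dimensional selfinjective algebra has finite global dimension if and only if it is semisimple, if and only if its global dimension is $0$ (over a selfinjective algebra a module of finite projective dimension is projective, so $\gldim<\infty$ forces all simples to be projective). This yields $(3)\Leftrightarrow(4)\Leftrightarrow(5)$.

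For $(1)\Leftrightarrow(2)$: by Proposition~\ref{prop.KdGor}(1), $B$ is AS-Gorenstein of dimension $d-1$ with Gorenstein parameter $d-2\ge0$, and $\omega_B\simeq B(2-d)$ by Proposition~\ref{prop.lc1}. The bound $\gldim\qgr B\ge d-2$ is unconditional: via Proposition~\ref{H_m and qgr} one finds $\Ext^{d-2}_{\qgr B}(\pi B,\pi B(2-d))\simeq\H^{d-1}_{\fm}(B)_{2-d}\neq0$. On the other hand, if $\gldim\qgr B$ is finite then $\qgr B$ is smooth, so by local duality for $B$ (Theorem~\ref{local duality}) and invertibility of $\omega_B$ (Proposition~\ref{prop.d3}) the category $\Db(\qgr B)$ has a Serre functor $-\otimes_B\omega_B[d-2]$, and Serre duality then gives $\Ext^n_{\qgr B}(X,Y)\simeq D\Ext^{d-2-n}_{\qgr B}(Y,X\otimes_B\omega_B)=0$ for every $n>d-2$, whence $\gldim\qgr B\le d-2$. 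Since $d\ge2$ this proves $(1)\Leftrightarrow(2)$.

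The bridge $(1)\Leftrightarrow(3)$ rests on three inputs. First, by Proposition~\ref{prop.KdGor}(4) the algebra $A=(B^!)^{\op}$ is itself connected Koszul AS-Gorenstein of dimension $1$, so Proposition~\ref{prop.Kd} gives a duality $F\colon\Db(\qgr B)\xrightarrow{\sim}\uCM^{\ZZ}A=\D_{\sg}^{\ZZ}(A)$ with $F(\Omega^ik(i))\simeq A(i)_{\ge0}$; moreover $\Db(\qgr B)$, hence also $\D_{\sg}^{\ZZ}(A)$, is proper (Ext-finite), because $B$ satisfies $\chi$ and has finite local cohomological dimension by Theorem~\ref{local duality}. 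Second, $A$ satisfies \textup{(A1)}--\textup{(A3)} of Section~\ref{section: tilting theory} (connected, hence basic and ring-indecomposable; $\gldim A_0=0$; Gorenstein parameter $2-d\le0$), so Theorem~\ref{a and tilting}(2) gives $\D_{\sg,0}^{\ZZ}(A)=\thick V\simeq\per\Gamma$ with $\Gamma$ a finite-dimensional Iwanaga-Gorenstein algebra. Third, there is a Verdier localization sequence $\D_{\sg,0}^{\ZZ}(A)\hookrightarrow\D_{\sg}^{\ZZ}(A)\twoheadrightarrow\D_{\sg}^{\ZZ}(Q_A)\simeq\umod C(B)$, obtained from $\D_{\sg}^{\ZZ}(A)=\Db(\mod^{\ZZ}A)/\Kb(\proj^{\ZZ}A)$, $\D_{\sg,0}^{\ZZ}(A)=\mathscr{C}_A/\Kb(\proj^{\ZZ}A)$, and the identifications $\Db(\mod^{\ZZ}A)/\mathscr{C}_A\simeq\Db(\qgr A)/\per(\qgr A)\simeq\D_{\sg}^{\ZZ}(Q_A)$. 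Now assume $(3)$: then $\umod C(B)=0$, so $\D_{\sg}^{\ZZ}(A)=\D_{\sg,0}^{\ZZ}(A)\simeq\per\Gamma$, and Corollary~\ref{a and tilting 2}(2) gives $\gldim\Gamma<\infty$, so $\per\Gamma=\Db(\mod\Gamma)$ is smooth and proper; applying $F$, $\Db(\qgr B)\simeq\Db(\mod\Gamma^{\op})$ is smooth and proper, and smoothness of $\Db(\qgr B)$ amounts to $\gldim\qgr B<\infty$ ($=d-2$ by the previous step). This is $(3)\Rightarrow(1)$. Conversely, assume $(1)$: then $\Db(\qgr B)$ is smooth and proper, hence so is $\D_{\sg}^{\ZZ}(A)$ via $F$; its complementary piece $\D_{\sg,0}^{\ZZ}(A)=\thick V$ is generated by a single object, hence admissible in the smooth proper $\D_{\sg}^{\ZZ}(A)$, so $\D_{\sg}^{\ZZ}(A)=\D_{\sg,0}^{\ZZ}(A)\perp\umod C(B)$ is a genuine semiorthogonal decomposition and $\umod C(B)$ is smooth and proper, in particular Ext-finite. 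But for a finite-dimensional selfinjective $C(B)$ the category $\umod C(B)=\D_{\sg}(C(B))$ is Ext-finite only if $C(B)$ is semisimple: otherwise a non-projective module $M$ has $\underline{\End}(M)\neq0$ and, the syzygy functor being invertible on $\umod C(B)$, the groups $\underline{\Hom}(M,M[n])$ do not vanish for $|n|\gg0$, contradicting Ext-finiteness. Hence $(1)\Rightarrow(3)$, completing the circle.

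The main obstacle is $(1)\Rightarrow(3)$: one must convert non-semisimplicity of $C(B)$ — a statement about the Verdier \emph{quotient} $\umod C(B)$ of $\D_{\sg}^{\ZZ}(A)$ — into an obstruction to smoothness of the abelian category $\qgr B$ far away on the other side of the Koszul duality. The route above routes this through the (nonformal) equivalence $F$ of Proposition~\ref{prop.Kd} and the identification $\D_{\sg}^{\ZZ}(A)/\D_{\sg,0}^{\ZZ}(A)\simeq\umod C(B)$; granting these, the remaining steps are formal, using Theorem~\ref{a and tilting}, Corollary~\ref{a and tilting 2}, and the stability of smoothness/properness (saturation) under triangle equivalences and under passage to admissible subcategories. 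In \cite[Section~5]{MU3} the bookkeeping is organized somewhat differently, and one there also has the option of reading $(1)\Leftrightarrow(3)$ off the Orlov-type semiorthogonal decompositions of Section~\ref{section: orlov} applied to both $A$ and $B$ in parallel, matching their pieces through $F$.
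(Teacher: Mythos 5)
The paper itself gives no proof here --- it simply cites \cite[Theorem 5.5]{MU3} --- so yours is a genuine re-derivation. Your treatment of $(3)\Leftrightarrow(4)\Leftrightarrow(5)$ (via $\qgr A\simeq\mod C(B)$ with $C(B)$ finite dimensional selfinjective) and of $(1)\Leftrightarrow(2)$ (unconditional lower bound from local cohomology plus a Serre-duality upper bound) is correct, and the overall strategy of routing everything through $F$, the Orlov-type decompositions of Section~\ref{section: orlov}, and the quotient $\umod^{\ZZ}Q_A\simeq\umod C(B)$ is indeed what underlies \cite{MU3}.

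However, the implication $(1)\Rightarrow(3)$ has a genuine gap at its key step. You assert that $\D_{\sg,0}^{\ZZ}(A)=\thick V$, being classically generated by a single object, is automatically admissible inside the smooth proper $\D_{\sg}^{\ZZ}(A)$. This inference is false in general: for an elliptic curve $E$, $\thick\mathcal{O}_E$ is generated by a single object inside the smooth proper category $\Db(\coh E)$, yet it is not admissible, since $\Db(\coh E)$ has no nontrivial semiorthogonal decompositions. To make your route work you would need to know that $\thick V\simeq\per\Gamma$ is \emph{saturated}, i.e.\ that $\gldim\Gamma<\infty$; but the only source for this in the paper, Corollary~\ref{a and tilting 2}(2), assumes $\qgr A$ is semisimple, which is exactly condition $(3)$ --- so appealing to it here is circular. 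A secondary issue: the concluding Ext-infiniteness claim (``$\Omega$ invertible, hence $\underline{\Hom}(M,M[n])\neq0$ for $|n|\gg0$'') does not follow as stated, since invertibility of $\Omega$ only gives $\underline{\Hom}(\Omega^{-n}M,\Omega^{-n}M)\neq0$; to get the periodic nonvanishing of $\underline{\Hom}(M,M[n])$ one needs some power of $\Omega$ to fix a nonprojective indecomposable, which requires a separate argument. So while $(3)\Rightarrow(1)$ and the easy equivalences go through, $(1)\Rightarrow(3)$ requires more care than ``generated by one object, hence admissible''.
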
  

\begin{ex} \label{ex.cqh}
	Let $S = k[x_1, \dots, x_n]$, $f=x_1^2 + \cdots + x_n^2 \in S_2$, $B=S/(f)$, and $A=(B^!)^{\op}$.
	Then $A$ is isomorphic to $k\langle x_1, \dots, x_n\rangle/(x_jx_i+x_ix_j, x_n^2-x_i^2 \mid 1\leq i, j\leq n, i\neq j)$ and $w=x_1^2=\cdots=x_n^2 \in A_2$ is a regular central element such that $A/(w) \simeq (S^!)^{\op}$.
	Furthermore, one can check that 
	\begin{align*}
C(B) &\simeq k\langle t_1, \dots, t_{n-1}\rangle/(t_jt_i+t_it_j,\, t_i^2-1 \mid 1\leq i, j\leq n-1, i\neq j)\simeq \begin{cases} \rM_{2^{(n-1)/2}}(k) &\text{if $n$ is odd},\\ \rM_{2^{(n-2)/2}}(k)^2 &\text{if $n$ is even} \end{cases}
\end{align*}
	(see e.g.\ \cite{Lee}). By \eqref{eq.qgrKos}, we have
	\[  \qgr A \simeq \mod C(B) \simeq
	\begin{cases} \mod k &\text{if $n$ is odd},\\ \mod k^2 &\text{if $n$ is even}. \end{cases}
	\]
\end{ex}

\subsection{Tilting theory for noncommutative quadric hypersurfaces}
The following is the main result of this section. 

\begin{thm}\label{thm.nqh}
	Let $B=S/(f)$ be a noncommutative quadric hypersurface of dimension $d-1$ with $d\geq 2$, and let $A=(B^!)^{\op}$.
	Assume that $\qgr B$ has finite global dimension (or, equivalently, $C(B)$ is semisimple).
	\begin{enumerate}
	\item There exists a duality $F: \Db(\qgr B) \to \uCM^{\ZZ}A$ such that $F(\Omega ^ik(i)) \simeq A(i)_{\geq 0}$ for any $i\in \NN$.
	\item $\uCM^{\ZZ} A$ has a tilting object $\bigoplus_{i=1}^{d-1} A(i)_{\geq 0}$, and $\Db(\qgr B)$ has a tilting object $\bigoplus_{i=1}^{d-1}\Omega^ik(i)$. Moreover, they correspond to each other via the duality $F$ in (1).
	\item Take an arbitrary direct sum decomposition $\Omega^{d-1}k(d-1)=\bigoplus_{j=1}^{\ell}X_j^{\oplus m_j}$ in $\qgr B$, where $X_j$'s are pairwise non-isomorphic indecomposable. Then we have a full strong exceptional collection in $\Db(\qgr B)$\[(X_{\ell}, X_{\ell-1}, \dots, X_1, \Omega^{d-2}k(d-2), \dots, \Omega^1k(1)).\]
	\item Let $\Lambda := \End_{\Db(\qgr B)}(\bigoplus_{i=1}^{d-1}\Omega^ik(i))$. Let
	$Q=Q_{A}$ be the graded total quotient ring of $A$. 
	Then we have isomorphisms of $k$-algebras
		\[
		\Lambda \simeq \End_{\uCM^{\ZZ}A}(\bigoplus_{i=1}^{d-1} A(i)_{\geq 0})^{\op}
		\simeq
		\begin{bmatrix}
			k &0&\cdots  &\cdots  &0\\
			A_1&k &\ddots  &&\vdots \\
			\vdots&\vdots&\ddots  &\ddots&\vdots \\
			A_{d-3}&A_{d-4}&\cdots&k &0\\
			Q_{d-2}&Q_{d-3}&\cdots&Q_1&Q_0
		\end{bmatrix}^{\op},
		\]
		and we have triangle equivalences
		\[\Db(\qgr B) \simeq(\uCM^{\ZZ} A)^{\op} \simeq \Db(\mod \Lambda).\]
	\end{enumerate}
\end{thm}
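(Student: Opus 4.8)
\textbf{Proof strategy for Theorem \ref{thm.nqh}.}
The plan is to deduce everything from the combination of (a) the Koszul-duality machinery assembled in Section \ref{sec.nqh}, (b) Proposition \ref{prop.KdGor}, which identifies $A=(B^!)^{\op}$ as a connected Koszul AS-Gorenstein algebra of dimension $1$ with Gorenstein parameter $p_1=2-d\le 0$, and (c) the main existence theorem, Theorem \ref{a and tilting}, specialized to this $A$. For part (1), I would simply invoke Proposition \ref{prop.Kd}: both $B$ and $A$ are connected Koszul AS-Gorenstein algebras by Proposition \ref{prop.KdGor}(1)(4), so the duality $F:\Db(\qgr B)\to\uCM^{\ZZ}A$ exists and satisfies $F(\Omega^ik(i))\simeq A(i)_{\ge0}$. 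Note that here $\uCM^{\ZZ}A=\uCM_0^{\ZZ}A$ and $\D_{\sg}^{\ZZ}(A)=\D_{\sg,0}^{\ZZ}(A)$ by Corollary \ref{a and tilting 2}(1), because $C(B)$ being semisimple means $\qgr A\simeq\mod C(B)$ is semisimple (Proposition \ref{prop.gldimq}).

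For part (2): the algebra $A$ satisfies (A1) since it is connected (hence ring-indecomposable and basic) Koszul AS-Gorenstein of dimension $1$; it satisfies (A2) since $A_0=k$; and it satisfies (A3) since $p_1=2-d\le0$. Moreover $A$ is generated in degree $1$, so by the remark after Theorem \ref{prop.Q4} we may take $q=1$. Plugging $p_1=2-d$ and $q=1$ into \eqref{define V}, the silting/tilting object $V$ becomes
\[
V=\bigoplus_{i=1}^{-p_1+q}A(i)_{\ge0}=\bigoplus_{i=1}^{d-1}A(i)_{\ge0},
\]
which by Theorem \ref{a and tilting}(2) is a tilting object in $\D_{\sg,0}^{\ZZ}(A)\simeq\uCM^{\ZZ}A$. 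Applying the duality $F$ of part (1) and using $F(\Omega^ik(i))\simeq A(i)_{\ge0}$, the preimage $\bigoplus_{i=1}^{d-1}\Omega^ik(i)$ is a tilting object in $\Db(\qgr B)$, proving (2). For part (3): by Corollary \ref{a and tilting 2}(3), since $A$ is connected the quiver of $A_0=k$ is trivially acyclic, so there is an ordering of the indecomposable summands of $V$ that forms a full strong exceptional collection in $\D_{\sg}^{\ZZ}(A)$. The description in Proposition \ref{information on V}(3)--(4) (with $\widetilde{\II}^1_A$ indexing $A(i)_{\ge0}$ for $1\le i\le d-2$ and $\widetilde{\II}^2_A$ indexing $A(d-1)_{\ge0}$, using $q=1$) shows that placing the $A(i)_{\ge0}$ in increasing order of $i$ (equivalently, $\Omega^ik(i)$ in increasing $i$ in $\Db(\qgr B)$ after dualizing, which reverses the order and passes to summands of $\Omega^{d-1}k(d-1)$) yields the stated collection; the decomposition of $\Omega^{d-1}k(d-1)=A(d-1)_{\ge0}$ into indecomposables corresponds to the block decomposition of the semisimple ring $C(B)\simeq\End^{\ZZ}_Q(A(d-1)_{\ge0}\otimes_AQ)$.

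For part (4): the first isomorphism $\Lambda\simeq\End_{\uCM^{\ZZ}A}(\bigoplus_{i=1}^{d-1}A(i)_{\ge0})^{\op}$ is immediate from the duality $F$ of part (1) sending $\bigoplus\Omega^ik(i)$ to $\bigoplus A(i)_{\ge0}$. The explicit matrix form follows from Proposition \ref{information on V}(3) with $q=1$: for $1\le i,j\le d-1$ the $(i,j)$-entry of $\End^{\ZZ}_A(V)$ is $\Hom_A^{\ZZ}(A(i)_{\ge0},A(j)_{\ge0})$, which equals $A_{j-i}$ when $j\le d-2$ (i.e.\ $(j)\in\widetilde{\II}^1_A$, using Proposition \ref{j-i}(2) with $a_1=d-2$), equals $Q_{j-i}$ when $j=d-1$ (Proposition \ref{j-i}(1)), and vanishes when $i=d-1$, $j\le d-2$ (Proposition \ref{j-i}(3)); upper-triangularity over the diagonal $k$'s comes from $A_{<0}=0$ and $Q_0=C(B)$ by \eqref{define C(A)}. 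Taking the opposite ring gives the displayed $\Lambda$. Finally, the triangle equivalences follow: $\Db(\qgr B)\simeq(\uCM^{\ZZ}A)^{\op}$ from $F$, and $\uCM^{\ZZ}A\simeq\per\Gamma$ with $\Gamma=\End_{\uCM^{\ZZ}A}(V)$ from Theorem \ref{a and tilting}(2); since $\Gamma$ has finite global dimension by Corollary \ref{a and tilting 2}(2), $\per\Gamma=\Db(\mod\Gamma)$, and dualizing/opposing gives $\Db(\qgr B)\simeq\Db(\mod\Lambda)$ with $\Lambda=\Gamma^{\op}$. I do not anticipate a serious obstacle here: the entire theorem is essentially a dictionary translation of Theorem \ref{a and tilting}, Proposition \ref{information on V}, and Corollary \ref{a and tilting 2} through the Koszul duality $F$, with the only bookkeeping care needed being the order-reversal under the duality in part (3) and the correct identification $Q_0\simeq C(B)$ in the matrix in part (4).
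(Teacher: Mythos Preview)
Your proposal is correct and follows essentially the same route as the paper's proof: part (1) via Propositions \ref{prop.Kd} and \ref{prop.KdGor}, part (2) by verifying (A1)--(A3) with $q=1$ and applying Theorem \ref{a and tilting}(2), part (3) via Corollary \ref{a and tilting 2}(3), and part (4) via the duality $F$ together with the explicit description of $\Gamma$ and Corollary \ref{a and tilting 2}(2) for finite global dimension. The only cosmetic difference is that the paper cites Example \ref{End of connected case} for the matrix form in (4), whereas you unpack that example by going directly through Proposition \ref{information on V}(3) and Proposition \ref{j-i}; the content is identical.
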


\begin{proof}
	(1) This follows from Propositions \ref{prop.Kd} and \ref{prop.KdGor}.
	
	(2) Since $\qgr B$ has finite global dimension, we have
	$\gldim(\qgr A)=0$ by Proposition \ref{prop.gldimq}. Thus we have $\uCM^{\ZZ} A = \uCM^{\ZZ}_0 A$ by Proposition \ref{prop.CM0}.
	Since Gorenstein parameter of $A$ is $2-d \leq 0$,
	and $\proj^{\ZZ}Q=\add Q$ holds (i.e.\ we can choose $q=1$), it follows that $\uCM^{\ZZ}A$ has a tilting object $\bigoplus_{i=1}^{d-1} A(i)_{\geq 0}$ by Theorem \ref{a and tilting}. 
	Since $F(\Omega^ik(i)) \simeq A(i)_{\geq 0}$ holds for any $i\in \NN$ by (1), $\Db(\qgr B)$ has a tilting object $\bigoplus_{i=1}^{d-1}\Omega^ik(i)$.
	
	(3) This follows from Corollary \ref{a and tilting 2}(3).
	
	(4) The isomorphisms follow from (2) and Example \ref{End of connected case}.
Since $\Lambda$ has finite global dimension, we get $\Db(\qgr B) \simeq (\uCM^{\ZZ} A)^{\op} \simeq \per \Lambda=\Db(\mod \Lambda)$.
\end{proof}

\begin{ex}\label{ex.coco}
	We consider the case $n=3$ in Example \ref{ex.cqh}, that is,
	the case where $S=k[x,y,z]$, $f=x^2+y^2+z^2 \in S_2$, and $B=S/(f)$.
	Then $S$ is a connected Koszul AS-regular algebra of dimension $3$ and $f$ is a regular central element of $S$. It is easy to see that $A=(B^!)^{\op} \simeq k\langle x, y, z\rangle/(yx+xy, zx+xz, zy+yz, x^2-y^2, x^2-z^2)$ 
	is a connected Koszul AS-Gorenstein algebra of dimension $1$
	and Gorenstein parameter $-1$, and $w=x^2=y^2=z^2 \in A_2$ is a regular central element such that $A/(w)\simeq (S^!)^{\op}$.
	As seen in Example \ref{ex.cqh}, $C(B)\simeq \rM_2(k)$ is a semisimple algebra, so $\gldim(\qgr B)<\infty$ by Proposition \ref{prop.gldimq}.
	Thus $\Db(\qgr B) \simeq(\uCM^{\ZZ} A)^{\op}$ has a tilting object by Theorem \ref{thm.nqh}. Now $V=A(1)_{\geq 0} \oplus A(2)_{\geq 0}$ is a tilting object of $\uCM^{\ZZ} A$.
	By Theorem \ref{information on V}(3), we have $\dim_k\End_{\uCM^{\ZZ} A}(A(1)_{\geq 0})=1$.
	One can calculate that
	\[ A(2)_{\geq 0} \simeq L^1\oplus L^2, \]
	where 
	\begin{align*}
		L^1 = ((yz+\xi xz)A+(xy-\xi x^2)A) (2),\quad  L^2 =  ((yz-\xi xz)A+(-xy-\xi x^2)A) (2),&&\xi=\sqrt{-1}.
	\end{align*}
	Moreover, we see that $L^1$ and $L^2$ are isomorphic via 
	\[
	L^1 \to L^2;\quad yz+\xi xz\mapsto -xy-\xi x^2,\quad xy-\xi x^2\mapsto yz-\xi xz.
	\]
	It follows from Theorem \ref{information on V}(3) that
	$\dim_k\End_{\uCM^{\ZZ} A}(L^1)=1$ and $\dim_k \Hom_{\uCM^{\ZZ} A}(A(1)_{\geq 0},L^1)=2$,
	so $\End_{\uCM^{\ZZ} {A}} (V)$ is Morita equivalent to the path algebra $kK_2$ of the following quiver $K_2$:
	\[\xymatrix@C2em@R1em{
		&A(1)_{\geq 0} \ar@<-0.6ex>[r]\ar@<0.6ex>[r] &L^1 &\text{($2$-Kronecker quiver)}.
	}\]
	Hence we have a triangle equivalence
	\[ \Db(\qgr B) \simeq (\uCM^{\ZZ} {A})^{\op} \simeq \Db(\mod kK_2)^{\op}\simeq \Db(\mod kK_2).  \]
	Let $X =\Projs B$. Notice that this example reproves the fact that $\Db(\coh X)\simeq \Db(\mod kK_2) \simeq \Db(\coh \mathbb{P}^1)$.
\end{ex}

Let $B=S/(f)$ be a commutative quadric hypersurface of dimension greater than or equal to $2$, and let $X=\Proj B$. Then it is well-known that
$\Db(\qgr B) \simeq \Db(\coh X)$ has a full strong exceptional collection with the Spinor bundles by Kapranov's theorem \cite{Ka1, Ka2}. 
In this case, our exceptional collection for $\Db(\qgr B)$ obtained in Theorem \ref{thm.nqh} can be realized as an iterated left mutation of Kapranov's exceptional collection.

As a comparison with Example \ref{ex.coco} on a commutative conic, we now give an example using a noncommutative conic; see \cite{HMM, Uey} for similar results.

\begin{ex} 
	Let $S=k\langle x, y, z\rangle/(xy-yx, xz-zx, yz+zy+x^2)$, $f=y^2+z^2 \in S_2$, and $B=S/(f)$.
	Then 
	$S$ is a connected Koszul AS-regular algebra of dimension $3$
	and $f$ is a regular central element of $S$. It is easy to see that $A=(B^!)^{\op} \simeq k\langle x, y, z\rangle/(yx+xy, zx+xz, zy-yz, x^2-zy, y^2-z^2)$ 
	is a connected Koszul AS-Gorenstein algebra of dimension $1$
	and Gorenstein parameter $-1$, and $w=y^2=z^2 \in A_2$ is a regular central element such that $A/(w)\simeq (S^!)^{\op}$.
	Since $C(B)\, (=A[w^{-1}]_0) \to k[t]/(t^4-1); xyw^{-1} \mapsto t$ is an isomorphism, $C(B)$ is a semisimple algebra isomorphic to $k^4$.
	Thus $\gldim(\qgr B)<\infty$ by Proposition \ref{prop.gldimq}.
	It follows from Theorem \ref{thm.nqh} that $\Db(\qgr B) \simeq(\uCM^{\ZZ} A)^{\op}$ has a tilting object. Now $V=A(1)_{\geq 0} \oplus A(2)_{\geq 0}$ is a tilting object of $\uCM^{\ZZ} A$.
	By Theorem \ref{information on V}(3), we have $\dim_k\End_{\uCM^{\ZZ}A}(A(1)_{\geq 0})=1$.
	One can calculate that
	\[ A(2)_{\geq 0} \simeq L^1\oplus L^2 \oplus L^3 \oplus L^4,  \]
	where 
	\begin{align*}
		&L^1 = (xy+xz+\xi yz+\xi y^2)A(2),&&L^2 = (xy+xz-\xi yz-\xi y^2)A(2),\\
		&L^3 = (xy-xz-yz+y^2)A(2),&&L^4 = (xy-xz+yz-y^2)A(2), &&\xi=\sqrt{-1}.
	\end{align*}
	Since $\dim_k\End_{\uCM^{\ZZ} A}(A(2)_{\geq 0})=4$
	by Theorem \ref{information on V}(3), we have $\dim_k\End_{\uCM^{\ZZ} A}(L^i)=1$ and $L^i \not\simeq L^j$ for $i \neq j$.
	Since $0\to L^i \to L^i(1) \to k(1)\to 0$ is a non-split exact sequence, we have $\Ext^1_{\mod^{\ZZ} A}(k(1), L^i)\neq 0$.
	 Applying $\Hom_{\mod^{\ZZ} A}(-,L^i)$ to the exact sequence $0\to A_{\ge1}(1)\to A(1)\to k(1)\to0$, we obtain
	\begin{align*}
	\Hom_{\uCM^{\ZZ} A}(A(1)_{\geq 0},L^i)
	\stackrel{\text{Prop.\,}\ref{j-i}(5)}{=}\Hom_{\mod^{\ZZ} A}(A_{\geq 1}(1), L^i)
	\simeq \Ext^1_{\mod^{\ZZ} A}(k(1), L^i)\neq 0.
	\end{align*}
	 Since $\dim_k \Hom_{\uCM^{\ZZ} A}(A(1)_{\geq 0}, A(2)_{\geq 0})=4$ by Theorem \ref{information on V}(3), it follows that $\dim_k \Hom_{\uCM^{\ZZ}A}(A(1)_{\geq 0},L^i)=1$.
	Therefore $\End_{\uCM^{\ZZ} A} (V)$ is isomorphic to the path algebra $kQ$ of the following quiver $Q$:
	\[\xymatrix@C1em@R0em{
		&&A(1)_{\geq 0} \ar[lldd]\ar[ldd]\ar[rdd]\ar[rrdd]\\
		&&&&&\text{(type $\widetilde{D_4}$)}.\\
		L^1 &L^2 &&L^3 &L^4 
	}\]
	Hence we have a triangle equivalence
	\[ \Db(\qgr B) \simeq (\uCM^{\ZZ} A)^{\op} \simeq \Db(\mod kQ)^{\op}\simeq \Db(\mod kQ).  \]
\end{ex}

\appendix\section{Combinatorics}\label{appendix}

\subsection{Non-negativity of matrices}
In this subsection we give combinatorial results for a matrix.
For a finite set $I$, let $I^2=I \times I$ and $\Aut(I)=\Aut_\mathrm{set}(I)$.

\begin{dfn}
Let $I$ be a finite set and $m:I^2\to\ZZ$ a map.
\begin{enumerate}
\item We call $m$ \emph{non-negative} if $m(i,j)\ge0$ holds for any $i,j\in I$. We denote by $\Sq I$ the set of sequences $\fs=(i_1, i_2,\dots, i_n)$ in $I$ with $n\geq 1$. For $\fs=(i_1, i_2,\dots, i_n)\in \Sq I$, let
\[m(\fs):=\sum_{k=1}^nm(i_k, i_{k+1}),\ \mbox{where $i_{n+1}:=i_1$.}\]
We say that $m$ is \emph{$\Sigma$-non-negative} if $m(\fs)\geq 0$ holds for any $\mathfrak{s}\in\Sq I$.
\item For a map $s : \II \to \ZZ$, we define a map $sm : \II^2 \to \ZZ$ called the \emph{conjugate} of $m$ by
\[
 (sm)(i, j):=m(i, j)+s(i)-s(j).
\]
\end{enumerate}
\end{dfn}

The aim of this subsection is to prove the following result.

\begin{thm}\label{lem-mat-pos}
Let $m : I^2 \to \ZZ$ be a map. Then $m$ admits a non-negative conjugate if and only if $m$ is $\Sigma$-non-negative.
\end{thm}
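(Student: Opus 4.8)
The ``only if'' direction is the easy one: if $m=sm'$ for some non-negative $m'$ and some $s:I\to\ZZ$, then for any sequence $\fs=(i_1,\dots,i_n)\in\Sq I$ the telescoping terms $s(i_k)-s(i_{k+1})$ cancel (recall $i_{n+1}=i_1$), so $m(\fs)=m'(\fs)\ge0$; hence $m$ is $\Sigma$-non-negative. Note this really uses nothing but the cyclic structure of the sum, so it is worth isolating the computation $(sm)(\fs)=m(\fs)$ as a one-line lemma since it will presumably be reused.

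For the ``if'' direction, the plan is to build the conjugating function $s$ by a shortest-path (Bellman--Ford / potential) argument on the complete directed graph with vertex set $I$ and edge weights $m(i,j)$. Fix a base point $i_0\in I$ and define, for each $j\in I$,
\[
s(j):=\min\Bigl\{\,\sum_{k=1}^{\ell-1}m(j_k,j_{k+1})\ \Bigm|\ \ell\ge1,\ j_1=i_0,\ j_\ell=j\,\Bigr\},
\]
i.e.\ the length of a shortest walk from $i_0$ to $j$ (with the convention that the empty-edge walk from $i_0$ to $i_0$ has length $0$, so $s(i_0)\le0$). The first thing to check is that this infimum is finite, i.e.\ bounded below: any walk from $i_0$ to $j$ that repeats a vertex contains a closed subwalk, whose total weight is of the form $m(\fs)$ for some $\fs\in\Sq I$ and hence is $\ge0$ by $\Sigma$-non-negativity; deleting such subwalks one at a time only decreases the length and terminates at a simple walk, of which there are finitely many. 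So $s(j)$ is attained and $s$ is well-defined. Then for every edge $i\to j$ one has the triangle inequality $s(j)\le s(i)+m(i,j)$ (append the edge $(i,j)$ to an optimal walk reaching $i$), which is exactly $(sm)(i,j)=m(i,j)+s(i)-s(j)\ge0$. Thus $sm$ is non-negative, as required.

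The main obstacle — the only place where any care is needed — is the boundedness-below step that guarantees $s(j)$ is a genuine integer rather than $-\infty$; everything else is formal. The key point to get right there is the ``cycle cancellation'' reduction: a closed walk need not be a \emph{simple} cycle, so one must argue that it decomposes (by repeatedly splitting off the loop created the first time a vertex recurs) into a union of closed walks each of which is the cyclic sum $m(\fs)$ over a sequence $\fs$ that visits vertices without repetition, so that $\Sigma$-non-negativity applies to each piece. I would state this as a small combinatorial lemma: for any closed walk $w$ there is a simple walk $w'$ with the same endpoints and $\mathrm{length}(w')\le \mathrm{length}(w)$, proved by induction on the number of edges of $w$. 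Once that is in hand, the shortest-walk quantity is a minimum over the finite set of simple walks, hence well-defined, and the triangle inequality finishes the proof. One should also remember to record that the statement and construction are used in the paper only for the map $m':\II^2\to\ZZ_{\ge0}$ produced in Lemma~\ref{almost constant} (via Example~\ref{extend to m-data}), so no compatibility with $\nu$ or with $a^A$ is needed at this stage — that is handled separately in Theorem~\ref{thm-nonneg-app-constant}.
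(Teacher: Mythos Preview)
Your proof is correct and takes a genuinely different route from the paper. The paper argues by induction on $\#I$: it picks a multiplicity-free cycle $\ft$ realizing $m_{\min}$, uses an auxiliary lemma (Lemma~\ref{lem-cyc-pos}) to conjugate so that $m$ is ``normalized'' along $\ft$, then collapses $\ft$ to a single vertex, checks that the induced map $\overline{m}$ on the smaller index set is again $\Sigma$-non-negative, and applies the induction hypothesis. Your argument is the standard Bellman--Ford/potential argument: define $s(j)$ as a shortest-walk length from a fixed basepoint and read off non-negativity of $sm$ from the triangle inequality. This is shorter and avoids both the induction and the auxiliary normalization lemma; the paper's approach, on the other hand, is more explicit about how the conjugating function is built up cycle-by-cycle, which fits the hands-on flavor of the rest of Appendix~\ref{appendix} but is not reused elsewhere.

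One small simplification you missed: in the paper's definition, $\Sigma$-non-negativity requires $m(\fs)\ge0$ for \emph{all} $\fs\in\Sq I$, not just multiplicity-free ones. So in your boundedness-below step, when you split off a closed subwalk at the first repeated vertex, its weight is already $\ge0$ directly---you do not need to further decompose it into simple cycles. Your ``small combinatorial lemma'' reducing any walk to a simple walk of no greater weight is therefore immediate: delete one closed subwalk (of non-negative weight, by hypothesis) at a time until no vertex repeats. The worry you flag about a closed walk not being a simple cycle is a non-issue here.
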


To prove Theorem \ref{lem-mat-pos}, we fix some notations.
For $\fs=(i_1, i_2,\dots, i_n)\in \Sq I$, let $\#\fs:=\#\{i_1, i_2,\dots, i_n\}$ and
\[
m_{\min}:=\min\{m(\fs)\mid \fs\in\Sq I,\ \#\fs\ge2\}.
\]
Call a sequence $\fs=(i_1, i_2,\dots, i_n)$ \emph{multiplicity-free} if $i_1,\ldots,i_n$ are pairwise distinct. The proof of the following easy observations is left to the reader.

\begin{lem}\label{easy fact of m}
Let $I$ be a finite set and $m:I^2\to\ZZ$ a map.
\begin{enumerate}
\item $m(\fs)=(sm)(\fs)$ holds for each $s:I\to\ZZ$ and $\fs\in\Sq I$. Therefore $\Sigma$-non-negativity is preserved by taking conjugations.
\item $m_{\min}=m(\fs)$ holds for some multiplicity-free $\mathfrak{s}\in\Sq I$ with $\#\fs\ge2$. Hence $m$ is $\Sigma$-non-negative if and only if $m_{\min}\ge0$ and $m(i,i)\ge0$ for each $i\in I$.
\item $m$ is $\Sigma$-non-negative if and only if $\sum_{i\in I}m(i,\sigma(i))\ge0$ holds for each $\sigma\in\Aut(I)$.
\end{enumerate}
\end{lem}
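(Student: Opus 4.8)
The whole statement is elementary and combinatorial; the engine is a decomposition of a cyclic sequence at a repeated entry, which I would isolate once and then reuse for parts (2) and (3).

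\emph{Part (1).} Write $\fs=(i_1,\dots,i_n)$ with $i_{n+1}:=i_1$. Then
\[
(sm)(\fs)=\sum_{k=1}^n\bigl(m(i_k,i_{k+1})+s(i_k)-s(i_{k+1})\bigr)=m(\fs)+\bigl(s(i_1)-s(i_{n+1})\bigr)=m(\fs),
\]
the inserted terms telescoping to $0$ because the index set is cyclic. Since $\Sigma$-non-negativity is a condition on the numbers $m(\fs)$, and these are unchanged by conjugation, it is preserved.

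\emph{The splitting lemma.} I would first record: if $\fs=(i_1,\dots,i_n)$ satisfies $i_a=i_b$ for some $1\le a<b\le n$, put $\fs_1:=(i_a,i_{a+1},\dots,i_{b-1})$ and $\fs_2:=(i_b,i_{b+1},\dots,i_n,i_1,\dots,i_{a-1})$. Reading off the edges with their cyclic wrap-around, the multiset of edges of $\fs$ is the disjoint union of those of $\fs_1$ and $\fs_2$, so $m(\fs)=m(\fs_1)+m(\fs_2)$, and both $\fs_1,\fs_2$ are strictly shorter than $\fs$. Iterating, every $\fs$ decomposes into a finite disjoint collection of multiplicity-free cyclic sequences together with constant ``loops'' $(\,\underbrace{i,\dots,i}_{k}\,)$ of $m$-value $k\cdot m(i,i)$, and $m(\fs)$ is the sum of the corresponding values. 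This is the step whose bookkeeping must be done carefully; it is the only non-routine point.

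\emph{Part (2).} Among $\fs$ with $\#\fs\ge2$ realizing $m_{\min}$, pick one of minimal length. If it had a repeated entry, the splitting lemma gives shorter $\fs_1,\fs_2$ with $m(\fs_1)+m(\fs_2)=m_{\min}$; one of the two then has $m$-value $\le m_{\min}$, and a short case analysis (if it has $\#\ge2$ this contradicts minimal length; if it is a constant loop then $m(i,i)\le 0$, and $m(i,i)<0$ would make values unbounded below, while $m(i,i)=0$ lets the other piece realize $m_{\min}$ at smaller length) forces the minimizer to be multiplicity-free, proving the first assertion. The ``hence'' is then immediate: $\Sigma$-non-negativity trivially gives $m_{\min}\ge0$ and $m(i,i)\ge0$; conversely, given these, the splitting lemma reduces $m(\fs)\ge0$ for arbitrary $\fs$ (by induction on length) to multiplicity-free pieces, each $\ge m_{\min}\ge 0$, and loop pieces, each a nonnegative multiple of some $m(i,i)\ge 0$.

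\emph{Part (3).} Any $\sigma\in\Aut(I)$ decomposes $I$ into disjoint cycles, each a cyclic sequence (a fixed point $i$ contributing the one-term sequence $(i)$ of value $m(i,i)$), and $\sum_{i\in I}m(i,\sigma(i))$ is precisely the sum of the $m$-values of these cyclic sequences; hence $\Sigma$-non-negativity yields $\sum_{i\in I}m(i,\sigma(i))\ge 0$. For the converse I would again use the splitting lemma to reduce to multiplicity-free cyclic sequences $\ft=(i_1,\dots,i_n)$ and loops: extend the $n$-cycle $(i_1\,i_2\,\cdots\,i_n)$ by the identity off $\{i_1,\dots,i_n\}$ to get $\sigma\in\Aut(I)$ with $\sum_{i\in I}m(i,\sigma(i))=m(\ft)+\sum_{i\notin\ft}m(i,i)$, which forces $m(\ft)\ge0$ once the diagonal terms are controlled; in all of the paper's applications $m(i,i)=0$ (it is part of the data, cf.\ \eqref{eq dfn order} and the definition of $m^A$), and loops then contribute $0$, so $m$ is $\Sigma$-non-negative. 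The delicate point here, as in part (2), is simply keeping track of the diagonal values $m(i,i)$ and invoking their vanishing; with that in hand the argument is routine.
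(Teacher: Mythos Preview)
The paper leaves this lemma to the reader, so there is no proof to compare against; I comment on your argument directly.

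Part (1) and the splitting lemma are correct and cleanly done.

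In part (2), the line ``one of the two then has $m$-value $\le m_{\min}$'' is not justified as written: from $m(\fs_1)+m(\fs_2)=m_{\min}$ alone neither summand need be $\le m_{\min}$. The argument is rescued by an observation you make only in passing: if $m_{\min}$ is finite at all, then iterating any multiplicity-free $\ft$ with $\#\ft\ge2$ (or any diagonal loop) shows $m(\ft)\ge0$ and $m(i,i)\ge0$, so in fact $m_{\min}\ge0$. With that in hand your minimal-length argument goes through cleanly: both pieces have non-negative $m$-value summing to $m_{\min}$, at least one has $\#\ge2$, and that one must realise $m_{\min}$ at smaller length. It would be worth stating the step $m_{\min}\ge0$ explicitly before the induction.

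Your caution in part (3) is entirely justified: the converse is false without the hypothesis $m(i,i)=0$. For $I=\{1,2,3\}$ with $m(1,2)=-1$, $m(2,3)=1$, $m(3,3)=10$ and all other values $0$, one checks $\sum_{i}m(i,\sigma(i))\ge0$ for every $\sigma\in\Aut(I)$, yet $m((1,2))=-1$. Your proposed fix (assume $m(i,i)=0$, then extend a multiplicity-free cycle by the identity) is exactly right. Note however that the one place the paper invokes (3), the proof of Theorem~\ref{thm-nonneg-app-constant 2}(2), applies it to the auxiliary map $\overline{m}$ on $J^2$, whose diagonal need not vanish; fortunately the multiset argument given there for permutations works verbatim for an arbitrary $\fs\in\Sq J$ (the balance condition---equal in- and out-multiplicity at each vertex---holds for any cyclic sequence, not just for permutations), so $\overline{m}$ is $\Sigma$-non-negative directly and the appeal to (3) can be bypassed.
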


The ``only if'' part of Theorem \ref{lem-mat-pos} is immediate from Lemma \ref{easy fact of m}(1).

In the rest, we prove the ``if'' part of Theorem \ref{lem-mat-pos}.
The following result confirms it in a special case.

\begin{lem}\label{lem-cyc-pos}
Let $I=[0, n-1]$ and $m : I^2 \to \ZZ$ a $\Sigma$-non-negative map. Assume that a sequence $\mathfrak{i}=(0, 1, \dots, n-1)$ satisfies $m(\mathfrak{i})=m_{\min}$. Then there exists a conjugate $m'$ of $m$ satisfying the following statements.
\begin{itemize}
\item[(1)] $m'$ is non-negative.
\item[(2)] $m'(i, i+1)=0$ for $i\neq n-1$ and $m'(n-1, 0)=m_{\min}$. (In this case, we call $m'$ \emph{normalized}.)
\end{itemize}
\end{lem}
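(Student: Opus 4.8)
The plan is to write down the conjugating function $s\colon I\to\ZZ$ by hand so that condition (2) becomes a telescoping identity, and then to prove non-negativity (1) by realizing each entry of $m'$ as the weight $m(\fs)$ of an explicit cyclic sequence $\fs\in\Sq I$.

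Concretely, I would set $s(0):=0$ and $s(\ell):=\sum_{k=0}^{\ell-1}m(k,k+1)$ for $\ell\in I$ (empty sum when $\ell=0$), and put $m':=sm$. Then for $0\le i\le n-2$ one gets $m'(i,i+1)=m(i,i+1)+s(i)-s(i+1)=0$ by the defining recursion of $s$, while $m'(n-1,0)=m(n-1,0)+s(n-1)-s(0)=m(n-1,0)+\sum_{k=0}^{n-2}m(k,k+1)=m(\mathfrak{i})=m_{\min}$; so (2) holds for free. (The case $n=1$ is vacuous, so I assume $n\ge 2$.) By Lemma \ref{easy fact of m}(1), $\Sigma$-non-negativity of $m$ passes to $m'$, so it remains only to check that the \emph{individual} entries of $m'$ are non-negative — this is where the hypothesis $m(\mathfrak{i})=m_{\min}$ enters.

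For the diagonal, $m'(i,i)=m(i,i)=m((i))\ge 0$. For $i\neq j$ I would distinguish two cases according to the relative position of $i,j$ in $[0,n-1]$. If $i>j$, take $\fs:=(j,j+1,\dots,i)$; telescoping gives $m(\fs)=\sum_{k=j}^{i-1}m(k,k+1)+m(i,j)=s(i)-s(j)+m(i,j)=m'(i,j)$, which is $\ge 0$ by $\Sigma$-non-negativity. If $i<j$, take the cyclic sequence $\fs:=(i,\,j,j+1,\dots,n-1,\,0,1,\dots,i-1)$, which visits $\{i\}\sqcup\{j,\dots,n-1\}\sqcup\{0,\dots,i-1\}$ and has $\#\fs\ge 2$. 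Splitting the cycle $m(\mathfrak{i})$ at the vertices $0$ and $j$ and recombining, one obtains the identity $m(\fs)=m'(i,j)+m_{\min}$; since $m(\fs)\ge m_{\min}$ by the very definition of $m_{\min}$ (here $\#\fs\ge 2$), we conclude $m'(i,j)\ge 0$. Assembling the three cases proves (1).

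I do not expect a genuine obstacle: the argument is essentially a careful choice of telescoping cycles. The only point needing attention is the second case — verifying that the displayed sequence is honestly a cyclic sequence on the indicated (disjoint) set, that the empty-sum conventions at the boundary values $i=0$ and $j=n-1$ cause no trouble, and that the bookkeeping really yields $m(\fs)=m'(i,j)+m_{\min}$ rather than some other offset. Everything else is immediate from the definitions of $s$, $m'$, and $m_{\min}$ together with Lemma \ref{easy fact of m}.
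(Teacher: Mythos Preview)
Your proposal is correct and matches the paper's proof essentially verbatim: the same conjugating function $s(\ell)=\sum_{k=0}^{\ell-1}m(k,k+1)$, the same telescoping for (2), and the same two-case cycle argument for (1), with your sequence in the $i<j$ case being a cyclic rotation of the paper's $(0,1,\dots,i,j,\dots,n-1)$. The only cosmetic differences are that you treat the diagonal case explicitly and invoke Lemma~\ref{easy fact of m}(1), whereas the paper leaves these implicit.
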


\begin{proof}
Let $s : I \to \ZZ$ be a map such that $s(0)=0$ and $s(i)=\sum_{j\in [0, i-1]}m(j, j+1)$ for $i\in [1, n-1]$.
We show that $m'=sm$ satisfies the desired conditions.
We have $m'(i, i+1) = m(i, i+1) +s(i) - s(i+1) = 0$ for any $i\in[0,n-2]$, and $m'(n-1, 0)=m(n-1, 0) + s(n-1) = m_{\min}$. Thus (2) is satisfied.

To prove (1), fix $i,j\in I$. If $i>j$, let $I'=[j, i]$ and $\fs=(j, j+1, \dots, i-1, i)\in\Sq I$. We have
\begin{align*}
m'(i, j) & = m(i, j) +s(i) -s(j) = m(i, j) + \sum_{k\in I'}m(k, k+1)= m(\fs)\geq 0.
\end{align*}
This is illustrated as follows.
\begin{center}
\begin{tikzpicture}[scale=0.7]
\draw (0,0)--(0,5)--(5,5)--(5,0)--cycle; 
\draw[dashed] (0,5)--(5,0);

\node at (1,1) {$\times$};
\node at (1.5,4.1) {$\times$};
\node at (2,3.6) {$\times$};
\node at (4.1,1.5) {$\times$};
\draw[loosely dotted, line width=2pt] (2.6,3.0)--(3.55,2.05);

\draw (0.8,0.8) to [out=250, in=90] (0.7,-0.35);
\node at (0.7,-0.7) {$m(i,j)$}; 

\draw (1.3,4.4) to [out=130, in=270] (0.8,5.45);
\node at (0,5.7) {$m(j,j+1)$}; 

\draw (2, 3.8) to [out=90, in=220] (3,5.45);
\node at (3.2,5.7) {$m(j+1, j+2)$};

\draw (4.1,1.7) to [out=90, in=220] (5.3, 5.45);
\node at (6.5, 5.7) {$m(i-1,i)$}; 

\draw[->, line width=1pt] (6.5,2.5)--(9.5,2.5);
\node at (8, 2) {conjugate by $s$}; 

\draw (11,0)--(11,5)--(16,5)--(16,0)--cycle; 
\draw[dashed] (11,5)--(16,0);

\node at (12.5,4.1) {$0$};
\node at (13,3.6) {$0$};
\node at (15.1,1.5) {$0$};
\draw[loosely dotted, line width=2pt] (13.6,3.0)--(14.55,2.05);

\node at (12,1) {$\times$};
\draw (11.8,0.8) to [out=250, in=90] (11.7,-0.35); 
\node [anchor=west] at (5,-0.7) {$m(i,j)+ m(j,j+1) + m(j+1,j+2) + \cdots + m(i-1,i)$};
\end{tikzpicture}
\end{center}
If $i<j$, let $I'=[i, j-1]$ and $\fs=(0, 1, \dots, i, j, \dots, n-1)\in\Sq I$. Then $m_{\min}=m(\fs)+\sum_{k\in I'}m(k, k+1)-m(i, j)$ holds. Therefore we have
\begin{align*}
m'(i, j) & = m(i,j) + s(i)-s(j) = m(i, j) - \sum_{k\in I'} m(k, k+1)
= m(\fs) - m_{\min}\geq 0.\qedhere
\end{align*}
\end{proof}

We are ready to prove Theorem \ref{lem-mat-pos}.

\begin{proof}[Proof of Theorem \ref{lem-mat-pos}]
We show the assertion by an induction on $\# I$. Take multiplicity-free $\ft\in \Sq I$ with $m(\ft) =m_{\min}$ and $\#\ft\ge2$ (Lemma \ref{easy fact of m}(2)). Label the elements $\ft=(0, 1, \dots, \ell-1)$, and let $\ft=\{0, 1, \dots, \ell-1\}\subset I$ by abuse of notations.
Let $m_\ft:\ft^2\to\ZZ$ be a restriction of $m$.
Applying Lemma \ref{lem-cyc-pos} to $(m,\mathfrak{i}):=(m_\ft,\ft)$, we obtain a map $s : \ft \to \ZZ$ such that $sm_\ft$ is non-negative and normalized. Extending $s$ to a map $s:I\to\ZZ$ by $s|_{I\setminus \ft}=0$ and replacing $m$ by $s m$, we may assume that $m|_{\ft^2}$ is non-negative and normalized, that is,
\begin{equation}\label{normalized}
\mbox{$m(i, i+1)=0$ for $i\in \ft \setminus\{\ell-1\}$ and $m(\ell-1, 0)=m_{\min}$.}
\end{equation}
Let $\overline{I}=(I\setminus \ft) \sqcup\{ t \}$.
Then $\#\overline{I}<\#I$ holds. We define a map $\overline{m} : \overline{I}^2 \to \ZZ$ by
\begin{itemize}
\item $\overline{m}(i, j)=m(i, j)$ for $i, j \in I\setminus \ft$,
\item $\overline{m}(i, t)=\min\{ m(i, j) \mid j\in \ft\}$ for each $i\in I\setminus \ft$,
\item $\overline{m}(t, j)=\min\{m(i, j) \mid i \in \ft\}$ for each $j\in I\setminus \ft$,
\item $\overline{m}(t, t)=0$.
\end{itemize}
We claim that $\overline{m}$ is $\Sigma$-non-negative. By Lemma \ref{easy fact of m}(2), it suffices to prove $\overline{m}(\fs)\geq 0$ for each multiplicity-free $\fs\in\Sq\overline{I}$ with $\#\fs\ge2$.

First, assume $t\notin \fs$. Then we may regard $\fs\in\Sq I$, and we have $\overline{m}(\fs)=m(\fs)\geq 0$ as desired.

Secondly, assume $t\in\fs=(i_1, \dots, i_n)$. Assume $i_n=t$ without loss of generality. By definition of $\overline{m}$, there exist elements $a, b\in \ft$ such that $\overline{m}(i_{n-1}, t)=m(i_{n-1}, a)$ and $\overline{m}(t, i_1)=m(b, i_1)$. Let
\[\Sq I\ni\widetilde{\mathfrak{s}}:=\left\{\begin{array}{ll}
(i_1, i_2, \dots, i_{n-1}, a, a+1, \dots, b-1, b)&a\le b\\
(i_1, i_2, \dots, i_{n-1}, a, a+1, \dots, \ell-1, 0, \cdots, b-1)&a>b.
\end{array}\right.\]
If $a\le b$, then
\begin{align*}
0\le m(\widetilde{\fs})&=\sum_{k\in [1, n-2]}m(i_k, i_{k+1})+m(i_{n-1}, a) + \sum_{j\in[a,b-1]}m(j,j+1) + m(b, i_1)\stackrel{(\ref{normalized})}{=}\overline{m}(\fs).
\end{align*}
If $a>b$, then
\begin{align*}
m_{\min}\le m(\widetilde{\fs})=&\sum_{k=1}^{n-2}m(i_k, i_{k+1})+m(i_{n-1}, a) +  \sum_{j=a}^{\ell-2}m(j,j+1)+m(\ell-1,0)+ \sum_{j=0}^{b-1}m(j,j+1) + m(b, i_1)\\
&\stackrel{(\ref{normalized})}{=}\overline{m}(\fs) + m_{\min}.
\end{align*}
Therefore $\overline{m}(\fs)\ge0$ holds in both cases.

Since $\#\overline{I}<\#I$, by the induction hypothesis, there exists a map $\overline{s} : \overline{I}\to \ZZ$ such that $\overline{s}\overline{m}$ is non-negative.
Let $s : I \to \ZZ$ be a map such that $s|_{I\setminus \ft} = \overline{s}$ and $s(i) = \overline{s}(t)$ for any $i\in \ft$.
Then $s m$ is clearly non-negative. We complete the proof.
\end{proof}

\subsection{Almost constancy of matrices}

The following elementary notion plays a central role.

\begin{dfn-prop}\label{dfn-prop-app-constant}
We call a sequence $(\ell_i)_{i\in I}$ of integers \emph{almost constant} if one of the following equivalent conditions are satisfied.
\begin{itemize}
\item There exists $\ell\in\ZZ$ such that $\ell_i\in\{\ell,\ell+1\}$ for each $i\in I$.
\item For all $i,j\in I$, we have $\ell_i-\ell_j\in\{-1,0,1\}$.
\end{itemize}
When $I$ is finite, the following condition is also equivalent, where   $\ell$ is the average of $\ell_i$.
\begin{itemize}
\item $|\ell_i-\ell|<1$ for all $i\in I$.
\end{itemize}
\end{dfn-prop}

For each $\nu\in\Aut(I)$, we define an element $\nu\in\Aut(I^2)$ by $\nu(i,j):=(\nu(i),\nu(j))$.

\begin{dfn}\label{define m-data}
Let $I$ be a finite set. An \emph{m-data} on $I$ is a triple $(m,a,\nu)$ consisting of maps $m:I^2 \to\ZZ$, $a:I\to \ZZ$ and $\nu\in\Aut(I)$ satisfying the following conditions.
\begin{enumerate}
\item For each $i,j\in I$, we have
\[m(\nu(i,j))=m(i,j)-a(i)+a(j).\]
\item There exists $a_\mathrm{av}\in\QQQ$ such that, for each $\nu$-orbit $I'\subset I$, we have
\[a_\mathrm{av}\cdot\# I'=\sum_{i\in I'}a(i).\]
\end{enumerate}
We call an m-data \emph{non-negative} if $m$ is non-negative.
We call an m-data \emph{almost constant} if the sequence $(a(i))_{i\in I}$ is almost constant.

For $s:I\to\ZZ$, we have a new m-data $s(m,a,\nu) =(sm, sa, \nu)$ called a \emph{conjugate} given by
\[\mbox{$(sm)(i, j)=m(i, j)+s(i)-s(j)$ and $(sa)(i)=a(i)+s(i)-s(\nu(i))$}\]
for each $i, j\in I$.
\end{dfn}

We give two examples of m-data which are necessary in Section \ref{section: GP and GME}.

\begin{ex}
Let $A$ be a ring-indecomposable basic $\NN$-graded AS-Gorenstein algebra with the Gorenstein parameter $p^A$ and $\{e_i \mid i\in I\}$ its complete set of orthogonal primitive idempotents.
By Proposition \ref{prop average a-invariant} and \eqref{eq-m-a-a}, the map $a^A : I \to \ZZ; i \mapsto -p_i$ and the Nakayama permutation $\nu : I \to I$ induces a non-negative m-data $(m^A, a^A, \nu)$ on $I$.
\end{ex}

\begin{ex}\label{extend to m-data}
Let $I$ be a finite set, $H$ a subset of $I^2$, and $(m,a,\nu)$ a triple consisting of maps $m:H \to\ZZ$, $a:I\to \ZZ$ and $\nu\in\Aut(I)$ satisfying $\nu(H)=H$, the condition (2) in Definition \ref{define m-data}, and
\[m(\nu(i,j))=m(i,j)-a(i)+a(j)\ \mbox{ for each }\ (i,j)\in H.\]
Then we can extend $m$ to a map $m':I^2\to\ZZ$ such that $(m',a,\nu)$ is an $m$-data. Moreover, for a given $N\in\ZZ$, we can choose $m'$ such that $\min m'(I^2\setminus H)\ge N$.
\end{ex}

\begin{proof}
Fix a complete set $K\subset I^2\setminus H$ of representatives of $(I^2\setminus H)/\langle\nu\rangle$. Fix $L\in\ZZ$, and extend $m$ to $m'$ by
\[m'(\nu^\ell(i,j)):=L-\sum_{k=0}^{\ell-1}a(\nu^k(i))+\sum_{k=0}^{\ell-1}a(\nu^k(j))\ \mbox{ for each }\ (i,j)\in K\ \mbox{ and }\ \ell\in\ZZ.\]
This is well-defined thanks to the condition (2) in Definition \ref{define m-data}, and clearly satisfies the condition (1) in Definition \ref{define m-data}. Thus $(m',a,\nu)$ is an m-data. The last condition is also satisfied if $L$ is enough large.
\end{proof}

In this subsection we prove the following result.

\begin{thm}\label{thm-nonneg-app-constant}
Let $(m,a,\nu)$ be an m-data.
\begin{enumerate}
\item $(m,a,\nu)$ is conjugate to an almost constant m-data.
\item If $(m,a,\nu)$ is $\Sigma$-non-negative, then it is conjugate to an almost constant non-negative m-data.
\end{enumerate}
\end{thm}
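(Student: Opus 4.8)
\textbf{Proof proposal for Theorem \ref{thm-nonneg-app-constant}.}

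The plan is to treat the two parts in order, deriving part (1) from a direct ``averaging'' conjugation and part (2) by combining part (1) with Theorem \ref{lem-mat-pos}. For part (1), fix an integer $g\ge 1$ with $\nu^g=\id$, and set $a_\mathrm{av}$ as in Definition \ref{define m-data}(2). The idea is to choose $s:I\to\ZZ$ so that the conjugate sequence $(sa)(i)=a(i)+s(i)-s(\nu(i))$ becomes almost constant. Working $\nu$-orbit by $\nu$-orbit, on a single orbit $\{i_0,i_1=\nu(i_0),\dots,i_{g'-1}\}$ (of length $g'\mid g$) one wants $s(\nu(i_\ell))-s(i_\ell)=a(i_\ell)-c_\ell$ where each $c_\ell\in\{c,c+1\}$ and $\sum_\ell c_\ell=\sum_\ell a(i_\ell)=g'a_\mathrm{av}$; such a choice of $(c_\ell)$ exists precisely because $g'a_\mathrm{av}\in\ZZ$ and one may distribute the ``rounding'' over the orbit, taking $c=\lfloor a_\mathrm{av}\rfloor$ and making exactly $g'a_\mathrm{av}-g'c$ of the $c_\ell$ equal to $c+1$. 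The telescoping partial sums $s(i_{\ell})$ are then forced (up to a free additive constant per orbit), and one must check that going all the way around the orbit is consistent, i.e.\ $\sum_{\ell=0}^{g'-1}(a(i_\ell)-c_\ell)=0$, which is exactly the condition $\sum_\ell c_\ell=\sum_\ell a(i_\ell)$ arranged above. Since conjugation by $s$ preserves the m-data axioms (this is built into Definition \ref{define m-data}), the result $(sm,sa,\nu)$ is an almost constant m-data conjugate to $(m,a,\nu)$.

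For part (2), I would first apply part (1) to replace $(m,a,\nu)$ by a conjugate almost constant m-data; by Lemma \ref{easy fact of m}(1), $\Sigma$-non-negativity of $m$ is preserved under conjugation, so we may assume from the start that $(a(i))_{i\in I}$ is almost constant, say $a(i)\in\{c,c+1\}$ for all $i$. Now the goal is to further conjugate to make $m$ non-negative while keeping $a$ almost constant. The key observation is that the set of conjugating maps $s$ that preserve almost-constancy of $a$ is exactly those with $s(i)-s(\nu(i))$ bounded appropriately; more usefully, $\nu$-invariant $s$ (i.e.\ $s=s\circ\nu$) leave $a$ completely unchanged, since $(sa)(i)=a(i)+s(i)-s(\nu(i))=a(i)$. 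So I would like to find a $\nu$-invariant $s$ making $sm$ non-negative. By Theorem \ref{lem-mat-pos} there exists \emph{some} $s$ with $sm$ non-negative; the issue is to upgrade it to a $\nu$-invariant one. For this I would symmetrize: given $s$ with $sm\ge 0$, consider the averaged map; but averaging over $\ZZ$-valued functions is not closed, so instead I would argue that the conjugacy axiom $m(\nu(i,j))=m(i,j)-a(i)+a(j)$ together with almost-constancy of $a$ means $m$ and $m\circ\nu$ differ by a bounded amount, and iterate: if $sm\ge 0$ then $(s\circ\nu^{-1})(m)$ is non-negative too after a bounded shift, and a suitable ``running minimum'' construction $s'(i):=\min_{0\le \ell<g}\big(s(\nu^\ell i)+(\text{correction term})\big)$ produces a $\nu$-compatible conjugator; the correction terms are telescoping sums of the $-a(\nu^k i)$ and, because $a$ is almost constant, they stay within a bounded window, so $s'm$ is still non-negative and $s'a$ is still almost constant. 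The resulting m-data is almost constant and non-negative, as required.

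The main obstacle is the last step of part (2): reconciling non-negativity of $m$ (obtained from the general Theorem \ref{lem-mat-pos}) with the rigid constraint that the conjugation must preserve almost-constancy of $a$, i.e.\ must be compatible with $\nu$ up to bounded error. The running-minimum/symmetrization argument needs to be carried out carefully so that the correction terms really do stay bounded --- this is where almost-constancy of $a$ (rather than just $\Sigma$-non-negativity) is essential, and where I expect the bulk of the technical work to lie. An alternative route, which may be cleaner, is to work directly with the quotient construction used in the proof of Theorem \ref{lem-mat-pos}: perform the collapsing of a minimizing multiplicity-free cycle in a $\nu$-equivariant way, i.e.\ simultaneously collapse the whole $\nu$-orbit of that cycle, and induct on $\#I/\langle\nu\rangle$; the almost-constancy of $a$ should be inherited by the quotient m-data because the collapsed ``super-vertex'' can be assigned the common value $c$ (or $c+1$). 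I would develop both approaches and present whichever yields the shorter verification.
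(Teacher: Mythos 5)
Your part (1) is essentially fine: distributing the rounding of $a_\mathrm{av}$ along each $\nu$-orbit with a telescoping $s$ yields almost constancy, and the consistency condition is exactly $\sum_\ell c_\ell = \sum_\ell a(i_\ell)$. However, the paper deliberately makes a \emph{specific} choice here --- the ``floor type'' normalization $a(i) = \lfloor (i+1)a_\mathrm{av}\rfloor - \lfloor i a_\mathrm{av}\rfloor$ --- and this is not cosmetic: it forces the extra periodicity $a\circ\nu^g = a$ and $m\circ\nu^g = m$ (Lemma \ref{lem-cyclic-mdata}(2), $g$ = denominator of $a_\mathrm{av}$), which is the engine of part (2). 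Your weaker ``merely almost constant'' output does not give this, and that loss propagates into a gap.

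The real gap is in part (2). You try to find a $\nu$-\emph{invariant} $s$ with $sm \ge 0$. First, this is strictly more than you need: the paper's eventual $s(i) = \lfloor ir/g\rfloor - \lfloor(ir-\overline{s}(x))/g\rfloor$ is \emph{not} $\nu$-invariant, only $\nu$-compatible enough that $sa$ stays almost constant. Second, and more seriously, a $\nu$-invariant $s$ generally cannot exist. A $\nu$-invariant $s$ is just a map $\overline{s}$ on the orbit set $J$, and then $sm\ge0$ is equivalent to $\min_{(i,j)\in I_x\times I_y} m(i,j) + \overline{s}(x) - \overline{s}(y)\ge0$. But $\Sigma$-non-negativity of $m$ does \emph{not} imply $\Sigma$-non-negativity of this naive orbit minimum of $m$. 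What the paper shows is $\Sigma$-non-negativity of the minimum of the \emph{averaged} map $m'(i,j)=\sum_{k=0}^{g-1} m(\nu^k(i,j))$, and this requires the $\nu^g$-periodicity supplied by floor type. Applying Theorem~\ref{lem-mat-pos} to this $\overline{m}$ and lifting $\overline{s}$ via the \emph{floor formula} (rather than $\nu$-invariantly) gives control only over the \emph{sum} $\sum_{k}(sm)(\nu^k(i,j))$, not over the individual entries $(sm)(i,j)$. The last step of the paper's proof is the observation that the sequence $k\mapsto (sm)(\nu^k(i,j))$ is itself almost constant (another floor-function computation), and an almost constant integer sequence with non-negative average must be termwise non-negative. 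Your running-minimum argument never establishes this final pointwise inequality: knowing the corrections are \emph{bounded} because $a$ is almost constant is not the same as knowing that they reproduce exact non-negativity, and the sketch as written leaks sign. Your alternative route (a $\nu$-equivariant version of the collapsing in Theorem~\ref{lem-mat-pos}) is closer in spirit but still faces the same obstruction, since the $\nu$-orbit of a single minimizing cycle need not organize into a quotient on which $\Sigma$-non-negativity descends without first averaging $m$.
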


We start with preparing elementary properties of the floor function. Let $\lfloor x \rfloor:=\mathrm{max}\{n \in \ZZ \mid n\leq x \}$ for $x\in\RR$.
The key step of the proof of Theorem \ref{thm-nonneg-app-constant} is the following.

\begin{lem}\label{floor sequence}
Let $n>0$ and $p$, $q$, $r$ be integers.
\begin{enumerate}
\item The sequence $(\lfloor(ir+p)/n\rfloor-\lfloor(ir+q)/n\rfloor)_{i\in\ZZ}$ is almost constant.
\item If $n$ and $r$ are coprime, then $p=\sum_{i=0}^{n-1}(\lfloor ir/n \rfloor-\lfloor (ir-p)/n \rfloor)$.
\end{enumerate}
\end{lem}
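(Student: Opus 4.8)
The statement (Lemma \ref{floor sequence}) is the elementary heart of the combinatorial construction in Appendix \ref{appendix}, and both parts reduce to standard facts about the floor function and periodicity. First I would record the basic identity $\lfloor (x+n)/n\rfloor = \lfloor x/n\rfloor + 1$ for any $x\in\RR$ and $n>0$, together with its iterate $\lfloor (x+kn)/n\rfloor = \lfloor x/n\rfloor + k$ for $k\in\ZZ$. This shows that the function $g(i):=\lfloor (ir+p)/n\rfloor-\lfloor (ir+q)/n\rfloor$ is periodic in $i$ with period $n$ (replacing $i$ by $i+n$ adds $r$ to both numerator shifts, hence adds $\lfloor \cdot\rfloor$-value $r/n$-worth... more precisely replacing $i$ by $i+n$ sends $ir$ to $ir+nr$, which by the iterate adds exactly $r$ to each of the two floor values, so $g(i+n)=g(i)$). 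Thus it suffices to bound $g$ on $i\in[0,n-1]$.

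For part (1), I would show $g$ takes at most two consecutive values. Write $p-q = cn + d$ with $0\le d<n$ (or handle the sign of $p-q$ symmetrically; by replacing $(p,q)$ with $(q,p)$ if necessary and adjusting by the constant $c$, which only shifts $g$ by a constant and does not affect almost-constancy, assume $0\le p-q<n$). Then for each real $x$ one has $\lfloor (x+p)/n\rfloor - \lfloor (x+q)/n\rfloor \in\{0,1\}$: indeed setting $x+q = an+b$ with $0\le b<n$, the difference is $\lfloor (b+(p-q))/n\rfloor$, and since $0\le b+(p-q)<2n$ this is $0$ or $1$. Taking $x = ir$ gives $g(i)\in\{0,1\}$ for all $i$, so the sequence is almost constant by Definition-Proposition \ref{dfn-prop-app-constant}. (In the general case where $p-q$ is not reduced, $g$ differs from this by the constant $c$, so $g(i)\in\{c,c+1\}$, still almost constant.)

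For part (2), assume $\gcd(n,r)=1$. Then as $i$ ranges over $\{0,1,\dots,n-1\}$, the residues $ir \bmod n$ range over all of $\{0,1,\dots,n-1\}$ bijectively. I would compute $\sum_{i=0}^{n-1}\bigl(\lfloor ir/n\rfloor - \lfloor (ir-p)/n\rfloor\bigr)$ by the elementary identity $\lfloor y/n\rfloor - \lfloor (y-p)/n\rfloor = \sum_{j=1}^{p}[\,n \mid (y-j+1)\text{-related condition}\,]$ — more cleanly, use that $\lfloor y/n\rfloor - \lfloor (y-p)/n\rfloor$ counts the integers in the half-open interval $((y-p)/n, y/n]$, i.e.\ the number of multiples of $n$ in $(y-p, y]$, summed appropriately; alternatively use the telescoping $\lfloor y/n\rfloor-\lfloor(y-p)/n\rfloor = \sum_{t=0}^{p-1}\bigl(\lfloor (y-t)/n\rfloor-\lfloor(y-t-1)/n\rfloor\bigr)$ and the fact that $\lfloor z/n\rfloor-\lfloor(z-1)/n\rfloor = 1$ iff $n\mid z$ and $0$ otherwise. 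Summing over $i$ and interchanging the order of summation, for each fixed $t$ the inner sum $\sum_{i=0}^{n-1}\bigl(\lfloor(ir-t)/n\rfloor-\lfloor(ir-t-1)/n\rfloor\bigr)$ counts how many $i\in\{0,\dots,n-1\}$ have $ir\equiv t\pmod n$; by coprimality this is exactly $1$. Hence the total is $\sum_{t=0}^{p-1}1 = p$, as claimed. (One should check the argument is insensitive to the sign of $p$; for $p<0$ the telescoping runs the other way and the count of solutions is again $1$ per step, yielding $p$.)

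The main obstacle is purely bookkeeping: getting the signs and the reduction of $p-q$ modulo $n$ right in part (1), and making sure in part (2) that the telescoping identity and the "exactly one solution $i$ to $ir\equiv t\pmod n$ in a window of length $n$" claim are applied to the correct range of $i$ (the range is $[0,n-1]$, a complete residue system, which is exactly what makes the count equal to $1$). There is no conceptual difficulty beyond the standard floor-function manipulations; the care needed is in the edge cases and in the direction of the telescoping sum when $p$ is negative.
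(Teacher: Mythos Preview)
Your argument is correct. Both parts are handled by valid elementary floor-function manipulations, and the final conclusions match the paper's.

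The routes differ slightly in execution. For (1), the paper bypasses any reduction or periodicity: it simply uses $x-1<\lfloor x\rfloor\le x$ on both floor terms to get $\tfrac{p-q}{n}-1<g(i)<\tfrac{p-q}{n}+1$, forcing $g(i)\in\{\lfloor(p-q)/n\rfloor,\lfloor(p-q)/n\rfloor+1\}$ in one line. Your reduction of $p-q$ modulo $n$ and your computation via $b+(p-q)\in[0,2n)$ reach the same conclusion, but the periodicity observation is not needed and the reduction step adds a case check; the paper's inequality argument is shorter. For (2), the paper writes $p=sn+t$ with $0\le t<n$ and observes directly that each summand equals $s+1$ when $ir\bmod n\in[0,t-1]$ and $s$ otherwise, so the sum is $t(s+1)+(n-t)s=p$. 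Your telescoping into unit steps $\lfloor z/n\rfloor-\lfloor(z-1)/n\rfloor=[n\mid z]$ followed by interchanging sums and counting solutions of $ir\equiv t\pmod n$ is equally clean and makes the role of coprimality perhaps more transparent; it also handles negative $p$ by the same mechanism, whereas the paper's formulation implicitly covers that case through the division $p=sn+t$. Both approaches are of the same elementary nature; the paper's is marginally terser, yours slightly more structural.
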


\begin{proof}
(1) Since $\frac{ir+p}{n}-1<\lfloor\frac{ir+p}{n}\rfloor\le\frac{ir+p}{n}$ and $\frac{ir+q}{n}-1<\lfloor\frac{ir+q}{n}\rfloor\le\frac{ir+q}{n}$, we have
\[\frac{p-q}{n}-1=\frac{ir+p}{n}-1-\frac{ir+q}{n}<\left\lfloor\frac{ir+p}{n}\right\rfloor-\left\lfloor\frac{ir+q}{n}\right\rfloor<\frac{ir+p}{n}-\frac{ir+q}{n}+1=\frac{p-q}{n}+1.\]
Therefore $\lfloor(ir+p)/n\rfloor-\lfloor(ir+q)/n\rfloor\in\{\lfloor(p-q)/n\rfloor,\lfloor(p-q)/n\rfloor+1\}$ holds for all $i\in\ZZ$.

(2)  Write $p=sn+t$ for $s,t\in\ZZ$ with $0\leq t<n$. Then we have
\begin{align*} \lfloor ir/n \rfloor-\lfloor (ir-p)/n \rfloor=\left\{\begin{array}{ll}s+1& \mbox{$ir \in [0, t-1]+n\ZZ$},\\ s& \mbox{else}.\end{array}\right.
\end{align*}
Since $n$ and $r$ are coprime, we have $\sum_{i=0}^{n-1}(\lfloor ir/n \rfloor-\lfloor (ir-p)/n \rfloor)=t(s+1)+(n-t)s=p$.
\end{proof}

The following notion plays a key role.

\begin{dfn}\label{define floor}
For $c\in\QQQ$ and a positive integer $n$ satisfying $cn\in\ZZ$, consider a map
\[f_{c,n}:\ZZ/n\ZZ\to\ZZ;\ i\mapsto\lfloor (i+1)c\rfloor -\lfloor ic\rfloor,\]
which is almost constant by Lemma \ref{floor sequence}(1).
We say that an m-data $(m , a, \nu)$ is \emph{of floor type} if
\begin{enumerate}
\item[$\bullet$] there exists a decomposition
\begin{equation}\label{decompose I}
I=\bigsqcup_{x\in J}I_x=\bigsqcup_{x\in J}\ZZ/n_x\ZZ
\end{equation}
into $\nu$-orbits such that, for each $x\in J$, $\nu$ acts on $I_x=\ZZ/n_x\ZZ$ as $i\mapsto i+1$,
\item[$\bullet$] $a|_{I_x}=f_{a_\mathrm{av},n_x}$ holds, where $a_\mathrm{av}\in\QQQ$ is given in Definition \ref{define m-data}(2).
\end{enumerate}
\end{dfn}

We prove Theorem \ref{thm-nonneg-app-constant} in the following stronger form.

\begin{thm}\label{thm-nonneg-app-constant 2}
Let $(m,a,\nu)$ be an m-data.
\begin{enumerate}
\item $(m,a,\nu)$ is conjugate to an m-data of floor type.
\item If $(m,a,\nu)$ is $\Sigma$-non-negative, then it is conjugate to a non-negative m-data of floor type.
\end{enumerate}
\end{thm}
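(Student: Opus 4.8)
The plan is to reduce Theorem \ref{thm-nonneg-app-constant 2} to the combinatorial result Theorem \ref{lem-mat-pos} of the previous subsection, after first arranging that the $a$-part of the m-data has the ``floor type'' normal form. So the proof splits into two logical stages: (i) show every m-data is conjugate to one of floor type (this handles (1)), and (ii) when in addition the m-data is $\Sigma$-non-negative, further conjugate to make $m$ non-negative \emph{without disturbing} the floor-type shape of $a$ (this handles (2)). The second stage is where Theorem \ref{lem-mat-pos} enters, but one must be careful: a general conjugation $s$ changes $a$ via $(sa)(i)=a(i)+s(i)-s(\nu i)$, so to preserve floor type we can only conjugate by maps $s$ that are \emph{constant on each $\nu$-orbit}. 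I expect this constraint to be the main obstacle, and dealing with it correctly is the heart of the argument.

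\textbf{Stage (i): reaching floor type.} Fix a $\nu$-orbit $I_x$, identified via $\nu$ with $\ZZ/n_x\ZZ$ acting by $i\mapsto i+1$. The condition Definition \ref{define m-data}(1) along the orbit gives $a(i+1)-a(i) = a(\nu i)-a(i)$ determined by... wait, more precisely iterating (1) we get $\sum_{k=0}^{n_x-1}a(\nu^k i)=a_\mathrm{av} n_x$ (condition (2)), and the partial sums $A(i):=\sum_{k=0}^{i-1}a(k)$ satisfy $A(n_x)=a_\mathrm{av}n_x\in\ZZ$. First I would choose $s$ on $I_x$ by $s(i):=A(i)-\lfloor i\,a_\mathrm{av}\rfloor$; this is well-defined on $\ZZ/n_x\ZZ$ because $s(i+n_x)-s(i)=A(n_x)-(\lfloor (i+n_x)a_\mathrm{av}\rfloor-\lfloor i a_\mathrm{av}\rfloor)=a_\mathrm{av}n_x - a_\mathrm{av}n_x=0$, using $a_\mathrm{av}n_x\in\ZZ$. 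Then $(sa)(i)=a(i)+s(i)-s(i+1) = a(i) + (A(i)-A(i+1)) + (\lfloor(i+1)a_\mathrm{av}\rfloor-\lfloor i a_\mathrm{av}\rfloor) = \lfloor(i+1)a_\mathrm{av}\rfloor-\lfloor i a_\mathrm{av}\rfloor = f_{a_\mathrm{av},n_x}(i)$, which is exactly the floor-type requirement. Doing this orbit by orbit and checking that conjugation preserves conditions (1) and (2) of an m-data (routine, since $a_\mathrm{av}$ is unchanged) proves (1).

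\textbf{Stage (ii): achieving non-negativity.} Now assume $(m,a,\nu)$ is of floor type and $\Sigma$-non-negative. By Theorem \ref{lem-mat-pos}, $m$ admits \emph{some} non-negative conjugate $s'm$; the trouble is that $s'$ need not be orbit-constant, so $s'(m,a,\nu)$ need not be of floor type. The fix is to average $s'$ over $\nu$-orbits: set $\bar s(i):=\frac{1}{n_x}\sum_{k=0}^{n_x-1}s'(\nu^k i)$ for $i\in I_x$ — but this is rational, not integral. Instead I would argue more carefully using the structure of floor-type $a$ together with Lemma \ref{floor sequence}: the point is that for a floor-type m-data, condition (1) rigidly determines $m$ on each $\langle\nu\rangle$-orbit of $I^2$ once one value is fixed, via $m(\nu^\ell(i,j)) = m(i,j)-\sum_{k=0}^{\ell-1}a(\nu^k i)+\sum_{k=0}^{\ell-1}a(\nu^k j)$, and by Lemma \ref{floor sequence}(1) the correction term is almost constant in $\ell$; hence $m$ is non-negative on a whole $\langle\nu\rangle$-orbit as soon as it is non-negative at the representative with the minimal correction. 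So I would work on the finite quotient set $I^2/\langle\nu\rangle$, define $\bar m$ on a set of orbit representatives (chosen to minimize the correction term), verify $\bar m$ inherits $\Sigma$-non-negativity from $m$ via Lemma \ref{easy fact of m}, apply Theorem \ref{lem-mat-pos} to get an orbit-constant conjugating map $s$ making the representatives non-negative, and then Lemma \ref{floor sequence}(1) propagates non-negativity to all of $I^2$. Since $s$ is orbit-constant, $sa=a$ stays floor type, and almost constancy of $a$ follows because $f_{a_\mathrm{av},n_x}$ is almost constant (Lemma \ref{floor sequence}(1)) with the \emph{same} floor across all orbits (namely $\lfloor a_\mathrm{av}\rfloor$), so the whole sequence $(a(i))_{i\in I}$ is almost constant. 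This gives both Theorem \ref{thm-nonneg-app-constant 2}(2) and, since almost constant floor type is what is asserted, Theorem \ref{thm-nonneg-app-constant} as an immediate corollary.

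\textbf{Main obstacle.} The delicate point is precisely the compatibility in Stage (ii): ensuring that the conjugation used to non-negativize $m$ can be taken orbit-constant. The clean way to do this is to pass to $I^2/\langle\nu\rangle$ and exploit that, by Lemma \ref{floor sequence}(1), the function $\ell\mapsto m(\nu^\ell(i,j))$ takes only two consecutive... no — it varies, but its minimum over $\ell\in\ZZ$ is attained (the correction term is almost constant hence bounded, and periodic up to the $a_\mathrm{av}$-drift, which over a full period returns to $0$ since $a_\mathrm{av}\cdot(\text{orbit length})\in\ZZ$), so $\inf_\ell m(\nu^\ell(i,j))$ is a well-defined integer and $m$ is non-negative iff this infimum is $\geq 0$ for every orbit. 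Defining $\bar m$ by these infima and checking it is still $\Sigma$-non-negative — using that any sequence $\fs\in\Sq(I^2/\langle\nu\rangle)$ lifts to a sequence in $\Sq I$ realizing the infima up to the $\nu$-drift, which cancels around a cycle — is the technical crux; once that is in place, Theorem \ref{lem-mat-pos} finishes the job.
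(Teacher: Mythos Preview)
Your Stage (i) is correct and coincides with the paper's Lemma \ref{lem-cyclic-mdata}(1).

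Stage (ii) has a genuine gap. You propose to reach non-negativity via an \emph{orbit-constant} conjugation $s$ (so that $sa=a$ stays literally of floor type), reducing to Theorem \ref{lem-mat-pos} for some $\bar m$ on the orbit set $J$. But orbit-constant conjugation is too restrictive: there exist $\Sigma$-non-negative floor-type m-data for which \emph{no} orbit-constant $s$ makes $sm$ non-negative. Concretely, take $I=I_x\sqcup I_y$ with $I_x\simeq I_y\simeq\ZZ/4\ZZ$, $a_\mathrm{av}=1/2$, and $a$ of floor type (so $a=(0,1,0,1)$ on each orbit). Define $m$ on $I_x\times I_y$ by $m(i,j)=1$ unless $a(i)=0$ and $a(j)=1$, in which case $m(i,j)=0$; define $m$ on $I_y\times I_x$ by $m(j,i)=0$ unless $a(j)=0$ and $a(i)=1$, in which case $m(j,i)=-1$; make the diagonal blocks a large constant. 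One checks directly that this satisfies the m-data relation and is $\Sigma$-non-negative: in any alternating cycle, each step $j\to i$ with value $-1$ forces $a(i)=1$, whence the next step $i\to j'$ has value $1$, so the contributions pair off. Yet $\min_{I_x\times I_y}m=0$ and $\min_{I_y\times I_x}m=-1$, so an orbit-constant shift $c=s(x)-s(y)$ would need both $c\ge 0$ and $-c\ge 1$, which is impossible. In particular your $\bar m$ on $J^2$ is \emph{not} $\Sigma$-non-negative here, and the heuristic ``the $\nu$-drift cancels around a cycle'' fails: the drift is precisely what produces the compensating $+1$'s, and taking minima on each block discards that compensation. (Separately, $I^2/\langle\nu\rangle$ is not a Cartesian square, so Theorem \ref{lem-mat-pos} does not apply to it; the only sensible target is $J^2$, and that is what fails.)

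The paper's fix is genuinely different. Writing $a_\mathrm{av}=r/g$ in lowest terms, it first replaces $m$ by the period-$g$ sum $m'(i,j)=\sum_{k=0}^{g-1}m(\nu^k(i,j))$, which is $\nu$-invariant by Lemma \ref{lem-cyclic-mdata}(2), and sets $\bar m(x,y)=\min_{I_x\times I_y}m'$. A balancing argument over $L=\lcm_x n_x$ many $\nu$-shifts shows this $\bar m$ \emph{is} $\Sigma$-non-negative, so Theorem \ref{lem-mat-pos} gives $\bar s:J\to\ZZ$. The decisive step is that the resulting conjugation on $I$ is \emph{not} orbit-constant: one sets $s(i)=\lfloor ir/g\rfloor-\lfloor(ir-\bar s(x))/g\rfloor$ for $i\in I_x$, morally a shift by the fraction $\bar s(x)/g$. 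This does alter $a$, but only to another floor-type function (for a shifted identification $I_x\simeq\ZZ/n_x\ZZ$); then almost-constancy of $\ell\mapsto(sm)(\nu^\ell(i,j))$ via Lemma \ref{floor sequence}(1), together with $\sum_{k=0}^{g-1}(sm)(\nu^k(i,j))=m'(i,j)+\bar s(x)-\bar s(y)\ge 0$, forces $(sm)\ge 0$ everywhere. The summation $m'$ and the fractional floor-function conjugation are the missing ingredients.
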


The following result implies Theorem \ref{thm-nonneg-app-constant 2}(1).

\begin{lem}\label{lem-cyclic-mdata}
Let $(m, a, \nu)$ be an m-data on $I$.
\begin{enumerate}
\item There exists a map $s : I \to \ZZ$ such that $s(m, a, \nu)$ is of floor type.
\item Assume that $(m,a,\nu)$ is of floor type, and write $a_\mathrm{av}=r/g$ for coprime integers $r$ and $g\ge1$. Then we have $a\circ \nu^g = a$ and $m(\nu^g(i,j))=m(i, j)$ for any $i,j\in I$.
\end{enumerate}
\end{lem}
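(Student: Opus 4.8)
\textbf{Proof plan for Lemma \ref{lem-cyclic-mdata}.}

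\emph{Part (1).} The plan is to first reduce to a single $\nu$-orbit, since $I$ decomposes into $\nu$-orbits and one may construct $s$ orbit by orbit. So fix a $\nu$-orbit $I_x$ of size $n=n_x$ and identify it with $\ZZ/n\ZZ$ with $\nu(i)=i+1$. Starting from any element $i_0\in I_x$, one can use the compatibility condition Definition \ref{define m-data}(1), which on this orbit reads $a(i+1)=a(i)$ in the quotient but more precisely $m(\nu(i,j))=m(i,j)-a(i)+a(j)$, together with condition (2) which says $\sum_{i\in I_x}a(i)=n\,a_\mathrm{av}$. The idea is to define $s$ on $I_x$ by telescoping so that the conjugate $sa$ on $I_x$ becomes exactly the "floor difference" sequence $f_{a_\mathrm{av},n}(i)=\lfloor(i+1)a_\mathrm{av}\rfloor-\lfloor i a_\mathrm{av}\rfloor$. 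Concretely, set $s(i):=\lfloor i a_\mathrm{av}\rfloor - \sum_{k=0}^{i-1}a(k)$ for a chosen representative $i\in\{0,1,\dots,n-1\}$ (adjusting so that this is well-defined modulo $n$ using $\sum_{k=0}^{n-1}a(k)=n a_\mathrm{av}\in\ZZ$ and $\lfloor n a_\mathrm{av}\rfloor = n a_\mathrm{av}$); then $(sa)(i)=a(i)+s(i)-s(i+1)=a(i)+\lfloor i a_\mathrm{av}\rfloor - \lfloor(i+1)a_\mathrm{av}\rfloor - a(i) \cdot(-1)$... one checks it collapses to $f_{a_\mathrm{av},n}(i)$ as desired. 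Doing this on each orbit yields a global $s:I\to\ZZ$ with $s(m,a,\nu)$ of floor type. The point to be careful about is the well-definedness of $s$ on $\ZZ/n\ZZ$, which hinges exactly on condition (2) of an m-data; this is where the hypothesis is genuinely used.

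\emph{Part (2).} Assume $(m,a,\nu)$ is already of floor type, with $a_\mathrm{av}=r/g$, $\gcd(r,g)=1$, $g\ge1$. The plan is to show $a\circ\nu^g=a$ first, then deduce $m\circ\nu^g=m$. For the first claim, on an orbit $I_x=\ZZ/n_x\ZZ$ we have $a|_{I_x}=f_{a_\mathrm{av},n_x}$, so $a(\nu^g(i))=f_{a_\mathrm{av},n_x}(i+g)=\lfloor(i+g+1)a_\mathrm{av}\rfloor-\lfloor(i+g)a_\mathrm{av}\rfloor$. Since $g a_\mathrm{av}=g\cdot r/g=r\in\ZZ$, we have $\lfloor(i+g+1)a_\mathrm{av}\rfloor = \lfloor(i+1)a_\mathrm{av}\rfloor + r$ and $\lfloor(i+g)a_\mathrm{av}\rfloor=\lfloor i a_\mathrm{av}\rfloor + r$, so the difference is $f_{a_\mathrm{av},n_x}(i)=a(i)$. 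For the second claim, iterate Definition \ref{define m-data}(1): $m(\nu^g(i,j))=m(i,j)-\sum_{\ell=0}^{g-1}a(\nu^\ell i)+\sum_{\ell=0}^{g-1}a(\nu^\ell j)$. So it suffices to show $\sum_{\ell=0}^{g-1}a(\nu^\ell i)=\sum_{\ell=0}^{g-1}a(\nu^\ell j)$ for all $i,j$, i.e.\ that this partial sum is independent of the starting point. On the orbit of $i$, this partial sum is $\sum_{\ell=0}^{g-1}f_{a_\mathrm{av},n}(i+\ell)=\lfloor(i+g)a_\mathrm{av}\rfloor-\lfloor i a_\mathrm{av}\rfloor = \lfloor i a_\mathrm{av}+r\rfloor - \lfloor i a_\mathrm{av}\rfloor = r$, which is independent of $i$ (and of the orbit). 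Hence $m(\nu^g(i,j))=m(i,j)$, as claimed. Here the divisibility identity $g a_\mathrm{av}=r\in\ZZ$ and the telescoping of the floor-difference sequence (Lemma \ref{floor sequence}(1) giving almost-constancy, but here we need the exact telescoped sum) are the crucial computational inputs.

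\emph{Expected main obstacle.} The only delicate point I anticipate is the well-definedness argument in Part (1): one must verify that the proposed $s$, defined via a partial sum over a linearly ordered set of representatives $\{0,\dots,n-1\}$, actually descends to $\ZZ/n\ZZ$, which requires the precise normalization $\sum_{i\in I_x}a(i)=n_x a_\mathrm{av}$ and the integrality $n_x a_\mathrm{av}\in\ZZ$ (the latter following from $a|_{I_x}$ taking integer values together with condition (2), or more directly from $g\mid n_x$ when the data is of floor type — but at the stage of Part (1) we do not yet know $g\mid n_x$, so one argues via condition (2) directly). Once the bookkeeping on a single orbit is set up correctly, assembling over all orbits and verifying the floor-type conditions is routine.
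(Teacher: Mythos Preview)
Your approach is essentially the same as the paper's, with one concrete slip.

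\textbf{Part (1).} The paper does exactly what you propose: it works orbit by orbit, identifies $I_x$ with $\ZZ/n_x\ZZ$ so that $\nu(i)=i+1$, and defines $s$ by a partial sum. However, your sign on $s$ is wrong. With your choice $s(i)=\lfloor i a_\mathrm{av}\rfloor-\sum_{k=0}^{i-1}a(k)$, one gets
\[
(sa)(i)=a(i)+s(i)-s(i+1)=2a(i)-\bigl(\lfloor(i+1)a_\mathrm{av}\rfloor-\lfloor i a_\mathrm{av}\rfloor\bigr),
\]
not $f_{a_\mathrm{av},n}(i)$. (Your garbled computation ``$-a(i)\cdot(-1)$\dots'' is a symptom of this.) The paper uses the opposite sign, $s(i)=\sum_{j=0}^{i-1}a(j)-\lfloor i a_\mathrm{av}\rfloor$, and then the telescoping gives $(sa)(i)=f_{a_\mathrm{av},n_x}(i)$ directly. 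Your observation that well-definedness at the wrap-around $i=n_x-1\mapsto 0$ uses condition (2) of an m-data (namely $\sum_{j=0}^{n_x-1}a(j)=n_x a_\mathrm{av}\in\ZZ$) is exactly right; the paper leaves this implicit.

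\textbf{Part (2).} Your argument is the paper's argument, but you supply more detail than the paper does. The paper proves $a\circ\nu^g=a$ by the same floor-shift identity you write, and then asserts the second claim follows from this and Definition~\ref{define m-data}(1). Your explicit computation that the telescoped sum $\sum_{\ell=0}^{g-1}a(\nu^\ell i)=\lfloor(i+g)a_\mathrm{av}\rfloor-\lfloor i a_\mathrm{av}\rfloor=r$ is independent of $i$ (and of the orbit) is precisely what is needed to make that deduction work across different orbits, and is a useful clarification.
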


\begin{proof}
(1) Take a decomposition \eqref{decompose I}. For each $i\in I_x
=\ZZ/n_x\ZZ$ with $x\in J$, let $s(i)= \sum_{j=0}^{i-1}a(j)-\lfloor ia_\mathrm{av}\rfloor$. Then the map $s:I\to\ZZ$ satisfies
\[(sa)(i)=a(i)+s(i)-s(i+1)=\lfloor (i+1)a_\mathrm{av} \rfloor -\lfloor ia_\mathrm{av} \rfloor=f_{a_\mathrm{av},n_x}(i).\]

(2) The first assertion is clear from $\lfloor (i+g+1)r/g \rfloor -\lfloor (i+g)r/g \rfloor=\lfloor (i+1)r/g \rfloor -\lfloor ir/g \rfloor$.
The second assertion follows from the first one and Definition \ref{define m-data}(1).
\end{proof}

We are ready to prove Theorem \ref{thm-nonneg-app-constant 2}(2).

\begin{proof}[Proof of Theorem \ref{thm-nonneg-app-constant 2}(2)]
Since $\Sigma$-non-negative maps are preserved by taking conjugations, by Lemma \ref{lem-cyclic-mdata}(1), we may assume that $(m, a, \nu)$ is a $\Sigma$-non-negative m-data of floor type. Take a decomposition \eqref{decompose I}. Write $a_\mathrm{av}=r/g$ for coprime integers $r$ and $g\ge1$. By Lemma \ref{lem-cyclic-mdata}(2), we have $a\circ\nu^g=a$ and
\begin{equation}\label{period g}
m(\nu^g(i,j))=m(i,j)\ \mbox{ for each }\ i,j\in I.
\end{equation}
Define maps
\begin{align*}
&m':I^2\to\ZZ;\ (i,j)\mapsto\sum_{k=0}^{g-1}m(\nu^k(i,j)),\\
&\overline{m}:J^2\to\ZZ;\ (x,y)\mapsto\min\{m'(i,j)\mid (i,j)\in I_x\times I_y\}.
\end{align*}
We give an explicit example of $m'$ and $\overline{m}$ in Example \ref{example of m'} below.

We claim that $\overline{m}$ is $\Sigma$-non-negative.
By Lemma \ref{easy fact of m}(3), it suffices to prove that, for each bijection $\sigma:J\to J$, $\overline{m}(\sigma):=\sum_{x\in J}\overline{m}(x,\sigma(x))$ is non-negative.
For each $x\in J$, fix $(i_x,j_x)\in I_x\times I_{\sigma(x)}$ satisfying $\overline{m}(x,\sigma(x))=m'(i_x,j_x)$. Then for each $\ell\in\ZZ$, we have
\begin{equation}\label{overline{m}=sum m}
\sum_{k=0}^{g-1}m(\nu^{\ell+k}(i_x,j_x))\stackrel{\eqref{period g}}{=}m'(i_x,j_x)=\overline{m}(x,\sigma(x)).
\end{equation}
For $L:=\lcm \{n_x\mid x\in J\}$, we have $\nu^L=1_I$. Consider a multiset
\[S:=\{\nu^k(i_x,j_x)\mid 0\le k\le L-1,\ x\in J\}.\]
Then each element $i\in I_x$ with $x\in J$ appears exactly $L/n_x$ times as the first (respectively, second) entry of the elements in $S$. Thus there exists $\fs=(h_1,\ldots,h_{Lt})\in\Sq I$ with $t=\# J$ such that
\[S=\{(h_k,h_{k+1})\mid k\in\ZZ/Lt\ZZ\}\]
as multisets. Since $m$ is $\Sigma$-non-negative, we have
\[0\le m(\fs)=\sum_{0\le k\le L-1,\ x\in J}m(\nu^k(i_x,j_x))\stackrel{\eqref{overline{m}=sum m}}{=}(L/g)\sum_{x\in J}\overline{m}(x,\sigma(x))=(L/g)\overline{m}(\sigma).\]
Thus $\overline{m}(\sigma)\ge0$ holds as desired.

Since $\overline{m}$ is $\Sigma$-non-negative, by Theorem \ref{lem-mat-pos}, there exists a map $\overline{s} : J\to \ZZ$ such that $\overline{s}\overline{m} : J^2\to \ZZ$ is non-negative, that is, for each $x,y\in J$, we have
\begin{equation}\label{m+s-s}
\overline{m}(x,y)+\overline{s}(x)-\overline{s}(y)\ge0.
\end{equation}
Define a map
\begin{align*}
s:I\to\ZZ\ \mbox{ by }\ s(i)=\lfloor ir/g \rfloor-\lfloor (ir-\overline{s}(x))/g \rfloor\ \mbox{ for each }\ i\in I_x=\ZZ/n_x\ZZ.
\end{align*}
We claim that $s(m,a,\nu)$ is almost constant and non-negative.

For $x\in J$ and $i\in I_x=\ZZ/n_x\ZZ$, we have
\begin{align*}
&a(i)+s(i)-s(\nu(i))\\
&=\left(\left\lfloor \frac{(i+1)r}{g} \right\rfloor-\left\lfloor \frac{ir}{g} \right\rfloor\right)+\left(\left\lfloor \frac{ir}{g} \right\rfloor-\left\lfloor \frac{ir-\overline{s}(x)}{g} \right\rfloor\right)-\left(\left\lfloor \frac{(i+1)r}{g} \right\rfloor-\left\lfloor \frac{(i+1)r-\overline{s}(x)}{g} \right\rfloor\right)\\
&=\left\lfloor \frac{(i+1)r-\overline{s}(x)}{g} \right\rfloor-\left\lfloor \frac{ir-\overline{s}(x)}{g} \right\rfloor.
\end{align*}
Thus $sa:I\to\ZZ$ is almost constant by Lemma \ref{floor sequence}(1).

We claim that, for each $i,j\in I$, $((sm)(\nu^k(i,j)))_{k\in\ZZ}$ is almost constant. In fact, for each $(i,j)\in I_x\times I_y$ with $x,y\in J$, we have
\begin{align*}
(sm)(\nu^\ell(i,j))
&=m(i,j)+\sum_{k=0}^{\ell-1}(a(\nu^k(j))-a(\nu^k(i)))+s(\nu^\ell(i))-s(\nu^\ell(j))\\
&=m(i,j)+\left(\left\lfloor\frac{(j+\ell)r}{g}\right\rfloor-\left\lfloor\frac{jr}{g}\right\rfloor-\left\lfloor\frac{(i+\ell)r}{g}\right\rfloor+\left\lfloor\frac{ir}{g}\right\rfloor\right)\\
&\hspace{5mm}+\left(\left\lfloor\frac{(i+\ell)r}{g}\right\rfloor-\left\lfloor\frac{(i+\ell)r-\overline{s}(x)}{g}\right\rfloor\right)-\left(\left\lfloor\frac{(j+\ell)r}{g}\right\rfloor-\left\lfloor\frac{(j+\ell)r-\overline{s}(y)}{g}\right \rfloor\right)\\
&=m(i,j)-\left\lfloor\frac{jr}{g}\right\rfloor+\left\lfloor\frac{ir}{g}\right\rfloor+\left\lfloor\frac{(j+\ell)r-\overline{s}(y)}{g}\right \rfloor-\left\lfloor\frac{(i+\ell)r-\overline{s}(x)}{g}\right\rfloor
\end{align*}
Thus Lemma \ref{floor sequence}(1) shows the claim.

It remains to show that $sm:I^2\to\ZZ$ is non-negative.
For each $(i,j)\in I_x\times I_y$ with $x,y\in J$, we have
\begin{align*}
\sum_{k=0}^{g-1}(sm)(\nu^k(i,j))&=
\sum_{k=0}^{g-1}m(\nu^k(i,j))+\sum_{k=0}^{g-1}s(\nu^k(i))-\sum_{k=0}^{g-1}s(\nu^k(j))\\
&\stackrel{\text{Lem.\,\ref{floor sequence}(2)}}{=}m'(i,j)+\overline{s}(x)-\overline{s}(y)\stackrel{\eqref{m+s-s}}{\ge}0.
\end{align*}
Thus the almost constancy of $((sm)(\nu^k(i,j)))_{k\in\ZZ}$ implies $(sm)(i,j)\ge0$.
\end{proof}

\begin{ex}\label{example of m'}
We give an explicit example showing $m'$ and $\overline{m}$ in the proof.
Let $I=(\ZZ/4\ZZ)\sqcup(\ZZ/6\ZZ)$ and $a_\mathrm{av}=1/2$. Take a bijection $\{1,2,3,4,5,6,7,8,9,10\}\simeq I$ given by $\{1,2,3,4\}\simeq\ZZ/4\ZZ$; $i\mapsto i-1$ and $\{5,6,7,8,9,10\}\simeq\ZZ/6\ZZ$; $i\mapsto i-5$, and describe $a$ and $m$ as elements in $\ZZ^{10}$ and $\rM_{10}(\ZZ)$. Since $(m,a,\nu)$ is of floor type, we have
{\small
\begin{align*}
&a=(0,1,0,1,0,1,0,1,0,1)\ \mbox{ and }\ m= \left[\begin{array}{cccc|cccccc}
b&c&d&e&f&g&f&g&f&g\\
e+1&b&c+1&d&g+1&f&g+1&f&g+1&f\\
d&e&b&c&f&g&f&g&f&g\\
c+1&d&e+1&b&g+1&f&g+1&f&g+1&f\\ \hline
h&i&h&i&j&k&l&m&n&p\\
i+1&h&i+1&h&p+1&j&k+1&l&m+1&n\\
h&i&h&i&n&p&j&k&l&m\\
i+1&h&i+1&h&m+1&n&p+1&j&k+1&l\\
h&i&h&i&l&m&n&p&j&k\\
i+1&h&i+1&h&k+1&l&m+1&n&p+1&j
\end{array}\right].
\end{align*}
}
Taking $m'$, the matrix becomes stable under $(i,j)\to(\nu(i),\nu(j))$:
{\small 
\[m'=\left[\begin{array}{cccc|cccccc}
2b&2c+1&2d&2e+1&2f&2g+1&2f&2g+1&2f&2g+1\\
2e+1&2b&2c+1&2d&2g+1&2f&2g+1&2f&2g+1&2f\\
2d&2e+1&2b&2c+1&2f&2g+1&2f&2g+1&2f&2g+1\\
2c+1&2d&2e+1&2b&2g+1&2f&2g+1&2f&2g+1&2f\\ \hline
2h&2i+1&2h&2i+1&2j&2k+1&2l&2m+1&2n&2p+1\\
2i+1&2h&2i+1&2h&2p+1&2j&2k+1&2l&2m+1&2n\\
2h&2i+1&2h&2i+1&2n&2p+1&2j&2k+1&2l&2m+1\\
2i+1&2h&2i+1&2h&2m+1&2n&2p+1&2j&2k+1&2l\\
2h&2i+1&2h&2i+1&2l&2m+1&2n&2p+1&2j&2k+1\\
2i+1&2h&2i+1&2h&2k+1&2l&2m+1&2n&2p+1&2j
\end{array}\right]\]
}
Taking the minimum of each block, we obtain the matrix $\overline{m}$:
\[\overline{m}=\left[\begin{array}{cc}
\min\{2b,2c+1,2d,2e+1\}&\min\{2f,2g+1\}\\
\min\{2h,2i+1\}&\min\{2j,2k+1,2l,2m+1,2n,2p+1\}
\end{array}\right].\]
\end{ex}

\end{document}